\newtheorem{thm}{Theorem}[section]
\newtheorem{cor}[thm]{Corollary}
\newtheorem{lem}[thm]{Lemma}
\newtheorem{prop}[thm]{Proposition}
\newtheorem{claim}[thm]{Claim}
\theoremstyle{definition}
\newtheorem*{defn}{Definition}
\newtheorem{conj}[thm]{Conjecture}
\newtheorem{ex}[thm]{Example}
\theoremstyle{remark}
\newtheorem{remark}[thm]{Remark}
\numberwithin{equation}{section}
\title{Generic solutions of equations involving the modular $j$ function}
\author{Sebastian Eterovi\'c}
\address{School of Mathematics, University of Leeds, Leeds, UK} 
\email{s.eterovic@leeds.ac.uk}
\date{\today}
\thanks{Supported by NSF RTG grant DMS-1646385 and EPSRC fellowship EP/T018461/1. 
I would like to thank Vahagn Aslanyan, Sebasti\'an Herrero, Vincenzo Mantova, Adele Padgett, Thomas Scanlon and Roy Zhao for helpful discussions on some of the details presented here}
\thanks{ORCiD: 0000-0001-6724-5887}
\keywords{Strong existential closedness, $j$ function, modular Schanuel conjecture, Ax--Schanuel, Zilber--Pink}
\subjclass[2020]{11F03, 11J89, 03C60}
\begin{document}

\begin{abstract}
    Assuming a modular version of Schanuel's conjecture and the modular Zilber--Pink conjecture, we show that the existence of generic solutions of certain families of equations involving the modular $j$ function can be reduced to the problem of finding a Zariski dense set of solutions. By imposing some conditions on the field of definition of the variety, we are also able to obtain versions of this result without relying on these conjectures, and even a result including the derivatives of $j$. 
\end{abstract}

\maketitle

\section{Introduction}
In this paper we study the strong part of the \emph{Existential Closedness Problem} (\emph{strong EC} for short) for the modular $j$ function. The strong EC problem asks to find minimal geometric conditions that an algebraic variety $V\subset\mathbb{C}^{2n}$ should satisfy to ensure that for every finitely generated field $K$ over which $V$ is defined, there exists a point $(z_1,\ldots,z_n)$ in $\mathbb{H}^{n}$ such that $(z_1,\ldots,z_n,j(z_1),\ldots,j(z_n))$ is a point of $V$ which is generic over $K$. The results of \cite{aek-differentialEC} and \cite{aslanyan-kirby} inform what the conditions of strong EC should be: in technical terms, it is expected that \emph{broadness} and \emph{freeness} is the minimal set of conditions (see \textsection\ref{subsec:broadfree} for definitions and Conjecture \ref{conj:ec} for a precise statement). 

When approaching the strong EC problem, the first immediate obstacle is the \emph{EC problem}, which simply asks to find geometric conditions that an algebraic variety $V\subset\mathbb{C}^{2n}$ should satisfy to ensure that $V$ has a Zariski dense set of points in the graph of the $j$ function. When $V$ has such a Zariski dense set of points, we say that \emph{$V$ satisfies (EC)}. Following from the previous paragraph, it is expected that if $V$ is free and broad, then $V$ satisfies (EC) (see Conjecture \ref{conj:ec}). It was proven in \cite[Theorem 1.1]{paper1} that if $V\subset\mathbb{C}^{2n}$ is a variety such that the projection $\pi:V\rightarrow\mathbb{C}^{n}$ onto the first $n$ coordinates is dominant, then $V$ satisfies (EC). This is a partial solution to the EC problem, since asking that the projection $\pi$ is dominant is a stronger hypothesis than the notion of broadness.  

Regarding strong EC, it was shown in \cite[Theorem 1.2]{paper1} that if one assumes a modular version of Schanuel's conjecture, then any plane irreducible curve $V\subset\mathbb{C}^{2}$ which is not a horizontal or a vertical line has a generic point over any given finitely generated field over which it is defined. 
On the other hand, \cite[Theorem 1.1]{aek-closureoperator} provides a version of this without assuming the modular Schanuel conjecture, but instead assuming that $V$ is in some precise sense ``generic'' with respect to the $j$ function. 

The main results of this paper extend \cite[Theorem 1.2]{paper1} and \cite[Theorem 1.1]{aek-closureoperator} to higher dimensions. 
Our first main result shows that under a modular version of Schanuel's conjecture (which we call MSCD, see Conjecture \ref{conj:mscd}) and the modular Zilber--Pink conjecture (MZP, see Conjecture \ref{conj:mzp}) the strong EC problem can be reduced to the EC problem.
\begin{thm}
\label{thm:generic}
Let $K\subseteq\mathbb{C}$ be a finitely generated field, and let $V\subseteq\mathbb{C}^{2n}$ be a broad and free variety defined over $K$ satisfying (EC). 
Then MSCD and MZP imply that $V$ has a point of the form $(z_1,\ldots,z_n,j(z_1),\ldots,j(z_n))$, with $(z_1,\ldots,z_n)\in\mathbb{H}^{n}$, which is generic over $K$.
\end{thm}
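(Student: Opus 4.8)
The plan is to argue by contradiction, extracting from (EC) a Zariski‑dense supply of $j$‑points of $V$ and then using MSCD to control their transcendence degrees over $K$ and MZP to prevent the non‑generic ones from accumulating. So suppose $V$ has no point of the form $(\bar z, j(\bar z))$, $\bar z \in \mathbb{H}^n$, that is generic over $K$. By (EC) the set $\Sigma$ of all $j$‑points of $V$ is Zariski dense in $V$; for $p=(\bar z,j(\bar z))\in\Sigma$ put $W_p=\operatorname{Loc}_K(p)\subseteq V$, so that $\dim W_p=\operatorname{tr.deg}_K K(p)<\dim V$ by assumption. The goal is to produce a single proper Zariski‑closed subset of $V$ containing all of $\Sigma$, which contradicts density.

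First I would peel off the $j$‑points with a modular degeneracy. If some $z_i$ is a special point, or some pair $z_i,z_k$ satisfies a $\mathrm{GL}_2^{+}(\mathbb Q)$‑relation, then $W_p$ is contained in $V\cap T$ for the corresponding weakly special subvariety $T$, which by freeness of $V$ is a proper subvariety; moreover $z_i$ (and $j(z_i)$, a singular modulus) is then algebraic, so Schneider's theorem together with an induction on $n$ reduces the search for a generic $j$‑point lying on these slices to a lower‑dimensional instance of the statement, after slicing by $T$ and descending to a finitely generated extension of $K$. Similarly, if $\bar z$ fails to be generic over $K$ in $\mathbb{C}^n$ one passes to $\pi^{-1}(\overline{\pi(V)})$ with $\pi$ the projection to the $\mathbb{H}^n$‑coordinates and replaces the Ax--Schanuel bound below by its version relative to $\dim\overline{\pi(V)}$. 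The upshot is that it suffices to find a generic $j$‑point among those $p\in\Sigma$ whose coordinates $z_1,\dots,z_n$ are pairwise $\mathrm{GL}_2^{+}(\mathbb Q)$‑inequivalent, non‑special, and generic over $K$; so I may assume every $p\in\Sigma$ is of this form.

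The heart of the argument is to promote MSCD into a geometric statement about the loci $W_p$, and this is the step I expect to be the main obstacle. Attach to $p$ the enlarged tuple $\widehat p=(\bar z,j(\bar z),j'(\bar z),j''(\bar z))$, which lies on the $n$‑fold fibre product $J$ of the order‑$2$ jet of the graph of $j$; let $\widehat W_p=\operatorname{Loc}_K(\widehat p)\subseteq J$ and let $\widehat V\subseteq J$ be the corresponding lift of $V$. Differentiating the equations of $W_p$ expresses $j'(\bar z)$ and $j''(\bar z)$ algebraically over $K(\bar z,j(\bar z))$, so $\dim\widehat W_p=\dim W_p<\dim V=\dim\widehat V$; since $K$ is finitely generated, $\operatorname{tr.deg}_{\mathbb Q}\mathbb{Q}(\widehat p)\le\operatorname{tr.deg}_{\mathbb Q}K+\dim\widehat W_p$. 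On the other hand MSCD, applied to the $j$‑generic tuple $\bar z$, bounds $\operatorname{tr.deg}_{\mathbb Q}\mathbb{Q}(\widehat p)$ from below in terms of the geometry of $\widehat W_p$, and — because $V$ is broad — the same bound evaluated at a would‑be generic $j$‑point of $V$ returns exactly $\dim V$. Comparing the two inequalities forces $\widehat W_p$ to be deficient in a way that, through the Ax--Schanuel theorem of Pila--Tsimerman for the jet space, can only be accounted for by a weakly special subvariety $T$ with $\widehat W_p\subseteq\widehat V\cap T$ and $\dim(\widehat V\cap T)$ exceeding the expected value $\dim\widehat V+\dim T-\dim J$; projecting back to $\mathbb{C}^{2n}$ this places $W_p$ inside the atypical locus $\operatorname{Atyp}(V)$. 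The delicate point is to run this comparison so that it genuinely yields an \emph{atypical}, rather than merely proper, intersection: one must track how broadness and freeness of $V$ interact with the differential‑algebraic constraints of the modular equation, and it is exactly here that MSCD — not merely the already available Ax--Schanuel theorem — is indispensable, since the conclusion must hold for transcendence over the fixed finitely generated field $K$ and not over $\mathbb C$ in a family.

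Finally I would apply MZP: since $V$ is broad and free it is not itself atypical, so $\operatorname{Atyp}(V)$ is a finite union of proper subvarieties of $V$. Combining the last two paragraphs, every $W_p$, for $p$ ranging over the (now modularly non‑degenerate) members of $\Sigma$, is contained in the fixed proper closed subset $\operatorname{Atyp}(V)\subsetneq V$, and the finitely many degenerate families were already absorbed into a proper closed subset in the reduction step. Since $p\in W_p$ for each $p$, the Zariski‑dense set $\Sigma$ would lie inside a proper closed subset of $V$, a contradiction. Hence $V$ carries a point $(\bar z,j(\bar z))$ with $\bar z\in\mathbb{H}^n$ that is generic over $K$, as required.
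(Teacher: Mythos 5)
The overall scaffold you propose — use (EC) to produce a Zariski-dense set of $j$-points, show that non-generic ones lie in a fixed proper closed subset via an atypicality argument, then invoke MZP for finiteness — matches the spirit of the paper's proof. But there is a genuine gap at exactly the step you flag as ``delicate,'' and you do not supply the mechanism that closes it.

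The core problem: MSCD bounds $\mathrm{tr.deg.}_{\mathbb{Q}}\mathbb{Q}(\mathbf{z},J(\mathbf{z}))$ from below, while a non-generic $j$-point is one with $\mathrm{tr.deg.}_{K}K(\mathbf{z},j(\mathbf{z})) < \dim V$. There is a gap of size $\mathrm{tr.deg.}_{\mathbb{Q}}K$ between these two measures, plus up to $2n$ from the derivatives, so the naive comparison you sketch does not force any atypical intersection. Your attempt to close the gap via the jet lift rests on the claim that ``differentiating the equations of $W_p$ expresses $j'(\bar z)$ and $j''(\bar z)$ algebraically over $K(\bar z,j(\bar z))$,'' hence $\dim\widehat W_p=\dim W_p$. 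This is not true: $W_p=\mathrm{Loc}_K(\mathbf{z},j(\mathbf{z}))$ is an ordinary algebraic variety, not a differential variety, and its defining polynomials impose no relation whatsoever on $j'(\mathbf{z}),j''(\mathbf{z})$. In general $\dim\widehat W_p$ can exceed $\dim W_p$ by up to $2n$, which precisely reopens the gap. The paper's resolution of this obstacle is the notion of \emph{convenient generators} (Lemma \ref{lem:convenientgenerators}, built on \cite[Theorem 5.1]{aek-closureoperator}, itself a consequence of MSCD and differential Ax--Schanuel): one finds $t_1,\ldots,t_m\in\mathbb{H}$ with $K\subseteq\overline{\mathbb{Q}(\mathbf{t},J(\mathbf{t}))}$ and $\mathrm{tr.deg.}_{\mathbb{Q}}\mathbb{Q}(\mathbf{t},J(\mathbf{t})) = 3\dim_{G}(\mathbf{t}|\Sigma)+\dim^{j}(\mathbf{t})$, which yields a relative Schanuel inequality over $F=\mathbb{Q}(\mathbf{t},j(\mathbf{t}))$ (Lemma \ref{lem:fgjmscas}). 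Crucially, the paper then adjoins the coordinates $(\mathbf{t},j(\mathbf{t}))$ to form an enlarged variety $W\subseteq\mathbb{C}^{n+m}\times\mathrm{Y}(1)^{n+m}$ that is defined over $\overline{\mathbb{Q}}$, so that the absolute MSCD bound applies and the extra contribution $\dim^{j}(\mathbf{t})$ appears in the dimension count (Lemma \ref{lem:dimW}, Lemma \ref{lem:msc+t}) to balance the books. Without this device the comparison of inequalities you propose does not produce an atypical component.

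A secondary issue: your preliminary reduction, which ``peels off'' $j$-points with modular relations or special coordinates, cannot be a simple induction. For each $\mathrm{GL}_2^+(\mathbb{Q})$-relation one gets a proper slice of $V$, but there are infinitely many such relations and the slices need not be broad or free, so one cannot descend to ``a lower-dimensional instance of the statement.'' The paper handles this inside the contradiction argument by putting all M\"obius slices into a single parametric family (Example \ref{ex:parametric family}) and invoking \emph{uniform} MZP (Corollary \ref{cor:horizontalUMZP}) to extract a finite collection $\mathscr{S}$ of special subvarieties that works simultaneously for every slice; the $j$-point is then chosen, using (EC) and freeness, to avoid the finitely many $T_0\in\mathscr{S}$ in advance. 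Separately, $j$-points with special coordinates are controlled by uniform Andr\'e--Oort (Proposition \ref{prop:special}), not by Schneider's theorem alone.
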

This addresses the second question posed in \cite[\textsection 1]{paper1}, and as such, this paper can be seen as a continuation of the work done there. 
We will also show some general cases in which we can remove the dependence on MZP, see Theorem \ref{thm:domproj2}. 
In particular, this has consequences on the dynamical behaviour of the $j$ function, see Corollary \ref{cor:iterates1}.

Using that the Ax--Schanuel theorem for $j$ (\cite{pila-tsimerman}) implies weak forms of both MSCD and MZP, we are able to remove the dependency on these conjectures from Theorem \ref{thm:generic} by instead imposing conditions on the field of definition of $V$, conditions we are calling ``having no $C_{j}$-factors'' (see \S\ref{subsec:unconditional} for the definition). 
Here $C_j$ is a specific countable algebraically closed subfield of $\mathbb{C}$ which is built by solving systems of equations involving the $j$ function, and the condition of ``having no $C_j$-factors'' ensures that $V$ is sufficiently generic so that the weak forms of MSCD and MZP suffice. 
A version of this condition already appeared in the hypotheses of \cite[Theorem 1.1]{aek-closureoperator}.
We now state our second main result.
\begin{thm}
\label{thm:unconditional}
Let $V\subseteq\mathbb{C}^{2n}$ be a broad and free variety with no $C_{j}$-factors and satisfying (EC). 
Then for every finitely generated field $K$ containing the field of definition of $V$, there is a point in $V$ of the form $(z_1,\ldots,z_n,j(z_1),\ldots,j(z_n))$, with $(z_1,\ldots,z_n)\in\mathbb{H}^{n}$, which is generic over $K$.
\end{thm}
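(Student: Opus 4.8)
The plan is to follow the strategy of the proof of Theorem~\ref{thm:generic}, tracking carefully where MSCD and MZP are invoked and replacing each by an unconditional substitute. In place of MSCD I would use the Ax--Schanuel theorem for $j$ with derivatives of Pila--Tsimerman, which also yields the functional Lindemann--Weierstrass statement with derivatives; in place of MZP I would use the weakly special (``defect'') case of the Zilber--Pink statement, which is a formal consequence of Ax--Schanuel, reinforced by the Andr\'e--Oort theorem for products of modular curves, which is unconditional. The role of the hypothesis that $V$ has no $C_j$-factors is this: Ax--Schanuel is only a relative transcendence statement, so by itself it controls transcendence degrees over the field $C_j$ together with the field of definition of $V$, whereas MSCD controls transcendence degree over $\mathbb{Q}$ outright; the no-$C_j$-factors hypothesis is exactly what is needed to promote genericity over $C_j\cdot(\text{field of definition of }V)$ to genericity over $K$, for every finitely generated $K$ containing the field of definition of $V$.

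Concretely, as in the proof of Theorem~\ref{thm:generic} I would reduce the statement to exhibiting a point of $V$ of the form $(z_1,\dots,z_n,j(z_1),\dots,j(z_n))$ with $\bar z\in\mathbb{H}^n$ that lies in no proper $K$-subvariety of $V$ --- equivalently, one whose coordinates have transcendence degree $\dim V$ over $K$. I would first arrange for $\bar z$ to be \emph{modularly generic}: no two of $z_1,\dots,z_n$ related by a $\mathrm{GL}_2^+(\mathbb{Q})$-transformation, none of them a CM point, and $\bar z$ in no proper weakly special subvariety of $\mathbb{H}^n$. The graph points of $V$ whose $\bar z$ violates this are controlled by the intersections of $V$ with the countably many proper weakly special subvarieties arising from the modular structure: freeness forces each such intersection to be a proper subvariety, the defect form of Zilber--Pink confines the atypical intersections to a proper subvariety of $V$, and Andr\'e--Oort controls the CM constraints, so that --- after possibly passing to such an intersection and running an induction on $\dim V$, during which one checks that broadness, freeness and the absence of $C_j$-factors are inherited (or handles the exceptional components separately) --- these bad points are not Zariski dense and (EC) still provides a graph point avoiding them. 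For such a modularly generic $\bar z$, Ax--Schanuel and the functional Lindemann--Weierstrass statement compute the transcendence degree of $(\bar z,j(\bar z))$ over $C_j\cdot(\text{field of definition of }V)$; broadness and freeness of $V$ force it to equal $\dim V$, and the no-$C_j$-factors hypothesis transfers this to transcendence degree $\dim V$ over $K$.

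The step I expect to be the main obstacle is precisely this last transfer --- turning the relative transcendence bound furnished by Ax--Schanuel into a bound over the arbitrary finitely generated field $K$. This requires a careful analysis of how $K$ can interact with $C_j$ inside the function field of $V$: morally, one must show that if genericity over $K$ failed for every modularly generic graph point of $V$, then $V$ would have to acquire a $C_j$-factor. This is where essentially all of the work beyond the conditional Theorem~\ref{thm:generic} lies. By comparison the Zilber--Pink input is comparatively soft, since only the weakly special (defect) case is needed and it already follows from Ax--Schanuel; the remaining care there is bookkeeping, namely verifying that broadness, freeness and the no-$C_j$-factors property survive the inductive passage to subvarieties.
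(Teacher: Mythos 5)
Your high-level plan is the right one, and you have correctly located where the real difficulty sits: Ax--Schanuel controls transcendence only over a $j\mathrm{cl}$-closed base, so the whole point is converting that relative bound into genericity over an arbitrary finitely generated $K$. But you stop at naming this step as ``the main obstacle'' without supplying the argument, and this is precisely where the content of the paper's proof lies. The actual mechanism is the existence of \emph{convenient generators} for $K$ over $C_j$ (Theorem~\ref{thm:mainclosureoperator}, quoted from \cite{aek-closureoperator}): there is a tuple $\mathbf{t}\in(\mathbb{H}\setminus C_j)^m$ with $\overline{K}\subseteq\overline{C_j(\mathbf{t},J(\mathbf{t}))}$ and with the transcendence degree of $C_j(\mathbf{t},J(\mathbf{t}))$ over $C_j$ exactly computed by $\dim_G$ and $\dim^j$. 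One then works not on $V$ itself but on the auxiliary variety $W\subseteq\mathbb{C}^{n+m}\times\mathrm{Y}(1)^{n+m}$, the $C_j$-Zariski closure of $(\mathbf{x},\mathbf{t},\mathbf{y},j(\mathbf{t}))$ for $(\mathbf{x},\mathbf{y})$ generic in $V$ over $C_j(\mathbf{t},j(\mathbf{t}))$; this is defined over the $j\mathrm{cl}$-closed field $C_j$, so Proposition~\ref{prop:as} applies directly to points of $W$, and genericity of the resulting $j$-point in $V$ over $K$ is extracted at the end via \cite[Lemma~5.2]{aek-closureoperator}. Without this device your ``moral'' claim that failure of genericity would force a $C_j$-factor has no supporting argument, and I don't see how you would run it.

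A second genuine gap: you treat the Zilber--Pink input as soft because only the weakly special/defect case is needed, and you propose patching the CM issues with Andr\'e--Oort. Andr\'e--Oort is in fact not used in the unconditional proof, and the Zilber--Pink side is not soft. The weak uniform ZP (Theorem~\ref{thm:weakzp} and its two-sorted form, Theorem~\ref{thm:horizontalweakzp}) only handles \emph{strongly} atypical components, i.e.\ those whose image under $\pi_{\mathrm{Y}}$ has no constant coordinates. The remaining atypical components --- those picking up constant coordinates --- are where the hypothesis of no $C_j$-factors does its real work: it guarantees $K\not\subseteq C_j$, hence $\dim^j(\mathbf{t})\geq 1$, hence $W$ is \emph{strongly} broad (Lemma~\ref{lem:strongWbroadnfree}); combining this with the bounded-complexity refinement of weak ZP (Proposition~\ref{prop:horizontalboundedcomplexity}, itself a nontrivial induction) and a fibre-dimension count gives the contradiction in the constant-coordinate case. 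Your proposal never confronts this case, and without it the argument does not close.
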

We will also prove a version of Theorem \ref{thm:unconditional} which includes the derivatives of $j$, see Theorem \ref{thm:unconditionalwithderivatives}.

The EC problem for $j$ is still open, but certain partial results have been achieved (we already mentioned \cite[Theorem 1.1]{paper1}). 
One of the main results of \cite{aslanyan-kirby} proves an approximate solution to the EC problem. 
Their approach is referred to as a \emph{blurring} of the $j$-function (definition can be found in \S\ref{subsec:blurring}). 
Using this, we will deduce the following in \S\ref{subsec:thmblurred}. 
\begin{thm}
\label{thm:blurred}
Let $V\subseteq\mathbb{C}^{2n}$ be a broad and free variety with no $C_{j}$-factors. 
Then for every finitely generated field $K\subset\mathbb{C}$ containing the field of definition of $V$, there are matrices $g_{1},\ldots,g_{n}$ in $ \mathrm{GL}_2^+(\mathbb{Q})$ such that $V$ has a point of the form  
\[(z_{1},\ldots,z_{n},j(g_{1}z_{1}),\ldots,j(g_{n}z_{n})),\]
with $(z_1,\ldots,z_n)\in \mathbb{H}^n$, which is generic over $K$.
\end{thm}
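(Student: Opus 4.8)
The plan is to absorb the matrices into a coordinate change and reduce to Theorem \ref{thm:unconditional}. For a tuple $g=(g_1,\dots,g_n)\in\big(\mathrm{GL}_2^+(\mathbb{Q})\big)^n$, let $\phi_g$ denote the automorphism of $(\mathbb{P}^1)^n\times\mathbb{C}^n$ acting coordinate-wise by $(z_i,w_i)\mapsto(g_iz_i,w_i)$, and let $V^g$ be the Zariski closure in $\mathbb{C}^{2n}$ of the image $\phi_g(V\cap U)$, where $U$ is the complement of the finitely many hyperplanes on which some $g_iz_i=\infty$. Since $V$ is free it is not contained in any hyperplane $\{z_i=c\}$, so $V\cap U$ is Zariski dense in $V$ and $V^g$ is an irreducible variety of the same dimension as $V$. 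The first step is to check that $V^g$ inherits every hypothesis on $V$. Broadness is preserved because $\phi_g$ acts coordinate-wise by birational maps, so every coordinate projection of $V^g$ has the same dimension as the corresponding projection of $V$. Freeness is preserved because $\phi_g$ permutes the weakly special subvarieties of $\mathbb{C}^{2n}$ that freeness forbids: it sends $\{z_i=c\}$ to $\{z_i=g_ic\}$, a modular relation $z_i=hz_k$ to $z_i=(g_ihg_k^{-1})z_k$, and leaves the conditions on the $w$-coordinates unchanged. Finally, the absence of $C_j$-factors is inherited because $\phi_g$ is defined over $\mathbb{Q}$, so $V$ and $V^g$ have exactly the same field of definition; in particular $K$ contains the field of definition of $V^g$.

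The second step sets up a dictionary between blurred points of $V$ and points of $V^g$ on the graph of $j$. For $(z_1,\dots,z_n,w_1,\dots,w_n)\in\mathbb{H}^n\times\mathbb{C}^n$ one has $(z,w)\in V$ with $w_i=j(g_iz_i)$ for all $i$ if and only if $\phi_g(z,w)=(g_1z_1,\dots,g_nz_n,w_1,\dots,w_n)$ is a point of $V^g$ lying on the graph of $j$; here one uses that $g_i$ preserves $\mathbb{H}$, and that away from the locus $\{z_i=\infty\}$ the map $\phi_g$ is an isomorphism, so that a graph point of $V^g$ with coordinates in $\mathbb{H}$ pulls back to a point of $V$ itself rather than merely of $V^g$. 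Since $z_i$ and $g_iz_i$ are interalgebraic over $\mathbb{Q}\subseteq K$, these two points generate the same extension of $K$, while $\dim V=\dim V^g$; hence $(z,w)$ is generic in $V$ over $K$ exactly when its image is generic in $V^g$ over $K$. The same dictionary shows that $V$ has a Zariski dense set of points of the form $(z,j(gz))$ with $z\in\mathbb{H}^n$ if and only if $V^g$ satisfies (EC).

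With these two steps in place the deduction is immediate. By the blurred Existential Closedness theorem of \cite{aslanyan-kirby}, since $V$ is broad and free there is a tuple $g\in\big(\mathrm{GL}_2^+(\mathbb{Q})\big)^n$ for which $V$ has a Zariski dense set of points of the form $(z,j(gz))$, $z\in\mathbb{H}^n$ --- and if the statement there is phrased for the blurred graph as a whole, or over $\mathrm{GL}_2^+(\mathbb{R})$, one extracts a single rational tuple $g$ by an elementary argument using the density of $\mathrm{GL}_2^+(\mathbb{Q})$. By the second step $V^g$ satisfies (EC), and by the first step it is broad, free, has no $C_j$-factors, and $K$ contains its field of definition; so Theorem \ref{thm:unconditional} furnishes a point $(z_1',\dots,z_n',j(z_1'),\dots,j(z_n'))\in V^g$ with $z'\in\mathbb{H}^n$ that is generic over $K$. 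Setting $z_i=g_i^{-1}z_i'$ and applying the dictionary of the second step in reverse produces a point $(z_1,\dots,z_n,j(g_1z_1),\dots,j(g_nz_n))\in V$, with $z\in\mathbb{H}^n$, generic over $K$. The one genuinely delicate point in this argument is the input from \cite{aslanyan-kirby}: one needs their result to deliver, for a single rational tuple $g$, a \emph{Zariski dense} set of blurred solutions on $V$ (not merely a nonempty one), since Theorem \ref{thm:unconditional} uses (EC) for $V^g$ in full strength; the transport of ``broad'', ``free'', ``no $C_j$-factors'' and ``generic over $K$'' along $\phi_g$ is routine but must be done with some care.
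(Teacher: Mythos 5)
Your reduction-to-Theorem-\ref{thm:unconditional} approach hinges on a claim that you rightly flag as delicate, and unfortunately that claim does not follow from what is available. You need a single fixed tuple $g=(g_1,\ldots,g_n)\in G^n$ for which the blurred solution set $\{(z_1,\ldots,z_n,j(g_1z_1),\ldots,j(g_nz_n))\in V : \mathbf{z}\in\mathbb{H}^n\}$ is Zariski dense in $V$, so that $V^g$ satisfies (EC). But Theorem \ref{thm:blurredj} only asserts that the \emph{union} over all $g\in G^n$ of these sets is Euclidean dense in $V\cap(\mathbb{H}^n\times\mathrm{Y}(1)^n)$; it gives no control on the Zariski closure of the solution set for any individual $g$. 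A countable union of sets, each with Zariski closure a proper subvariety of $V$, can perfectly well be Zariski (and Euclidean) dense in $V$, so no pigeonhole or Baire-type argument extracts the tuple you need, and the ``elementary argument using the density of $\mathrm{GL}_2^+(\mathbb{Q})$'' you invoke is not spelled out and does not appear to produce a single rational $g$ achieving Zariski density.

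The paper's route is genuinely different and sidesteps this obstacle: instead of fixing $g$ in advance and changing coordinates, it re-runs the proof of Theorem \ref{thm:unconditional} directly on $V$ with $\mathrm{E}_j^n$ replaced by the blurred graph, using Theorem \ref{thm:blurredj} only at the one step where a point must be found inside a prescribed nonempty Zariski open subset of $V$; the tuple $g$ comes along with that point rather than being chosen beforehand. The transcendence and Zilber--Pink machinery then applies for that particular $g$ because $\mathrm{tr.deg.}_{F}F(\mathbf{z},j(\mathbf{z}))=\mathrm{tr.deg.}_{F}F(\mathbf{z},j(g_1z_1),\ldots,j(g_nz_n))$ and modular dependence among $j(z_i)$'s is unchanged when each $z_i$ is replaced by $g_iz_i$. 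Your dictionary via $\phi_g$ and the checks that $\phi_g$ preserves broadness, freeness, genericity, and absence of $C_j$-factors are correct as far as they go, but the argument cannot close without the single-$g$ density statement that is not provided by the cited result.
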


\subsection{Summary of the proof of Theorem \ref{thm:generic}}
\label{subsec:keyingredients}
\label{subsec:keys}
In order to reduce Strong EC to EC under MSCD and MZP, there are two main technical steps.
\begin{enumerate}[(a)]
    \item MSCD gives a lower bound for transcendence degree, but this inequality is only measured over $\mathbb{Q}$, whereas strong EC requires one to measure transcendence degree over an arbitrary finitely generated field, that is, we need a modular Schanuel-inequality \emph{with parameters}. 
    This is resolved by using the results on the existence of ``convenient generators'' of \cite[\textsection 5]{aek-closureoperator}, which in turn is based upon the results of \cite{aek-differentialEC} on differential existential closedness. See \textsection\ref{subsec:transineq} for details.
    \item Under MSCD, a point in $V$ of the form $(z_{1},\ldots,z_{n},j(z_{1}),\ldots,j(z_{n}))$ which is not generic in $V$ will produce what is known as an \emph{atypical intersection}. 
    MZP speaks precisely about such atypical components, giving us sufficient control over their behaviour. 
\end{enumerate}

\subsection{Strong EC for \texorpdfstring{$\exp$}{exp}}
The motivation for the EC and strong EC problems for $j$ originates in the study of analogous problems stated for the complex exponential function $\exp:\mathbb{C}\to\mathbb{C}^\times$, which where first considered in Zilber's work on pseudo-exponentiation \cite{zilberexp}, with further details and results in \cite{kirby-zilber} and \cite{bays-kirby}. 
Zilber's work gives a model-theoretic approach to the study of the algebraic properties of the complex exponential function, and his ideas have since been expanded to many other settings.

With this in mind, the motivation for Theorem \ref{thm:generic} is not simply a restating of Zilber's conjectures on $\exp$ for the case of $j$, but our aim is also to give a general strategy for reducing strong EC problems to EC problems, \emph{even though} there is no nice model-theoretic ``pseudo-$j$'' structure like pseudo-exponentiation (yet). 
The methods presented here are expected to work in more general situations, such as in the case of Shimura varieties for which some cases of (EC) have been shown in \cite{eterovic-zhao}. 

The original name of the EC problem in the case of $\exp$ was \emph{Exponential Algebraic Closedness} (EAC). 
Instead of requiring the variety $V$ to be broad and free, EAC requires the varieties  $V\subseteq\mathbb{C}^{n}\times\left(\mathbb{C}^{\times}\right)^{n}$ to be \emph{rotund}, and additively and multiplicatively \emph{free}, see \cite[\textsection 3.7]{kirby-zilber} for definitions. 
The original name for the Zilber--Pink conjecture in the context of the exponential function was the \emph{conjecture on the intersection with tori} (CIT). 

The result below is a direct analogue of Theorem \ref{thm:generic} for $\exp$, and we reference \cite[Theorem 1.5]{kirby-zilber} as a source.
We point out however that the original formulation of \cite[Theorem 1.5]{kirby-zilber} does not require the variety $V$ to be additively and multiplicatively free, although these conditions are necessary for the theorem to hold. 
Without them, we cannot even expect the variety to intersect the graph of $\exp$. 
For example, if we define a variety $V\subset \mathbb{C}^2\times\left(\mathbb{C}^\times\right)^2$ by the following two equations: $X_1 - X_2 = 0$ on $\mathbb{C}^2$ (which prevents $V$ from being additively free) and $Y_1 - Y_2 = 1$ on $\left(\mathbb{C}^\times\right)^2$, then $V$ cannot have point in the graph of any function. 
Not only that, if we slightly modify $V$ to be defined by the equations $X_1 - X_2 = 0$ and $X_1=Y_1$, it can be checked that $V$ has an infinite intersection with the graph of $\exp$ (one for every fixed point of $\exp$), but every point in this intersection has transcendence degree at most 1, so they are not generic in $V$. 
\begin{thm}[{{\cite[Theorem 1.5]{kirby-zilber}}}]
\label{thm:expgeneric}
Let $K\subseteq\mathbb{C}$ be a finitely generated field, and let $V\subseteq\mathbb{C}^{n}\times\left(\mathbb{C}^{\times}\right)^{n}$ be an algebraic variety which is rotund, additively free, multiplicatively free, defined over $K$, and satisfying (EAC). 
Then Schanuel's conjecture and CIT imply that $V$ has a point of the form $(\mathbf{z},\exp(\mathbf{z}))$ which is generic over $K$.
\end{thm}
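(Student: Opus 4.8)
This is the exponential prototype of Theorem \ref{thm:generic}, and I would prove it along the same three-step pattern, with Schanuel's conjecture in place of MSCD and CIT in place of MZP. The strategy is to argue by contradiction. By (EAC), the set $\Sigma$ of points of $V$ of the form $(\mathbf{z},\exp(\mathbf{z}))$ is Zariski dense in $V$; suppose no element of $\Sigma$ is generic over $K$. Since $K$ is finitely generated, hence countable, $V$ has only countably many proper $K$-subvarieties, so some proper $K$-irreducible $W\subsetneq V$ contains a subset of $\Sigma$ that is Zariski dense in $W$, and by the same argument $W$ contains a point $p=(\mathbf{z},\exp(\mathbf{z}))$ generic in $W$ over $K$, so that $\operatorname{td}_{K}(\mathbf{z},\exp(\mathbf{z}))=\dim W<\dim V$. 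The task is to show that Schanuel's conjecture and CIT make the existence of such a $W$ impossible; I would argue by induction on $n$.

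The first step is the passage from transcendence degree over $\mathbb{Q}$, which is all that Schanuel's conjecture bounds, to transcendence degree over the finitely generated field $K$. Here I would fix a $\mathbb{Q}$-linearly independent tuple $\mathbf{c}$ together with a choice of $\exp(\mathbf{c})$ such that $K\subseteq\mathbb{Q}(\mathbf{c},\exp(\mathbf{c}))$, chosen so that $\mathbf{c}$ is ``convenient'': in the appropriate sense independent of the point $p$, so that Schanuel's conjecture applied to the combined tuple $(\mathbf{c},\mathbf{z})$ decomposes into the contribution attached to $\mathbf{c}$ --- which is then determined by $K$ --- plus exactly the lower bound for $\operatorname{td}_{K}(\mathbf{z},\exp(\mathbf{z}))$ that strong EC for $\exp$ demands. (In the $j$-setting this is the role of the convenient generators of \cite{aek-closureoperator}.) From this point on I may reason as if transcendence degree is measured over $\mathbb{Q}$.

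The heart of the argument uses CIT. If the coordinates of $\mathbf{z}$ fail to be $\mathbb{Q}$-linearly independent over the $\mathbb{Q}$-span of $\mathbf{c}$, then $p$ lies in a coset of a proper subtorus of $\left(\mathbb{C}^{\times}\right)^{n}$; CIT, together with a dimension count and the multiplicative freeness of $V$, shows that the locus of such points in $V$ is not Zariski dense, so either all $\exp$-points of $W$ lie in it --- in which case, passing to the quotient torus, one obtains a smaller instance and concludes by the inductive hypothesis --- or $p$ may be taken so that $(\mathbf{c},\mathbf{z})$ is $\mathbb{Q}$-linearly independent, so Schanuel applies cleanly. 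Now rotundity of $V$ is precisely the condition ensuring that the transcendence degree Schanuel predicts for a point of $V$ of the form $(\mathbf{z},\exp(\mathbf{z}))$ in general position is $\dim V$; hence $\operatorname{td}_{K}(\mathbf{z},\exp(\mathbf{z}))=\dim W<\dim V$ exhibits $p$ as lying on a component of the intersection of $V$ with a coset of a subtorus whose dimension exceeds the expected one --- that is, an atypical intersection of the kind CIT governs. By CIT the union of all such atypical components is contained in a proper subvariety of $V$, hence is not Zariski dense; but by hypothesis it contains all of $\Sigma$, which is Zariski dense in $V$ --- a contradiction. Therefore $V$ has a point of the required form generic over $K$.

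I expect the main obstacle to be the rigorous implementation of the CIT step: identifying the correct family of subtori and their cosets, making precise the implication ``deficiency in transcendence degree $\Rightarrow$ atypical intersection'' (which is where the exact form of Schanuel's conjecture and of the rotundity and additive/multiplicative freeness hypotheses are used), and verifying that the resulting atypical locus is a \emph{proper} subvariety of $V$. By comparison, the bookkeeping around the convenient choice of $\mathbf{c}$ --- arranging that $\mathbf{c}$ and $\mathbf{z}$ are independent enough for the contribution of $\mathbf{c}$ to the Schanuel inequality to subtract cleanly --- should be routine.
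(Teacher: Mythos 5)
This statement is quoted in the paper as \cite[Theorem 1.5]{kirby-zilber} and is not itself proved there; the paper's Theorem \ref{thm:generic} (the modular analogue) is what carries the corresponding argument, so that is what your sketch must be measured against.

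Your general plan --- convenient generators to pass from $\mathbb{Q}$ to $K$, then the Zilber--Pink-type conjecture to control atypical intersections --- is the right one, and it is the plan the paper follows. But the sketch omits the one device that makes the plan actually run, and you flag this yourself when you say the CIT step is where the difficulty lies. The missing device is the auxiliary variety in the enlarged space. In the proof of Theorem \ref{thm:generic}, after the convenient generators $\mathbf{t}$ are fixed, the argument does not ``reason as if over $\mathbb{Q}$'' informally. It passes to the variety $W\subseteq\mathbb{C}^{n+m}\times\mathrm{Y}(1)^{n+m}$ defined as the $\overline{\mathbb{Q}}$-Zariski closure of $(\mathbf{x},\mathbf{t},\mathbf{y},j(\mathbf{t}))$ for a point $(\mathbf{x},\mathbf{y})\in V$ generic over $\mathbb{Q}(\mathbf{t},j(\mathbf{t}))$. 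All dimension counts and the uniform MZP applications (Corollary \ref{cor:horizontalUMZP}, Proposition \ref{prop:horizontalboundedcomplexity}) are performed on the parametric family of intersections of $W$ with M\"obius subvarieties of $\mathbb{C}^{n+m}$. That $W$ is $\overline{\mathbb{Q}}$-definable is exactly what lets one compare its dimension with transcendence degree over $\mathbb{Q}$ via MSCD, and what makes the component $X$ in the eventual contradiction also $\overline{\mathbb{Q}}$-definable, so that the dimension inequality genuinely exhibits an atypical intersection in the sense MZP (here CIT) controls. In your sketch CIT is applied to $V$, or to a $K$-subvariety of it, which is defined only over $K$; Schanuel bounds transcendence degree over $\mathbb{Q}$, but the variety you want to compare against is not $\overline{\mathbb{Q}}$-definable, so no atypical intersection materialises. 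The whole point of the convenient-generator construction is to dissolve this mismatch by moving to $\mathbb{C}^{n+m}\times\left(\mathbb{C}^{\times}\right)^{n+m}$, and this step has to be made explicit.

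Two smaller mismatches. The paper first reduces to $\dim V=n$ via Proposition \ref{prop:secindimn}, which you omit and will need, since the expected-dimension bookkeeping against cosets of subtori is calibrated to that case. And the top-level logic is not an argument by contradiction: the paper extracts a finite set $\mathscr{S}$ of special subvarieties from uniform MZP, then uses (EC) and freeness to choose a point of $V$ in the graph avoiding $\mathscr{S}$, and finally verifies that this particular point is generic, the contradiction sitting inside that verification. Your proposed induction on $n$ with passage to quotient tori is a genuinely different and more delicate route, which the uniform form of Zilber--Pink makes unnecessary. Your opening countability remark is correct but overcomplicated: if $p$ is any point of the graph in $V$ which is not generic over $K$, its $K$-Zariski closure is already a proper $K$-subvariety in which $p$ is generic by definition --- but, as above, that subvariety is not the object to which the Zilber--Pink bound should be applied.
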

Since our methods for proving Theorem \ref{thm:generic} are expected to easily generalise to give a proof of Theorem \ref{thm:expgeneric}, and many aspects of the such a proof can already be found in \cite{bays-kirby} and \cite{kirby-zilber}, we will not present a proof of this result here. 
We remark that Theorem \ref{thm:expgeneric} only considers complex algebraic varieties, not a general model of the first-order theory of pseudo-exponentiation, and as such it falls short of the full ambition of \cite[Theorem 1.5]{kirby-zilber}.
 
On the other hand, we expect that by imposing conditions on the base field of $V$ (analogous to our notion of ``having no $C_{j}$-factors'') one can proceed like in the proof of Theorem \ref{thm:unconditional} to remove the dependence on Schanuel's conjecture and CIT from Theorem \ref{thm:expgeneric}. 
Although not phrased in this way, many aspects of such a result can be recovered from \cite[Proposition 11.5]{bays-kirby}, with some extra details provided by \cite[Theorem 5.6]{aek-closureoperator}.

Naturally, given the similarities between the results for $\exp$ and $j$ one should ask: when can one expect to obtain analogous results with other functions? 
The key ingredient in obtaining the existence of the ``convenient generators'' mentioned in \S\ref{subsec:keyingredients} is the Ax--Schanuel theorem. 
As we will see, this theorem is also essential for obtaining uniform weak forms of Zilber--Pink, which are then used in proving Theorem \ref{thm:unconditional}. 
We therefore expect that the methods used here can be extended to other situations where one has such a result. 
Ax--Schanuel theorems have been been obtained in many situations: the exponential map of a (semi-) abelian variety in \cite{ax2} and \cite{kirby-semiab}, uniformizers of any Fuchsian group of the first kind in \cite{bcfn} (and more), the uniformisation map of a Shimura variety in \cite{mpt}, variations of mixed Hodge structures in \cite{gao-klingler,chiu-mixedaxschanuelwithder}, among many others.

\subsection{Structure of the paper}
\begin{enumerate}
    \item[\textsection 2:] We set up the basic notation and give some background on the algebraic aspects of the modular $j$ function.
    \item[\textsection 3:] We go over the technical details of broadness, freeness, the (EC) condition, and the uniform and weak versions of Zilber--Pink.
    \item[\textsection 4:] We first lay down the necessary groundwork for handling transcendence inequalities over different fields (using the convenient generators alluded to earlier), we then deal with the possible presence of special solutions (using uniform Andr\'e--Oort), and we prove Theorem \ref{thm:generic}. 
    After that, we give a few cases of Theorem \ref{thm:generic} were we can remove the dependence on MZP, without imposing conditions on the field of definition of $V$. 
    We also prove Theorem \ref{thm:genericblurred} in \S\ref{subsec:blurring}, which is a version of Theorem \ref{thm:generic} for a blurring of the $j$ function.
    \item[\textsection 5:] We prove Theorem \ref{thm:unconditional}. 
    This proof, combined with the technicalities explained in the proof of Theorem \ref{thm:genericblurred}, produce Theorem \ref{thm:blurred} in \S\ref{subsec:thmblurred}.
    \item[\textsection 6:] We give an analogue of Theorem \ref{thm:unconditional} including the derivatives of $j$.
\end{enumerate}

\section{Background}
\label{sec:background}

\subsection{Basic notation}
\begin{itemize}
    \item If $L$ is any subfield of $\mathbb{C}$, then $\overline{L}$ denotes the algebraic closure of $L$ in $\mathbb{C}$. 
    \item Given sets $A,B$ we define $A\setminus B:=\{a\in A:a\not \in B\}$. 
    \item Tuples of elements will be denoted with boldface letters; that is, if $x_{1},\ldots,x_{m}$ are elements of a set $X$, then we write $\mathbf{x}:=(x_{1},\ldots,x_{m})$ for the ordered tuple. We will also sometimes use $\mathbf{x}$ to denote the (unordered) set $\left\{x_{1},\ldots,x_{m}\right\}$, which should not lead to confusion. 
    \item Suppose $X$ is a non-empty subset of $\mathbb{C}^{m}$. If $f$ denotes a function defined on $X$ and $\mathbf{x}$ is an element of $X$, then we write $f(\mathbf{x})$ to mean $(f(x_{1}),\ldots,f(x_{m}))$. 
    \item Given a subfield $L$ of $\mathbb{C}$ and a subset $A\subseteq\mathbb{C}^{n}$, we say that a subset $Z\subseteq\mathbb{C}^{n}$ is the \emph{$L$-Zariski closure of $A$} if $Z$ is the smallest Zariski closed set containing $A$ that is defined over $L$. 
    \item The term \emph{algebraic variety} for us will just mean a Zariski closed set, not necessarily irreducible. We also identify complex algebraic varieties $V$ with the set of their $\mathbb{C}$-points $V(\mathbb{C})$. 
\end{itemize}

\subsection{The \texorpdfstring{$j$}{j}-function}
We denote by $\mathbb{H}$ the complex upper-half plane $\{z\in \mathbb{C}:\mathrm{Im}(z)>0\}$. 
The group $\mathrm{GL}_2^+(\mathbb{R})$ of $2$ by $2$ matrices with coefficients in $\mathbb{R}$ and positive determinant, acts on $\mathbb{H}$ via the formula 
\[gz:=\frac{az+b}{cz+d} \ \text{ for } \ g=\left(\begin{array}{cc}a & b\\ c & d\end{array}\right) .\]
This action can be extended to a continuous action of $\mathrm{GL}_2^+(\mathbb{R})$ on the Riemann sphere $\widehat{\mathbb{C}}:=\mathbb{C}\cup \{\infty\}$. 
Given a subring $R$ of $ \mathbb{R}$ we define $M_2^+(R)$ as the set of $2$ by $2$ matrices with  coefficients in $R$ and positive determinant. 
We put 
\[G:=\mathrm{GL}_2^+(\mathbb{Q})= M_2^+(\mathbb{Q}),\]
which is a subgroup of $\mathrm{GL}_2^+(\mathbb{R})$. 
The modular group is defined as 
\[\Gamma:=\mathrm{SL}_2(\mathbb{Z})=\{g\in M_2^+(\mathbb{Z}):\det(g)=1\}.\]
The modular $j$ function is defined as the unique holomorphic function $j:\mathbb{H}\to \mathbb{C}$ that satisfies  
\[j(gz)=j(z) \text{ for every }g \text{ in } \Gamma \text{ and every }z\text{ in }\mathbb{H},\]
and has a Fourier expansion of the form 
\begin{equation}\label{eq:j-fourier-expansion}
j(z)=q^{-1}+744+\sum_{k=1}^{\infty}a_kq^k \text{ with }q:=\exp(2\pi i z)  \text{ and }a_k\in \mathbb{C}.
\end{equation} 
This function allows us to identify $\Gamma \backslash \mathbb{H}\simeq \mathbb{C}$. 
The quotient space $Y(1):=\Gamma\backslash \mathbb{H}$ is known to be a 
(coarse) moduli space for one-dimensional complex tori, or equivalently, elliptic curves over $\mathbb{C}$. 
If $\Gamma z$ is a point in $Y(1)$ and $E_z$ denotes an elliptic curve in the corresponding isomorphism class, then $j(z)$ is simply the $j$-invariant of the curve $E_z$. 

It is well-known (\cite{mahler}) that $j$ satisfies the following algebraic differential equation (and none of lower order):
\begin{equation}
    \label{eq:j}
    0 = \frac{j'''}{j'} - \frac{3}{2}\left(\frac{j''}{j'}\right)^{2} + \frac{j^{2} -1968j + 2654208}{j^{2}(j-1728)^{2}}\left(j'\right)^{2}.
\end{equation}

\subsection{Modular polynomials}
\label{sec:modular_polynomials} 
Let $\left\{\Phi_{N}(X,Y)\right\}_{N=1}^{\infty}\subseteq\mathbb{Z}[X,Y]$ denote the family of \emph{modular polynomials} associated with $j$ (see \cite[Chapter 5, Section 2]{lang} for the definition and main properties of this family). 
We recall that $\Phi_{N}(X,Y)$ is irreducible in $\mathbb{C}[X,Y]$, $\Phi_{1}(X,Y) = X-Y$, and for $N\geq 2$, $\Phi_{N}(X,Y)$ is symmetric of total degree $\geq 2N$. 
Also, the action of $G$ on $\mathbb{H}$ can be traced by using modular polynomials in the following way: for every $g$ in $G$ we define $\widetilde{g}$ as the unique matrix of the form $rg$ with $r\in \mathbb{Q}$ and $r>0$, so that the entries of $\widetilde{g}$ are all integers and relatively prime. 
Then, for every $x$ and $y$ in $\mathbb{H}$ the following statements are equivalent: 
\begin{itemize}
    \item[(M1):] $\Phi_{N}(j(x),j(y)) = 0$;
    \item[(M2):] There exists $g$ in $G$ with $gx=y$ and $\det\left(\widetilde{g}\right) = N$.
\end{itemize}
\begin{defn}
A finite set $A\subset \mathbb{C}$ is said to be \emph{modularly independent} if for every pair of distinct numbers $a,b$ in $A$ and every positive integer $N$, we have that $\Phi_{N}(a,b)\neq 0$. 
Otherwise, we say that $A$ is \emph{modularly dependent}.

An element $w$ is said to be \emph{modularly dependent over $A$} if there is $a\in A$ such that the set $\{a,w\}$ is modularly dependent.
\end{defn}

\begin{defn}
Given a subset $A\subseteq\mathbb{C}$, the \emph{Hecke orbit} of $A$ is defined as
$$\mathrm{He}(A):=\left\{z\in\mathbb{C} : \exists a\in A\exists N\in \mathbb{N}(\Phi_N(z,a) = 0)\right\}.$$
\end{defn}

\begin{remark}
\label{rem:heckeorbit}
Let $d$ be a positive integer. 
As explained in \cite[\textsection 7.3]{paper1}, combining isogeny estimates of Masser--W\"ustholz and Pellarin, with gonality estimates for modular curves, we obtain that for every $z\in\mathbb{C}$ we have that the set 
\[\mathrm{He}_{d}(z):=\{w\in\mathrm{He}(z) : [\mathbb{Q}(z,w):\mathbb{Q}(z)]\leq d\}\]
is finite. 
This immediately gives that if $A\subset\mathbb{C}$ is a finite set, then
\[\mathrm{He}_{d}(A):=\{w\in\mathrm{He}(A) : [\mathbb{Q}(A,w):\mathbb{Q}(A)]\leq d\}\]
is also finite.
\end{remark}

\subsection{Special points}
\label{subsec:specialpts}
A point $w$ in $\mathbb{C}$ is said to be \emph{special} (also known as a \emph{singular modulus}) if there is $z$ in $\mathbb{H}$ such that $[\mathbb{Q}(z):\mathbb{Q}] = 2$ and $j(z)=w$. 
Under the moduli interpretation of $j$, special points are those that correspond to elliptic curves endowed with complex multiplication. 
We set  
\[\Sigma:=\left\{z\in\mathbb{H} : [\mathbb{Q}(z):\mathbb{Q}] = 2\right\}.\]
A theorem of Schneider \cite{schneider} says that $\mathrm{tr.deg.}_{\mathbb{Q}}\mathbb{Q}(z,j(z)) = 0$ if and only if $z$ is in $\Sigma$. 
We say that a point $\mathbf{w}$ in $\mathbb{C}^{n}$ is \emph{special} if every coordinate of $\mathbf{w}$ is special.

\subsection{The modular Schanuel conjecture}
\label{subsec:msc}
Given a subset  $A$ of $\mathbb{H}$, we define $\dim_{G}(A)$ as the number of distinct $G$-orbits in  
\[G\cdot A=\{g a:g\in G,a\in A\}\]
(which may be infinite). 
Equivalently, $\dim_G(A)$ is the cardinality of the quotient set $G\backslash (G\cdot A)$. 
Given another subset $C$ of $\mathbb{C}$, we define $\dim_{G}(A|C)$ as the number of distinct $G$-orbits in $(G\cdot A)\setminus (G\cdot C)$. 
In plain words, $\dim_{G}(A|C)$ counts the number of orbits generated by elements of $A$ that do not contain elements of $C$. 

We now present two modular versions of Schanuel's conjecture. 
Both follow from the Grothendieck--Andr\'e generalised period conjecture applied to powers of the modular curve (see \cite[\textsection 6.3]{aek-closureoperator} and references therein). 

\begin{conj}[Modular Schanuel conjecture (MSC)]
\label{conj:msc}
For every $z_{1},\ldots,z_{n}$ in $\mathbb{H}$:
\[\mathrm{tr.deg.}_{\mathbb{Q}}\mathbb{Q}\left(\mathbf{z},j(\mathbf{z})\right)\geq \dim_{G}(\mathbf{z}|\Sigma).\]    
\end{conj}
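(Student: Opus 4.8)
The displayed statement is a conjecture, so by a ``proof'' I mean the conditional derivation from the Grothendieck--Andr\'e generalised period conjecture (GPC) indicated in the references cited just before it. The plan is to reinterpret the left-hand side as (essentially) the transcendence degree of a period field attached to the $n$-fold power of the modular curve, and the right-hand side as the dimension of the corresponding motivic Galois (Mumford--Tate) group, and then to invoke GPC to equate the two.

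First I would fix $z_{1},\ldots,z_{n}\in\mathbb{H}$ and attach to each $z_{i}$ the elliptic curve $E_{z_{i}}=\mathbb{C}/(\mathbb{Z}+\mathbb{Z}z_{i})$, whose $j$-invariant is $j(z_{i})$; the datum $(z_{i},j(z_{i}))$ records, up to algebraic data, a point of $Y(1)$ together with a period ratio on the fibre above it. Thus $\mathbb{Q}(\mathbf{z},j(\mathbf{z}))$ is, up to algebraic extension, a period field in the sense of GPC for the variation of Hodge structure on $Y(1)^{n}$ attached to the tuple, and GPC predicts
$$\mathrm{tr.deg.}_{\mathbb{Q}}\mathbb{Q}(\mathbf{z},j(\mathbf{z})) = \dim\mathrm{MT},$$
where $\mathrm{MT}$ is the Mumford--Tate group of the point; the remaining, purely geometric, task is the identification $\dim\mathrm{MT}=\dim_{G}(\mathbf{z}\mid\Sigma)$.

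For that identification the key steps are: (i) by the equivalence (M1)$\Leftrightarrow$(M2), two indices $i,i'$ lie in the same $G$-orbit exactly when $E_{z_{i}}$ and $E_{z_{i'}}$ are isogenous, and isogenous curves have the same period field up to $\overline{\mathbb{Q}}$-coefficients, so each $G$-orbit contributes at most one new unit of transcendence; (ii) when $z_{i}\in\Sigma$, i.e.\ $E_{z_{i}}$ has complex multiplication, Schneider's theorem (the CM case, which is unconditional) gives $\mathrm{tr.deg.}_{\mathbb{Q}}\mathbb{Q}(z_{i},j(z_{i}))=0$, so $G$-orbits meeting $\Sigma$ contribute nothing, which is exactly why one deletes $G\cdot\Sigma$ on the right; (iii) for a tuple of pairwise non-isogenous, non-CM elliptic curves the Mumford--Tate group is as large as possible, so the remaining orbits each contribute exactly one unit, giving the stated count. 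Steps (i)--(ii) translate the geometry of $Y(1)^{n}$ (isogeny loci and special points) into the language of submotives; (iii) is the standard ``big monodromy'' computation for products of generic elliptic curves.

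The principal obstacle is of course GPC itself, which is wide open; even granting it, the delicate point is to make the dictionary in (i)--(iii) airtight, i.e.\ to be sure that no ``hidden'' algebraic relation among the $z_{i}$ and $j(z_{i})$ escapes the bookkeeping by $G$-orbits and $\Sigma$ --- this is precisely where one uses that the special subvarieties of $Y(1)^{n}$ are exactly those cut out by modular polynomials and CM conditions. For the bare inequality over $\mathbb{Q}$ stated here this dictionary is classical (as in the cited sources); the genuine difficulty exploited in the rest of the paper is that strong EC forces transcendence-degree bounds over an arbitrary finitely generated base field, where no such clean period-theoretic statement is available and one must route through the differential Ax--Schanuel theorem instead.
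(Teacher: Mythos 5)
The paper does not prove MSC — it is stated as a conjecture and the paper merely cites \cite[\textsection 6.3]{aek-closureoperator} for its derivation from the Grothendieck--Andr\'e period conjecture — so the relevant question is whether your conditional derivation from GPC is sound. The outline is in the right spirit, but the central identification you assert is not correct, and the error reflects a conflation that your argument would have to untangle to be made airtight.

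Specifically, the equation $\mathrm{tr.deg.}_{\mathbb{Q}}\mathbb{Q}(\mathbf{z},j(\mathbf{z})) = \dim\mathrm{MT}$ followed by the ``purely geometric task'' $\dim\mathrm{MT}=\dim_{G}(\mathbf{z}\mid\Sigma)$ cannot both hold. For a tuple of $k$ pairwise non-isogenous non-CM elliptic curves the Mumford--Tate group of $\bigoplus_i h^1(E_{z_i})$ has dimension on the order of $3k+1$ (a product of copies of $\mathrm{GL}_2$ glued along the determinant), not $k$. What GPC actually controls is the \emph{full} period field — the field generated by the $j$-invariants together with the periods and quasi-periods $\omega_{1,i},\omega_{2,i},\eta_{1,i},\eta_{2,i}$ and $2\pi i$ — and $\mathbb{Q}(\mathbf{z},j(\mathbf{z}))$ is only a small subfield of it, roughly corresponding to the period ratios $z_i=\omega_{2,i}/\omega_{1,i}$ and the base field. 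The natural statement one extracts from GPC is therefore MSCD (Conjecture~\ref{conj:mscd}), with its factor of $3$; the ``extra'' coordinates carrying the missing transcendence are exactly $j'(\mathbf{z})$ and $j''(\mathbf{z})$, which encode the quasi-periods. MSC is then obtained from MSCD, not directly from a dimension count on $\mathbb{Q}(\mathbf{z},j(\mathbf{z}))$ itself.

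So the gap is: you treat $\mathbb{Q}(\mathbf{z},j(\mathbf{z}))$ as a bona fide period field and match it to the full motivic Galois group, when in fact the correct matching is between the full period field (including quasi-periods) and the motivic Galois group, and MSC is the residual inequality that survives after accounting for the variables you drop. Your points (i) and (ii) — isogenies give $G$-orbits and collapse transcendence, CM points contribute nothing by Schneider — are correct and are indeed used in the genuine derivation; the missing piece is (iii), where the dimension count is off by a factor reflecting the quasi-period coordinates, and where the passage from the full period field down to $\mathbb{Q}(\mathbf{z},j(\mathbf{z}))$ needs to be justified explicitly rather than assumed.
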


\begin{conj}[Modular Schanuel conjecture with derivatives (MSCD)]
\label{conj:mscd}
For every $z_{1},\ldots,z_{n}$ in $\mathbb{H}$:
\[\mathrm{tr.deg.}_{\mathbb{Q}}\mathbb{Q}\left(\mathbf{z},j(\mathbf{z}),j'(\mathbf{z}),j''(\mathbf{z})\right)\geq 3\dim_{G}(\mathbf{z}|\Sigma).\]    
\end{conj}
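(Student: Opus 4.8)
Strictly speaking MSCD is an open conjecture, so the best one can offer is a \emph{conditional} argument, deducing it from a recognised transcendence principle. The route I would take is the one indicated above: deduce MSCD from the Grothendieck--Andr\'e generalised period conjecture (GPC) for the $\overline{\mathbb{Q}}$-variety $Y(1)^{n}$ equipped with its universal family of elliptic curves, evaluated at the $\mathbb{C}$-point determined by $(z_{1},\ldots,z_{n})$. Recall that GPC predicts that the transcendence degree over $\mathbb{Q}$ of the field generated by the ``periods'' attached to a $\mathbb{C}$-point $p$ of a smooth $\overline{\mathbb{Q}}$-variety is at least the dimension of the motivic Galois (Mumford--Tate) group of the fibre at $p$.

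The plan splits into three steps. First I would make two reductions aligning the two sides of the inequality. (i) Replacing a $z_{i}$ by a $G$-translate $gz_{i}$ changes $z_{i}$ by a M\"obius map with rational coefficients, and changes $j(z_{i}),j'(z_{i}),j''(z_{i})$ only by $\overline{\mathbb{Q}}$-algebraic functions of the old values, via the modular polynomial $\Phi_{N}$ with $N=\det(\widetilde{g})$ and its derivatives; hence passing to such a translate affects neither side, and we may assume the $z_{i}$ lie in pairwise distinct $G$-orbits. (ii) If $z_{i}\in\Sigma$, then deleting $z_{i}$ passes to a subfield of $\mathbb{Q}(\mathbf{z},j(\mathbf{z}),j'(\mathbf{z}),j''(\mathbf{z}))$ and leaves $\dim_{G}(\mathbf{z}|\Sigma)$ unchanged, since the orbit $G\cdot z_{i}$ already lies inside $G\cdot\Sigma$; so by induction we may assume no $z_{i}$ is special. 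As a non-special $z_{i}$ has $G\cdot z_{i}$ disjoint from $G\cdot\Sigma$, we now have $\dim_{G}(\mathbf{z}|\Sigma)=n$, and it remains to prove $\mathrm{tr.deg.}_{\mathbb{Q}}\mathbb{Q}(\mathbf{z},j(\mathbf{z}),j'(\mathbf{z}),j''(\mathbf{z}))\geq 3n$. Second, I would translate the left-hand side into period data: using the classical formulas expressing $j,j',j''$ through the quasi-modular Eisenstein series $E_{2},E_{4},E_{6}$, together with the relations tying $E_{4},E_{6}$ to the Weierstrass coefficients $g_{2},g_{3}$ of a model of $E_{z_{i}}$ and $E_{2}$ to a quasi-period, one checks that this field has, over $\overline{\mathbb{Q}}$, the same transcendence degree as the field generated by a period, a quasi-period, and the period ratio $z_{i}$ of $E_{z_{i}}$, for $i=1,\ldots,n$. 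Third, I would identify the relevant group: since the $E_{z_{i}}$ are non-CM and pairwise non-isogenous, the motivic Galois group of the fibre is, modulo the Tate motive $\mathbb{Q}(1)$, the product of $n$ copies of $\mathrm{PGL}_{2}$, of dimension $3n$, and GPC then yields the bound. The constant $3$ per orbit --- rather than $4$, the number of entries of the period matrix of an elliptic curve --- is exactly the passage from $\mathrm{GL}_{2}$ to $\mathrm{PGL}_{2}$: $2\pi i$ need not enter the field generated by $z_{i},j(z_{i}),j'(z_{i}),j''(z_{i})$.

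The principal obstacle is that GPC is itself entirely open --- it subsumes Schanuel's conjecture and the Grothendieck period conjecture, among much else --- so this route gives no unconditional statement, only a reduction of MSCD to a more standard conjecture. Even granting GPC, two points demand real care: the transcendence bookkeeping in the translation step, where one must track exactly which periods, quasi-periods and occurrences of $2\pi i$ lie in the $(j,j',j'')$-field; and the determination of the motivic Galois group of the fibre, which requires control of the isogeny and CM relations among the $E_{z_{i}}$. Unconditionally only fragments are available: the case $n=1$ with $z\in\Sigma$ is Schneider's theorem, the CM contributions to the derivatives are governed by Chudnovsky's theorems, and the functional (differential) analogue of MSCD is the Ax--Schanuel theorem for $j$ and its derivatives (Pila--Tsimerman), which indeed implies weak forms of MSCD and is what powers the unconditional results obtained later in this paper.
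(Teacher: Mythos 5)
This statement is a conjecture (MSCD), not a theorem, and the paper offers no proof --- it only remarks, immediately before Conjecture~\ref{conj:msc}, that MSC and MSCD follow from the Grothendieck--Andr\'e generalised period conjecture applied to powers of the modular curve, citing \cite[\textsection 6.3]{aek-closureoperator} for details. Your proposal correctly recognises this, and the conditional derivation from GPC you sketch (reduction to distinct non-special $G$-orbits, translation of the $(j,j',j'')$-field into period/quasi-period data, and identification of the relevant motivic Galois group as a product of $\mathrm{PGL}_{2}$'s of dimension $3n$) is the same approach the paper points to; there is nothing further to compare.
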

Clearly MSC follows from MSCD. 
Also, in view of the differential equation \ref{eq:j}, MSCD only needs to involve up to the second derivative of $j$.
We also point out that MSCD is somewhat incomplete as it does not say anything about the transcendence degree of $\mathbb{Q}(z,j(z),j'(z),j''(z))$ over $\mathbb{Q}$ when $z\in\Sigma$. 
In fact, a stronger conjecture is implied by the generalised period conjecture (see \cite[Conjecture 6.13]{aek-closureoperator}) which does account for points in $\Sigma$. 
However, in the proofs of our main results we will show that we can move away from points that have coordinates in $\Sigma$, and for this reason, MSCD will suffice for us. 

\begin{remark}
\label{remark:geometricmsc}
Using the equivalence between (M1) and (M2) in \textsection\ref{sec:modular_polynomials}, it is easy to see that MSC can be restated equivalently using a more geometric language (see also \cite[\textsection 15]{pila2}): For every algebraic variety $V\subseteq\mathbb{C}^{2n}$ defined over $\overline{\mathbb{Q}}$ for which there exists $(\mathbf{z},j(\mathbf{z}))\in V$, if $\dim V<n$ then $\dim_{G}(\mathbf{z}|\Sigma) < n$. 

This way of thinking about MSC leads one naturally to a counterpart problem for MSC, that is, the question of determining which subvarieties $V\subseteq\mathbb{C}^{2n}$ intersect the graph of the $j$-function. 
This gives rise to the existential closedness problem, which we discuss next.
\end{remark}

\section{Existential Closedness and the Zilber--Pink Conjecture}
\label{sec:zilber-pink}

In this section we define the notions of \emph{broad} and \emph{free}, we define the existential closedness conjecture for the $j$ function, we recall the modular Zilber--Pink conjecture, and we give some partial results. 

To do all this, we need to distinguish a family of subvarieties that will be called \emph{special}. 
For this, it will be important to differentiate the first $n$ coordinates of $\mathbb{C}^{2n}$ from the last $n$ coordinates because we want to think of $\mathbb{C}^{2n}$ as a product $\mathbb{C}^{n}\times \mathbb{C}^{n}$ where the left factor has special subvarieties coming from the action of $G$, whereas on the second factor the special subvarieties are defined by modular polynomials. 
So as to avoid confusion, we prefer to change the name of the second factor to $\mathrm{Y}(1)^{n}$, as in this case we are thinking of $\mathbb{C}$ as the modular curve $\mathrm{Y}(1)$. 
This way, we view $\mathbb{C}^{2n}$ as $\mathbb{C}^{n}\times \mathrm{Y}(1)^{n}$. This is purely done for purposes of notation.

We will always think of the subvarieties of $\mathbb{C}^{n}\times \mathrm{Y}(1)^{n}$ as being defined over the ring of polynomials $\mathbb{C}[X_{1},\ldots,X_{n},Y_{1},\ldots,Y_{n}]$. 

We will denote by $\pi_{\mathbb{C}}:\mathbb{C}^{n}\times \mathrm{Y}(1)^{n}\to\mathbb{C}^{n}$ and $\pi_{\mathrm{Y}}:\mathbb{C}^{n}\times \mathrm{Y}(1)^{n}\to\mathrm{Y}(1)^{n}$ the corresponding projections.

\subsection{Broad and free varieties}
\label{subsec:broadfree}
Given a matrix $g=\left(\begin{array}{cc}
    a & b \\
    c & d
\end{array}\right)$ in $\mathrm{GL}_2(\mathbb{C})$, we define the polynomial $M_g(X,Y):=Y(cX+d)-(aX+b)$ and the rational function $gX:=\frac{aX+b}{cX+d}$. 
Note that $M_g(X,gX)=0$.

\begin{defn}
A \emph{M\"obius subvariety} of $\mathbb{C}^n$ is a variety defined by finitely many equations of the form $M_{g_{i,k}}(X_i,X_k)$ with $g_{i,k}$ in $\mathrm{GL}_2(\mathbb{C})$ non-scalar and $i,k$ in $\{1,\ldots,n\}$ not necessarily different.
\end{defn}

\begin{ex}\footnote{We thanks Sebasti\'an Herrero for providing this example.}
The variety $V=\{(x_1,x_2,x_3)\in \mathbb{C}^3:x_1-2x_2+3x_3=0\}$ is not a M\"obius subvariety of $\mathbb{C}^3$ but it contains infinitely many M\"obius subvarieties. 
Indeed, for every integer $m$ define $g_m=\left(\begin{array}{cc}
    3m-1 & 0 \\
    0 & 1
\end{array}\right)$ and $h_m=\left(\begin{array}{cc}
    2m-1 & 0 \\
    0 & 1
\end{array}\right)$. 
Then $V$ contains $M_{g_m}(X_1,X_2)\cap M_{h_m}(X_1,X_3)$, which is a M\"obius subvariety when $m\geq 2$.
\end{ex}

\begin{defn}
A \emph{special subvariety} of $\mathrm{Y}(1)^{n}$ is an irreducible component of an algebraic set defined by equations of the following forms: 
\begin{enumerate}[(a)]
    \item  $\Phi_{N}(Y_{i},Y_{k}) = 0$, for some $N\in\mathbb{N}$, and
    \item $Y_{i} = \tau$, where $\tau\in\mathbb{C}$ is a special point. 
\end{enumerate}
We allow the set of equations to be empty, so $\mathrm{Y}(1)^{n}$ is itself a special variety. 
A special subvariety without constant coordinates is called a \emph{basic special subvariety}.

Since every special point can be obtained as the solutions of $\Phi_N(X,X)=0$ for some $N\in\mathbb{N}$, the condition $Y_{i} = \tau$ in the previous definition follows from the condition $\Phi_{N}(Y_{i},Y_{k}) = 0$ by choosing $i=k$, but we have opted for this presentation as it makes it easy to compare it with the definition of weakly special variety in the following paragraph. 

A subvariety of $\mathrm{Y}(1)^{n}$ is called \emph{weakly special} if it is an irreducible component of an algebraic set defined by equations of the following forms: 
\begin{enumerate}[(i)]
    \item $\Phi_{N}(Y_{i},Y_{k}) = 0$, for some $N\in\mathbb{N}$,
    \item $Y_{\ell} = d$, for some constant $d\in \mathbb{C}$.
\end{enumerate}
As it turns out, every special variety has a Zariski dense set of special points (see e.g.~\cite[1.4 Aside]{pila:andre-oort}), and so a weakly special subvariety is special if and only if it contains a special point.

If $S$ is a special subvariety of $\mathrm{Y}(1)^{n}$, then a \emph{(weakly) special subvariety of $S$} is a (weakly) special subvariety of $\mathrm{Y}(1)^{n}$ that is contained in $S$. 

If $T$ is a proper positive dimensional weakly special subvariety of $\mathrm{Y}(1)^{n}$, there are $m\in\{1,\ldots,n\}$, a basic special subvariety $S\subseteq\mathrm{Y}(1)^m$, and a point $p\in\mathrm{Y}(1)^{n-m}$ such that (up to re-indexing of the coordinates) $T$ can be written as $S\times\{p\}$.   
We define the \emph{basic complexity} of $T$ to be the maximal positive integer $N$ for which the modular polynomial $\Phi_{N}$ is required to define $S$. 
We denote this number by $\Delta_b(T)$ (which represents the complexity of the basic special part of $T)$.
This definition differs slightly from \cite[Definition 3.8]{habegger-pila2}, where the complexity is only defined for special varieties, and the complexity also depends on the constant special coordinates that the variety may have. 
\end{defn}

We remark that if $S$ and $T$ are (weakly) special subvarieties of $\mathrm{Y}(1)^{n}$, then the irreducible components of $S\cap T$ (if there are any) are again (weakly) special suvbarieties of $\mathrm{Y}(1)^{n}$. 
In fact, by Hilbert's basis theorem the irreducible components of any non-empty intersection of (weakly) special subvarieties is again a (weakly) special subvariety. 

\begin{defn}
Given an irreducible constructible set $X$, we denote by $\mathrm{spcl}(X)$ the \emph{special closure} of $X$, that is, the smallest special subvariety containing $X$. 
Similarly, we denote by $\mathrm{wspcl}(X)$ the \emph{weakly special closure} of $X$.
\end{defn}

Following \cite{vahagn3}, we also make the following definition.

\begin{defn}
Given a tuple $\mathbf{c} = (c_{1},\ldots,c_m)$ of complex numbers, we say that a subvariety $S$ of $\mathrm{Y}(1)^n$ is $\mathbf{c}$\emph{-special} if $S$ is weakly special and the values of each of the constant coordinates of $S$ (if there are any) is either an element of $\mathrm{He}(\mathbf{c})$, or a special point.
\end{defn}

\begin{defn}
We will say that an irreducible constructible  set $V\subseteq \mathbb{C}^{n}\times\mathrm{Y}(1)^{n}$ is \emph{modularly free} if $\pi_{\mathrm{Y}}(V)$ is not contained in a proper special subvariety of $\mathrm{Y}(1)^n$. 

We will also say that $V$ is \emph{free} if $V$ is modularly free, no coordinate of $V$ is constant, and $\pi_{\mathbb{C}}(V)$ is not contained in any M\"obius subvariety of $\mathbb{C}^{n}$ which is only defined by elements of $G$. 

We say that a constructible subset $V\subseteq \mathbb{C}^{n}\times\mathrm{Y}(1)^{n}$ is \emph{free} if every irreducible component of $V$ is free.
\end{defn}

Now we introduce some notation for coordinate projections. 
Let $n$ and $\ell$ denote positive integers with $\ell \leq n$, and let $\mathbf{i}=(i_1,\ldots,i_\ell)$ denote a point in $\mathbb{N}^{\ell}$ with $1\leq i_1 < \ldots < i_\ell \leq n$. 
Define the projection map $\mathrm{pr}_{\mathbf{i}}:\mathbb{C}^{n} \rightarrow \mathbb{C}^{\ell}$ by
\[\mathrm{pr}_{\mathbf{i}}:(x_1,\ldots,x_n)\mapsto (x_{i_1},\ldots,x_{i_\ell}).\] 
In particular we distinguish between the natural number $n$ and the tuple $\mathbf{n}=(1,\ldots,n)$. 
We also use the notation $\mathbf{n}\setminus\mathbf{i}$ to denote the tuple of entries of $\mathbf{n}$ that do not appear in $\mathbf{i}$.

\begin{remark}
If $T$ is a (weakly) special subvariety of $\mathrm{Y}(1)^{n}$, then for any choice of indices $1\leq i_{1}<\cdots<i_{\ell}\leq n$ we have that $\mathrm{pr}_{\mathbf{i}}(T)$ is a (weakly) special subvariety of $\mathrm{Y}(1)^{\ell}$. 
\end{remark}

Define $\mathrm{Pr}_{\mathbf{i}}:\mathbb{C}^{n}\times\mathrm{Y}(1)^{n}\rightarrow \mathbb{C}^{\ell}\times\mathrm{Y}(1)^{\ell}$ by
\[\mathrm{Pr}_{\mathbf{i}}(\mathbf{x},\mathbf{y}):= (\mathrm{pr}_{\mathbf{i}}(\mathbf{x}),\mathrm{pr}_{\mathbf{i}}(\mathbf{y})).\]

\begin{defn}
An algebraic set $V \subseteq \mathbb{C}^{n}\times\mathrm{Y}(1)^{n}$ is said to be \emph{broad} if for any $\mathbf{i}=(i_1,\ldots,i_{\ell})$ in $\mathbb{N}^{\ell}$ with $1\leq i_1 < \ldots < i_{\ell} \leq n$ we have $\dim \mathrm{Pr}_{\mathbf{i}} (V) \geq \ell$. 
In particular, if $V$ is broad then $\dim V\geq n$. 

We say $V$ is \emph{strongly broad} if the strict inequality $\dim \mathrm{Pr}_{\mathbf{i}} (V) > \ell$ holds for every $\mathbf{i}$.
\end{defn}

For example, if $\pi_{\mathbb{C}}(V)$ is Zariski dense in $\mathbb{C}^{n}$, then $V$ is broad.

\subsection{Existential Closedness}
\label{subsec:ec}
Here we will define the (EC) condition, give a conjecture for the EC problem for $j$ (see Conjecture \ref{conj:ec}), and review some known results.

Given an integer $n\geq 1$ we denote the graph of $j:\mathbb{H}^{n}\rightarrow\mathbb{C}^{n}$ as
\[\mathrm{E}_j^n:=\{(z_1,\ldots,z_n,j(z_1),\ldots,j(z_n)):z_1,\ldots,z_n \in \mathbb{H}\}.\]

\begin{defn}
We say that an algebraic variety $V\subseteq \mathbb{C}^{n}\times\mathrm{Y}(1)^{n}$ satisfies the \emph{Existential Closedness condition} (EC) if the set $V\cap \mathrm{E}_{j}^{n}$ is Zariski dense in $V$. 
\end{defn}

We recall the following result which gives examples of varieties satisfying (EC). 

\begin{thm}[{{\cite[Theorem 1.1]{paper1}}}]
\label{th:main1}
Let $V\subseteq \mathbb{C}^{n}\times\mathrm{Y}(1)^{n}$ be an irreducible algebraic variety. 
If $\pi_{\mathbb{C}}(V)$ is Zariski dense in $\mathbb{C}^{n}$, then $V$ satisfies (EC).
\end{thm}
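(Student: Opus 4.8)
The plan is to reduce the statement to a situation where one can build a single point of $V$ inside the graph of $j$, and then upgrade this to Zariski density by an inductive dimension argument. The key observation is that the hypothesis ``$\pi_{\mathbb{C}}(V)$ is Zariski dense in $\mathbb{C}^n$'' should be replaced, after passing to a suitable subvariety, by ``$\pi_{\mathbb{C}}\restriction_V$ is dominant with fibres of the expected dimension'', which is exactly the setting in which one has freedom to prescribe the first coordinates $z_1,\ldots,z_n\in\mathbb{H}$ and solve for the rest. Concretely, since $\pi_{\mathbb{C}}(V)$ is Zariski dense in $\mathbb{C}^n$, some irreducible component $V_0$ of $V$ has $\pi_{\mathbb{C}}(V_0)$ Zariski dense in $\mathbb{C}^n$; it suffices to prove $(EC)$ for $V_0$, so we may assume $V$ irreducible and $\pi_{\mathbb{C}}\restriction_V$ dominant. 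Write $d=\dim V$ and $r=d-n\geq 0$ for the generic fibre dimension of $\pi_{\mathbb{C}}$.

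First I would set up the density target: it suffices to show that for every nonempty Zariski open $U\subseteq V$ (equivalently, avoiding any prescribed proper closed subset) there is a point $(\mathbf z,j(\mathbf z))\in U$; iterating this over a cofinal family of opens, or more cleanly arguing that the Zariski closure $W$ of $V\cap \mathrm E_j^n$ must equal $V$ by a dimension count, gives $(EC)$. So fix a proper closed $Z\subsetneq V$ that we must avoid. Now I would use the analytic structure: choose a small open polydisc $\Omega\subseteq\mathbb H^n$ and, shrinking if necessary, an analytic section of $\pi_{\mathbb{C}}\restriction_V$ over $\pi_{\mathbb{C}}(\Omega)$-directions is too much to ask, but one can instead consider the analytic set
$$
X:=\{(\mathbf z,\mathbf w)\in \Omega\times\mathrm Y(1)^n : (\mathbf z,\mathbf w)\in V,\ \mathbf w = j(\mathbf z)\}
$$
realized as the intersection inside $V^{\mathrm{an}}$ of the graph of $j$ over $\Omega$ (an $n$-dimensional complex submanifold of $\Omega\times\mathbb C^n$, biholomorphic to $\Omega$ via $\mathbf z\mapsto(\mathbf z,j(\mathbf z))$) with the analytic variety $V$. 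The graph of $j$ over $\Omega$ has dimension $n$ and $V$ has dimension $d=n+r$ inside the ambient $\mathbb C^{2n}$, so the expected dimension of the intersection is $n+d-2n=r\geq 0$; the real content is to show this intersection is nonempty and of the right dimension, and meets $U=V\setminus Z$.

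The main obstacle — and the heart of the argument — is precisely showing nonemptiness (and genericity, i.e. that the intersection is not forced into $Z$). Here I expect the proof to invoke either a suitable openness/properness property of $j$ together with a degree argument, or, more likely given the surrounding literature, an explicit construction: because $\pi_{\mathbb{C}}\restriction_V$ is dominant, for a generic $\mathbf z_0\in\mathbb C^n$ the fibre $V_{\mathbf z_0}\subseteq\{\mathbf z_0\}\times\mathrm Y(1)^n$ is nonempty of dimension $r$; one then wants to move $\mathbf z_0$ into $\mathbb H^n$ and force some point of the fibre to land on $j(\mathbf z_0)$. The trick is to iterate one coordinate at a time: treating $z_1$ as a parameter ranging over a disc in $\mathbb H$, the curve $N\mapsto j(z_1)$ traced as $z_1$ varies is non-algebraic (by Ax--Schanuel, or more elementarily by the transcendence of $j$), so it cannot be contained in the proper subvariety cut out by the remaining equations; a Baire-category or monodromy argument across the $n$ coordinates then produces the desired point outside $Z$. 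I would organize this as an induction on $n$: fix $z_1,\ldots,z_{n-1}$ generically so that the fibre over them, intersected with the graph of $j$ in the first $n-1$ slots, is a variety $V'\subseteq\mathbb C^{1+?}$ to which the inductive hypothesis (or the base case, a plane-curve statement) applies in the last coordinate. The delicate bookkeeping is to ensure that ``generic choice of $z_1,\ldots,z_{n-1}$'' is compatible with the dominance hypothesis descending to $V'$ and with avoiding $Z$; this compatibility, rather than any single transcendence input, is where the argument requires care.
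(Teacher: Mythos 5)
Your reductions are sound: passing to $V$ irreducible with $\pi_{\mathbb{C}}\upharpoonright_{V}$ dominant, and reformulating (EC) as ``find a point in $(V\setminus Z)\cap\mathrm{E}_{j}^{n}$ for each proper closed $Z\subsetneq V$'', is the right frame; and you correctly observe that the expected-dimension count (the graph of $j$, of dimension $n$, meeting $V$ of dimension $n+r$ inside $\mathbb{C}^{2n}$: expected dimension $r\geq0$) is only heuristic. But the theorem \emph{is} the nonemptiness of that intersection, and that is exactly what you never prove. Neither of the two mechanisms you gesture at can produce intersection points. That the graph of $j$ over a disc is not contained in any algebraic hypersurface is a non-containment statement: a transcendental analytic curve in $\mathbb{C}^{2}$ is generically disjoint from a fixed algebraic curve (real codimension $2$), so ``not contained in'' does not give ``meets.'' ``Baire category'' or a ``monodromy argument'' similarly cannot force a point onto a positive-codimension set without further structure. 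And the proposed induction is circular: at the base case $n=1$ (a plane curve with dominant first projection) and at each inductive step you need precisely the same nonemptiness statement you are trying to establish.

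What actually makes the result true is a \emph{global} analytic input about $j$ that your sketch never invokes. The proof of \cite[Theorem~1.1]{paper1} uses the $q$-expansion $j(z)=q^{-1}+744+\cdots$ with $q=e^{2\pi iz}$, together with the periodicity $j(z+1)=j(z)$: as $z$ traverses a horizontal segment of length $1$ at height $T$, $j(z)$ winds once around a circle of radius $\approx e^{2\pi T}$, while the algebraic branches of the fibre of $\pi_{\mathbb{C}}$ over $\mathbb{C}^{n}$ grow like a fixed power of the coordinates, hence of $T$. Comparing these on a suitable contour yields a nonzero winding number (topological degree), hence a zero by the argument principle; running this over a Zariski-dense family of slices, after first cutting $V$ down to dimension $n$ by generic hyperplane sections, gives (EC). That argument-principle step is the entire content of the theorem; your write-up correctly locates the difficulty but leaves a placeholder where the proof should be.
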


We also recall the following statement (although it falls short from giving examples of varieties satisfying (EC) as it does not prove Zariski density). 

\begin{thm}[{{\cite[Theorem 3.31]{gallinaro}}}]
\label{thm:gallinaro}
Let $L\subseteq\mathbb{C}^{n}$ be a subvariety defined by equations of the form $M_{g}(X,Y)=0$, with $g\in\mathrm{GL}_{2}(\mathbb{R})$. 
Let $W\subseteq\mathrm{Y}(1)^{n}$ be a subvariety, and assume that $L\times W$ is a free and broad subvariety of $\mathbb{C}^{n}\times\mathrm{Y}(1)^{n}$. 
Then $j(L)$ is Euclidean dense in $W$. 
\end{thm}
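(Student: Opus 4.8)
The plan is to combine a topological-degree (existence) argument with the Ax--Schanuel theorem for $j$, along the lines of Theorem \ref{th:main1}. I would reduce at once to the case in which $L$ and $W$ are irreducible. Because $L$ is defined by M\"obius equations and $L\times W$ is free, the coordinates $1,\dots,n$ partition into \emph{blocks} according to which pairs are M\"obius-related; within a block every coordinate is the image of one distinguished coordinate under a fixed element $g_{\iota}\in\mathrm{GL}_2^{+}(\mathbb{R})$, and, freeness forbidding constant coordinates, no block is a single point. Hence $L\cap\mathbb{H}^{n}$ is biholomorphic to $\mathbb{H}^{r}$ with $r=\dim L$, via a map $\zeta=(\zeta_{1},\dots,\zeta_{r})\mapsto\mathbf{z}(\zeta)$ with each $z_{\iota}=g_{\iota}\zeta_{b(\iota)}$, and under this parametrisation the requirement that $(\mathbf{z},j(\mathbf{z}))\in L\times W$ with $\mathbf{z}\in\mathbb{H}^{n}$ becomes $F(\zeta)\in W$, where $F\colon\mathbb{H}^{r}\to\mathbb{C}^{n}$ is the holomorphic ``M\"obius-twisted $j$'' given by $F(\zeta)=(j(g_{\iota}\zeta_{b(\iota)}))_{\iota}$. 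Proving (EC) therefore amounts to showing that $F^{-1}(W)\subseteq\mathbb{H}^{r}$ maps, under $\zeta\mapsto(\mathbf{z}(\zeta),F(\zeta))$, onto a Zariski-dense subset of $L\times W$.

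The two hypotheses feed in as follows. Broadness, read block by block, says exactly that $\dim\mathrm{pr}_{\mathbf{i}}(W)\ge|\mathbf{i}|-\dim\mathrm{pr}_{\mathbf{i}}(L)$ for all $\mathbf{i}$, where $\dim\mathrm{pr}_{\mathbf{i}}(L)$ equals the number of blocks meeting $\mathbf{i}$; this is the bookkeeping ensuring that $F^{-1}(W)$ has non-negative expected dimension and that, near a generic smooth point of $W$, one can match the ``free'' chart directions of $W$ against enough of the parameters $\zeta_{1},\dots,\zeta_{r}$, leaving the remaining coordinates to be solved using surjectivity of $j\colon\mathbb{H}\to\mathbb{C}$. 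Freeness enters through the Ax--Schanuel theorem for $j$ (\cite{pila-tsimerman}): up to the M\"obius relations defining $L$, the only algebraic relations that can hold between $\zeta$ and $F(\zeta)$ are the modular relations $\Phi_{N}$ among those coordinates $z_{\iota}$ lying in a common $G$-orbit, and freeness says precisely that no two distinct coordinates of $L$ are $G$-related; hence none of these relations occurs, and the Zariski closure of $\{(\mathbf{z}(\zeta),F(\zeta)):\zeta\in\mathbb{H}^{r}\}$ is the whole of $L\times\mathbb{C}^{n}$. This maximal independence is what keeps the degree/Jacobian computation below non-degenerate; it is also what, combined with running the existence argument over a family of target points, upgrades ``existence of a solution'' to ``Zariski density of the solution set in $L\times W$''.

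The heart of the proof is then the existence statement: for a Zariski-dense family of smooth points $\mathbf{w}^{\ast}\in W$, the twisted system $F(\zeta)\in W$ has a solution with $F(\zeta)$ near $\mathbf{w}^{\ast}$. I would prove this by the Rouch\'e/topological-degree mechanism used for Theorem \ref{th:main1}: drive $\zeta$ towards a suitably chosen boundary point of $\mathbb{H}^{r}$ --- for each block, a cusp of the conjugate $g^{-1}\Gamma g$ of the modular group attached to that block --- so that the dominant coordinates of $F$ grow exponentially, solve the constrained coordinates against the values forced by $W$ using that $j$ is onto, and extract a non-zero winding number. I expect this analytic core to be the main obstacle, and the reason it is harder than Theorem \ref{th:main1} is that the functions $j\circ g_{\iota}$ attached to one block have their cusps at different points of $\partial\mathbb{H}$, so the $j$-values of a block cannot all be driven to infinity along a single approach, while $W$ need not factor through the blocks. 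Controlling the asymptotics of $j\circ g_{\iota}$ along boundary approaches --- using that $j$ is definable on fundamental domains in an o-minimal structure together with the cusp structure of $g^{-1}\Gamma g$ --- and stitching the per-block estimates into one non-vanishing degree, is where the genuine difficulty lies.
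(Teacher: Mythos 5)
The paper does not prove this theorem: it is quoted verbatim from \cite[Theorem 3.31]{gallinaro} as an external ingredient in the list of known sources of varieties satisfying (EC), so there is no internal argument here to compare your proposal against.

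Taken on its own terms, what you have written is an outline with a genuine hole at its centre. The preliminary reductions are sound and are the natural framing: passing to irreducible $L$ and $W$; decomposing the coordinates into M\"obius blocks, each governed by a single free parameter (freeness guaranteeing no block collapses to a point); identifying $L\cap\mathbb{H}^{n}$ with $\mathbb{H}^{r}$ via $\zeta\mapsto(g_{\iota}\zeta_{b(\iota)})$; and the Ax--Schanuel computation showing that the Zariski closure of $\left\{\bigl(\mathbf{z}(\zeta),F(\zeta)\bigr):\zeta\in\mathbb{H}^{r}\right\}$ is all of $L\times\mathbb{C}^{n}$ is correct (if the weakly special closure of $j(L\cap\mathbb{H}^{n})$ were proper, two coordinates of $L$ would be $G$-related or some coordinate of $L$ would be constant, either of which contradicts freeness of $L\times W$). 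You also read broadness correctly as a codimension budget per block. But you then explicitly label the existence step --- producing, for a Zariski-dense family of smooth $\mathbf{w}^{\ast}\in W$, a parameter $\zeta$ with $F(\zeta)\in W$ near $\mathbf{w}^{\ast}$ --- as ``the heart of the proof'' and ``the main obstacle,'' and the write-up stops exactly there. No Rouch\'e estimate, no choice of compact regions, no winding-number or degree computation is actually performed, and you yourself point out why the argument of Theorem~\ref{th:main1} does not transfer: within a block the functions $j\circ g_{\iota}$ have their cusps at different points of $\partial\mathbb{H}$, so their $j$-values cannot all be driven to a common dominant regime along one boundary approach, and $W$ need not decouple across blocks. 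That observation is useful diagnosis, but it is the problem, not a solution. As submitted, the central analytic claim that makes this theorem nontrivial is asserted to be hard and then left unproved, so the proposal is incomplete.
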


These theorems and the main results of \cite{aek-differentialEC} give evidence for the following conjecture (cf \cite[Conjecture 1.2]{aslanyan-kirby}).

\begin{conj}
\label{conj:ec}
Let $n$ be a positive integer and let $V\subseteq \mathbb{C}^{n}\times\mathrm{Y}(1)^{n}$ be a broad and free variety such that $V\cap \left(\mathbb{H}^{n}\times\mathrm{Y}(1)^{n}\right)$ is Zariski dense in $V$. 
Then $V\cap \mathrm{E}_{j}^{n}\neq\emptyset$. 
\end{conj}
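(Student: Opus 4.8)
This is the central open problem of the subject, so what follows is a plausible line of attack rather than a complete argument. The plan is to induct on $n$ and, at each stage, reduce to the case of a dominant first projection, which is Theorem~\ref{th:main1}; I will assume $V$ irreducible, the general case reducing to this with some care over which component one passes to.

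For $n=1$ there is nothing new: a free $V\subseteq\mathbb{C}\times\mathrm{Y}(1)$ has no constant coordinate, so either $\dim V=2$ and $V\cap\mathrm{E}_j^{1}\neq\emptyset$ trivially, or $V$ is a curve whose first projection is not a point, hence Zariski dense in $\mathbb{C}$, and Theorem~\ref{th:main1} applies. For $n\geq 2$, if $\pi_{\mathbb{C}}(V)$ is Zariski dense in $\mathbb{C}^{n}$ we are again done by Theorem~\ref{th:main1}; otherwise let $W$ be the Zariski closure of $\mathrm{Pr}_{(1,\ldots,n-1)}(V)$. One checks that $W$ is broad (its coordinate projections are those of $V$) and free: if $W$ were contained in $\mathbb{C}^{n-1}\times T'$ or in $M'\times\mathrm{Y}(1)^{n-1}$ for a proper special $T'$ or a proper $G$-M\"obius $M'$, then $V$ would lie in $\mathbb{C}^{n}\times(T'\times\mathrm{Y}(1))$ or in $(M'\times\mathbb{C})\times\mathrm{Y}(1)^{n}$, contradicting freeness of $V$, and a constant coordinate of $W$ would be one of $V$. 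The inductive hypothesis then gives a point $q=(\mathbf{z}',j(\mathbf{z}'))\in W$ with $\mathbf{z}'\in\mathbb{H}^{n-1}$.

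It remains to lift $q$ through $\mathrm{Pr}_{(1,\ldots,n-1)}|_{V}$ to a point of $V\cap\mathrm{E}_j^{n}$; writing $F\subseteq\mathbb{C}\times\mathrm{Y}(1)$ for the fibre over $q$, in coordinates $(x_n,y_n)$, we want $(z_n,j(z_n))\in F$ with $z_n\in\mathbb{H}$. This is routine when $F$ is $2$-dimensional, when $F$ is a curve with Zariski dense $x_n$-projection (apply Theorem~\ref{th:main1} to a suitable component of $F$), or when $F=\mathbb{C}\times\{d\}$ (take $z_n\in\mathbb{H}$ with $j(z_n)=d$, using surjectivity of $j$). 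The obstruction --- the step I expect to be the real difficulty --- is the case where $F$ pins $x_n$ to a finite set of values none of which lies in $\mathbb{H}$. Freeness of $V$ need not exclude this, since the constraint may be an algebraic function of $(\mathbf{x}',\mathbf{y}')$ rather than a $G$-M\"obius relation, and it may be met only by the particular solution the induction produced. The remedy should be to strengthen the inductive hypothesis to the full (EC) conclusion for $W$ --- which also handles the minor point that $q$ must lie in the image of $\mathrm{Pr}_{(1,\ldots,n-1)}$, not merely its closure --- so that the admissible $\mathbf{z}'$ form a Zariski dense subset of $W$, and then to prove that at least one of them admits a lift with $z_n\in\mathbb{H}$. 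Establishing this last point in general appears to need genuinely new input: a quantitative description of how $\mathrm{E}_j^{n}$ meets $V$ near the boundary of $\mathbb{H}^{n}$, exploiting the behaviour of $j$ at the cusp encoded in \eqref{eq:j-fourier-expansion}, or an o-minimality / point-counting or monodromy argument, together with freeness to rule out the degenerate boundary configurations.

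An alternative is to start from a blurred solution produced by the method of \cite{aslanyan-kirby} (cf.\ Theorem~\ref{thm:blurred}): matrices $g_{i}\in\mathrm{GL}_2^+(\mathbb{Q})$ and a point $(\mathbf{z},j(g_{1}z_{1}),\ldots,j(g_{n}z_{n}))\in V$, which one would deform by moving the $g_{i}$ continuously to the identity while tracking a solution. The crux there is an a priori bound keeping the $z_{i}$ away from $\partial\mathbb{H}$ along the deformation; as in the first approach, this is precisely where the freeness of $V$ must enter, since a solution escaping to the boundary would, in the limit, force $V$ into a product of one of the forbidden forms.
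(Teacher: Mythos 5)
This is Conjecture~\ref{conj:ec} of the paper, and it is genuinely open: the paper treats it as a hypothesis (via the (EC) condition and the Rabinowitsch Lemma in \textsection\ref{subsec:ec}), not as a proved statement, so there is no paper proof against which to check your argument. You correctly flag your proposal as a strategy rather than a proof, and you also correctly identify the place where it breaks down, so let me focus on saying precisely why the gap you locate is not a technicality but the whole content of the conjecture.

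The preliminary steps are sound. The $n=1$ case does follow from \cite[Theorem 1.1]{paper1}: an irreducible free $V\subseteq\mathbb{C}\times\mathrm{Y}(1)$ is either all of $\mathbb{C}\times\mathrm{Y}(1)$ or a curve with non-constant $x$-coordinate, hence with dominant $\pi_{\mathbb{C}}$. The verification that $W:=\overline{\mathrm{Pr}_{(1,\ldots,n-1)}(V)}$ is broad and free is also correct (for irreducible $V$ the image is irreducible, and $\dim\mathrm{Pr}_{\mathbf{i}}(W)=\dim\mathrm{Pr}_{\mathbf{i}}(V)$ for $\mathbf{i}\subseteq(1,\ldots,n-1)$, so the inherited broadness/freeness checks you give are fine). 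Strengthening the inductive statement to full (EC) is also the right move, both to land in the image rather than its closure and to have freedom in choosing the base point $q$.

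The gap is the lifting step, and it is not a loose end one expects to tidy up: in the generically finite case it is equivalent in difficulty to the conjecture itself. When $\dim V=n$ and $\mathrm{Pr}_{(1,\ldots,n-1)}\upharpoonright_V$ is generically finite (which dropping a different coordinate need not avoid, since broadness allows $\dim\mathrm{Pr}_{\mathbf{n}\setminus(i)}(V)=n$ for every $i$), the fibre $F$ over a point $q=(\mathbf{z}',j(\mathbf{z}'))$ is a finite set of algebraic points over the field generated by $q$, and one is asking that some $(a,b)\in F$ satisfy \emph{both} $a\in\mathbb{H}$ and $j(a)=b$. Freeness of $V$ forbids the explicit $G$-M\"obius and modular constraints, but not the implicit algebraic one that pins $a$ to a transcendental number with no visible relation to $\mathbb{H}$ or to $j$. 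That a Zariski dense set of $q$ gives a Zariski dense set of candidate fibres is cold comfort: the condition $(a,j(a))\in F$ is not Zariski closed in $q$, and there is no known density, counting, or transcendence mechanism forcing it to hold anywhere. This is exactly what separates the conjecture from \cite[Theorem 1.1]{paper1}, where dominance of $\pi_{\mathbb{C}}$ gives a full-dimensional target for the degree/winding-number argument. The blurring-and-deformation alternative runs into a structurally identical obstruction: as the $g_i$ tend to the identity, one needs an a priori bound keeping a continuous branch of solutions away from $\partial\mathbb{H}^n$, and this compactness input is precisely what is missing --- which is why \cite{aslanyan-kirby} stop at the blurred statement. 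In short: your reduction is organised sensibly and your diagnosis is accurate, but the remaining step is not "new input" to be supplied within the same framework; it is a different kind of statement (an existence theorem in a zero-dimensional set over a transcendental base) for which no technique is currently known, and any claim of a proof of Conjecture~\ref{conj:ec} must live or die on how it handles exactly this configuration.
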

The reader may be wondering why, in light of Theorem \ref{th:main1}, Conjecture \ref{conj:ec} does not ask for the stronger condition of $V\cap \mathrm{E}_{j}^{n}$ being Zariski dense in $V$. 
In fact, as we will see in Lemma \ref{lem:usefultrick}, Conjecture \ref{conj:ec} already implies the this stronger condition.

Conjecture \ref{conj:ec} can be thought of as a version of Hilbert's Nullstellensatz for systems of algebraic equations involving the $j$-function (cf \cite{daquino-macintyre-terzo} for the discussion in the case of the exponential function). 
The examples of \cite[\textsection 3]{paper1} show that if $V$ has constant coordinates or is contained in a proper subvariety of the form $M\times\mathrm{Y}(1)^{n}$, with $M$ a M\"obius subvariety, then it can happen that $V\cap\mathrm{E}_{j}^{n}$ is empty. 

We also point out that the condition of $V\cap \left(\mathbb{H}^{n}\times\mathrm{Y}(1)^{n}\right)$ being Zariski dense in $V$ is needed, as otherwise the subvariety of $\mathbb{C}^2\times\mathrm{Y}(1)^2$ defined by the single equation $X_1 = iX_2$ is free and broad, but cannot have points in the graph of the $j$ function since there is no point $z$ in $\mathbb{H}$ for which $iz$ is also in $\mathbb{H}$. 
One could alternatively resolve this issue by extending the domain of $j$ to the lower half-plane using Schwarz reflection. 

\begin{remark}
Let $V\subseteq \mathbb{C}^{n}\times\mathrm{Y}(1)^{n}$ be a broad and free variety. 
If $\dim V=n$, then by \cite[Theorem 1.1]{pila-tsimerman} if $U$ is a component of $V\cap\mathrm{E}_{j}^{n}$ (assuming there are any), then $U$ has dimension zero unless $\pi_{\mathrm{Y}}(U) \subseteq T$, for some proper special subvariety $T\subset\mathrm{Y}(1)^{n}$. 
So for a generic $V$, we can only expect the intersection $V\cap\mathrm{E}_{j}^{n}$ to be at most countable.
\end{remark}

The following result is an approximate version of Conjecture \ref{conj:ec} using what is known as a \emph{blurring} of the $j$ function.

\begin{thm}[{{\cite[Theorem 1.9]{aslanyan-kirby}}}]
\label{thm:blurredj}
Let $V\subseteq \mathbb{C}^{n}\times\mathrm{Y}(1)^{n}$ be an irreducible variety which is broad and free. 
Then the set of points in $V$ of the form
\[(z_{1},\ldots,z_{n},j(g_{1}z_{1}),\ldots,j(g_{n}z_{n})),\]
such that $z_1,\ldots,z_n\in\mathbb{H}$ and $g_{1},\ldots,g_{n}\in G$, is Euclidean dense in $V\cap\left(\mathbb{H}^{n}\times\mathrm{Y}(1)^{n}\right)$. 
\end{thm}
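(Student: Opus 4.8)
\textbf{Proof proposal for Theorem \ref{thm:blurredj}.}

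The plan is to deduce this density statement from the differential existential closedness results of \cite{aek-differentialEC}, translated through the theory of the blurred $j$-function developed in \cite{aslanyan-kirby}. The key conceptual point is that the graph
$$\mathrm{E}_{j}^{n,\mathrm{bl}} := \left\{(z_{1},\ldots,z_{n},j(g_{1}z_{1}),\ldots,j(g_{n}z_{n})) : z_{i}\in\mathbb{H},\ g_{i}\in G\right\}$$
is, up to the $\Gamma$-action, nothing but the preimage of the graph of $j$ under the $G$-action on the source; equivalently, a point lies in it exactly when the second coordinates are ``$G$-related'' in the appropriate sense to the first coordinates. Since $G\cdot\mathbb{H} = \mathbb{H}$ and the $G$-orbits are dense, one expects the blurred graph to be much more flexible than $\mathrm{E}_{j}^{n}$ itself, and indeed the advantage is that it is amenable to the differential/analytic approximation techniques that the (unblurred) EC problem resists.

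First I would set up the reduction to the case $\dim V = n$: by broadness $\dim V\geq n$, and by slicing $V$ with generic hyperplanes (defined over a suitable field) one reduces to showing that a broad, free variety of dimension exactly $n$ has blurred graph points Euclidean-dense in its real points lying over $\mathbb{H}^{n}$; passing back up to arbitrary dimension is then a standard fibering argument, using that broadness and freeness are inherited by generic slices (here one must check that genericity of the hyperplanes preserves modular freeness, which follows because the special subvarieties of $\mathrm{Y}(1)^{n}$ form a countable family). Second, for the dimension-$n$ case I would invoke the main differential-EC theorem of \cite{aek-differentialEC}: in a differentially closed field containing a ``$j$-section'', broad and free varieties of the critical dimension meet the relevant differential graph; transporting this through the Seidenberg embedding theorem and then through the correspondence between solutions of the $j$-differential equation \eqref{eq:j} and genuine values $j(gz)$ with $g\in G$ produces a point of the required form. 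Third, to upgrade from ``there exists a point'' to ``Euclidean dense'', I would localise: given any $\mathbf{p}\in V\cap(\mathbb{H}^{n}\times\mathrm{Y}(1)^{n})$ and any small Euclidean ball $B$ around it, one works with $V\cap (B\times\mathrm{Y}(1)^{n})$, which is still broad and free of the same dimension because broadness and freeness are open, generic conditions; applying the existence result inside this localised piece places a blurred graph point inside $B$.

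The main obstacle I expect is the passage between the differential-algebraic existence statement and an \emph{analytic} point with $z_{i}\in\mathbb{H}$ and $g_{i}\in G$, i.e.\ controlling that the abstract solution produced by model-theoretic means can be realised by honest matrices in $\mathrm{GL}_2^+(\mathbb{Q})$ acting on honest points of the upper half-plane, rather than merely by a formal solution of the differential equation. This is precisely where the \cite{aslanyan-kirby} ``blurring'' formalism does the work: by blurring $j$ one enlarges the graph just enough that the fibres of the exponential-type map become $G$-cosets, which are Zariski dense, so the analytic realisation problem becomes a matter of choosing the $g_{i}$ freely; the freeness hypothesis on $V$ is exactly what guarantees one is not forced onto a proper special subvariety where no such choice exists. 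A secondary technical point is ensuring the density is with respect to the Euclidean topology on $V\cap(\mathbb{H}^{n}\times\mathrm{Y}(1)^{n})$ and not merely the Zariski topology; this is handled by the localisation step above together with the fact that $\mathbb{H}^{n}\times\mathrm{Y}(1)^{n}$ is an open subset of $\mathbb{C}^{2n}$ in the Euclidean topology, so Euclidean-small neighbourhoods of a point remain broad and free subvarieties after intersecting with $V$.
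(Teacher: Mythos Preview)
This theorem is not proved in the paper; it is quoted from \cite[Theorem 1.9]{aslanyan-kirby} and used as a black box. There is no proof here to compare your proposal against.

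That said, your sketch has two concrete gaps worth flagging. First, the localisation step is malformed: for a Euclidean ball $B$, the set $V\cap(B\times\mathrm{Y}(1)^{n})$ is not an algebraic variety, so the notions ``broad'' and ``free'' do not apply to it, and you cannot feed such an object back into an algebraic existence theorem. Upgrading from Zariski density to Euclidean density genuinely requires an analytic ingredient (an open-mapping or implicit-function-theorem step applied to the holomorphic map $(\mathbf{z},\mathbf{g})\mapsto(\mathbf{z},j(g_1z_1),\ldots,j(g_nz_n))$), not a re-run of the existence result on a Euclidean-smaller set.

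Second, the correspondence you invoke between solutions of the differential equation \eqref{eq:j} and values $j(gz)$ holds for $g\in\mathrm{GL}_2(\mathbb{C})$, not for $g\in G=\mathrm{GL}_2^+(\mathbb{Q})$. You correctly identify this as the main obstacle, but your proposed resolution (``fibres become $G$-cosets, which are Zariski dense, so \ldots\ choosing the $g_i$ freely'') is not an argument: perturbing $g_i$ to land in $G$ changes $j(g_iz_i)$ and will in general move the point off $V$. Bridging this gap is precisely the content of \cite{aslanyan-kirby}; it passes through the equivalence between ``$w=j(gz)$ for some $g\in G$'' and ``$\Phi_N(j(z),w)=0$ for some $N$'' (the (M1)$\Leftrightarrow$(M2) of \S\ref{sec:modular_polynomials}), combined with density of $G$-orbits in $\mathbb{H}$ and an analytic approximation argument, and the differential-EC input from \cite{aek-differentialEC} is deployed rather differently from the direct transport you describe.
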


We finish this subsection by recalling a useful trick. 

\begin{lem}[cf {{\cite[Proposition 4.34]{vahagn2}}}]
\label{lem:usefultrick}
Suppose Conjecture \ref{conj:ec} holds. 
Let $V\subseteq \mathbb{C}^{n}\times\mathrm{Y}(1)^{n}$ be a broad and free irreducible
variety. 
Then $V$ satisfies (EC). 
\end{lem}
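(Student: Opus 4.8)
The plan is to exploit that, $V$ being irreducible, $V$ satisfies (EC) if and only if for every polynomial $\hat f\in\mathbb{C}[X_1,\dots,X_n,Y_1,\dots,Y_n]$ whose restriction $f:=\hat f|_V$ is a nonzero element of $\mathbb{C}[V]$ there is a point $(\mathbf z,j(\mathbf z))\in V$ with $\mathbf z\in\mathbb{H}^n$ and $\hat f(\mathbf z,j(\mathbf z))\neq 0$ (the sets $\{\hat f\neq 0\}$ form a basis of the Zariski topology on the irreducible $V$). If $f$ is a nonzero constant this is immediate from Conjecture~\ref{conj:ec} applied to $V$, so I may assume $f$ is nonconstant. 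Moreover, scaling $\hat f$ by a nonzero constant changes neither the hypothesis nor the conclusion, so I may first replace $f$ by a suitable constant multiple of itself so that, writing $y_i$ for the image of $Y_i$ in $\mathbb{C}(V)$, we have $\Phi_N(y_i,1/f)\neq 0$ in $\mathbb{C}(V)$ for every $N\geq 1$ and every $i\in\{1,\dots,n\}$. Such a choice exists: $V$ has no constant coordinate, so each $y_i$ is transcendental over $\mathbb{C}$, whence $\Phi_N(y_i,Z)$ is a polynomial of positive degree in $\mathbb{C}(V)[Z]$ (its leading coefficient, a nonzero integer polynomial evaluated at the transcendental $y_i$, does not vanish) and has only finitely many roots; the union over all $N,i$ of these roots is a countable subset $B\subseteq\mathbb{C}(V)$, whereas $\{\lambda/f:\lambda\in\mathbb{C}^\times\}$ is uncountable, so some constant multiple of $f$ has its reciprocal outside $B$.

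With this $f$ fixed I would introduce one new coordinate pair and run a Rabinowitsch-type construction: let $\widetilde V\subseteq\mathbb{C}^{n+1}\times\mathrm{Y}(1)^{n+1}$ be defined by the equations of $V$ in the variables $X_1,\dots,X_n,Y_1,\dots,Y_n$ together with the single equation $Y_{n+1}\hat f=1$, with no condition imposed on $X_{n+1}$. Then $\widetilde V$ is Zariski closed; it is the image of $V_f\times\mathbb{A}^1$, where $V_f:=V\setminus\{\hat f=0\}$ is a dense open subvariety of $V$, under the injective morphism $\big((\mathbf x,\mathbf y),t\big)\mapsto(\mathbf x,t,\mathbf y,1/\hat f(\mathbf x,\mathbf y))$, so it is irreducible; and every point $(\mathbf x,x_{n+1},\mathbf y,y_{n+1})$ of $\widetilde V$ satisfies $\hat f(\mathbf x,\mathbf y)\neq 0$ since $y_{n+1}\hat f(\mathbf x,\mathbf y)=1$.

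It remains to check that $\widetilde V$ is broad and free and then to invoke Conjecture~\ref{conj:ec}. For broadness: if $\mathbf i\subseteq\{1,\dots,n\}$ then $\mathrm{Pr}_{\mathbf i}(\widetilde V)$ has the same Zariski closure as the corresponding projection of $V$, which has dimension $\geq|\mathbf i|$ since $V$ is broad; if $\mathbf i$ contains $n+1$, then $X_{n+1}$ varies freely and contributes one dimension beyond the $(|\mathbf i|-1)$-dimensional projection of $V$ indexed by the remaining entries. For freeness: $\widetilde V$ has no constant coordinate, since $X_{n+1}$ is free, $Y_{n+1}=1/f$ is nonconstant, and the remaining $2n$ coordinates are nonconstant because $V$ is free. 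If $\widetilde V\subseteq\mathbb{C}^{n+1}\times T$ for a proper special $T\subseteq\mathrm{Y}(1)^{n+1}$, then $\widetilde V$ is contained in some hypersurface $\{\Phi_N(Y_a,Y_b)=0\}$ or $\{Y_\ell=\tau\}$ cutting out $T$; the containments in $\{Y_\ell=\tau\}$ and in $\{\Phi_N(Y_a,Y_a)=0\}$ with $N\geq 2$ are excluded by nonconstancy of coordinates; if $a,b\leq n$ with $a\neq b$ we would get $V\subseteq\{\Phi_N(Y_a,Y_b)=0\}$, contradicting modular freeness of $V$; and if (up to symmetry of $\Phi_N$) $a\leq n$ and $b=n+1$ we would get $\Phi_N(y_a,1/f)=0$ in $\mathbb{C}(V)$, contradicting the choice of $f$. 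Similarly, if $\widetilde V\subseteq M\times\mathrm{Y}(1)^{n+1}$ for a proper Möbius $M$ defined by elements of $G$, then $\widetilde V$ lies in some $\{M_g(X_a,X_b)=0\}$ with $g$ nonscalar; for $a,b\leq n$ this contradicts freeness of $V$, while if $n+1\in\{a,b\}$ it is impossible because the projection of $\widetilde V$ to the relevant $X$-coordinates is dense in its ambient affine space whereas $\{M_g=0\}$ is a proper subvariety (here the free coordinate $X_{n+1}$ does the work). Hence Conjecture~\ref{conj:ec} applied to $\widetilde V$ yields $\mathbf z=(z_1,\dots,z_{n+1})\in\mathbb{H}^{n+1}$ with $(\mathbf z,j(\mathbf z))\in\widetilde V$; discarding the last coordinate gives a point of $V\cap\mathrm{E}_j^n$ at which $\hat f$ does not vanish, which is what was needed.

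The step I expect to be the main obstacle is establishing that $\widetilde V$ is free, and in particular ruling out the containments $\widetilde V\subseteq\{\Phi_N(Y_i,Y_{n+1})=0\}$: the $j$-value $1/f$ created by the construction could a priori be Hecke-related to one of the original coordinates $y_i$, and it is precisely to prevent this that the constant multiple of $f$ had to be chosen with care at the outset. All the remaining verifications are routine dimension counts on coordinate projections.
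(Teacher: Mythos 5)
Your proof is correct, and it follows essentially the same Rabinowitsch strategy as the paper: introduce a new pair of coordinates, impose $Y_{n+1}\hat{f}=1$ with $X_{n+1}$ unconstrained, and then check that the resulting variety is broad and free so that Conjecture~\ref{conj:ec} applies. The broadness check and the exclusion of constant coordinates and M\"obius containments agree with the paper's.

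Where you diverge is in ruling out $\widetilde{V}\subseteq\mathbb{C}^{n+1}\times T$ for a proper special $T$ involving the new coordinate $Y_{n+1}$, i.e.\ in excluding a relation $\Phi_N(y_i,1/f)=0$ in $\mathbb{C}(V)$. The paper first reduces to the case that $f$ actually vanishes somewhere on $V$, and then observes that $\Phi_N(y_i,1/f)=0$ on $V\setminus\{f=0\}$ forces $V\subseteq\{f^d\Phi_N(y_i,1/f)=0\}$, a set on which $f$ is never zero (since $\Phi_N$ is monic of degree $d$ in its second variable, the constant term of $f^d\Phi_N(y_i,1/f)$ in $f$ is $\pm 1$); this contradicts the reduction. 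You instead preemptively replace $f$ by a scalar multiple $\lambda f$ chosen so that $1/(\lambda f)$ avoids the countable set of roots in $\mathbb{C}(V)$ of all the polynomials $\Phi_N(y_i,Z)$, which makes the modular-freeness of $\widetilde{V}$ immediate and removes the need for the case split on whether $f$ has a zero on $V$. Both verifications are sound; yours trades the paper's contradiction-via-monicity for a cardinality argument (uncountably many rescalings versus countably many bad values), and as a small bonus you do not need to separately dispose of the case where $f$ never vanishes on $V$. One trivial simplification to your write-up: since $\Phi_N(X,Y)$ is monic (up to sign) in $Y$, the leading coefficient of $\Phi_N(y_i,Z)$ in $Z$ is a nonzero constant, so the remark about evaluating a nonzero integer polynomial at the transcendental $y_i$ is unnecessary.
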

\begin{proof} 
Let $p_{1},\ldots,p_{m}$ be polynomials in $\mathbb{C}[X_{1},\ldots,X_{n},Y_{1},\ldots,Y_{n}]$ defining $V$. 
Let $f$ be a polynomial in $\mathbb{C}[X_{1},\ldots,X_{n},Y_{1},\ldots,Y_{n}]$ that does not vanish identically on $V$ and let 
\[W = \left\{(\mathbf{x},\mathbf{y})\in V : f(\mathbf{x},\mathbf{y})=0\right\}.\] 
To prove the lemma it suffices to show that $\mathrm{E}_j^n\cap (V\setminus W)\neq \emptyset$. 
By Conjecture \ref{conj:ec} this is clear if $W=\emptyset$, hence we can assume that $f$ vanishes at least at one point of $V$.  
We now define the following subvariety\footnote{This is the standard Rabinowitsch trick used in the proof of Nullstellensatz.} of $\mathbb{C}^{n+1}\times\mathrm{Y}(1)^{n+1}$:
\begin{equation*}
    V':=\left\{ \begin{array}{ccc}
        p_{1}(x_{1},\ldots,x_{n},y_{1},\ldots,y_{n}) &=& 0  \\
         & \vdots & \\
         p_{m}(x_{1},\ldots,x_{n},y_{1},\ldots,y_{n}) &=& 0\\
         y_{n+1}f(x_{1},\ldots,x_{n},y_{1},\ldots,y_{n}) -1 &=& 0
    \end{array}\right\}.
\end{equation*}
Choose $1\leq i_{1}<\cdots<i_{\ell}\leq n+1$ and set $\mathbf{i}=(i_1,\ldots,i_\ell)$. Then
\[\dim\mathrm{Pr}_{\mathbf{i}}(V')=\left\{
\begin{array}{ll}
\dim\mathrm{Pr}_{\mathbf{i}}(V\setminus W)   &  \text{ if }i_{\ell}\neq n+1, \\
 \dim\mathrm{Pr}_{(i_1,\ldots,i_{\ell-1})}(V\setminus W) +1    &  \text{ if } \ell\geq 2 \text{ and }i_{\ell}= n+1,\\
 2 &  \text{ if } \ell=1 \text{ and }i_{1}= n+1.
\end{array}
\right.\]
Now, since $V$ is irreducible, the set $V\setminus W$ is dense in $V$. 
Hence, for any subtuple $\mathbf{k}$ of $\mathbf{n}$ we have, by continuity of $\mathrm{Pr}_{\mathbf{i}}$, that $\dim \mathrm{Pr}_{\mathbf{k}}(V\setminus W)=\dim \mathrm{Pr}_{\mathbf{k}}(V)$. 
Since $V$ is broad, we conclude that $V'$ is also broad.  

We will now prove that $V'$ is free. 
Since $V$ has no constant coordinates, and $f$ is non-constant, we see that no coordinate is constant on $V'$. 
Also, since $V$ is free, it is clear that $V'$ is not contained in a variety of the form $M\times \mathrm{Y}(1)^{n+1}$ where $M$ is a proper M\"obius subvariety of $\mathbb{C}^{n+1}$. 
Moreover, if $V$ is contained in a variety of the form $\mathbb{C}^{n+1}\times T$ where $T$ is a proper special subvariety of $\mathrm{Y}(1)^{n+1}$, then at least one of the polynomials defining $T$ must be of the from $\Phi_N(y_i,y_{n+1})$ with $i\in \{1,\ldots,n\}$ and $N\geq 1$.  
This implies that $\Phi_N(y_i,1/f)=0$ on $V\setminus W$, hence $V\setminus W$ is contained in the variety $Z$ defined by the polynomial $f^d\Phi_N(y_i,1/f)$ where $d$ is the degree of $\Phi_N$ in the $Y$ variable. 
This implies that $V\subseteq Z$. 
Since $\Phi_N(X,Y)$ has leading term $\pm 1$ as a polynomial in $Y$, it follows that $f$ has no zeroes on $Z$. 
But this implies that $f$ has no zeroes on $V$, which is a contradiction. 
This proves that $V'$ is free.

Since $V'$ is broad and free, Conjecture \ref{conj:ec} implies that there exists a point of the form $(\mathbf{z},j(\mathbf{z}))$ in $V'$ with $\mathbf{z}\in \mathbb{H}^{n+1}$, and so $\mathrm{Pr}_{\mathbf{n}}(\mathbf{z},j(\mathbf{z})) \in  (V\setminus W)$. 
This completes the proof of the lemma.
\end{proof}

\subsection{Atypical intersections and Zilber--Pink}
\label{subsec:atypical}

\begin{defn}
Suppose that $V$ and $W$ are subvarieties of a smooth algebraic variety $Z$. 
Let $X$ be an irreducible component of the intersection $V\cap W$. 
We say that $X$ is an \emph{atypical component of $V\cap W$ (in $Z$)} if
$$\dim X > \dim V + \dim W - \dim Z.$$
We say that the intersection \emph{$V\cap W$ is atypical (in $Z$)} if it has an atypical component. 
Otherwise, we say that \emph{$V\cap W$ is typical (in $Z$)}, i.e.~$V\cap W$ is typical in $Z$ if $\dim V\cap W = \dim V + \dim W - \dim Z$.

If $Z = \mathrm{Y}(1)^{n}$, we say that $X$ is an \emph{atypical component of $V$} if there exists a special subvariety $T$ of $\mathrm{Y}(1)^{n}$ such that $X$ is an atypical component of $V\cap T$. 
We remark that in this case, since $\dim T\geq \dim\mathrm{spcl}(X)$, it is also true that $X$ is an atypical component of $V\cap \mathrm{spcl}(X)$.

We say that $X$ is a \emph{strongly atypical component of $V$} if $X$ is an atypical component of $V$ and no coordinate is constant on $X$. 

An atypical (resp.~strongly atypical) component of $V$ is said to be \emph{maximal} (in $V$) if it is not properly contained in another atypical (resp.~strongly atypical) component of $V$.  

Given a tuple $\mathbf{c} = (c_1,\ldots,c_m)$ of complex numbers, an atypical component $X$ of $V$ is said to be $\mathbf{c}$\emph{-atypical} is $X$ is an atypical component of the intersection $V\cap T$, where $T$ is a $\mathbf{c}$-special subvariety.
\end{defn}

\begin{ex}
Let $T$ be a proper special subvariety of $\mathrm{Y}(1)^{n}$. 
Then $T$ is an atypical component of itself, since
\[\dim T = \dim T\cap T > \dim T + \dim T - n.\]
On the other hand, although $\mathrm{Y}(1)^{n}$ is a special variety, it is not atypical in itself.
\end{ex}

\begin{conj}[Modular Zilber--Pink]
\label{conj:mzp}
For every positive integer $n$, any subvariety of $\mathrm{Y}(1)^{n}$ has only finitely many maximal atypical components. 
\end{conj}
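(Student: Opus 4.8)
Zilber--Pink is open in general, so what follows is the Pila--Zannier strategy one would attempt, together with an indication of where it currently stalls; indeed Habegger and Pila (\cite{habegger-pila2}) prove exactly this statement for $\mathrm{Y}(1)^{n}$ conditionally on a ``large Galois orbits'' hypothesis, so the real content is the removal of that hypothesis. Fix $n$ and a subvariety $V\subseteq\mathrm{Y}(1)^{n}$. By Noetherianity it suffices to rule out an infinite family $\{X_{k}\}$ of maximal atypical components, and by induction on $n$ and on $\dim V$ one may assume no $X_{k}$ is contained in a proper subvariety of $V$. To each $X_{k}$ attach its special closure $T_{k}=\mathrm{spcl}(X_{k})$ and the complexity $\Delta(T_{k})$; since there are only finitely many special subvarieties of bounded complexity, we may assume $\Delta(T_{k})\to\infty$.

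First I would pass to the uniformisation. Let $\mathcal{F}\subset\mathbb{H}$ be the standard fundamental domain for $\Gamma$; by Peterzil--Starchenko the restriction $j|_{\mathcal{F}}$ is definable in the o-minimal structure $\mathbb{R}_{\mathrm{an},\exp}$, so the preimage $Z:=(j^{n}|_{\mathcal{F}^{n}})^{-1}(V)\subseteq\mathbb{H}^{n}$ is definable. A special subvariety of complexity $N$ is, modulo $\Gamma^{n}$, the image under $j^{n}$ of a $\mathrm{GL}_{2}^{+}(\mathbb{Q})$-translate of a coordinate diagonal whose defining data have height $\mathrm{poly}(N)$; hence each $X_{k}$ contributes a point (or a positive-dimensional piece) of $Z$ lying on a rational subvariety of height $\mathrm{poly}(\Delta(T_{k}))$. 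The positive-dimensional contributions, and more generally the ``weakly special'' part of the intersection, are controlled by the Ax--Schanuel theorem for $j$ (Pila--Tsimerman, \cite{pila-tsimerman}): an optimal atypical intersection of $V$ with a weakly special subvariety forces genuine modular relations among the coordinates, so only finitely many weakly special subvarieties arise this way and one is reduced to counting atypical \emph{points} attached to genuinely special subvarieties. Then one applies the Pila--Wilkie counting theorem to $Z$: if $\{X_{k}\}$ were infinite, the associated points lie on rational subvarieties of bounded degree and unbounded height, so either there are $\gg\Delta(T_{k})^{\delta}$ of them of each complexity, or they accumulate on a positive-dimensional semialgebraic subset of $Z$ whose image under $j^{n}$ is, by Ax--Schanuel again, contained in a proper weakly special subvariety, forcing the corresponding $X_{k}$ into a proper subvariety of $V$ and contradicting the inductive setup.

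The step where this stalls, and the main obstacle, is making that dichotomy work: it requires a lower bound of the shape $[\mathbb{Q}(P):\mathbb{Q}]\gg\Delta(\mathrm{spcl}(P))^{\delta}$ for the Galois orbit of a point $P$ on a special subvariety, to contradict the first alternative. For CM points in $\mathrm{Y}(1)$ this is classical (Siegel's lower bound for class numbers) and, with equidistribution, underpins the known proof of André--Oort for $\mathrm{Y}(1)^{n}$; but for positive-dimensional special subvarieties the required uniform lower bound on the size of Galois orbits is not available. This is precisely why, in the body of the paper, only the \emph{weak and uniform} consequences of Zilber--Pink that do follow unconditionally from Ax--Schanuel together with the known (uniform) André--Oort are isolated and used, rather than Conjecture~\ref{conj:mzp} itself.
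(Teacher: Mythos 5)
The statement you were handed is Conjecture~\ref{conj:mzp} (the Modular Zilber--Pink Conjecture); the paper states it as a conjecture, does not attempt a proof, and uses it only as a hypothesis in Theorem~\ref{thm:generic} and its relatives. You correctly recognize this, and your sketch of the Pila--Zannier strategy and the obstruction is accurate: Habegger--Pila~\cite{habegger-pila2} establish MZP for $\mathrm{Y}(1)^{n}$ conditionally on a large-Galois-orbits hypothesis, the Ax--Schanuel and Pila--Wilkie ingredients are in place, and what is missing is the arithmetic lower bound on Galois orbits of the atypical data attached to positive-dimensional special subvarieties. Your final observation also matches the paper's actual strategy: \textsection\ref{subsec:atypical}--\ref{subsec:weakmzp} isolate the uniform weak consequences of MZP that \emph{do} follow unconditionally from Ax--Schanuel (Theorem~\ref{thm:weakzp} and Proposition~\ref{prop:boundedcomplexity}), plus the proven Andr\'e--Oort for the special-point part (\textsection\ref{subsec:secialsol}), and these substitutes are what power the unconditional Theorems~\ref{thm:unconditional}, \ref{thm:blurred} and \ref{thm:unconditionalwithderivatives}. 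Since there is no proof in the paper to compare against, there is nothing to correct; your account is a fair summary of why MZP remains open and how the paper works around it.
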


From now on, we will abbreviate this conjecture as MZP.
This conjecture is sometimes presented in terms of optimal varieties, which we discuss next. 

\begin{defn}
Let $V$ be a subvariety of $\mathrm{Y}(1)^{n}$. 
Given a subvariety $X\subseteq V$, we define the \emph{defect of $X$} to be
\[\mathrm{def}(X):= \dim\mathrm{spcl}(X) - \dim X.\]
We say that $X$ is \emph{optimal in $V$} is for every subvariety $W\subseteq V$ satisfying $X\subsetneq W$ we have that $\mathrm{def}(X)<\mathrm{def}(W)$. 
We let $\mathrm{Opt}(V)$ denote the set of all optimal subvarieties of $V$. 
Observe that always $V\in\mathrm{Opt}(V)$. 
We think of $\mathrm{Opt}(V)$ as a cycle in $\mathrm{Y}(1)^{n}$.
\end{defn}

\begin{remark}
\label{rem:maxatypisopt}
A maximal atypical component of $V$ is optimal in $V$. 
However, optimal subvarieties need not be maximal atypical.

On the other hand, if $X$ is a proper subvariety of $V$ which is optimal in $V$, then $\mathrm{def}(X) < \mathrm{def}(V)$ which implies that $\dim V\cap\mathrm{spcl}(X)\geq \dim X > \dim V + \dim\mathrm{spcl}(X) - \dim\mathrm{spcl}(V)$, so the intersection $V\cap\mathrm{spcl}(X)$ is atypical. 
\end{remark}

As shown in \cite[Lemma 2.7]{habegger-pila2}, MZP is equivalent to the statement that any subvariety of $\mathrm{Y}(1)^{n}$ contains only finitely many optimal subvarieties, i.e.~$\mathrm{Opt}(V)$ is a finite set.  

\begin{defn}
Let $S$ be a constructible set (resp.~an algebraic variety) in $\mathbb{C}^{N}$, where $N$ is some positive integer. 
A \emph{parametric family of constructible subsets (resp.~subvarieties) of $S$} is a constructible set $V\subseteq S\times Q$, where $Q\subseteq\mathbb{C}^{m}$ is another constructible set, which we denote as an indexed collection $V = (V_\mathbf{q})_{\mathbf{q}\in Q}$, where for each $q\in Q$ the set
\[V_{\mathbf{q}}:=\left\{\mathbf{s}\in S : (\mathbf{s},\mathbf{q})\in V\right\}\]
is a constructible subset (resp.~subvariety) of $S$.\footnote{In the terminology of model-theory, we can equivalently say that $V\subseteq S\times Q$ is a definable family of definable subsets of $S$, in the language of rings.}
\end{defn}

\begin{ex}
\label{ex:parametric family}
An important example for us of a parametric family is given by the following construction. 
Let $W$ be an algebraic subvariety of $\mathbb{C}^{n}\times\mathrm{Y}(1)^{n}$; we want to define the family of subvarieties of $W$ obtained by intersecting $W$ with all M\"obius subvareities of $\mathbb{C}^{n}$ defined by elements of $\mathrm{GL}_{2}(\mathbb{C})$. 

Given a function $f:D\to \mathrm{GL}_2(\mathbb{C})$ defined on a non-empty subset $D$ of $\{1,\ldots,n\}\times \{1,\ldots,n\}$, set
\[W_f:=\{(x_1,\ldots,x_{n},y_{1},\ldots,y_{n})\in W :  f(i,j)x_i=x_j \text{ for all }(i,j) \in D\}.\]
Then, the collection of all such $W_f$ forms a parametic family of subvarieties of $W$. 
Indeed, put
\[Q:=\bigsqcup_{\emptyset \neq D\subseteq \{1,\ldots,n\}^2} \mathrm{GL}_2(\mathbb{C})^{D}.\]
Every function $f:D\to \mathrm{GL}_2(\mathbb{C})$ can be represented as an element of $\mathrm{GL}_2(\mathbb{C})^{D}$. 
Since for every finite non-empty set $A$ we have that $\mathrm{GL}_2(\mathbb{C})^{A}$ is a constructible subset of $\mathbb{C}^{4m}$, where $m=\#A$, we have that $Q$ is a finite union of constructible sets, hence it is constructible. 
Then choosing $S=W$ and
\[V=\{((\mathbf{x},\mathbf{y}),f)\in W\times Q:(\mathbf{x},\mathbf{y})\in W_f\}\]
we have that $(W_f)_f$ is just the parametric family $(V_f)_{f\in Q}$ associated to $V$.
\end{ex}

Pila showed that MZP implies the following uniform version of itself.

\begin{thm}[Uniform MZP, see {{\cite[\S24.2]{pila_2022}}}]
\label{thm:UMZP}
Suppose that for every positive integer $n$, MZP holds for all subvarieties of $\mathrm{Y}(1)^{n}$. 
Let $(V_\mathbf{q})_{\mathbf{q}\in Q}$ be a parametric family of subvarieties of $\mathrm{Y}(1)^{n}$. 
Then there is a parametric family $(W_{\mathbf{p}})_{\mathbf{p}\in P}$ of closed algebraic subsets of $\mathrm{Y}(1)^{n}$ such that for every $\mathbf{q}\in Q$ there is $\mathbf{p}\in P$ such that $\mathrm{Opt}(V_{\mathbf{q}}) = W_{\mathbf{p}}$.
\end{thm}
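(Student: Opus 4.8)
The plan is to deduce the uniform statement from pointwise MZP by combining elimination theory---which expresses the condition ``$X$ is optimal in $V_{\mathbf q}$'' by a constructible condition on $\mathbf q$, once degrees are under control---with a compactness-type argument supplying the degree control that MZP does not give directly. Using the identification $\mathbb{C}=\mathrm{Y}(1)$ we regard $Q\subseteq\mathbb{C}^m=\mathrm{Y}(1)^m$, so that the Zariski closure $\widetilde V$ of $V$ in $\mathrm{Y}(1)^n\times Q$ is a subvariety of $\mathrm{Y}(1)^{n+m}$ with $V_{\mathbf q}=\widetilde V\cap(\mathrm{Y}(1)^n\times\{\mathbf q\})$. \emph{Step 1 (a uniform complexity bound).} The first and main task is to produce an integer $N_0$ with $\Delta(\mathrm{spcl}(X))\le N_0$ for every $\mathbf q\in Q$ and every \emph{proper} $X\in\mathrm{Opt}(V_{\mathbf q})$; the remaining member $X=V_{\mathbf q}$ has degree bounded uniformly by elimination theory. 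One route is by contradiction: since for each $N$ there are only finitely many special subvarieties of $\mathrm{Y}(1)^n$ of complexity at most $N$, one may write $Q$ as the nested union of the sets $Q_N$ on which every proper optimal subvariety has special closure of complexity $\le N$; granting that each $Q_N$ is constructible, the equality $\bigcup_N Q_N=Q$---which is exactly pointwise MZP---combined with the fact that over the uncountable field $\mathbb{C}$ a nested sequence of constructible subsets of a constructible set must be eventually constant (an induction on dimension, using that a countable union of proper subvarieties of an irreducible variety over $\mathbb{C}$ does not cover it) forces $Q=Q_{N_0}$ for some $N_0$. A more structural alternative is to apply MZP to the single variety $\widetilde V$---and, to accommodate non-special parameter values, to its finitely many Hecke translates in the spirit of the $\mathbf c$-special subvarieties of \textsection\ref{subsec:atypical}---and to bound the complexities of the optimal subvarieties of the fibres $V_{\mathbf q}$ in terms of the finitely many optimal subvarieties of $\widetilde V$.

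\emph{Step 2 (optimality becomes a constructible condition).} Fix $N_0$ and let $S_1,\dots,S_M$ list the special subvarieties of $\mathrm{Y}(1)^n$ of complexity $\le N_0$. If $X\in\mathrm{Opt}(V_{\mathbf q})$ is proper then $\mathrm{spcl}(X)$ equals some $S_i$, and in fact $X$ is an irreducible component of $V_{\mathbf q}\cap S_i$: otherwise the component of $V_{\mathbf q}\cap S_i$ through $X$ would properly contain $X$ with defect no larger than $\mathrm{def}(X)$, contradicting optimality. By constructibility of fibrewise irreducible decompositions there is a finite list of constructible families $(X_{t,\mathbf q})_{\mathbf q\in Q}$, $t=1,\dots,r$, exhausting all such components as $\mathbf q$ ranges over $Q$; for each, $\dim\mathrm{spcl}(X_{t,\mathbf q})$---and hence $\mathrm{def}(X_{t,\mathbf q})$---is a constructible function of $\mathbf q$, read off from which of $S_1,\dots,S_M$ contain $X_{t,\mathbf q}$. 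Finally, a subvariety that is maximal among those properly containing a given $X$ inside $V_{\mathbf q}$ and having defect $\le\mathrm{def}(X)$ is itself optimal in $V_{\mathbf q}$, hence by Step 1 of bounded degree and therefore present in the list (or equal to $V_{\mathbf q}$); consequently ``$X_{t,\mathbf q}\in\mathrm{Opt}(V_{\mathbf q})$'' is equivalent to the constructible condition that $\mathrm{def}(X_{t,\mathbf q})<\mathrm{def}(W)$ for every $W$ from the list with $X_{t,\mathbf q}\subsetneq W\subseteq V_{\mathbf q}$.

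\emph{Step 3 (assembling the family).} By Step 2 the set $\Omega:=\{(\mathbf y,\mathbf q)\in\mathrm{Y}(1)^n\times Q:\mathbf y\in X\text{ for some }X\in\mathrm{Opt}(V_{\mathbf q})\}$ is constructible; since $\mathrm{Opt}(V_{\mathbf q})$ is finite by MZP, each fibre $\Omega_{\mathbf q}$ is a finite union of subvarieties, hence closed. Taking $P:=Q$ and $W_{\mathbf q}:=\Omega_{\mathbf q}=\bigcup\mathrm{Opt}(V_{\mathbf q})$ then gives the required parametric family of closed algebraic subsets, with $\mathbf p=\mathbf q$; if one wishes to record the full cycle $\mathrm{Opt}(V_{\mathbf q})$ rather than just its support, one instead keeps the families $(X_{t,\mathbf q})_t$ restricted to the constructible locus produced in Step 2. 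The crux is Step 1: turning the fibrewise finiteness supplied by MZP into a complexity---equivalently, degree---bound on optimal subvarieties that is uniform across the whole family. Steps 2 and 3 are soft consequences of elimination theory (constructibility of fibrewise irreducible decomposition, definability of dimension) once such a bound is in hand.
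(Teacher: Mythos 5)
The paper does not give its own proof of this theorem; it is quoted verbatim from Pila's book (the reference \cite[24.2]{pila_2022}), so there is no in-paper argument to compare against. I will therefore review your proposal on its own merits.

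Steps 2 and 3 are correct \emph{once} a uniform complexity bound $N_0$ is available: given such an $N_0$, the specials $S_1,\dots,S_M$ of complexity $\le N_0$ form a finite list, the families of components $X_{t,\mathbf q}$ of $V_{\mathbf q}\cap S_i$ are constructible in $\mathbf q$, each $\mathrm{def}(X_{t,\mathbf q})$ is read off constructibly from containments in the $S_i$, and your maximality trick correctly reduces ``$X_{t,\mathbf q}$ is optimal'' to a finite check against the list. The ``eventual stabilization of an increasing chain of constructible subsets of $Q$ whose union is $Q$'' is also a correct fact over the uncountable field $\mathbb C$ (it is essentially $\aleph_1$-saturation of $\mathbb C$ in $\mathrm{ACF}_0$, the same resource the paper uses to prove Corollary~\ref{cor:UMZP}).

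The gap is exactly where you locate the crux: Step~1. You need each $Q_N$ to be constructible, but that is asserted (``granting that each $Q_N$ is constructible''), not proved, and the obstruction is not cosmetic. To express ``every proper optimal subvariety of $V_{\mathbf q}$ has special closure of complexity $\le N$'' as a first-order condition on $\mathbf q$, one must be able to locate and test all optimal subvarieties of $V_{\mathbf q}$, which means quantifying over all special subvarieties $S$ (to pick out the candidates as components of $V_{\mathbf q}\cap S$) and, worse, computing $\mathrm{spcl}(W)$ for each candidate $W$ that might witness non-optimality. Without a prior complexity bound there is no a priori finite family of specials over which to search, so the very thing you are trying to prove seems to be what would make $Q_N$ constructible. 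This circularity is the reason the actual argument (and the weak analogues in the paper, Proposition~\ref{prop:boundedcomplexity} and Theorem~\ref{thm:weakzp}) must first establish a complexity bound by a separate mechanism --- Ax--Schanuel for strongly atypical components, and an Andr\'e--Oort input for components with constant coordinates --- before any constructibility argument can start.

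Your ``more structural alternative'' --- apply MZP to the total space $\widetilde V\subseteq\mathrm{Y}(1)^{n+m}$ and its Hecke translates --- also does not close the gap as stated. The optimal subvarieties of a fibre $V_{\mathbf q}$ need not arise from atypical or optimal subvarieties of $\widetilde V$: for example, if $\widetilde V\subset\mathrm{Y}(1)^2\times\mathrm{Y}(1)$ cuts out a family of curves $V_{\mathbf q}\subset\mathrm{Y}(1)^2$, a singular-moduli point $(\tau_1,\tau_2)\in V_{\mathbf q}$ is a $0$-dimensional atypical (hence optimal) component of $V_{\mathbf q}$, but the corresponding point $(\tau_1,\tau_2,\mathbf q)$ of $\widetilde V$ is typically a \emph{typical} intersection of $\widetilde V$ with the special line $\{Y_1=\tau_1,Y_2=\tau_2\}$ (dimension $0=2+1-3$), so MZP applied to $\widetilde V$ says nothing about it. Controlling these special points across a family is precisely an Andr\'e--Oort-type problem (compare Proposition~\ref{prop:special} in the paper, which quotes uniform Andr\'e--Oort from \cite{pila:andre-oort}). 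In short, Steps~2 and~3 are sound but soft, while Step~1 --- the crux --- needs the separate inputs (Ax--Schanuel for bounded complexity of positive-dimensional atypicality, uniform Andr\'e--Oort for the $0$-dimensional/constant-coordinate part) that the cited result \cite[24.2]{pila_2022} assembles, and your compactness-only route does not supply them.
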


\begin{cor}
\label{cor:UMZP}
Let $(V_{\mathbf{q}})_{\mathbf{q}\in Q}$ be a parametric family of constructible subsets of $\mathrm{Y}(1)^{n}$. 
Then MZP implies that there is a finite collection $\mathscr{S}$ of proper special subvarieties of $\mathrm{Y}(1)^{n}$ such that for all $\mathbf{q}\in Q$ and all special subvarieties $T$, if $X$ is an atypical component of $V_\mathbf{q}\cap T$, then there is $T_0\in\mathscr{S}$ such that $X\subseteq T_0$. 
\end{cor}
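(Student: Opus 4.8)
The plan is to deduce this from Uniform MZP (Theorem \ref{thm:UMZP}) applied to a suitably enlarged parametric family. First I would observe that, as recalled after the definition of optimality, any atypical component $X$ of $V_{\mathbf{q}}\cap T$ (for $T$ a special subvariety) produces, via Remark \ref{rem:maxatypisopt}, a maximal atypical component $X'\supseteq X$ of $V_{\mathbf{q}}$ which is optimal in $V_{\mathbf{q}}$ and is a \emph{proper} subvariety of $V_{\mathbf{q}}$ (properness because $V_{\mathbf{q}}\cap T$ is atypical, so $V_{\mathbf{q}}$ itself cannot be the atypical component unless $V_{\mathbf{q}}\subseteq T$ is already special of the wrong dimension — I would handle that degenerate case separately, or more cleanly absorb it by noting $X\subseteq\mathrm{spcl}(X')$ and $\mathrm{spcl}(X')$ is a proper special subvariety since $\mathrm{def}(X')>0$). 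Thus it suffices to find a finite family $\mathscr{S}$ of proper special subvarieties that captures every proper optimal subvariety of every $V_{\mathbf{q}}$: indeed if $X'\in\mathrm{Opt}(V_{\mathbf{q}})$ is proper then $X\subseteq X'\subseteq\mathrm{spcl}(X')$, and $\mathrm{spcl}(X')$ is the special subvariety we want in $\mathscr{S}$.

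Next I would apply Theorem \ref{thm:UMZP} to the given family $(V_{\mathbf{q}})_{\mathbf{q}\in Q}$, obtaining a parametric family $(W_{\mathbf{p}})_{\mathbf{p}\in P}$ of closed algebraic subsets of $\mathrm{Y}(1)^{n}$ with $\mathrm{Opt}(V_{\mathbf{q}})=W_{\mathbf{p}(\mathbf{q})}$ for a suitable choice of parameter. Writing each $\mathrm{Opt}(V_{\mathbf{q}})$ as a finite union of its irreducible components, I get a parametric family $(W'_{\mathbf{p}'})_{\mathbf{p}'\in P'}$ whose members are exactly these components (passing to components of fibres of a constructible set is again a constructible operation, uniformly in the parameter). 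Now I would take special closures: the assignment $X\mapsto\mathrm{spcl}(X)$ on the members of a parametric family of irreducible subvarieties again yields a parametric family $(S_{\mathbf{r}})_{\mathbf{r}\in R}$ of special subvarieties — this is because there are only finitely many special subvarieties of each bounded complexity, and one can detect, definably in the parameter, the minimal one containing a given fibre; alternatively invoke the standard fact (as used in \cite{habegger-pila2,pila_2022}) that special closure is uniform in families. A parametric family of special subvarieties of $\mathrm{Y}(1)^{n}$ takes only finitely many values (again: only finitely many special subvarieties of each complexity, and complexity is bounded on a constructible family). Discarding $\mathrm{Y}(1)^{n}$ itself and any non-proper members, let $\mathscr{S}$ be the resulting finite set of proper special subvarieties.

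Finally I would check that $\mathscr{S}$ works: given $\mathbf{q}\in Q$, a special $T$, and an atypical component $X$ of $V_{\mathbf{q}}\cap T$, the discussion above gives a proper optimal $X'\in\mathrm{Opt}(V_{\mathbf{q}})$ with $X\subseteq X'$; then $X\subseteq X'\subseteq\mathrm{spcl}(X')$, and $\mathrm{spcl}(X')=S_{\mathbf{r}}$ for some $\mathbf{r}$, which is one of the finitely many special subvarieties collected in $\mathscr{S}$ (it is proper because $X'$ is a proper subvariety with positive defect). So $T_0:=\mathrm{spcl}(X')\in\mathscr{S}$ contains $X$, as required. The main obstacle — the only genuinely non-routine point — is justifying that passing to irreducible components and then to special closures are both operations that preserve the property of being a parametric family; this is where one must be careful, but it is exactly the kind of uniformity packaged into the proof of Theorem \ref{thm:UMZP} in \cite[\textsection 24]{pila_2022}, and the boundedness of complexity on a constructible family (hence finiteness of the values taken) does the rest.
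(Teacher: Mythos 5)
Your strategy — apply Theorem \ref{thm:UMZP}, pass from atypical components to proper optimal subvarieties via Remark \ref{rem:maxatypisopt}, and collect the special closures $\mathrm{spcl}(X')$ — mirrors the paper's proof up to the final finiteness step. There the two arguments diverge sharply: the paper secures finiteness in one stroke with a model-theoretic compactness argument, writing down the type (\ref{eq:type}) and playing $\aleph_0$-saturation of $\mathbb{C}$ against the pointwise covering (\ref{eq:finitespecials}). You instead try to argue directly that the special closures form a parametric family taking only finitely many values, and you support this with the assertion that there are ``only finitely many special subvarieties of each complexity, and complexity is bounded on a constructible family.''

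That assertion contains a genuine error. By the definition in \textsection\ref{subsec:broadfree}, the complexity $\Delta(T)$ records only the modular polynomials $\Phi_N$ needed to cut out $T$; it says nothing about the \emph{values} of the constant coordinates of $T$, which can be any of the countably infinitely many singular moduli. Already in $\mathrm{Y}(1)^2$ the fibres $\{Y_1 = \tau\}$, as $\tau$ ranges over singular moduli, are pairwise distinct special subvarieties with identical complexity. So bounding $\Delta$ on a family does not, by itself, force the family to take finitely many values, and the sentence you rely on is simply false as stated. (A secondary, smaller issue: the ``absorb it cleanly'' remark in your first paragraph invokes $\mathrm{def}(X')>0$, but optimal subvarieties can have defect $0$ — e.g.\ when $X'$ is itself special — and what you actually need is that $\mathrm{spcl}(X')$ is proper, which is a different fact.)

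The finiteness you want is still true, but it requires a separate treatment of the constant coordinates: for each stratum of the family on which a fixed subset of coordinates is constant, the constant values are given by a constructible function of the parameter, and a constructible function from a constructible set to $\mathbb{C}$ whose image lies in $\overline{\mathbb{Q}}$ (as singular moduli are algebraic) has finite image. Bounded complexity then handles the modular-polynomial part. Alternatively, adopt the paper's compactness argument, which sidesteps all of this: it never controls how the covering special subvarieties vary across the family, only that each individual $W_\mathbf{p}$ is covered by finitely many of them, and lets saturation do the rest. As written, your proof has a real gap at precisely the step where the paper reaches for compactness.
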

\begin{proof}
Let $(W_\mathbf{p})_{\mathbf{p}\in P}$ be the parametric family given by applying Theorem \ref{thm:UMZP} to $(V_{\mathbf{q}})_{\mathbf{q}\in Q}$. 
Without loss of generality we may assume that for all $\mathbf{p}\in P$ there is $\mathbf{q}\in Q$ such that $W_{\mathbf{p}}\subseteq V_\mathbf{q}$. 
Since $V_{\mathbf{q}}$ is always in $\mathrm{Opt}(V_{\mathbf{q}})$, we may remove this trivial optimal subvariety, and so we can assume that for every $\mathbf{p}\in P$, if $W_{\mathbf{p}}\subseteq V_\mathbf{q}$, then $V_{\mathbf{q}}$ is not in $W_{\mathbf{q}}$.

Let $T$ be a special subvariety of $\mathrm{Y}(1)^{n}$ and suppose that $\mathbf{q}\in Q$ is such that $V_\mathbf{q}\cap T$ contains an atypical component $X$. 
Then $T$ must be a proper subvariety of $\mathrm{Y}(1)^{n}$. $X$ is contained in a maximal atypical component of $V_\mathbf{q}$ which, by Remark \ref{rem:maxatypisopt}, is contained in $W_\mathbf{p}$ for some $\mathbf{p}\in P$. 

For every $\mathbf{p}\in P$, if $Z$ is in $W_\mathbf{p}$, then $Z$ is an optimal proper subvariety of $V_\mathbf{q}$, for some $\mathbf{q}\in Q$. 
By Remark \ref{rem:maxatypisopt} we know that 
\[\dim V\cap\mathrm{spcl}(Z) > \dim V + \dim\mathrm{spcl}(Z) - n,\]
and so in particular $\mathrm{spcl}(Z)$ is a proper special subvariety of $\mathrm{Y}(1)^{n}$. 
We conclude then that for every $p\in P$ there are finitely many proper special subvarieties $T_{1,\mathbf{p}},\ldots,T_{m,\mathbf{p}}$ such that
\begin{equation}
\label{eq:finitespecials}
    W_{\mathbf{p}}\subseteq\bigcup_{i=1}^{m}T_{i,\mathbf{p}}.
\end{equation}

We now use the compactness theorem from model theory. 
Let $\left\{T_{i}\right\}_{i\in\mathbb{N}}$ be an enumeration of all the proper special subvarieties of $\mathrm{Y}(1)^{n}$. 
If the conclusion of the corollary were not true, then the following set of formulas (in the variables $\mathbf{p}$ and $x$)
\begin{equation}
    \label{eq:type}
    \left\{ \mathbf{p}\in P\wedge x\in W_{\mathbf{p}}\wedge x\notin\bigcup_{i=0}^{m}T_{i}\right\}_{m\in\mathbb{N}}
\end{equation}
would be finitely satisfiable and hence it would form a type in the language of rings with some extra constant symbols. 
As the family $(W_\mathbf{p})_{\mathbf{p}\in P}$ only requires finitely many parameters to be defined and every $T_i$ in definable over $\overline{\mathbb{Q}}$, then (\ref{eq:type}) is a type in the language of rings with only countably many added constant symbols. 
As $\mathbb{C}$ is $\aleph_{0}$-saturated, this type must be realised over $\mathbb{C}$, but that would mean that there is $\mathbf{p}^{\star}\in P$ such that $W_{\mathbf{p}^{\star}}$ is not contained in the union of all proper special subvarieties, which contradicts (\ref{eq:finitespecials}). 
\end{proof}

We will need the following ``two-sorted'' version of this result.

\begin{cor}
\label{cor:horizontalUMZP}
Let $(U_{\mathbf{q}})_{\mathbf{q}\in Q}$ be a parametric family of subvarieties of $\mathbb{C}^{n}\times\mathrm{Y}(1)^{n}$. 
Then MZP implies that there is a finite collection $\mathscr{S}$ of proper special subvarieties of $\mathrm{Y}(1)^{n}$ such that for all $\mathbf{q}\in Q$ and all special subvarieties $T$, if $X$ is an atypical component of $V_\mathbf{q}\cap (\mathbb{C}^{n}\times T)$, then there is $T_0\in\mathscr{S}$ such that $X\subseteq \mathbb{C}^{n}\times T_0$. 
\end{cor}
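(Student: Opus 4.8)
The plan is to reduce Corollary~\ref{cor:horizontalUMZP} to Corollary~\ref{cor:UMZP} by passing from the ``two-sorted'' family $(U_{\mathbf{q}})_{\mathbf{q}\in Q}$ in $\mathbb{C}^n\times\mathrm{Y}(1)^n$ to an auxiliary parametric family of subvarieties of $\mathrm{Y}(1)^n$ alone, for which the previous corollary applies directly. The key observation is that for a fixed special subvariety $T\subseteq\mathrm{Y}(1)^n$ and a fixed $\mathbf{q}$, the irreducible components of $U_{\mathbf{q}}\cap(\mathbb{C}^n\times T)$ project (under $\pi_{\mathrm{Y}}$) into $T$, and whether such a component $X$ is atypical in $\mathbb{C}^n\times\mathrm{Y}(1)^n$ is controlled by the dimension of $\pi_{\mathrm{Y}}(X)$ relative to $T$, after accounting for the constant $n$ coming from the $\mathbb{C}^n$-factor. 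So the natural move is to introduce the family $(\pi_{\mathrm{Y}}(U_{\mathbf{q}}))_{\mathbf{q}\in Q}$ — or, to keep things constructible and uniform, the Zariski closures of these projections — which is again a parametric family of subvarieties of $\mathrm{Y}(1)^n$ by the usual fact that images of constructible sets under coordinate projections are constructible and vary definably in parameters.

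First I would set $V_{\mathbf{q}}' := \overline{\pi_{\mathrm{Y}}(U_{\mathbf{q}})}$ (Zariski closure in $\mathrm{Y}(1)^n$) and check that $(V_{\mathbf{q}}')_{\mathbf{q}\in Q}$ is a parametric family of subvarieties of $\mathrm{Y}(1)^n$: this follows because $\{(\mathbf{y},\mathbf{q}) : \mathbf{y}\in\overline{\pi_{\mathrm{Y}}(U_{\mathbf{q}})}\}$ is constructible, being built from the projection of the constructible set defining $(U_{\mathbf{q}})$ followed by fibrewise Zariski closure, which is a definable operation in the language of rings (uniform Zariski closure in families is definable by quantifier elimination / Noetherianity arguments). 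Then I would apply Corollary~\ref{cor:UMZP} to $(V_{\mathbf{q}}')_{\mathbf{q}\in Q}$ to obtain a finite collection $\mathscr{S}$ of proper special subvarieties of $\mathrm{Y}(1)^n$ such that every atypical component of any $V_{\mathbf{q}}'\cap T$ is contained in some $T_0\in\mathscr{S}$.

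The main step is then the dimension bookkeeping showing that if $X$ is an atypical component of $U_{\mathbf{q}}\cap(\mathbb{C}^n\times T)$ then $\overline{\pi_{\mathrm{Y}}(X)}$ is an atypical component of $V_{\mathbf{q}}'\cap T$ (or at least is contained in one). Concretely: from $\dim X > \dim U_{\mathbf{q}} + \dim(\mathbb{C}^n\times T) - \dim(\mathbb{C}^n\times\mathrm{Y}(1)^n) = \dim U_{\mathbf{q}} + \dim T - n$, and using the fibre-dimension theorem for the restriction of $\pi_{\mathrm{Y}}$ to $X$ and to $U_{\mathbf{q}}$ (comparing the generic fibre dimensions, which are both cut out inside $\mathbb{C}^n$ over their respective images and so differ by at most the drop in the $\mathbb{C}^n$-direction), one extracts $\dim \pi_{\mathrm{Y}}(X) > \dim \pi_{\mathrm{Y}}(U_{\mathbf{q}}) + \dim T - n - (\text{correction}) \geq \dim V_{\mathbf{q}}' + \dim T - n$, so $\pi_{\mathrm{Y}}(X)$ (hence its closure, hence the unique component of $V_{\mathbf{q}}'\cap T$ containing it) sits inside an atypical component of $V_{\mathbf{q}}'\cap T$. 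Once that is in place, Corollary~\ref{cor:UMZP} gives $T_0\in\mathscr{S}$ with $\overline{\pi_{\mathrm{Y}}(X)}\subseteq T_0$, and therefore $X\subseteq\mathbb{C}^n\times T_0$, which is exactly the conclusion. I expect the fibre-dimension inequality to be the only genuinely delicate point: one must be careful that the generic fibres of $\pi_{\mathrm{Y}}|_X$ and $\pi_{\mathrm{Y}}|_{U_{\mathbf{q}}}$ are handled correctly — the cleanest route is probably to work with a maximal atypical component (so that $X$ is itself the relevant component), to replace $T$ by $\mathrm{spcl}(\pi_{\mathrm{Y}}(X))$ as the remark on atypical components in $\mathrm{Y}(1)^n$ permits, and to note that over a generic point of $\pi_{\mathrm{Y}}(U_{\mathbf{q}})$ the fibre of $U_{\mathbf{q}}$ has dimension $\dim U_{\mathbf{q}} - \dim V_{\mathbf{q}}'$ while the fibre of $X$ over a generic point of $\pi_{\mathrm{Y}}(X)$ has dimension at most this, giving the required bound after rearranging. (Alternatively, one can bypass the inequality entirely by just intersecting $U_{\mathbf q}$ with $\mathbb C^n\times T$ for the finitely many $T$ that can arise and applying a direct compactness argument mirroring the proof of Corollary~\ref{cor:UMZP}, now in the two-sorted language; but the projection argument is cleaner.)
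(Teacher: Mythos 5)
Your reduction to Corollary~\ref{cor:UMZP} via the family $(V'_{\mathbf{q}})_{\mathbf{q}}$ with $V'_{\mathbf{q}}=\overline{\pi_{\mathrm{Y}}(U_{\mathbf{q}})}$ has a genuine gap in the fibre-dimension bookkeeping, which is precisely the step you flag as ``the only genuinely delicate point.'' Write $d$ for the generic fibre dimension of $\pi_{\mathrm{Y}}\upharpoonright_{U_{\mathbf{q}}}$, so $\dim V'_{\mathbf{q}}=\dim U_{\mathbf{q}}-d$, and write $k_{0}=\dim X-\dim\pi_{\mathrm{Y}}(X)$ for the generic fibre dimension of $\pi_{\mathrm{Y}}\upharpoonright_{X}$. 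From $\dim X>\dim U_{\mathbf{q}}+\dim T-n$ one gets
$\dim\pi_{\mathrm{Y}}(X) > \dim U_{\mathbf{q}}-k_{0}+\dim T-n$,
and to conclude $\dim\pi_{\mathrm{Y}}(X) > \dim V'_{\mathbf{q}}+\dim T-n = \dim U_{\mathbf{q}}-d+\dim T-n$ you would need $k_{0}\le d$. That inequality is \emph{false} in general: the atypical component $X$ typically sits over a \emph{non-generic} locus of $\pi_{\mathrm{Y}}(U_{\mathbf{q}})$ where the fibre of $U_{\mathbf{q}}$ jumps up, and the fibre of $X$ can be as large as the fibre of $U_{\mathbf{q}}$ there. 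Concretely, with $n=2$ take $U_{\mathbf{q}}$ of dimension $3$ dominating $\mathrm{Y}(1)^{2}$ with $d=1$, but having a $2$-dimensional fibre over a special curve $T\subset\mathrm{Y}(1)^{2}$; then the component $X=U_{\mathbf{q}}\cap(\mathbb{C}^{2}\times T)$ has $\dim X=3>\dim U_{\mathbf{q}}+\dim T-2=2$, so $X$ is atypical, yet $\pi_{\mathrm{Y}}(X)=T$ has $\dim T=1=\dim V'_{\mathbf{q}}+\dim T-2$, which is \emph{typical} in $V'_{\mathbf{q}}\cap T$. Atypicality is simply not preserved by projecting $U_{\mathbf{q}}$ as a single object.

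This is exactly why the paper does not work with $\pi_{\mathrm{Y}}(U_{\mathbf{q}})$ alone but first stratifies $U_{\mathbf{q}}$ by fibre dimension: it introduces the constructible sets $V_{\mathbf{q}}^{(k)}$ where the $\pi_{\mathrm{Y}}$-fibre has dimension $\ge k$, applies Corollary~\ref{cor:UMZP} to each projected stratum $\pi_{\mathrm{Y}}(V_{\mathbf{q}}^{(k)})$, and then for a given $X$ with fibre dimension $k_{0}$ compares against the stratum $V_{\mathbf{q}}^{(k_{0})}$ rather than against all of $U_{\mathbf{q}}$. The crucial gain is that \emph{every} fibre of $\pi_{\mathrm{Y}}\upharpoonright_{V_{\mathbf{q}}^{(k_{0})}}$ has dimension $\ge k_{0}$, so $\dim\pi_{\mathrm{Y}}(V_{\mathbf{q}}^{(k_{0})})\le\dim V_{\mathbf{q}}^{(k_{0})}-k_{0}\le\dim U_{\mathbf{q}}-k_{0}$, and the inequality now closes: $\dim\pi_{\mathrm{Y}}(X)=\dim X-k_{0}>\dim U_{\mathbf{q}}-k_{0}+\dim T-n\ge\dim\pi_{\mathrm{Y}}(V_{\mathbf{q}}^{(k_{0})})+\dim T-n$. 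Your parenthetical fallback (``a direct compactness argument in the two-sorted language'') also does not obviously save the day: the compactness step in Corollary~\ref{cor:UMZP} only converts a bounded-complexity/definable-family statement into a finite one, and you would still need a two-sorted analogue of Theorem~\ref{thm:UMZP} (or of the stratified reduction) to feed into it, which the paper does not provide and your sketch does not supply. So the stratification by fibre dimension is not a cosmetic refinement but the essential idea you are missing.
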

\begin{proof}
We follow the proof of \cite[Theorem 11.4]{bays-kirby}. 
Let $U\subseteq Q\times\mathbb{C}^{n}\times\mathrm{Y}(1)^{n}$ be the definable set such that for each $\mathbf{q}\in Q$, $U_{\mathbf{q}} = \left\{(\mathbf{x},\mathbf{y})\in\mathbb{C}^{n}\times\mathrm{Y}(1)^{n} : (\mathbf{q},\mathbf{x},\mathbf{y})\in U\right\}$. 
Given $k\in\left\{0,\ldots,\dim U\right\}$ define
\[U^{(k)}:=\left\{(\mathbf{q},\mathbf{x},\mathbf{y})\in U : \dim(U_{\mathbf{q}}\cap\pi_{\mathrm{Y}}^{-1}(\mathbf{y}))=k\right\}.\]
By definability of dimensions, the $U^{(k)}$ are all constructible subsets of $U$. 
Furthermore, define
\[V^{(k)}:=U^{(k)}\cup U^{(k+1)}\cup\cdots\cup U^{(\dim U)}.\]
Given $\mathbf{q}\in Q$ let $V_{\mathbf{q}}^{(k)}:=\left\{(\mathbf{x},\mathbf{y})\in\mathbb{C}^{n}\times\mathrm{Y}(1)^{n} : (\mathbf{q},\mathbf{x},\mathbf{y})\in V^{(k)}\right\}$. 

For each $k\in\left\{0,\ldots,\dim U\right\}$ let $\mathscr{S}_{k}$ be the finite collection of special subvarieties obtained by applying Corollary \ref{cor:UMZP} to the family $\left(\pi_{\mathrm{Y}}\left(V_{\mathbf{q}}^{(k)}\right)\right)_{\mathbf{q}\in Q}$. 
Set 
\[\mathscr{S} := \bigcup_{k=0}^{\dim U}\mathscr{S}_{k}.\] 

Let $T\subset\mathrm{Y(1)}^{n}$ be a special subvariety, and suppose that $X$ is an atypical component of $U_{\mathbf{q}}\cap (\mathbb{C}^{n}\times T)$. 
Let $k_{0} = \dim X - \dim\pi_{\mathrm{Y}}(X)$, by the fibre-dimension theorem there is a Zariski open subset $X'\subset X$ such that for every $(\mathbf{x},\mathbf{y})\in X'$ we have that $\dim X'\cap\pi_{\mathrm{Y}}^{-1}(\mathbf{y}) = k_{0}$. 
In particular $\dim X' = \dim X$. 
Also $V_{\mathbf{q}}^{(k_{0})}\subseteq U_{\mathbf{q}}$, so
\[\dim X > \dim U_{\mathbf{q}} + \dim T - n \geq \dim V_{\mathbf{q}}^{(k_{0})} + \dim T - n.\]
Let $\mathbf{y}\in\pi_{\mathrm{Y}}(X')$, then $\dim X'\cap\pi_{\mathrm{Y}}^{-1}(\mathbf{y}) = k_{0}$. 
By construction $\dim V_{\mathbf{q}}^{(k_{0})}\cap\pi_{\mathrm{Y}}^{-1}(\mathbf{y})\geq k_{0}$, so by the fibre-dimension theorem we get
\[\dim\pi_{\mathrm{Y}}(X) = \dim \pi_{\mathrm{Y}}(X') > \dim\pi_{\mathrm{Y}}\left(V_{\mathbf{q}}^{(k_{0})}\right) + \dim T - n.\]
Then there is a special subvariety $T_{0}\subset\mathrm{Y}(1)^{n}$ with $\Delta(T_{0})\leq N$ such that $\pi_{\mathrm{Y}}(X')\subseteq T_{0}$ and
\[\dim\pi_{\mathrm{Y}}(X')\leq\dim\pi_{\mathrm{Y}}\left(V_{\mathbf{q}}^{(k_{0})}\cap T_{0}\right) + \dim T\cap T_{0} -\dim T_{0}.\] 
This shows that $X'\subset \mathbb{C}^{n}\times T_{0}$, and since  $\mathbb{C}^{n}\times T_{0}$ is Zariski closed, then $X\subset \mathbb{C}^{n}\times T_{0}$. 
By the definition of $V_{\mathbf{q}}^{(k_{0})}$, the dimension of the fibres of the restriction of $\pi_{\mathrm{Y}}$ to $V_{\mathbf{q}}^{(k_{0})}\cap(\mathbb{C}^{n}\times T_{0})$ is at least $k_{0}$. 
By fibre-dimension theorem we get:
\[\dim X - k_{0}\leq \dim V_{\mathbf{q}}^{(k_{0})}\cap(\mathbb{C}^{n}\times T_{0}) - k_{0} + \dim T\cap T_{0} - \dim T_{0}.\]
Since $V_{\mathbf{q}}^{(k_{0})}\subseteq U_{\mathbf{q}}$, this completes the proof.
\end{proof}

\subsection{Weak MZP}
\label{subsec:weakmzp}
Although MZP is open, Pila and Tsimerman (\cite[\textsection 7]{pila-tsimerman}) showed that, as a consequence of the Ax--Schanuel theorem for $j$, one can obtain a weak form of MZP which states that the atypical components of an algebraic subvariety of $\mathrm{Y}(1)^n$, are contained in finitely many parametric families of proper weakly special subvarieties. 
For the proofs of our main results, we will need the following version of the weak form of MZP which allows for parametric families of algebraic varieties and is ``two-sorted'' like Corollary \ref{cor:horizontalUMZP}. 
\begin{prop}
\label{prop:horizontalboundedcomplexity}
Given a parametric family $(U_{\mathbf{q}})_{\mathbf{q}\in Q}$ of constructible subsets of $\mathbb{C}^{n}\times\mathrm{Y}(1)^{n}$, there is a positive integer $N$ such that for every $q\in Q$, for every weakly special subvariety $T\subset\mathrm{Y}(1)^{n}$ and for every atypical component $X$ of $U_{\mathbf{q}}\cap (\mathbb{C}^{n}\times T)$, there is a proper weakly special subvariety $T_{0}\subset\mathrm{Y}(1)^{n}$ with $\Delta_b(T_{0})\leq N$ such that $X\subseteq \mathbb{C}^{n}\times T_{0}$ and
$$\dim X\leq \dim U_{\mathbf{q}}\cap(\mathbb{C}^{n}\times T_{0}) + \dim T\cap T_{0} - \dim T_{0}.$$ 
\end{prop}
In this section we will present a few technical results centred around the weak form of MZP, which build up to prove Proposition \ref{prop:horizontalboundedcomplexity}. 
We begin with a result of Aslanyan showing the following uniform version of weak MZP.\footnote{That Ax--Schanuel implies a uniform version of a weak form of the Zilber--Pink conjecture holds in very general contexts, see 
\cite[Propositon 2.20]{eterovic-scanlon} and \cite{pila-scanlon}.}

\begin{thm}[Weak MZP, {{\cite[Theorem 5.2]{vahagn}}}]
\label{thm:weakzp}
Let $S$ be a special subvariety of $\mathrm{Y}(1)^{n}$. 
Given a parametric family $(U_{\mathbf{q}})_{\mathbf{q}\in Q}$ of constructible subsets of $S$, there is a finite collection $\mathscr{S}$ of proper special subvarieties of $S$ such that for every $\mathbf{q}$ in $Q$ and for every strongly atypical component $X$ of $U_{\mathbf{q}}$ in $S$, there is $T\in\mathscr{S}$ such that $X\subseteq T$.
\end{thm}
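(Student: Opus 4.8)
The plan is to derive this statement from the Ax--Schanuel theorem for $j$ \cite{pila-tsimerman} by the general mechanism of \cite{pila-scanlon}, namely that Ax--Schanuel-type inequalities upgrade to \emph{uniform} weak Zilber--Pink statements. I would organise the argument in three stages: a pointwise weak Zilber--Pink statement coming directly from Ax--Schanuel, an upgrade to uniformity in the parameter $\mathbf{q}$, and a finiteness count.

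The first stage is the pointwise statement. Fix a single subvariety $W\subseteq S$ and a strongly atypical component $X$ of $W$, say an atypical component of $W\cap T'$ for a special subvariety $T'\subseteq S$. I would show that $X$ is contained in a \emph{proper weakly special} subvariety $T$ of $S$, and this is exactly what Ax--Schanuel supplies: pulling $W$ back along the uniformisation $j\colon\mathbb{H}^{n}\to\mathrm{Y}(1)^{n}$ and applying the Ax--Schanuel inequality to an analytic component of $j^{-1}(W)$ lying above $X$ forces the Zariski closure of that component into a proper subvariety of $\mathbb{H}^{n}$ cut out by $\mathrm{GL}_2^+(\mathbb{Q})$-relations among the coordinates, whose $j$-image is a proper weakly special $T\supseteq X$ (this is essentially \cite[\textsection 7]{pila-tsimerman}). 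The key point, using that $X$ is \emph{strongly} atypical, is that no coordinate can be constant on $T$: if $T$ were defined in part by an equation $Y_{\ell}=d$, then $Y_{\ell}$ would be constant on $X\subseteq T$, contradicting strong atypicality. Hence $T$ is cut out purely by modular relations $\Phi_{N}(Y_{i},Y_{k})=0$, so it is a basic special subvariety (a weakly special subvariety with no constant coordinates contains CM points, hence is special), and there are only finitely many weakly special subvarieties of $S$ of any prescribed complexity $\Delta(T)$.

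The heart of the argument is the second stage: making the previous paragraph uniform in $\mathbf{q}\in Q$, i.e.\ producing a single $N$, depending only on the finite data defining the family $(U_{\mathbf{q}})_{\mathbf{q}\in Q}$, such that for every $\mathbf{q}$ and every strongly atypical component $X$ of $U_{\mathbf{q}}$ the weakly special $T$ above can be taken with $\Delta(T)\leq N$. Here I would follow \cite{pila-scanlon}: apply Ax--Schanuel not to individual members but to the total family, using that the pullbacks $j^{-1}(U_{\mathbf{q}})$, the relevant Zariski closures, the coordinate projections, and the dimension strata all remain definable in the parameter $\mathbf{q}$ — just as the sets $V_{\mathbf{q}}^{(k)}$ are built definably in the proof of Corollary~\ref{cor:horizontalUMZP} — and then invoke an o-minimal counting argument to obtain a complexity bound independent of $\mathbf{q}$.

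Once $N$ is fixed, the third stage is immediate: let $\mathscr{S}$ be the set of all proper weakly special subvarieties of $S$ with no constant coordinates and complexity at most $N$. This set is finite, since such a subvariety is determined by a choice of coordinate identifications together with a finite system $\{\Phi_{N_{i,k}}(Y_{i},Y_{k})=0\}$ with all $N_{i,k}\leq N$, of which there are finitely many; each member of $\mathscr{S}$ is a proper special subvariety of $S$; and by the first two stages every strongly atypical component $X$ of every $U_{\mathbf{q}}$ lies in some $T\in\mathscr{S}$. I expect the genuine difficulty to be the uniformity stage: the pointwise statement is essentially \cite[\textsection 7]{pila-tsimerman} and the finiteness count is bookkeeping, but transferring ``for each $W$, finitely many'' to ``for the whole family, uniformly finitely many'' is where the definability and o-minimality input is really needed, in the same spirit as — but logically independent of — the MZP-based Corollaries~\ref{cor:UMZP} and~\ref{cor:horizontalUMZP}.
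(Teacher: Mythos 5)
The paper does not prove this theorem: it is quoted from Aslanyan~\cite[Theorem 5.2]{vahagn}, with a footnote pointing to Pila--Scanlon~\cite{pila-scanlon} as the general mechanism by which Ax--Schanuel yields uniform weak Zilber--Pink. Your three-stage plan follows precisely that mechanism, so the approach matches what the paper itself defers to, and your identification of the uniformity stage as the genuine technical burden is accurate.

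One conceptual point in your first stage could mislead. You present Ax--Schanuel as what supplies a proper weakly special $T\supseteq X$, but the bare existence of such a $T$ is immediate from the definitions and costs nothing: since $X$ is a strongly atypical component of $U_{\mathbf{q}}\cap T'$ in $S$, necessarily $T'\subsetneq S$ (taking $T'=S$ would give $\dim X>\dim U_{\mathbf{q}}$, impossible), so $\mathrm{wspcl}(X)\subseteq T'$ is already proper, weakly special, and --- by your own correct observation that strong atypicality forbids constant coordinates --- basic special, all without invoking the $j$-function. The actual content of Ax--Schanuel here, once coupled with the o-minimal definability machinery of your second stage, is the \emph{uniform bound} on $\Delta(\mathrm{wspcl}(X))$; that bound is the entire nontrivial part of the theorem, and your second stage does locate it, but the division of labour as written obscures this. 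Two smaller matters: your aside that there are ``only finitely many weakly special subvarieties of $S$ of any prescribed complexity'' is literally false, since complexity $0$ permits arbitrary constant coordinates and hence infinitely many such subvarieties; the finiteness holds for \emph{basic} specials, which is what your third stage correctly restricts to. And the definability arguments in stage two must be carried out after restricting to a fundamental domain for $\Gamma$ in $\mathbb{H}^{n}$, since $j$ itself is not o-minimally definable on all of $\mathbb{H}$; this is standard in the Pila--Scanlon framework but worth making explicit in a full write-up.
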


\begin{cor}
\label{cor:weakmzp}
Let $S$ be a special subvariety of $\mathrm{Y}(1)^{n}$. 
Given a parametric family $(U_{\mathbf{q}})_{\mathbf{q}\in Q}$ of constructible subsets of $S$, there is a finite collection $\mathscr{S}$ of proper special subvarieties of $S$ such that for every $\mathbf{q}$ in $Q$ and for every strongly atypical component $X$ of $U_{\mathbf{q}}$, there is $T_{0}\in\mathscr{S}$ satisfying the following conditions:
\begin{enumerate}[(a)]
    \item $X\subseteq T_{0}$,
    \item $X$ is a typical component of $U\cap\mathrm{spcl}(X) = (U_{\mathbf{q}}\cap T_{0}) \cap (\mathrm{spcl}(X)\cap T_{0})$ in $T_{0}$, that is:
    \begin{equation*}
      \dim X \leq \dim U_{\mathbf{q}}\cap T_{0} + \dim \mathrm{spcl}(X) - \dim T_{0},  
    \end{equation*}
    and 
    \item the intersection $U_{\mathbf{q}}\cap T_{0}$ is atypical in $S$:
    \begin{equation*}
        \dim U_{\mathbf{q}}\cap T_{0} > \dim U_{q} + \dim T_{0} - \dim S.
    \end{equation*}
\end{enumerate}
\end{cor}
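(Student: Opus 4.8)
The goal is to upgrade Theorem~\ref{thm:weakzp} (Weak MZP) so that the single special subvariety $T_0$ witnessing $X \subseteq T_0$ can be chosen to interact with $X$ and its special closure in a controlled, ``typical-inside-$T_0$'' manner, while still being an atypical slice of $S$. The idea is to start from the finite collection $\mathscr{S}_0$ produced by Theorem~\ref{thm:weakzp} applied to the family $(U_{\mathbf q})_{\mathbf q \in Q}$, and then, for each fixed $T \in \mathscr{S}_0$, to iterate. Given a strongly atypical component $X$ of $U_{\mathbf q}$ with $X \subseteq T$, consider $X$ as a subvariety of $T$ (which is itself a special variety, of the same type as $\mathrm{Y}(1)^n$), and look at whether $X$ is still \emph{strongly atypical inside $T$}, i.e. whether $\dim X > \dim(U_{\mathbf q} \cap T) + \dim \mathrm{spcl}_T(X) - \dim T$. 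If it is, apply Theorem~\ref{thm:weakzp} again — this time to the family $(U_{\mathbf q} \cap T)_{\mathbf q \in Q}$ of constructible subsets of $T$ — to get $X$ inside some proper special $T' \subsetneq T$. This strictly decreases dimension, so the process terminates after at most $n$ steps, and one collects all the special subvarieties encountered into a single finite collection $\mathscr{S}$.

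The termination point of this descent is exactly a $T_0$ for which $X$ is \emph{not} strongly atypical inside $T_0$, which gives condition~(b): $\dim X \leq \dim(U_{\mathbf q} \cap T_0) + \dim \mathrm{spcl}_{T_0}(X) - \dim T_0$. Here I must be slightly careful that $\mathrm{spcl}_{T_0}(X)$, the special closure computed inside $T_0$, agrees with $\mathrm{spcl}(X)$ computed in $\mathrm{Y}(1)^n$; but since $T_0$ is special and contains $X$, the smallest special subvariety of $\mathrm{Y}(1)^n$ containing $X$ is contained in $T_0$, so the two notions coincide, and likewise $\mathrm{spcl}(X) = \mathrm{spcl}(X) \cap T_0$ and $U_{\mathbf q} \cap T_0 \cap (\mathrm{spcl}(X) \cap T_0) = U_{\mathbf q} \cap \mathrm{spcl}(X)$ as claimed in the displayed equality in~(b). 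For condition~(c), I want $U_{\mathbf q} \cap T_0$ to be atypical in $S$. At each stage of the descent, if $X \subseteq T_0 \subsetneq S$ and $X$ is an atypical component of $U_{\mathbf q}$ in $S$, then $\dim X > \dim U_{\mathbf q} + \dim T_0 - \dim S$ would already follow if $X$ were an atypical component of $U_{\mathbf q} \cap T_0$ with $T_0 = \mathrm{spcl}(X)$; more directly, since $X \subseteq U_{\mathbf q} \cap T_0 \subseteq U_{\mathbf q}$ and $X$ is an atypical component of $U_{\mathbf q}$ against \emph{some} special variety containing it, combining $\dim X > \dim U_{\mathbf q} + \dim\mathrm{spcl}(X) - \dim S$ with $\dim U_{\mathbf q} \cap T_0 \geq \dim X$ and $\dim\mathrm{spcl}(X) \leq \dim T_0$ yields $\dim U_{\mathbf q}\cap T_0 \geq \dim X > \dim U_{\mathbf q} + \dim T_0 - \dim S$ is not automatic — so instead I should \emph{enforce} (c) during the descent by only ever passing to a $T'$ that is the special closure (inside the current ambient special variety) of the atypical component at hand, for which the corresponding intersection is atypical by the definition of atypical component. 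The cleanest bookkeeping is: at each step replace $S$ by $\mathrm{spcl}_{S_{\mathrm{old}}}(X)$, note this intersection $U_{\mathbf q} \cap \mathrm{spcl}(X)$ is atypical in $S$ (that is the content of $X$ being an atypical component of $U_{\mathbf q}$), and recurse on the smaller family; at the end $T_0$ is the last such special closure and (c) holds by construction.

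The main obstacle I anticipate is making the induction genuinely uniform: Theorem~\ref{thm:weakzp} is applied to a new parametric family at each stage ($(U_{\mathbf q} \cap T)_{\mathbf q}$ as $T$ ranges over the previous finite collection), and I need to check that ``a strongly atypical component of $U_{\mathbf q}$ lying in $T$'' is still a strongly atypical component of $U_{\mathbf q} \cap T$ \emph{as a subvariety of $T$} — this requires that $\mathrm{spcl}$ behaves well under restriction to a special subvariety (true, as noted above, because intersections of special subvarieties are special, Hilbert basis) and that $X$ being a component of $U_{\mathbf q} \cap T_{\mathrm{witness}}$ implies it is a component of $(U_{\mathbf q}\cap T) \cap T_{\mathrm{witness}}$ when $T_{\mathrm{witness}} \subseteq T$. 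There is also a subtlety that the family $(U_{\mathbf q} \cap T)_{\mathbf q}$ should be indexed so that Theorem~\ref{thm:weakzp} applies to constructible subsets of the \emph{fixed} special variety $T$, which is fine since $T$ is one fixed variety from a finite list and we may run the argument separately for each $T$ and take the union of the resulting collections. Since each descent step strictly drops the dimension of the ambient special variety, the recursion has depth $\leq n$, the total collection $\mathscr{S}$ of special subvarieties encountered is finite, and every one of them is proper in $S$ (they are proper special subvarieties of successively smaller special varieties, hence proper in $S$). Conditions (a), (b), (c) then hold for the terminal $T_0$: (a) by construction, (b) because the descent stopped precisely when strong atypicality inside $T_0$ failed, and (c) because $T_0$ was chosen as a special closure for which the intersection with $U_{\mathbf q}$ is atypical in $S$.
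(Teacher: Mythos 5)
Your descent argument for conditions (a) and (b) is essentially the paper's: induct on $\dim S$, apply Theorem~\ref{thm:weakzp} to land in a proper special $T_1\in\mathscr{S}_1$, check whether $X$ is still strongly atypical inside $T_1$, and if so recurse on the family $\left(U_{\mathbf{q}}\cap T_1\right)_{\mathbf{q}}$ inside $T_1$. That part is fine, including the observation that $\mathrm{spcl}(X)$ computed in $\mathrm{Y}(1)^n$ agrees with the one computed in any special $T_1\supseteq X$.

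The gap is in (c). You correctly notice that the naive chain $\dim U_{\mathbf{q}}\cap T_0\geq \dim X > \dim U_{\mathbf{q}}+\dim\mathrm{spcl}(X)-\dim S$ does not give (c) because $\dim\mathrm{spcl}(X)\leq\dim T_0$ goes the wrong way. But the fix you then propose --- rerouting the descent through $\mathrm{spcl}_{S_{\mathrm{old}}}(X)$ at each stage and taking $T_0$ to be ``the last such special closure'' --- destroys uniformity in $\mathbf{q}$: the special closure $\mathrm{spcl}(X)$ depends on the particular atypical component $X$, hence on $\mathbf{q}$, so there are (a priori) infinitely many such special varieties as $\mathbf{q}$ ranges over $Q$, and you lose the finiteness of $\mathscr{S}$ that the statement requires. (Moreover, with $T_0=\mathrm{spcl}(X)$ condition (b) trivialises and no longer carries information.) The correct move is not to touch the descent at all: (c) is a formal consequence of (b) together with strong atypicality of $X$. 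From (b), rearranging,
\begin{equation*}
\dim X - \dim\mathrm{spcl}(X)\ \leq\ \dim U_{\mathbf{q}}\cap T_0 - \dim T_0,
\end{equation*}
while $X$ being an atypical component gives
\begin{equation*}
\dim U_{\mathbf{q}} - \dim S\ <\ \dim X - \dim\mathrm{spcl}(X).
\end{equation*}
Chaining these two inequalities yields $\dim U_{\mathbf{q}}\cap T_0 > \dim U_{\mathbf{q}} + \dim T_0 - \dim S$, which is exactly (c). So you should run the descent only to guarantee (a) and (b), and then read off (c) at the end.
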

\begin{proof}
We will first show that there is a family $\mathscr{S}$ satisfying conditions (a) and (b). 
We proceed by induction on the dimension of $S$. 
When $\dim S=0$, then $S$ is a just a point and there is nothing to prove as $U_{\mathbf{q}}=S$ for all $\mathbf{q}\in Q$, so no $U_{\mathbf{q}}$ contains an atypical component. 
Now assume that $\dim S>0$. Let $\mathscr{S}_{1}$ be the finite collection of proper special subvarieties of $S$ obtained by applying Theorem \ref{thm:weakzp} to $(U_{\mathbf{q}})_{\mathbf{q}\in Q}$. 

Suppose that $X$ is a strongly atypical component of $U_{\mathbf{q}}$, then
\[\dim X > \dim U_{\mathbf{q}} + \dim \mathrm{spcl}(X) - \dim S.\]
Choose $T_{1}\in\mathscr{S}_{1}$ such that $X\subseteq T_{1}$. 
If
\[\dim X > \dim U_{\mathbf{q}}\cap T_{1} + \dim \mathrm{spcl}(X)\cap T_{1} - \dim T_{1},\]
then $X$ is a strongly atypical component of $(U_{\mathbf{q}}\cap T_{1})\cap(T\cap T_{1})$ in $T_{1}$. 
Since $\dim T_{1} < \dim S$ we can apply the induction hypothesis on $T_{1}$ to the family $\left(U_{\mathbf{q}}\cap T_{1}\right)_{\mathbf{q}\in Q}$ to obtain a finite collection $\mathscr{S}_{2}$ of proper special subvarieties of $T_{1}$ (which in turn are special subvarieties of $S$) such that there is $T_{0}\in\mathscr{S}_{2}$ satisfying that $X\subseteq T_{0}$ and the intersection $(U_{\mathbf{q}}\cap T_{0}) \cap (\mathrm{spcl}(X)\cap T_{0})$ is typical in $T_{0}$. 

Therefore, the collection $\mathscr{S}$ obtained by taking the union of $\mathscr{S}_{1}$ with the finite collections obtained by the induction hypothesis applied to $\left(U_{\mathbf{q}}\cap T_{0}\right)_{\mathbf{q}\in Q}$ for every $T_{0}\in\mathscr{S}$ satisfies conditions (a) and (b).

We now check that $\mathscr{S}$ satisfies (c). 
Suppose that $X$ is a strongly atypical component of $U_{\mathbf{q}}$, and let $T_{0}\in\mathscr{S}$ be an element satisfying (a) and (b). 
Observe that $\mathrm{spcl}(X)\cap T_{0} = \mathrm{spcl}(X)$. 
From (b) we get that 
    \begin{equation*}
        \dim X - \dim \mathrm{spcl}(X)\leq  \dim U_{\mathbf{q}}\cap T_{0}  - \dim T_{0}.
    \end{equation*}
    Combining this with the fact that $X$ is an atypical component of $U_{\mathbf{q}}$ gives
    \begin{equation*}
        \dim U_{\mathbf{q}} - \dim S < \dim X - \dim \mathrm{spcl}(X) \leq \dim U_{\mathbf{q}}\cap T_{0}  - \dim T_{0},
    \end{equation*}
    from which we get
    \[\dim U_{\mathbf{q}}\cap T_{0} > \dim U_{\mathbf{q}} + \dim T_{0} - \dim S,\]
    thus confirming (c).
\end{proof}

\begin{cor}
\label{cor:basicspecials}
Let $(S_{\mathbf{q}})_{\mathbf{q}\in Q}$ be a parametric family of proper weakly special subvarieties of a special subvariety $S$ of $\mathrm{Y}(1)^{n}$. 
Then there are only finitely many members of this family which are basic special subvarieties. 
\end{cor}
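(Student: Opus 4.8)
The plan is to reduce the statement to Corollary~\ref{cor:weakmzp} by working inside a large enough ambient special variety. A basic special subvariety is a weakly special subvariety with \emph{no} constant coordinates, so it contains no special points of $\mathrm{Y}(1)$ in any fixed coordinate; the point is that such a subvariety, regarded as sitting inside $S$, must be ``atypical for size reasons'' unless $S$ itself is already small. More precisely, a basic special subvariety $T\subseteq S\subseteq\mathrm{Y}(1)^{n}$ is a component of an intersection defined purely by modular polynomial equations $\Phi_{N}(Y_{i},Y_{k})=0$, so $\dim T$ equals the number of ``free'' coordinates of $T$; since the $S_{\mathbf q}$ are \emph{proper} subvarieties of $S$, we have $\dim T<\dim S$. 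The aim is to show that each such $T$ is a strongly atypical component of $S_{\mathbf q}$ (which equals $T$), and then invoke the finiteness provided by Corollary~\ref{cor:weakmzp}.

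First I would set up the family: $(S_{\mathbf q})_{\mathbf q\in Q}$ is a parametric family of constructible (indeed algebraic) subsets of $S$ in the sense of the Definition preceding Example~\ref{ex:parametric family}, so we may apply Corollary~\ref{cor:weakmzp} to it, obtaining a finite collection $\mathscr S$ of proper special subvarieties of $S$. Next, suppose $S_{\mathbf q}$ is a basic special subvariety $T$. Since $T$ is special with no constant coordinates, $T$ has no constant coordinate, so $T$ is \emph{strongly atypical} in $S$ precisely when it is atypical in $S$; and since $T=\mathrm{spcl}(T)$ (a special subvariety is its own special closure) while $T\subsetneq S$, the inequality $\dim T=\dim T\cap T>\dim T+\dim T-\dim S$ holds, exactly as in the Example following the definition of atypical component. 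Hence $T$ is a strongly atypical component of $S_{\mathbf q}=T$ in $S$, so by Corollary~\ref{cor:weakmzp}(a) there is $T_{0}\in\mathscr S$ with $T\subseteq T_{0}$.

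Now I would use condition (b) of Corollary~\ref{cor:weakmzp} to pin $T$ down. Since $T$ is itself special, $\mathrm{spcl}(T)=T$, and (b) gives
\begin{equation*}
\dim T\leq \dim (T\cap T_{0})+\dim T-\dim T_{0},
\end{equation*}
i.e.\ $\dim T_{0}\leq\dim(T\cap T_{0})$. As $T\subseteq T_{0}$ forces $T\cap T_{0}=T$, we get $\dim T_{0}\leq\dim T$, hence $\dim T_{0}=\dim T$. Since $T_{0}$ has finitely many irreducible components and $T$ is one irreducible closed subset of the special variety $T_{0}$ of the same dimension, $T$ must be an irreducible component of $T_{0}$. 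Thus every basic special $S_{\mathbf q}$ is an irreducible component of one of the finitely many members of $\mathscr S$, and each member of $\mathscr S$ has only finitely many irreducible components; therefore there are only finitely many possibilities for $T$, which is the assertion.

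The main obstacle I anticipate is the bookkeeping around condition (b): one must be careful that $\mathrm{spcl}(X)$ really does equal $X$ when $X$ is already special (this is where it is essential that we are dealing with \emph{special}, not merely weakly special, subvarieties $S_{\mathbf q}$), and that the finiteness of $\mathscr S$ in Corollary~\ref{cor:weakmzp} is genuinely uniform over the whole parametric family rather than depending on $\mathbf q$. A secondary subtlety is verifying that a basic special subvariety, having no constant coordinates, is automatically strongly atypical rather than merely atypical; but this is immediate from the definitions once one observes $T=\mathrm{spcl}(T)\subsetneq S$.
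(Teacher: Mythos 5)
Your proposal is correct and follows essentially the same route as the paper: apply Corollary~\ref{cor:weakmzp} to $(S_{\mathbf q})_{\mathbf q\in Q}$, observe that a proper basic special $S_{\mathbf q}$ is a strongly atypical component of itself in $S$, and then use condition (b) to force $\dim T_0\leq\dim S_{\mathbf q}$, hence $T_0=S_{\mathbf q}$. The only (harmless) inefficiency is your final step: you invoke finiteness of irreducible components of $T_0$, but by definition a special subvariety of $\mathrm{Y}(1)^{n}$ is irreducible, so $S_{\mathbf q}\subseteq T_0$ of equal dimension gives $S_{\mathbf q}=T_0$ directly, as the paper notes.
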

\begin{proof}
Let $\mathscr{S}$ be the finite family of proper special subvarieties of $S$ obtained by applying Corollary \ref{cor:weakmzp} to the family $(S_{\mathbf{q}})_{\mathbf{q}\in Q}$. 
Suppose $\mathbf{q}\in Q$ is such that $S_{\mathbf{q}}$ is a proper basic special subvariety of $S$. 
Then 
\[\dim S_{\mathbf{q}}\cap S_{\mathbf{q}} = \dim S_{\mathbf{q}} > \dim S_{\mathbf{q}} + \dim S_{\mathbf{q}} - \dim S,\]
which shows that $S_{\mathbf{q}}$ is a strongly atypical component of itself in $S$. 
There is $T_{0}\in\mathscr{S}$ such that $S_{q}\subseteq T_{0}$ and
\[\dim S_{\mathbf{q}}\leq \dim S_{\mathbf{q}}\cap T_{0} + \dim S_{\mathbf{q}} - \dim T_{0}.\]
From this we obtain that $0 \leq \dim S_{\mathbf{q}}\cap T_{0} - \dim T_{0}$, and so we obtain that $T_{0} = S_{\mathbf{q}}$. Since $\mathscr{S}$ is a finite set, the result is proven.
\end{proof}

We can now get the following result with the same proof used in Corollary \ref{cor:horizontalUMZP}.

\begin{thm}[Two-sorted weak MZP]
\label{thm:horizontalweakzp}
Given a parametric family $(U_{\mathbf{q}})_{\mathbf{q}\in Q}$ of algebraic subvarieties of $\mathbb{C}^{n}\times\mathrm{Y}(1)^n$, there is a finite collection $\mathscr{S}$ of proper special subvarieties of $\mathrm{Y}(1) ^{n}$ such that for every $\mathbf{q}\in Q$, every proper special subvariety $T$ of $\mathrm{Y}(1)^n$ and every atypical irreducible component $X$ of $U_{\mathbf{q}}\cap (\mathbb{C}^n \times T)$ satisfying that $\pi_{\mathrm{Y}}(X)$ has no constant coordinates, there is $T_0$ in $\mathscr{S}$ such that $X\subseteq\mathbb{C}^{n}\times T_0$. 
\end{thm}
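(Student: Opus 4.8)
The plan is to run the proof of Corollary~\ref{cor:horizontalUMZP} essentially verbatim, changing only one ingredient: the fibrewise image families are fed into the unconditional weak statement (Corollary~\ref{cor:weakmzp}, which rests only on Ax--Schanuel) rather than into Corollary~\ref{cor:UMZP} (which needs the full MZP). The extra hypothesis that $\pi_{\mathrm{Y}}(X)$ has no constant coordinate is exactly what legitimises the swap: it forces the subvariety of $\mathrm{Y}(1)^{n}$ obtained by projecting $X$ to be \emph{strongly} atypical, which is the form of input that Corollary~\ref{cor:weakmzp} handles.

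Concretely, I would realise the family as a definable set $U\subseteq Q\times\mathbb{C}^{n}\times\mathrm{Y}(1)^{n}$ and, for each $k\in\{0,\ldots,\dim U\}$, form the constructible sets
$$U^{(k)}:=\left\{(\mathbf{q},\mathbf{x},\mathbf{y})\in U : \dim\left(U_{\mathbf{q}}\cap\pi_{\mathrm{Y}}^{-1}(\mathbf{y})\right)=k\right\},\qquad V^{(k)}:=U^{(k)}\cup\cdots\cup U^{(\dim U)},$$
exactly as in the proof of Corollary~\ref{cor:horizontalUMZP}. Then $\bigl(\pi_{\mathrm{Y}}(V^{(k)}_{\mathbf{q}})\bigr)_{\mathbf{q}\in Q}$ is a parametric family of constructible subsets of $\mathrm{Y}(1)^{n}$; applying Corollary~\ref{cor:weakmzp} with $S=\mathrm{Y}(1)^{n}$ to each of these gives finite collections $\mathscr{S}_{k}$ of proper special subvarieties of $\mathrm{Y}(1)^{n}$, and I set $\mathscr{S}:=\bigcup_{k=0}^{\dim U}\mathscr{S}_{k}$, which is again finite.

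Now let $X$ be an atypical component of $U_{\mathbf{q}}\cap(\mathbb{C}^{n}\times T)$ with $\pi_{\mathrm{Y}}(X)$ having no constant coordinate, and put $k_{0}:=\dim X-\dim\pi_{\mathrm{Y}}(X)$. The fibre-dimension theorem gives a Zariski-open dense $X'\subseteq X$ whose $\pi_{\mathrm{Y}}$-fibres all have dimension $k_{0}$; one checks that $X'\subseteq V^{(k_{0})}_{\mathbf{q}}$ and that $V^{(k_{0})}_{\mathbf{q}}$ is a union of full $\pi_{\mathrm{Y}}$-fibres of $U_{\mathbf{q}}$, each of dimension $\geq k_{0}$, so $\dim V^{(k_{0})}_{\mathbf{q}}\geq\dim\pi_{\mathrm{Y}}(V^{(k_{0})}_{\mathbf{q}})+k_{0}$. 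Feeding this into $\dim X>\dim U_{\mathbf{q}}+\dim T-n\geq\dim V^{(k_{0})}_{\mathbf{q}}+\dim T-n$ together with $\dim X=\dim\pi_{\mathrm{Y}}(X')+k_{0}$ yields
$$\dim\pi_{\mathrm{Y}}(X')>\dim\pi_{\mathrm{Y}}\bigl(V^{(k_{0})}_{\mathbf{q}}\bigr)+\dim T-n.$$
Hence the irreducible component $Y$ of $\overline{\pi_{\mathrm{Y}}(V^{(k_{0})}_{\mathbf{q}})}\cap T$ that contains $\pi_{\mathrm{Y}}(X')$ satisfies $\dim Y>\dim\overline{\pi_{\mathrm{Y}}(V^{(k_{0})}_{\mathbf{q}})}+\dim T-n$, so $Y$ is an atypical component of $\pi_{\mathrm{Y}}(V^{(k_{0})}_{\mathbf{q}})$ cut out by the special subvariety $T$; and since $\pi_{\mathrm{Y}}(X')\subseteq Y$ inherits from $\pi_{\mathrm{Y}}(X)$ the property of having no constant coordinate, $Y$ has none either, i.e.\ $Y$ is \emph{strongly} atypical. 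Corollary~\ref{cor:weakmzp}, applied to the family $\bigl(\pi_{\mathrm{Y}}(V^{(k_{0})}_{\mathbf{q}})\bigr)_{\mathbf{q}\in Q}$, then produces $T_{0}\in\mathscr{S}_{k_{0}}\subseteq\mathscr{S}$ with $Y\subseteq T_{0}$, whence $\pi_{\mathrm{Y}}(X)\subseteq\overline{\pi_{\mathrm{Y}}(X')}\subseteq T_{0}$ and therefore $X\subseteq\mathbb{C}^{n}\times T_{0}$, which is what is wanted.

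I do not expect a serious obstacle: this is a transcription of the proof of Corollary~\ref{cor:horizontalUMZP}. The points needing a little care are the standard definability-of-dimension facts that make the $U^{(k)}$ constructible (so that Corollary~\ref{cor:weakmzp} applies to the image families), the fibre-dimension inequality $\dim V^{(k_{0})}_{\mathbf{q}}\geq\dim\pi_{\mathrm{Y}}(V^{(k_{0})}_{\mathbf{q}})+k_{0}$, and the verification that ``no constant coordinate'' descends from $\pi_{\mathrm{Y}}(X)$ to the enclosing component $Y$ --- this last point, trivial as it is, is the one genuinely new to the weak two-sorted setting and is exactly the trigger for the strong-atypicality clause of Corollary~\ref{cor:weakmzp}.
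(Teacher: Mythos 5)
Your proposal is correct and matches the paper's own proof, which simply says to run the argument of Corollary~\ref{cor:horizontalUMZP} again; you have carried out that transcription faithfully, replacing the appeal to Corollary~\ref{cor:UMZP} by Corollary~\ref{cor:weakmzp} and correctly identifying the one point where the new hypothesis intervenes, namely that ``no constant coordinate on $\pi_{\mathrm{Y}}(X)$'' descends to $\pi_{\mathrm{Y}}(X')$ and hence to the enclosing component $Y$ (a subvariety with a constant coordinate forces the same constancy on any subset), so that $Y$ is strongly atypical and the weak, Ax--Schanuel-based statement applies.
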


Although Theorem \ref{thm:weakzp} and Corollaries \ref{cor:weakmzp} and \ref{cor:basicspecials} only speak about strongly atypical intersections, one can get the following result about general atypical intersections.

\begin{prop}[{{\cite[Proposition 3.4]{vahagn3}}}]
\label{prop:boundedcomplexity}
Let $S$ be a special subvariety of $\mathrm{Y}(1)^{n}$. 
Given a parametric family $(U_{\mathbf{q}})_{\mathbf{q}\in Q}$ of constructible subsets of $S$, there is a positive integer $N$ such that for every $\mathbf{q}\in Q$ and for every weakly special subvariety $T$ of $S$, if $X$ is an atypical component of $U_{\mathbf{q}}\cap T$ in $S$, then there is a proper weakly special subvariety $T_{0}$ of $S$ satisfying the following conditions: 
\begin{enumerate}[(a)]
    \item $\Delta_b(T_{0})\leq N$,
    \item $X\subseteq T_{0}$,
    \item $X$ is a typical component of $(U_{\mathbf{q}}\cap T_{0})\cap (T\cap T_{0})$ in $T_{0}$:
    \begin{equation*}
        \dim X \leq \dim U_{\mathbf{q}}\cap T_{0} + \dim \mathrm{wspcl}(X) - \dim T_{0},
    \end{equation*}
    and 
    \item the intersection $U_{\mathbf{q}}\cap T_{0}$ is atypical in $S$:
    \begin{equation*}
        \dim U_{\mathbf{q}}\cap T_{0} > \dim U_{\mathbf{q}} + \dim T_{0} - \dim S.
    \end{equation*}
\end{enumerate}
\end{prop}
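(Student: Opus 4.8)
The plan is to deduce Proposition~\ref{prop:boundedcomplexity} from Corollary~\ref{cor:weakmzp} by first reducing the statement about \emph{all} atypical components to one about \emph{strongly} atypical components, and then packaging weakly special subvarieties of $S$ with prescribed constant coordinates into finitely many auxiliary parametric families indexed over products of Hecke orbits. The key point is that a weakly special $T\subseteq S$ is determined by a choice of modular relations together with a choice of constant values for some subset of the coordinates; the ``constant'' coordinates split off a sub-product of $\mathrm{Y}(1)$-factors, and on the remaining factors $T$ becomes a \emph{basic} (hence special) subvariety. So I would stratify $\{1,\dots,n\}$ according to which coordinates are constant on $X$ (equivalently, on $\mathrm{wspcl}(X)$), and on each stratum work inside the corresponding lower-dimensional special subvariety $S'$ obtained by projecting away (or fixing) those coordinates.

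First I would fix a subset $I\subseteq\{1,\dots,n\}$ and consider the coordinate projection $p_I$ that forgets the $I$-coordinates; inside $p_I(S)$, the image $p_I(X)$ of an atypical component $X$ on which exactly the $I$-coordinates are constant is a \emph{strongly} atypical component of $p_I(U_{\mathbf{q}}\cap(\text{fibre over the constants}))$, because forgetting constant coordinates preserves all the relevant dimension counts. The subtlety is that the ``fibre over the constants'' depends on the constant values, which are not a priori bounded; but by Corollary~\ref{cor:basicspecials} (applied to the family of all weakly special $S_{\mathbf{q}}$ arising this way) only finitely many of the relevant basic special subvarieties occur, which is exactly the mechanism that forces the complexity bound $\Delta(T_0)\le N$. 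Concretely, I would build, for each $I$, a parametric family $(U'_{\mathbf{q}'})$ of constructible subsets of $p_I(S)$ whose members are the images of the $U_{\mathbf{q}}$ intersected with fibres, apply Corollary~\ref{cor:weakmzp} to get a finite collection $\mathscr{S}'$ of proper special subvarieties of $p_I(S)$ satisfying (a)--(c) of that corollary, and then pull back: $T_0$ is the preimage of the relevant $T_0'\in\mathscr{S}'$ intersected with the slice $\{Y_i = (\text{const})_i : i\in I\}$. The complexity bound then comes from the finiteness of $\mathscr{S}'$ together with the finiteness of the set of admissible constant-tuples provided by Corollary~\ref{cor:basicspecials} / Remark~\ref{rem:heckeorbit}.

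The dimension bookkeeping then needs to be transported back through $p_I$: conditions (c) and (d) for $T_0'$ in $p_I(S)$ translate into conditions (c) and (d) for $T_0$ in $S$ because $\dim T_0 = \dim T_0' + \#I$ (the slice adds no dimension once the constants are fixed... wait, fixing constants \emph{drops} $\#I$, so $\dim T_0=\dim T_0'$), and likewise $\dim S = \dim p_I(S) + \#I$ accounting does not quite balance --- so the careful point is that I must instead work with the subvariety $S\cap\{Y_i=(\text{const})_i\}$ rather than with $p_I(S)$, and these two are isomorphic, which is what makes the translation clean. Condition (c) of the Proposition (typicality, now phrased with $\mathrm{wspcl}$ rather than $\mathrm{spcl}$) follows because on the complement of the constant coordinates $\mathrm{wspcl}(X)$ corresponds exactly to $\mathrm{spcl}(p_I(X))$, so condition (b) of Corollary~\ref{cor:weakmzp} transfers verbatim. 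Finally, taking $N$ to be the maximum over the finitely many $I\subseteq\{1,\dots,n\}$ and over the finitely many members of each $\mathscr{S}'$ of the complexities of the resulting $T_0$ gives the uniform bound.

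The main obstacle I anticipate is precisely the handling of the unbounded constant coordinates: \emph{a priori} nothing prevents a weakly special $T$ with a constant coordinate equal to some wildly complicated algebraic number, and one must see that when such a $T$ produces an atypical component it can be replaced by one whose constants lie in a controlled (finite) set --- this is where Corollary~\ref{cor:basicspecials}, and behind it Remark~\ref{rem:heckeorbit} on finiteness of bounded-degree Hecke orbits, does the essential work, and getting the parametric family set up so that Corollary~\ref{cor:basicspecials} applies with the right index set is the delicate bookkeeping step. Everything else is a routine, if somewhat lengthy, translation of dimension inequalities through coordinate projections and slices.
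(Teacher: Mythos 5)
Your high-level plan --- stratify by the constant coordinates of $X$, pass to the fibre $S_{\mathbf{c}}=S\cap\mathrm{pr}_{\mathbf{i}}^{-1}(\mathbf{c})$, project away the constant coordinates to reduce to a strongly atypical component, invoke Corollary~\ref{cor:weakmzp}, and use Corollary~\ref{cor:basicspecials} / Remark~\ref{rem:heckeorbit} to control the finitely many admissible constant tuples $\mathbf{c}$ --- is the same approach as the paper's proof, and you have correctly isolated the crucial difficulty (controlling the unbounded constants). The paper organises this as an induction on $n$, applying the full Proposition to the family $\left(\mathrm{pr}_{\mathbf{k}}(U_{\mathbf{q}}\cap S_{\mathbf{c}})\right)_{\mathbf{q}\in Q}$ inside $\mathrm{pr}_{\mathbf{k}}(S_{\mathbf{c}})$; your variant of applying Corollary~\ref{cor:weakmzp} directly to the projected family works too, because (once the constant coordinates are peeled off) the projected component has no constant coordinates and $\mathrm{pr}_{\mathbf{k}}(S_{\mathbf{c}})$ is basic special, so Corollary~\ref{cor:weakmzp} applies.

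There is, however, one genuine gap in the reduction. You assert that $p_I(X)$ is a \emph{strongly atypical} component of the projected intersection, but this is not automatic. After removing the constant coordinates the ambient shrinks from $S$ to $\mathrm{pr}_{\mathbf{k}}(S_{\mathbf{c}})$ (of dimension $\dim S_{\mathbf{c}}$), and atypicality must be re-checked against this smaller ambient. If $\dim X\leq\dim(U_{\mathbf{q}}\cap S_{\mathbf{c}})+\dim T-\dim S_{\mathbf{c}}$, the projected component is \emph{typical} in $\mathrm{pr}_{\mathbf{k}}(S_{\mathbf{c}})$ and Corollary~\ref{cor:weakmzp} gives you nothing; the correct move here is simply to take $T_0=S_{\mathbf{c}}$ itself (which has complexity $0$, so still satisfies the bound). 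This also covers the degenerate case where $X$ is a point, so every coordinate is constant and there is no projection at all. Relatedly, the verification of condition (d) after pulling back $T_1$ to $T_0=\mathrm{pr}_{\mathbf{k}}^{-1}(T_1)\cap S_{\mathbf{c}}$ is not a ``routine translation'': it splits according to whether $U_{\mathbf{q}}\cap S_{\mathbf{c}}$ is itself atypical or typical in $S$, with different chains of inequalities in each case, and this case analysis is exactly what the paper's two-part argument is for. So your proposal is correct in outline but needs the explicit dichotomy on $S_{\mathbf{c}}$ (atypical vs.\ typical) and the escape $T_0=S_{\mathbf{c}}$ before it becomes a proof.
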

\begin{proof}
The proof requires a few more calculations than the proof given in \cite{vahagn3}. 
We proceed by induction on $n$, the case $n=0$ being trivial. 
Let $\mathscr{S}$ be the finite family of special subvarieties given by Corollary \ref{cor:weakmzp} applied to $(U_{\mathbf{q}})_{\mathbf{q}\in Q}$. 
Let $N_{\mathscr{S}}$ be the maximal complexity of the elements of $\mathscr{S}$.

Suppose that for some $\mathbf{q}\in Q$ and some weakly special subvariety $T\subset S$, the intersection $U_{\mathbf{q}}\cap T$ contains an atypical component $X$. 
We may assume that $T = \mathrm{wspcl}(X)$. 

If $X$ is strongly atypical (i.e.~has no constant coordinates), then $\mathrm{spcl}(X) = \mathrm{wspcl}(X)$ and so we know that there is $T_{0}\in\mathscr{S}$ satisfying (b), (c) and (d). 
By construction, $\Delta_b(T_{0})\leq N_{\mathscr{S}}$, which verifies (a). 

Assume now that $X$ is not strongly atypical. Let $\mathbf{i}=(i_{1},\ldots,i_{m})$ be the tuple of all of the coordinates which are constant on $X$, let $\mathbf{c}\in\mathrm{Y}(1)^{m}$ be such that $\mathrm{pr}_{\mathbf{i}}(X) = \left\{\mathbf{c}\right\}$, and define
\[S_{\mathbf{c}}:= S\cap \mathrm{pr}_{\mathbf{i}}^{-1}(\mathbf{c}),\] 
that is, $S_{\mathbf{c}}$ is the fibre in $S$ over $\mathbf{c}$. 
As explained in \cite[Proposition 3.4]{vahagn3}, $S_{\mathbf{c}}$ is an irreducible variety, so it is weakly special. 
Observe that as $T$ is the weakly special closure of $X$, then $T\subset S_{\mathbf{c}}$. 
When $\mathbf{i}$ is not all of $\mathbf{n}$, let $\mathbf{k} = \mathbf{n}\setminus\mathbf{i}$. 
We consider now the possible cases.

\textbf{Suppose that} 
\[\dim U_{\mathbf{q}}\cap S_{\mathbf{c}} > \dim U_{\mathbf{q}} + \dim S_{\mathbf{c}} -\dim S.\] 
If $\dim X\leq \dim U_{\mathbf{q}}\cap S_{\mathbf{c}} + \dim T - \dim S_{\mathbf{c}}$,  then we can let $T_{0}$ be $S_{\mathbf{c}}$, since $\Delta_b(S_{\mathbf{c}})=0$. 
In particular, this would be the case if $X$ happens to be a single point, in which case $\mathbf{i} = \mathbf{n}$. 

If instead $\dim X> \dim U_{\mathbf{q}}\cap S_{\mathbf{c}} + \dim T - \dim S_{\mathbf{c}}$, then $\mathbf{i} \neq \mathbf{n}$. 
Consider the projection $\mathrm{pr}_{\mathbf{k}}:\mathrm{Y}(1)^{n}\to\mathrm{Y}(1)^{n-m}$. 
Since we have that $X\subseteq T\subseteq S_{\mathbf{c}}$, then $\dim X = \dim\mathrm{pr}_{\mathbf{k}}(X)$, $\dim T = \dim\mathrm{pr}_{\mathbf{k}}(T)$ and $\dim U_{\mathbf{q}}\cap S_{\mathbf{c}} = \dim\mathrm{pr}_{\mathbf{k}}(U_{\mathbf{q}}\cap S_{\mathbf{c}})$. 
Also, since $S_{\mathbf{c}}\subseteq S$, then $\dim S_{\mathbf{c}} = \dim \mathrm{pr}_{\mathbf{k}}(S_{\mathbf{c}})\leq\dim\mathrm{pr}_{\mathbf{k}}(S)$. 
Let $X_{1}$ be the irreducible component of $\mathrm{pr}_{\mathbf{k}}(U_{\mathbf{q}}\cap S_{\mathbf{c}})\cap\mathrm{pr}_{\mathbf{k}}(T)$ containing $\mathrm{pr}_{\mathbf{k}}(X)$, and observe that $\mathrm{pr}_{\mathbf{k}}(T)$ is a weakly special subvariety of $\mathrm{pr}_{\mathbf{k}}(S)$. 
Thus
\begin{align*}
    \dim X_{1}\geq \dim\mathrm{pr}_{\mathbf{k}}(X) &= \dim X\\
    &> \dim U_{\mathbf{q}}\cap S_{\mathbf{c}} + \dim T - \dim S_{\mathbf{c}}\\
    & = \dim\mathrm{pr}_{\mathbf{k}}(U_{\mathbf{q}}\cap S_{\mathbf{c}}) + \dim\mathrm{pr}_{\mathbf{k}}(T) -\dim\mathrm{pr}_{\mathbf{k}}(S_{\mathbf{c}}),
\end{align*}
showing that $X_{1}$ is atypical in $\mathrm{pr}_{\mathbf{k}}(U_{\mathbf{q}}\cap S_{\mathbf{c}})\cap\mathrm{pr}_{\mathbf{k}}(T)$. 
We also remark that by construction $X_{1}$ is in fact strongly atypical, and so $\mathrm{pr}_{\mathbf{k}}(S_{\mathbf{c}})$ must in fact be a basic special subvariety of $\mathrm{pr}_{\mathbf{k}}(S)$. 
The collection $\left(\mathrm{pr}_{\mathbf{k}}(S_{\mathbf{c}})\right)_{\mathbf{c}\in\mathrm{Y}(1)^{m}}$ is a parametric family of weakly special subvarieties of $\mathrm{pr}_{\mathbf{k}}(S)$. By definability of dimensions, we may consider the parametric subfamily of those elements which are properly contained in $\mathrm{pr}_{\mathbf{k}}(S)$ (if there are any), and so by Corollary \ref{cor:basicspecials} among the whole family $\left(\mathrm{pr}_{\mathbf{k}}(S_{\mathbf{c}})\right)_{\mathbf{c}\in\mathrm{Y}(1)^{m}}$ we will find only finitely many basic special subvarieties. 
Specifically we can find a finite set $C\subset\mathrm{Y}(1)^{m}$ such that every basic special subvariety found in $\left(\mathrm{pr}_{\mathbf{k}}(S_{\mathbf{c}})\right)_{\mathbf{c}\in\mathrm{Y}(1)^{m}}$ can be realised as $S_{\mathbf{c}}$ for some $\mathbf{c}\in C$. 

We can apply the induction hypothesis to the family $\left(\mathrm{pr}_{\mathbf{k}}(U_{\mathbf{q}}\cap S_{\mathbf{c}})\right)_{\mathbf{q}\in Q}$ of constructible subsets of $\mathrm{pr}_{\mathbf{k}}(S_{\mathbf{c}})$. 
In this way we find a natural number $N_{\mathbf{k},\mathbf{c}}$ and a proper weakly special subvariety $T_{1}$ of $\mathrm{pr}_{\mathbf{k}}(S_{\mathbf{c}})$ such that
\begin{enumerate}[(i)]
    \item $\Delta_b(T_{1})\leq N_{\mathbf{k}}$,
    \item $X_{1}\subseteq T_{1}$,
    \item $\dim X_{1} \leq \dim \mathrm{pr}_{\mathbf{k}}(U_{\mathbf{q}}\cap S_{\mathbf{c}})\cap T_{1} + \dim \mathrm{wspcl}(X_{1}) - \dim T_{1}$, and 
    \item $\dim \mathrm{pr}_{\mathbf{k}}(U_{\mathbf{q}}\cap S_{\mathbf{c}})\cap T_{1} > \dim \mathrm{pr}_{\mathbf{k}}(U_{\mathbf{q}}\cap S_{\mathbf{c}}) + \dim T_{1} - \dim \mathrm{pr}_{\mathbf{k}}(S_{\mathbf{c}})$.
\end{enumerate}
Let $T_{0}:=\mathrm{pr}_{\mathbf{k}}^{-1}(T_{1})\cap S_{\mathbf{c}}$. 
Then $T_{0}$ is a weakly special subvariety of $S$ with the property that $\Delta_b(T_{0}) = \Delta_b(T_{1})\leq N_{\mathbf{k}}$, $\dim T_{0} = \dim T_{1}$ and $X\subseteq T_{0}$. 
Observe that since $X_{1}\subseteq\mathrm{pr}_{\mathbf{k}}(T)$ and $\mathrm{pr}_{\mathbf{k}}(T)$ is weakly special, then $\mathrm{wspcl}(X_{1})\subseteq\mathrm{pr}_{\mathbf{k}}(T)$. 
Also, one can readily check that $\mathrm{pr}_{\mathbf{k}}(U_{\mathbf{q}}\cap T_{0}) = \mathrm{pr}_{\mathbf{k}}(U_{\mathbf{q}}\cap S_{\mathbf{c}})\cap T_{1}$. So using (iii) we get:
\begin{align*}
    \dim X = \dim\mathrm{pr}_{\mathbf{k}}(X) &\leq \dim X_{1}\\
    &\leq \dim \mathrm{pr}_{\mathbf{k}}(U_{\mathbf{q}}\cap S_{\mathbf{c}})\cap T_{1} + \dim \mathrm{wspcl}(X_{1}) - \dim T_{1}\\
    &\leq\dim\mathrm{pr}_{\mathbf{k}}(U_{\mathbf{q}}\cap T_{0}) + \dim T - \dim T_{0}\\
    &= \dim U_{\mathbf{q}}\cap T_{0} + \dim T - \dim T_{0},
\end{align*}
and using (iv) we get
\begin{align*}
    \dim U_{\mathbf{q}}\cap T_{0} &=  \dim\mathrm{pr}_{\mathbf{k}}(U_{\mathbf{q}}\cap T_{0})\\
    &= \dim \mathrm{pr}_{\mathbf{k}}(U_{\mathbf{q}}\cap S_{\mathbf{c}})\cap T_{1}\\
    &> \dim \mathrm{pr}_{\mathbf{k}}(U_{\mathbf{q}}\cap S_{\mathbf{c}}) + \dim T_{1} - \dim \mathrm{pr}_{\mathbf{k}}(S_{\mathbf{c}})\\
    &\geq \dim U_{\mathbf{q}}\cap S_{\mathbf{c}} + \dim T_{0} - \dim \mathrm{pr}_{\mathbf{k}}(S_{\mathbf{c}})\\
    &> \dim U_{\mathbf{q}} + \dim T_{0} - \dim S.
\end{align*}
Thus, we can set $N_{\mathbf{k}} = \max\left\{N_{\mathbf{k},\mathbf{c}} \mid \mathbf{c}\in C\right\}$, and in this case $T_{0}$ satisfies the conditions of the proposition.

\textbf{Suppose now that}
\[\dim (U_{\mathbf{q}}\cap S_{\mathbf{c}}) = \dim U_{\mathbf{q}} + \dim S_{\mathbf{c}} - \dim S.\]
As $T\subseteq S_{\mathbf{c}}$, $\dim T = \dim\mathrm{pr}_{\mathbf{k}}(T)$ and $\dim (U_{\mathbf{q}}\cap S_{\mathbf{c}}) = \dim \mathrm{pr}_{\mathbf{k}}(U_{\mathbf{q}}\cap S_{\mathbf{c}})$. Then 
\begin{align*}
\dim \mathrm{pr}_{\mathbf{k}}(X) = \dim X &> \dim U_{\mathbf{q}} + \dim T - \dim S\\
&=\dim \mathrm{pr}_{\mathbf{k}}(U_{\mathbf{q}}\cap S_{\mathbf{c}}) + \dim\mathrm{pr}_{\mathbf{k}}(T) - \dim\mathrm{pr}_{\mathbf{k}}(S_{\mathbf{c}}).
\end{align*}
This seems to puts us right back in the previous case, and the argument can still be carried through with a few subtleties, so we will just focus on those. 
Let $X_{1}$ be the irreducible component of $\mathrm{pr}_{\mathbf{k}}(U_{\mathbf{q}}\cap S_{\mathbf{c}})\cap\mathrm{pr}_{\mathbf{k}}(T)$ containing $\mathrm{pr}_{\mathbf{k}}(X)$, and observe that $X_{1}$ is strongly atypical. 
As before, we get a finite set $C\subset\mathrm{Y}(1)^{m}$ such that every basic special subvariety found in $\left(\mathrm{pr}_{\mathbf{k}}(S_{\mathbf{c}})\right)_{\mathbf{c}\in\mathrm{Y}(1)^{m}}$ can be realised as $S_{\mathbf{c}}$ for some $\mathbf{c}\in C$. 
Let $\mathscr{S}_{\mathbf{k},\mathbf{c}}$ be the finite family of special subvarieties of $\mathrm{pr}_{\mathbf{k}}(S)$ given by Corollary \ref{cor:weakmzp} applied to  $\left(\mathrm{pr}_{\mathbf{k}}(U_{\mathbf{q}}\cap S_{\mathbf{c}})\right)_{\mathbf{q}\in Q}$ as a family in $ S_{\mathbf{c}}$. 
Then there is $T_{1}\in\mathscr{S}_{\mathbf{k},\mathbf{c}}$ such that
\begin{enumerate}[(i)]
    \item $X_{1}\subseteq T_{1}$
    \item $\dim X_{1}\leq \dim \mathrm{pr}_{\mathbf{k}}(U_{\mathbf{q}}\cap S_{\mathbf{c}})\cap T_{1} + \dim \mathrm{wspcl}(X_{1}) - \dim T_{1}$, and
    \item $\dim\mathrm{pr}_{\mathbf{k}}(U_{\mathbf{q}}\cap S_{\mathbf{c}})\cap T_{1} >\dim\mathrm{pr}_{\mathbf{k}}(U_{\mathbf{q}}\cap S_{\mathbf{c}}) + \dim T_{1} -\dim\mathrm{pr}_{\mathbf{k}}( S_{\mathbf{c}})$.
\end{enumerate}
Let $N_{\mathscr{S}_{\mathbf{k}}}$ be the maximal basic complexity of elements of $\mathscr{S}_{\mathbf{k},\mathbf{c}}$ for all $\mathbf{c}\in C$.

Define $T_{0}:=\mathrm{pr}_{\mathbf{k}}^{-1}(T_{1})\cap S_{\mathbf{c}}$, which is a proper weakly special subvariety of $S$ satisfying $\Delta_b\left(T_{0}\right) = \Delta_b(T_{1})\leq N_{\mathscr{S}_{\mathbf{k}}}$, $X\subseteq T_{0}$, 
\begin{align*}
    \dim X = \dim\mathrm{pr}_{\mathbf{k}}(X) &\leq \dim X_{1}\\
    & \leq \dim \mathrm{pr}_{\mathbf{k}}(U_{\mathbf{q}}\cap T_{0}) + \dim \mathrm{pr}_{\mathbf{k}}(T)  - \dim T_{0}\\
    &=\dim U_{\mathbf{q}}\cap T_{0} + \dim T - \dim T_{0},
\end{align*}
and 
\begin{align*}
\dim U_{\mathbf{q}}\cap T_{0} &= \dim U_{\mathbf{q}}\cap S_{\mathbf{c}}\cap T_{0}\\
&= \dim\mathrm{pr}_{\mathbf{k}}(U_{\mathbf{q}}\cap S_{\mathbf{c}})\cap T_{1}\\
&> \dim \mathrm{pr}_{\mathbf{k}}(U_{\mathbf{q}}\cap S_{\mathbf{c}}) + \dim T_{1} - \dim\mathrm{pr}_{\mathbf{k}}( S_{\mathbf{c}})\\
&= \dim U_{\mathbf{q}}\cap S_{\mathbf{c}} + \dim T_{0} - \dim\mathrm{pr}_{\mathbf{k}}( S_{\mathbf{c}})\\
&\geq \dim U_{\mathbf{q}} + \dim T_{0} - \dim S.
\end{align*}

Since there are finitely many tuples $\mathbf{k}$ to consider, by taking the maximum of the $N_{\mathscr{S}_{\mathbf{k}}}$, the $N_{\mathbf{k}}$ and $N_{\mathscr{S}}$, we get the desired $N$. 
This completes the proof.
\end{proof}

Now one can prove Proposition \ref{prop:horizontalboundedcomplexity} using the same method of proof as in Corollary \ref{cor:horizontalUMZP}.
We leave the details to the reader.

\section{Strong Existential Closedness}
\label{sec:sec}

Let $V\subseteq\mathbb{C}^{m}$ be an irreducible constructible set and $K\subset\mathbb{C}$ a subfield over which $V$ is defined. We say that a point $\mathbf{v}\in V$ is \emph{generic in $V$ over $K$} if 
\[\mathrm{tr.deg.}_{K}K(\mathbf{v}) = \dim V.\]

\begin{lem}
\label{lem:genericpoint}
Let $V\subseteq\mathbb{C}^{m}$ be an irreducible subvariety, and let $K\subset\mathbb{C}$ be a subfield over which $V$ is defined. 
Suppose that no coordinate is constant on $V$. 
If $\mathbf{v}\in V$ is generic in $V$ over $K$, then every coordinate of $\mathbf{v}$ is transcendental over $K$.
\end{lem}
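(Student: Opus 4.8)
The plan is to argue by contradiction on a single coordinate. Suppose $\mathbf{v} = (v_1,\ldots,v_m)$ is generic in $V$ over $K$, but that some coordinate, say $v_i$, is algebraic over $K$. I would first use the hypothesis that no coordinate is constant on $V$: this means the projection $\mathrm{pr}_i : V \to \mathbb{C}$ onto the $i$-th coordinate is non-constant, hence (as $V$ is irreducible and the image of an irreducible variety is irreducible) its image $\mathrm{pr}_i(V)$ is an irreducible curve or all of $\mathbb{C}$, and in any case is infinite and not contained in a single point. Crucially, $\mathrm{pr}_i(V)$ is defined over $K$.

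Next I would derive the contradiction by a dimension count. Since $v_i$ is algebraic over $K$, it lies in $\overline{K}$, so the condition $v_i = v_i$ picks out a proper closed subset: consider the subvariety $V' := V \cap \{x_i = v_i\}$. Because $\mathrm{pr}_i(V)$ is infinite and $v_i$ is a single point in it, $V'$ is a proper closed subvariety of $V$, so $\dim V' < \dim V$. But $\mathbf{v} \in V'$, and $V'$ is defined over $K(v_i) = \overline{K}\cap K(v_i)$, which is algebraic over $K$; hence $\mathrm{tr.deg.}_K K(\mathbf{v}) \leq \dim V' < \dim V$, contradicting genericity. To make this fully rigorous I should note $\mathrm{tr.deg.}_K K(\mathbf{v}) = \mathrm{tr.deg.}_{K(v_i)} K(v_i)(\mathbf{v}) \leq \dim V'$, using that $v_i$ algebraic over $K$ does not change transcendence degree and that a point of a $K(v_i)$-variety $V'$ has transcendence degree over $K(v_i)$ at most $\dim V'$.

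The only genuinely delicate point — and the step I would be most careful about — is ensuring that $V \cap \{x_i = v_i\}$ is a \emph{proper} subvariety of $V$, i.e. that $v_i$ does not already equal the $i$-th coordinate of every point of $V$. This is exactly where the hypothesis "no coordinate is constant on $V$" is used: the set where $x_i$ takes the constant value $v_i$ is all of $V$ only if $x_i \equiv v_i$ on $V$, which would make the $i$-th coordinate constant. So the argument is short, but it genuinely needs that hypothesis and it needs $V$ irreducible (so that a proper closed subvariety has strictly smaller dimension). I would write this up in three or four lines once these observations are in place.

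\begin{proof}
Suppose, for contradiction, that some coordinate $v_{i}$ of $\mathbf{v}$ is algebraic over $K$. Since no coordinate is constant on $V$, the projection to the $i$-th coordinate is not identically equal to $v_{i}$ on $V$, so $V' := V\cap\{x_{i}=v_{i}\}$ is a proper Zariski closed subset of $V$; as $V$ is irreducible, $\dim V' < \dim V$. Now $\mathbf{v}\in V'$, and $V'$ is defined over $K(v_{i})$, which is algebraic over $K$. Hence
$$\mathrm{tr.deg.}_{K}K(\mathbf{v}) = \mathrm{tr.deg.}_{K(v_{i})}K(v_{i})(\mathbf{v}) \leq \dim V' < \dim V,$$
contradicting the assumption that $\mathbf{v}$ is generic in $V$ over $K$. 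Therefore every coordinate of $\mathbf{v}$ is transcendental over $K$.
\end{proof}
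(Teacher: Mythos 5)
Your proof is correct and is essentially identical to the paper's: both observe that since no coordinate is constant on $V$, the intersection $V\cap\{x_i = c\}$ is a proper (hence lower-dimensional, by irreducibility) subvariety for each $c$, and that if $v_i\in\overline{K}$ then $\mathbf{v}$ lies in such a slice defined over an algebraic extension of $K$, contradicting genericity via the invariance of transcendence degree under algebraic base extension. The paper phrases the argument directly as a contrapositive bound without singling out $V'$ by name, but the content is the same.
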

\begin{proof}
Say that $V$ defined in the ring of polynomials $\mathbb{C}[X_{1},\ldots,X_{m}]$. 
Since no coordinate is constant on $V$, then for every $c\in\mathbb{C}$ and for every $i\in\left\{1,\ldots,m\right\}$ we have that $V\cap\left\{X_{i}=c\right\}$ is a proper subvariety of $V$. 
So, given $\mathbf{v}\in V$, if $c\in\overline{K}$ and $v_{i}=c$, for some $i\in\left\{1,\ldots,m\right\}$, then we have that
\[\mathrm{tr.deg.}_{K}K(\mathbf{v})\leq\dim V\cap\left\{X_{i}=c\right\} < \dim V,\]
which prevents $\mathbf{v}$ from being generic in $V$ over $K$.
\end{proof}

\begin{defn}
We say that a variety $V\subseteq\mathbb{C}^{n}\times\mathrm{Y}(1)^{n}$ satisfies the \emph{strong existential closedness property} (SEC for short) if for every finitely generated field $K\subset\mathbb{C}$ over which $V$ can be defined, there exists $(\mathbf{z},j(\mathbf{z}))\in V$ such that $(\mathbf{z},j(\mathbf{z}))$ is generic in $V$ over $K$. 
\end{defn}

\begin{conj}
\label{conj:sec}
For every positive integer $n$, every algebraic variety $V$ contained in $\mathbb{C}^{n}\times\mathrm{Y}(1)^{n}$ which is broad and free, if  $V\cap\left(\mathbb{H}^n\times\mathrm{Y}(1)^n\right)$ is Zariski dense in $V$, then $V$ satisfies (SEC).
\end{conj}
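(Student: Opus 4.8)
Since this conjecture implies Conjecture \ref{conj:ec}, no unconditional proof is in sight; the realistic plan is to \emph{reduce} it to that conjecture, at the cost of assuming MSCD and MZP, which is precisely the content of Theorem \ref{thm:generic}. The reduction proceeds in three layers.

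First, one must pass from the non-emptiness asserted by Conjecture \ref{conj:ec} to the Zariski-density condition (EC). Granting Conjecture \ref{conj:ec}, the Rabinowitsch-style argument already carried out above (the lemma deducing (EC) from Conjecture \ref{conj:ec}) shows that every broad and free irreducible variety satisfies (EC): adjoin a coordinate $y_{n+1}$ subject to $y_{n+1}f = 1$, check that the enlarged variety in $\mathbb{C}^{n+1}\times\mathrm{Y}(1)^{n+1}$ is still broad and free, and apply Conjecture \ref{conj:ec} to it. After the routine reduction to $V$ irreducible (pass to a finite, hence still finitely generated, extension of $K$ to single out a component of top dimension), we are exactly in the hypotheses of Theorem \ref{thm:generic}.

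Second, apply Theorem \ref{thm:generic}, where the substantive work sits, organised along the two steps of \textsection\ref{subsec:keyingredients}. (a) MSCD bounds $\mathrm{tr.deg.}_{\mathbb{Q}}$, whereas (SEC) demands control of $\mathrm{tr.deg.}_{K}$ for an arbitrary finitely generated $K$; one closes this gap with the ``convenient generators'' of \cite{aek-closureoperator} (themselves built on the differential Ax--Schanuel machinery of \cite{aek-differentialEC}), which let one choose $G$-orbit representatives of the $z_i$ so that the modular Schanuel inequality descends to the base $K$, after which the auxiliary derivatives $j'(\mathbf{z}),j''(\mathbf{z})$ are discarded. (b) If the point produced were not generic in $V$ over $K$, MSCD would force $(\mathbf{z},j(\mathbf{z}))$ into an atypical component of an intersection $V\cap(\mathbb{C}^{n}\times T)$ with $T$ a proper special subvariety of $\mathrm{Y}(1)^{n}$; by the uniform and two-sorted forms of Zilber--Pink assembled in \textsection\ref{subsec:atypical}--\textsection\ref{subsec:weakmzp} (Corollary \ref{cor:horizontalUMZP}, Theorem \ref{thm:horizontalweakzp}, Proposition \ref{prop:horizontalboundedcomplexity}), every such component lies inside one of a \emph{finite} list $\mathscr{S}$ of proper special subvarieties of bounded complexity. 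Since $V$ is modularly free it is not contained in any $\mathbb{C}^{n}\times T_{0}$ with $T_{0}\in\mathscr{S}$, and since $V$ satisfies (EC) its graph points are Zariski dense in $V$; hence some graph point of $V$ avoids all the $\mathbb{C}^{n}\times T_{0}$, $T_{0}\in\mathscr{S}$, and one checks, using the descended modular Schanuel inequality, that such a point is generic over $K$. One must also rule out graph points with a special coordinate, which is done with uniform Andr\'e--Oort (together with the finiteness of bounded-degree Hecke orbits, cf. Remark \ref{rem:heckeorbit}).

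Third, a genuinely unconditional proof would require proving Conjecture \ref{conj:ec} and eliminating MSCD and MZP. The template for the latter is visible in the excerpt: Theorem \ref{thm:unconditional} trades the two conjectures for the Ax--Schanuel theorem for $j$ of \cite{pila-tsimerman} (which supplies weak MSCD and weak MZP outright) under the ``no $C_j$-factors'' hypothesis on the field of definition, Theorem \ref{thm:domproj2} removes the dependence on MZP in further special cases, and Theorem \ref{thm:blurred} dispenses even with (EC) by blurring $j$. I expect the real obstruction to be Conjecture \ref{conj:ec} itself, together with the lack of any unconditional transcendence statement strong enough to stand in for MSCD for an arbitrary broad and free $V$: the atypical-intersection control of step (b) is already essentially unconditional in weak form via Ax--Schanuel, so it is the Existential Closedness and Schanuel inputs, not the Zilber--Pink input, that remain the bottleneck.
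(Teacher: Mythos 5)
The statement you were asked about is Conjecture~\ref{conj:sec}, which the paper poses but does not prove, so there is no proof to match against; your reading of the situation is correct. You rightly observe that Conjecture~\ref{conj:sec} subsumes Conjecture~\ref{conj:ec} and hence cannot be established outright, and your account of the paper's conditional reduction --- the Rabinowitsch lemma upgrading Conjecture~\ref{conj:ec} to the (EC) condition, then Theorem~\ref{thm:generic} producing a $K$-generic graph point under MSCD and MZP via convenient generators (\textsection\ref{subsec:transineq}) and the uniform/two-sorted Zilber--Pink corollaries (\textsection\ref{subsec:atypical}--\ref{subsec:weakmzp}) --- is accurate in outline, and is indeed the only route the paper offers toward the conjecture.

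One small correction about the anatomy of the proof of Theorem~\ref{thm:generic}: uniform Andr\'e--Oort (Proposition~\ref{prop:special}) does not enter it. In that proof the possibility that some $j(z_i)$ is a singular modulus, or that two coordinates satisfy a modular relation with each other or with some $j(t_k)$, is absorbed by the finiteness of the family $\mathscr{S}$ furnished by Corollary~\ref{cor:horizontalUMZP}: since $\mathscr{S}$ is finite and its members are proper special subvarieties, freeness of $V$ plus (EC) let one pick a graph point of $V$ whose $\pi_{\mathrm{Y}}$-image (together with $j(\mathbf{t})$) avoids every $T_0\in\mathscr{S}$. Proposition~\ref{prop:special} and the uniform Andr\'e--Oort machinery are used only in the MZP-free partial results (Theorem~\ref{thm:domproj2}, the $n\leq 2$ cases, and the $C_j$-factor-free unconditional theorems), precisely where MZP is not available to do that bookkeeping. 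Beyond this point, your diagnosis that the genuine bottleneck is Conjecture~\ref{conj:ec} and the Schanuel-type transcendence input rather than Zilber--Pink matches the paper's own assessment.
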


The condition in Theorem \ref{th:main1} that $\pi_{\mathbb{C}}(V)$ be Zariski dense in $\mathrm{Y}(1)^{n}$ implies broadness but not freeness, and yet, (EC) still holds on $V$. 
That is, even if $V$ is contained in a subvariety of the form $\mathbb{C}^{n}\times T$, with $T$ a special subvariety of $\mathrm{Y}(1)^{n}$, $V$ will satisfy (EC) as long as $\pi_{\mathbb{C}}(V)$ is Zariski dense in $\mathbb{C}^{n}$. 
However, if $V$ is not free then it may not have points in $V\cap\mathrm{E}_{j}^{n}$ which are generic, as shown in the next example. 

\begin{ex}
Let $D$ denote the diagonal of $\mathrm{Y}(1)^{2}$, and let $V = \mathbb{C}^{2}\times D$ (so $V$ is defined over $\mathbb{Q}$). 
By Theorem \ref{th:main1} $V\cap\mathrm{E}_{j}^{2}$ is Zariski dense in $V$. 
However, every point $(z_{1},z_{2},j(z_{1}),j(z_{2}))\in V$ satisfies that $j(z_{1}) = j(z_{2})$, and so for this point there exists $\gamma\in\mathrm{SL}_{2}(\mathbb{Z})$ such that $\gamma z_{1} = z_{2}$. Therefore
\[\mathrm{tr.deg.}_{\mathbb{Q}}\mathbb{Q}(z_{1},z_{2},j(z_{1}),j(z_{2}))\leq 2<3 = \dim V.\]
\end{ex}

The next proposition shows that in Conjecture \ref{conj:sec} we may assume $\dim V=n$.

\begin{prop}[cf {{\cite[Lemma 4.30]{vahagn2}}}]
\label{prop:secindimn}
Fix a positive integer $n$. 
Assume that every subvariety of $\mathbb{C}^{n}\times\mathrm{Y}(1)^{n}$ which is free, broad and has dimension $n$ satisfies (SEC). 
Then every subvariety $V\subseteq\mathbb{C}^{n}\times\mathrm{Y}(1)^{n}$ which is free and broad satisfies (SEC).
\end{prop}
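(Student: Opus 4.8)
Proposition~\ref{prop:secindimn} will follow by reducing to the assumed case $\dim V = n$ through generic hyperplane sections. Write $d = \dim V$; broadness gives $d \ge n$, and if $d = n$ there is nothing to prove, so assume $d > n$ and fix a finitely generated field $K$ over which $V$ is defined. The plan is to pick hyperplanes $H_1, \ldots, H_{d-n}$ of $\mathbb{C}^{2n}$ successively, taking the coefficient tuple $\mathbf{t}_k$ of $H_k$ to be algebraically independent over $K(\mathbf{t}_1, \ldots, \mathbf{t}_{k-1})$, and to set $W_0 = V$ and $W_k = W_{k-1} \cap H_k$. Since at each step we cut a variety of dimension $> n \ge 1$ (hence $\ge 2$), the affine Bertini theorem makes each $W_k$ irreducible of dimension $d - k$; in particular $W := W_{d-n}$ is irreducible of dimension $n$ and is defined over the finitely generated field $K' := K(\mathbf{t}_1, \ldots, \mathbf{t}_{d-n})$. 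Granting that $W$ is again broad and free (the crux, discussed below), the hypothesis supplies a point $p = (\mathbf{z}, j(\mathbf{z})) \in W$ with $\mathrm{tr.deg.}_{K'} K'(p) = n$, and the claim will be that $p$ is already generic in $V$ over $K$.

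For that last claim I would run the following transcendence bookkeeping. Let $\mathbf{t} = (\mathbf{t}_1, \ldots, \mathbf{t}_{d-n})$ have $N$ coordinates in total; by successive genericity $\mathrm{tr.deg.}_K K(\mathbf{t}) = N$, so $\mathrm{tr.deg.}_K K(\mathbf{t}, p) = N + \mathrm{tr.deg.}_{K'} K'(p) = N + n$. On the other hand, Lemma~\ref{lem:genericpoint} applied to the free variety $W$ shows that every coordinate of $p$ is transcendental over $K' \supseteq K$; hence $\mathbf{t}$, viewed as a point of $\mathbb{C}^N$, lies in the linear subspace cut out by the $d-n$ equations $H_1(p) = 0, \ldots, H_{d-n}(p) = 0$, which are $K(p)$-linearly independent because $H_k(p)$ is a nonzero homogeneous linear form in the pairwise disjoint blocks $\mathbf{t}_k$. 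Therefore $\mathrm{tr.deg.}_{K(p)} K(p, \mathbf{t}) \le N - (d-n)$, and
\[
\mathrm{tr.deg.}_K K(p) = \mathrm{tr.deg.}_K K(\mathbf{t}, p) - \mathrm{tr.deg.}_{K(p)} K(p, \mathbf{t}) \ge (N + n) - \bigl(N - (d-n)\bigr) = d.
\]
Since $p \in V$ forces the reverse inequality $\mathrm{tr.deg.}_K K(p) \le \dim V = d$, equality holds and $p$ is generic in $V$ over $K$.

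The step I expect to be the main obstacle is the inductive one: if $V' \subseteq \mathbb{C}^{n} \times \mathrm{Y}(1)^{n}$ is broad, free and irreducible with $\dim V' = d' > n$, then for a hyperplane $H$ whose coefficients are generic over the field of definition of $V'$, the section $W' := V' \cap H$ is again broad and free. For freeness one uses that there are only countably many proper special subvarieties $T \subseteq \mathrm{Y}(1)^{n}$ and countably many proper M\"obius subvarieties $M \subseteq \mathbb{C}^{n}$ defined over $G$, all over $\overline{\mathbb{Q}}$: for each of them $V'$ is not contained in $\mathbb{C}^{n} \times T$ (resp.\ $M \times \mathrm{Y}(1)^{n}$), so this locus has dimension $\le d' - 1$ in $V'$ and a generic hyperplane section drops it strictly below $d' - 1 = \dim W'$, forcing $W' \not\subseteq \mathbb{C}^{n} \times T$; each such containment is a non-dense constructible condition on $H$ over the base field of $V'$, so a single $H$ generic over that field avoids all of them simultaneously, and no coordinate becomes constant on $W'$ because a coordinate function is non-constant on $V'$, has positive-dimensional general fibres (as $d' \ge 2$), and a generic hyperplane meets the general fibre. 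For broadness, fix $\mathbf{i}$ with $|\mathbf{i}| = \ell$: when $\dim \mathrm{Pr}_{\mathbf{i}}(V') > \ell$, a fibre-dimension estimate on $W'$ gives $\dim \mathrm{Pr}_{\mathbf{i}}(W') \ge \dim \mathrm{Pr}_{\mathbf{i}}(V') - 1 \ge \ell$; and when $\dim \mathrm{Pr}_{\mathbf{i}}(V') = \ell$, necessarily $\ell < n$ (the case $\mathbf{i} = \mathbf{n}$ is impossible since $\dim V' > n$), so the general fibre of $\mathrm{Pr}_{\mathbf{i}}|_{V'}$ has dimension $d' - \ell \ge 2$, a generic hyperplane meets it, and $\mathrm{Pr}_{\mathbf{i}}(W')$ stays dense in $\mathrm{Pr}_{\mathbf{i}}(V')$. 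The elementary fact used repeatedly — that a generic hyperplane section of a variety mapping dominantly onto $B$ with positive-dimensional fibres still maps dominantly onto $B$ — follows from a dimension count on the incidence variety of pairs (point, hyperplane through it), together with the observation that a generic linear functional is non-constant on any fixed positive-dimensional variety and surjects onto $\mathbb{A}^{1}$. Iterating the single-cut statement $d - n$ times finishes the proof; the analogue for the exponential function is \cite[Lemma 4.30]{vahagn2}.
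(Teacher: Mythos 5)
Your proof is correct and takes the same overall route as the paper — cut $V$ down to dimension $n$ with hyperplanes having transcendental coefficients, then transfer genericity via a transcendence-degree count — but the way you produce the $n$-dimensional variety is different enough to be worth comparing. The paper never forms the set-theoretic section $V\cap H$: it fixes a point $(\mathbf{a},\mathbf{b})\in V$ generic over $K$, picks coefficients $\mathbf{p},\mathbf{q}$ transcendental over $K(\mathbf{a},\mathbf{b})$ and normalised so that $(\mathbf{a},\mathbf{b})$ lies on the hyperplane, sets $K_1:=K(\mathbf{p},\mathbf{q})$, and lets $V_1$ be the $\overline{K_1}$-Zariski closure of $(\mathbf{a},\mathbf{b})$, inducting one hyperplane at a time. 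Defining $V_1$ as a Zariski closure makes irreducibility automatic (no Bertini), and freeness drops out in one line: any offending $M\times T$ is defined over $\overline{\mathbb{Q}}\subseteq\overline{K_1}$, so $V_1\subseteq M\times T$ would force $(\mathbf{a},\mathbf{b})\in M\times T$, hence $V\subseteq M\times T$; broadness uses a short field-theoretic dichotomy from \cite[Lemma 4.31]{vahagn2}. Your version instead takes the honest intersection, so you must invoke affine Bertini for irreducibility and argue freeness by a countability argument over the $\overline{\mathbb{Q}}$-defined special and $\mathbb{Q}$-M\"obius subvarieties — all sound, but each step needs its own check. In particular the ``elementary fact'' you use for broadness, that a generic hyperplane section of a variety dominating $B$ with positive-dimensional fibres still dominates $B$, deserves to be spelled out: the crude fibre-dimension estimate only gives $\dim\mathrm{Pr}_{\mathbf{i}}(W')\ge\ell-1$, and to recover $\ell$ one needs the incidence count you gesture at — the hyperplanes missing a fixed positive-dimensional affine variety form a non-dense set, so $\{(b,H):V'_b\cap H=\emptyset\}$ has dimension at most $\dim B$ plus one less than the dimension of the hyperplane space, and the fibre-dimension theorem applied to its projection to the hyperplane space shows the bad set of $b$'s is non-dense for generic $H$. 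Both proofs are valid; the paper's buys simplicity by defining the cut as a Zariski closure over a transcendental extension rather than as an honest section, while yours makes the geometry explicit at the cost of Bertini and a few extra lemmas.
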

\begin{proof}
Let $V\subseteq \mathbb{C}^{n}\times\mathrm{Y}(1)^{n}$ be an subvariety which is free and broad. 
Let $k:=\dim V - n$. 
We proceed by induction on $k$. 
The case $k=0$ is given by the assumption, so we assume that $k>0$. 

Let $K$ be a finitely generated field over which $V$ is definable. 
Choose $(\mathbf{a},\mathbf{b})\in V$ to be generic over $K$. 
Let $p_{1},\ldots,p_{n},q_{1},\ldots,q_{n-1}\in \mathbb{C}$ be algebraically independent over $K(\mathbf{a},\mathbf{b})$ and choose $q_{n}\in\mathbb{C}$ such that
\begin{equation}
    \label{eq:hyperplane}
    \sum_{i=1}^{n}p_{i}a_{i} + \sum_{i=1}^{n}q_{i}b_{i} = 1.
\end{equation}
Set $K_{1}:=K(\mathbf{p},\mathbf{q})$ (so $K_{1}$ is also finitely generated) and let $V_{1}$ be the $\overline{K_{1}}$-Zariski closure of $(\mathbf{a},\mathbf{b})$. 
Clearly $V_{1}$ is and $\dim V_{1} < \dim V$. 
We now check the remaining conditions to apply the induction hypothesis on $V_{1}$. 
\begin{enumerate}[(a)]
    \item We first show that $\dim V_{1} = \dim V-1$. 
    Observe that
\begin{align*}
\mathrm{tr.deg.}_{K}K(\mathbf{a},\mathbf{b}) + \mathrm{tr.deg.}_{K(\mathbf{a},\mathbf{b})}K(\mathbf{a},\mathbf{b},\mathbf{p},\mathbf{q}) &= \mathrm{tr.deg.}_{K}K(\mathbf{a},\mathbf{b},\mathbf{p},\mathbf{q})\\
&= \mathrm{tr.deg.}_{K}K_{1} + \mathrm{tr.deg.}_{K_{1}}K_{1}(\mathbf{a},\mathbf{b}),
\end{align*}
and since by construction
\begin{align*}
\mathrm{tr.deg.}_{K}K(\mathbf{a},\mathbf{b}) &= \dim V\\
\mathrm{tr.deg.}_{K(\mathbf{a},\mathbf{b})}K(\mathbf{a},\mathbf{b},\mathbf{p},\mathbf{q}) &= 2n-1\\
\mathrm{tr.deg.}_{K}K_{1} &\geq 2n-1
\end{align*}
we conclude that $\dim V_{1} = \mathrm{tr.deg.}_{K_{1}}K_{1}(\mathbf{a},\mathbf{b}) = \dim V-1$. 
\item We now show that $V_{1}$ is free. 
Since $V$ is free, we know that $(\mathbf{a},\mathbf{b})$ is not contained in any variety of the form $M\times T$, where $M$ is a M\"obius subvariety of $\mathbb{C}^{n}$ defined over $\mathbb{Q}$ and $T$ is a proper special subvariety of $\mathrm{Y}(1)^{n}$. 
If $V_{1}$ had a constant coordinate, then that would imply that some coordinate of $(\mathbf{a},\mathbf{b})$ is in $\overline{K_{1}}$, but since $(\mathbf{a},\mathbf{b})$ already satisfies (\ref{eq:hyperplane}) and is generic in $V_{1}$ over $K_{1}$, then we would have that $\dim V_{1} < \dim V-1$. 
Therefore $V_{1}$ is free. 
\item Next we show that $V_{1}$ is broad. 
Let $\mathbf{i} = (i_{1},\ldots,i_{\ell})$ be such that $1\leq i_{1}<\cdots<i_{\ell}\leq n$ and $\ell<n$. 
Consider the projection $\mathrm{Pr}_{\mathbf{i}}(V_{1})$. 
We have that  
\[\dim \mathrm{Pr}_{\mathbf{i}}(V_{1}) = \mathrm{tr.deg.}_{K_{1}}K_{1}(\mathrm{pr}_{\mathbf{i}}(\mathbf{a}),\mathrm{pr}_{\mathbf{i}}(\mathbf{b})).\]
Proceeding as we did in (a), we have that
\begin{multline*}
\mathrm{tr.deg.}_{K}K(\mathrm{pr}_{\mathbf{i}}(\mathbf{a}),\mathrm{pr}_{\mathbf{i}}(\mathbf{b})) + \mathrm{tr.deg.}_{K(\mathrm{pr}_{\mathbf{i}}(\mathbf{a}),\mathrm{pr}_{\mathbf{i}}(\mathbf{b}))}K(\mathrm{pr}_{\mathbf{i}}(\mathbf{a}),\mathrm{pr}_{\mathbf{i}}(\mathbf{b}),\mathbf{p},\mathbf{q}) =\\ 
 \mathrm{tr.deg.}_{K}K_{1} + \mathrm{tr.deg.}_{K_{1}}K_{1}(\mathrm{pr}_{\mathbf{i}}(\mathbf{a}),\mathrm{pr}_{\mathbf{i}}(\mathbf{b})),
\end{multline*}
and 
\begin{align*}
\mathrm{tr.deg.}_{K}K(\mathrm{pr}_{\mathbf{i}}(\mathbf{a}),\mathrm{pr}_{\mathbf{i}}(\mathbf{b})) &= \dim\mathrm{Pr}_{\mathbf{i}}(V)\\
\mathrm{tr.deg.}_{K(\mathrm{pr}_{\mathbf{i}}(\mathbf{a}),\mathrm{pr}_{\mathbf{i}}(\mathbf{b}))}K(\mathrm{pr}_{\mathbf{i}}(\mathbf{a}),\mathrm{pr}_{\mathbf{i}}(\mathbf{b}),\mathbf{p},\mathbf{q}) &\geq 2n-1\\
\mathrm{tr.deg.}_{K}K_{1} &= 2n.
\end{align*}
 By \cite[Lemma 4.31]{vahagn2} we have that one of the two following cases must hold:
\begin{enumerate}[(i)]
    \item $a_{1},\ldots,a_{n},b_{1},\ldots,b_{n}\in \overline{K(\mathrm{pr}_{\mathbf{i}}(\mathbf{a}),\mathrm{pr}_{\mathbf{i}}(\mathbf{b}))}$. 
    Then $\dim\mathrm{Pr}_{\mathbf{i}}(V) = \dim V$, so 
    \[\dim\mathrm{Pr}_{\mathbf{i}}(V_{1})\geq n-1\geq\ell.\] 
    \item $\mathrm{tr.deg.}_{K_{1}}K_{1}(\mathrm{pr}_{\mathbf{i}}(\mathbf{a}),\mathrm{pr}_{\mathbf{i}}(\mathbf{b})) = \mathrm{tr.deg.}_{K}K(\mathrm{pr}_{\mathbf{i}}(\mathbf{a}),\mathrm{pr}_{\mathbf{i}}(\mathbf{b}))$. 
    Since $V$ is broad, 
    \[\dim\mathrm{Pr}_{\mathbf{i}}(V_{1}) = \dim\mathrm{Pr}_{\mathbf{i}}(V)\geq\ell.\]
\end{enumerate}
\end{enumerate}

We can now apply the induction hypothesis and deduce that there is $(\mathbf{z},j(\mathbf{z}))\in V_{1}\cap \mathrm{E}_{j}^{n}$ such that $(\mathbf{z},j(\mathbf{z}))$ is generic in $V_{1}$ over $K_{1}$. 
In particular $(\mathbf{z},j(\mathbf{z}))$ satisfies (\ref{eq:hyperplane}), but since the coefficients of this equation are transcendental over $K$, then 
\[\dim V\geq \mathrm{tr.deg.}_{K}K(\mathbf{z},j(\mathbf{z})) > \mathrm{tr.deg.}_{K_{1}}K_{1}(\mathbf{z},j(\mathbf{z})) = \dim V_{1},\]
so $(\mathbf{z},j(\mathbf{z}))$ is generic in $V$ over $K$.
\end{proof}

\subsection{\texorpdfstring{$j$}{j}-derivations}

As mentioned in \textsection\ref{subsec:keys}, an obstacle towards proving Theorem \ref{thm:generic} is that MSCD only gives a lower bound for the transcendence degree over $\mathbb{Q}$. 
Since we are looking for generic points over an arbitrary finitely generated field $K$, we would like to have an inequality for the transcendence degree over $K$. 
This is done in \textsection\ref{subsec:transineq}. 
Before that, we need to give a quick review of $j$-derivations (see \cite[\textsection 3.1]{aek-closureoperator} and \cite[\textsection 5]{sebae} for details). 

A function $\partial:\mathbb{C}\rightarrow\mathbb{C}$ is called a \emph{$j$-derivation} if it satisfies the following axioms:
\begin{enumerate}
    \item For all $a,b\in\mathbb{C}$, $\partial(a+b) = \partial(a) + \partial(b)$.
    \item For all $a,b\in\mathbb{C}$, $\partial(ab) = a\partial(b) + b\partial(a)$.
    \item For all $z\in\mathbb{H}$ and all $n\in\mathbb{N}$, $\partial\left(j^{(n)}(z)\right) = j^{(n+1)}(z)\partial(z)$, where $j^{(n)}$ denotes the $n$-th derivative of $j$.
\end{enumerate}
$\mathbb{C}$ has non-trivial $j$-derivations (in fact, it has continuum many $\mathbb{C}$-linearly independent $j$-derivations).  

\begin{defn}
Let $A\subseteq\mathbb{C}$ be any set. 
We define the set $j\mathrm{cl}(A)$ by the property: $x\in j\mathrm{cl}(A)$ if and only if $\partial(x) = 0$ for every $j$-derivation $\partial$ with $A\subseteq\ker\partial$. 
If $A = j\mathrm{cl}(A)$, then we say that $A$ is \emph{$j\mathrm{cl}$-closed}. 
\end{defn}

Every $j\mathrm{cl}$-closed subset of $\mathbb{C}$ is an algebraically closed subfield. 
Furthermore, $j\mathrm{cl}$ has a corresponding well-defined notion of dimension\footnote{In technical terms, $j\mathrm{cl}$ is a \emph{pregeometry}.}, which we denote $\dim^{j}$, defined in the following way. 
For any subsets $A, B\subseteq\mathbb{C}$, $\dim^{j}(A|B) \geq n$ if and only if there exist $a_{1},\ldots,a_{n}\in j\mathrm{cl}(A)$ and $j$-derivations $\partial_{1},\ldots,\partial_{n}$ such that $B\subseteq\ker\partial_{i}$ for $i=1,\ldots,n$ and 
\begin{equation*}
    \partial_{i}\left(a_{k}\right) = \left\{\begin{array}{cl}
         1 & \mbox{ if } i=k\\
         0 & \mbox{ otherwise}
    \end{array}\right.
\end{equation*}
for every $i,k = 1,\ldots,n$. 

We define $C_{j} := j\mathrm{cl}(\emptyset)$; this is a countable algebraically closed subfield of $\mathbb{C}$ which is $j\mathrm{cl}$-closed. 

\begin{remark}
\label{rem:dimj}
By definition, $C_{j}$ is contained in the kernel of every $j$-derivation, so for every finite set $A\subseteq\mathbb{C}$ we have that $\dim^{j}(A) = \dim^{j}(A|C_{j})$. 
\end{remark}

\begin{lem}
\label{lem:c}
Let $C\subseteq\mathbb{C}$ be $j\mathrm{cl}$-closed. 
For every $z\in\mathbb{H}$, the following statements hold:
\begin{enumerate}[(a)]
    \item $z\in C$ implies $j(z),j'(z),j''(z)\in C$.\footnote{Due to the differential equation of $j$ (\ref{eq:j}), it is enough to only consider the derivatives up to $j''$.}
    \item $j(z)\in C$ implies $z\in C$.
\end{enumerate}
\end{lem}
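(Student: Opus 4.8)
\textbf{Proof proposal for Lemma \ref{lem:c}.}

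The plan is to exploit the two characterizing features of a $j\mathrm{cl}$-closed set $C$: that it is the common kernel of a family of $j$-derivations (indeed $x \in C$ iff $\partial(x) = 0$ for every $j$-derivation $\partial$ with $C \subseteq \ker\partial$, and since $C$ is $j\mathrm{cl}$-closed this family is non-empty enough to detect exactly $C$), and that it is an algebraically closed subfield. For part (a), suppose $z \in C$ and let $\partial$ be any $j$-derivation with $C \subseteq \ker\partial$. Then $\partial(z) = 0$, so by axiom (3) applied with $n = 0, 1, 2$ we get $\partial(j(z)) = j'(z)\partial(z) = 0$, $\partial(j'(z)) = j''(z)\partial(z) = 0$, and $\partial(j''(z)) = j'''(z)\partial(z) = 0$. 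Since this holds for every such $\partial$, and $C = j\mathrm{cl}(C)$ consists precisely of the elements killed by all such $\partial$, we conclude $j(z), j'(z), j''(z) \in C$. (As noted in the footnote, higher derivatives are then in $C$ too, since by the differential equation \eqref{eq:j} $j'''$ is a rational function of $j, j', j''$ and $C$ is a field.)

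For part (b), suppose $j(z) \in C$ but, towards a contradiction, $z \notin C$. Then there is a $j$-derivation $\partial$ with $C \subseteq \ker\partial$ and $\partial(z) \neq 0$. Applying axiom (3) with $n = 0$ gives $\partial(j(z)) = j'(z)\partial(z)$. Since $j(z) \in C \subseteq \ker\partial$, the left-hand side is $0$, so $j'(z)\partial(z) = 0$; as $\partial(z) \neq 0$ this forces $j'(z) = 0$. But $j$ is a nonconstant holomorphic function on $\mathbb{H}$ whose derivative $j'$ is known to be nowhere vanishing on $\mathbb{H}$ (equivalently, $j : \Gamma\backslash\mathbb{H} \to \mathbb{C}$ is a local analytic isomorphism, so $j' \ne 0$ everywhere on $\mathbb{H}$), so $j'(z) \neq 0$ — a contradiction. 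Hence $z \in C$.

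The only genuinely substantive input is the non-vanishing of $j'$ on $\mathbb{H}$, which is the key obstacle in the sense that without it part (b) fails; everything else is a direct unwinding of the axioms for $j$-derivations and the definition of $j\mathrm{cl}$. This fact is standard: since $j$ induces an analytic isomorphism $\Gamma\backslash\mathbb{H} \simeq \mathbb{C}$, its derivative cannot vanish at any point of $\mathbb{H}$. I would simply cite this when writing the proof, or refer to \cite{aek-closureoperator, sebae} where the $j\mathrm{cl}$ formalism is set up and this observation is already in place.
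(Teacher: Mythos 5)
Part (a) of your proof is correct and is essentially the standard unwinding of the axioms, in line with the paper (which cites it away to \cite[Proposition 5.8]{sebae}).

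Part (b), however, contains a genuine factual error. You claim that $j'$ is nowhere vanishing on $\mathbb{H}$, and justify this by saying $j$ induces a local analytic isomorphism $\mathbb{H}\to\mathbb{C}$. This is false: the biholomorphism $\Gamma\backslash\mathbb{H}\simeq\mathbb{C}$ does not lift to a local isomorphism $\mathbb{H}\to\mathbb{C}$, because the quotient map $\mathbb{H}\to\Gamma\backslash\mathbb{H}$ is ramified at the elliptic points. Concretely, $j'(i)=0$ (since $j-1728$ has a double zero at $\tau=i$) and $j'(\rho)=j''(\rho)=0$ for $\rho=e^{2\pi i/3}$ (since $j$ has a triple zero there), and $j'$ vanishes at every $\Gamma$-translate of these points. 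So deriving $j'(z)=0$ from your assumptions does not immediately yield a contradiction.

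The correct fix is exactly the one the paper uses: $j'(z)=0$ forces $z$ to be an elliptic point, i.e.\ $\Gamma$-equivalent to $i$ or $\rho$, and in particular $z\in\overline{\mathbb{Q}}$. Since every $j\mathrm{cl}$-closed set is an algebraically closed subfield of $\mathbb{C}$, it contains $\overline{\mathbb{Q}}$, so $z\in C$ after all — contradicting $z\notin C$. So your overall structure for (b) is sound (pick $\partial$ with $C\subseteq\ker\partial$ and $\partial(z)\neq 0$, deduce $j'(z)\partial(z)=0$, hence $j'(z)=0$), but the final step must invoke \emph{$j'(z)=0$ implies $z$ algebraic, hence $z\in C$} rather than the false non-vanishing of $j'$. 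The paper's one-line proof cites \cite[Proposition 5.8]{sebae} together with precisely this observation.
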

\begin{proof}
Follows from \cite[Proposition 5.8]{sebae} and the fact that $j'(z) = 0$ implies that $z$ is algebraic
\end{proof}

The Ax--Schanuel theorem for $j$ \cite[Theorem 1.3]{pila-tsimerman} has the following consequence:

\begin{prop}[{{\cite[Proposition 6.2]{sebae}}}]
\label{prop:as}
Let $C$ be a $j\mathrm{cl}$-closed subfield of $\mathbb{C}$, then for every $z_{1},\ldots,z_{n}\in\mathbb{H}$ we have that 
 \begin{equation*}
     \mathrm{tr.deg.}_{C}C\left(\mathbf{z},j(\mathbf{z}),j'(\mathbf{z}),j''(\mathbf{z})\right)\geq 3\dim_{G}\left(\mathbf{z}|C\right) + \dim^{j}\left(\mathbf{z}|C\right).
 \end{equation*}
\end{prop}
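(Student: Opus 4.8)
The statement to prove is Proposition \ref{prop:as}: for a $j\mathrm{cl}$-closed subfield $C$ and $z_1,\ldots,z_n\in\mathbb{H}$,
$$\mathrm{tr.deg.}_C C\left(\mathbf{z},j(\mathbf{z}),j'(\mathbf{z}),j''(\mathbf{z})\right)\geq 3\dim_G(\mathbf{z}|C)+\dim^{j}(\mathbf{z}|C).$$
Since this is quoted as \cite[Proposition 6.2]{sebae}, my job is to reconstruct the argument from the Ax--Schanuel theorem for $j$ together with the formalism of $j$-derivations set up just above. The overall strategy is to reduce to the two extreme cases — the ``$\dim_G$ part'' handled by Ax--Schanuel itself, and the ``$\dim^j$ part'' handled by the very definition of $\dim^j$ via $j$-derivations vanishing on $C$ — and then to glue them, taking care that the two contributions count genuinely independent transcendentals.

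First I would fix notation: let $d := \dim_G(\mathbf{z}|C)$ and $e := \dim^j(\mathbf{z}|C)$. After reindexing we may assume $z_1,\ldots,z_d$ lie in pairwise distinct $G$-orbits, none of which meets $C$, and these account for $\dim_G(\mathbf{z}|C)$; similarly, by the definition of $\dim^j$, there are $j$-derivations $\partial_1,\ldots,\partial_e$ with $C\subseteq\ker\partial_i$ and (after reindexing among the $z$'s, or passing to suitable elements of $j\mathrm{cl}$-span) a subtuple witnessing $\partial_i(z_k)=\delta_{ik}$. The key structural input is the Ax--Schanuel theorem for $j$ in its differential form, which for a $j\mathrm{cl}$-closed field $C$ already yields
$$\mathrm{tr.deg.}_C C\left(\mathbf{z},j(\mathbf{z}),j'(\mathbf{z}),j''(\mathbf{z})\right)\geq 3\dim_G(\mathbf{z}|C)$$
whenever the $z_i$ are, in an appropriate sense, $j$-algebraically independent over $C$ — i.e. when $e=0$. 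So the content of the proposition is the refinement that adds $+e$ when some of the $z_i$ are $j\mathrm{cl}$-dependent but still in orbits disjoint from $C$.

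The main step is therefore a splitting argument. I would argue that each $\partial_i$, being a $j$-derivation vanishing on $C$ with $\partial_i(z_i)=1$, forces $z_i$ (and hence, by axiom (3) of $j$-derivations, the whole block $z_i,j(z_i),j'(z_i),j''(z_i)$ moves nontrivially under $\partial_i$) to be transcendental over $C$ together with the already-counted $3d$ transcendentals coming from Ax--Schanuel; concretely, one extends the $\partial_i$ to derivations of the relevant function field and observes that $\partial_i$ kills $C$ and the Ax--Schanuel block of the free orbits but not $z_i$, so $z_i$ cannot be algebraic over the compositum of $C$ with that block. Running this for $i=1,\ldots,e$ and checking that the $\partial_i$ are chosen to be $\mathbb{C}$-linearly independent on the span of the new elements (which is exactly what the definition of $\dim^j$ provides) gives $e$ further independent transcendentals on top of the $3d$, yielding the bound. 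The cleanest way to organize this is probably an induction on $e$: assuming the inequality for $e-1$, add one more derivation $\partial_e$, note it vanishes on $C$ and on a transcendence basis realizing the rank $3d+(e-1)$, and conclude $\partial_e(z_e)=1$ contributes one more to the transcendence degree.

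**Main obstacle.** The delicate point is the interaction between the two dimension notions: $\dim_G$ counts orbits over $C$, $\dim^j$ counts $j\mathrm{cl}$-independence over $C$, and one must be sure these are ``orthogonal'' enough that their contributions $3d$ and $e$ add rather than overlap. The resolution is that Ax--Schanuel, applied over the $j\mathrm{cl}$-closed field $C$, already produces the $3d$ bound for the orbit-part, and the $j$-derivations witnessing $\dim^j$ vanish on $C$ — hence also on anything algebraic over $C$, which by Lemma \ref{lem:c} includes the $j$-data of any $z$ lying in $C$ — so they can only detect the ``genuinely new'' coordinates, and their independence as derivations translates directly into algebraic independence of the witnessing values over the Ax--Schanuel block. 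Making this last translation precise — extending a given finite family of $j$-derivations on $\mathbb{C}$ to derivations on the function field $C(\mathbf{z},j(\mathbf{z}),j'(\mathbf{z}),j''(\mathbf{z}))$ in a way compatible with the $j$-axioms, which is where \cite[Proposition 5.8]{sebae} and the structure theory of $j$-derivations enter — is the technical heart of the proof and the step I would expect to occupy most of the work.
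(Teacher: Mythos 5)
The paper does not prove this proposition; it is quoted verbatim from \cite[Proposition~6.2]{sebae}, so what follows is a comparison with the argument in that source.

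Your overall instinct is right: the result is a consequence of the differential Ax--Schanuel theorem for $j$ (\cite[Theorem~1.3]{pila-tsimerman}), applied in the differential field $\bigl(\mathbb{C};\partial_1,\ldots,\partial_m\bigr)$ where the $\partial_k$ are $j$-derivations vanishing on $C$. But the proof is not a ``glue the $3d$-part and the $e$-part'' argument; the differential Ax--Schanuel already delivers both terms in a \emph{single} application, in the form $3\cdot(\text{number of orbits off the constants})+\mathrm{rk}\bigl(\partial_k z_i\bigr)$, and one simply has to match that conclusion to $3\dim_G(\mathbf{z}|C)+\dim^j(\mathbf{z}|C)$. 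This is where your version of the argument breaks.

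Concretely, the step ``observe that $\partial_i$ kills $C$ and the Ax--Schanuel block of the free orbits but not $z_i$, so $z_i$ cannot be algebraic over the compositum of $C$ with that block'' is false. By axiom (3) of $j$-derivations, $\partial_i\bigl(j(z_k)\bigr)=j'(z_k)\,\partial_i(z_k)$, so $\partial_i$ does not kill the $j$-data attached to $z_i$ itself (indeed $\partial_i\bigl(j(z_i)\bigr)=j'(z_i)\neq 0$ generically), nor that of the remaining representatives $z_{e+1},\ldots,z_d$, on which the values $\partial_i(z_k)$ are unconstrained. The algebraic-independence-by-derivation device therefore does not separate $z_i$ from the $3d$ transcendentals that Ax--Schanuel produces, and the proposed induction on $e$ has no base to stand on. A second issue you mention only in passing but would need to resolve: if you feed only the witnessing derivations $\partial_1,\ldots,\partial_e$ to the differential Ax--Schanuel, their common constant field $\mathcal{C}$ may be strictly larger than $C$, and representatives $z_i$ with $e<i\leq d$ could land inside $\mathcal{C}$ (all $\partial_k(z_i)=0$ is compatible with rank $e$). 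Then Ax--Schanuel counts only $\dim_G(\mathbf{z}|\mathcal{C})<d$ orbits. The fix is to augment the finite family of $j$-derivations with, for each such $z_i\notin C=j\mathrm{cl}(C)$, a further derivation killing $C$ but not $z_i$; this preserves the rank (which is capped at $\dim^j(\mathbf{z}|C)=e$ by definition of the pregeometry) while ensuring that no representative becomes a constant, and then the single application of the differential Ax--Schanuel gives exactly $3d+e$. Finally, the remark framing $e=0$ as the ``$j$-algebraically independent'' case is inverted: $\dim^j(\mathbf{z}|C)=0$ means all $z_i\in j\mathrm{cl}(C)=C$, which by Lemma~\ref{lem:c} forces $d=0$ as well, so that case is vacuous rather than the base case of a refinement.
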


\subsection{Convenient tuples}
\label{subsec:transineq}

In this section we use the results of \cite{aek-closureoperator} to show that MSCD implies the existence of ``convenient generators'' for any finitely generated field. 
We will use $J$ to denote the triple of function $(j,j',j'')$, so that if $z_{1},\ldots,z_{n}$ are elements of $\mathbb{H}$, then:
\[J(\mathbf{z}) := (j(z_{1}),\ldots,j(z_{n}),j'(z_{1}),\ldots,j'(z_{n}),j''(z_{1}),\ldots,j''(z_{n})).\]

\begin{defn}
We will say that a tuple $\mathbf{t}=(t_{1},\ldots,t_{m})$ of elements of $\mathbb{H}\setminus\Sigma$ is \emph{convenient for $j$} if
\[\mathrm{tr.deg.}_{\mathbb{Q}}\mathbb{Q}(\mathbf{t},J(\mathbf{t})) = 3\dim_{G}(\mathbf{t}) + \dim^{j}(\mathbf{t}).\]
We remark that since the coordinates of $\mathbf{t}$ are not in $\Sigma$, then $\dim_{G}(\mathbf{t}) = \dim_{G}(\mathbf{t}|\Sigma)$.
\end{defn}

The convenience of such a tuple of elements is manifested in the following two lemmas, which show that we can obtain MSCD-type inequalities over fields generated by a convenient tuple. 

\begin{lem}
\label{lem:fgjmscas}
Assume MSCD holds. 
Suppose that $t_{1},\ldots,t_{m}\in\mathbb{H}\setminus\Sigma$ is a convenient tuple for $j$ and set $F = \mathbb{Q}\left(\mathbf{t},J\left(\mathbf{t}\right)\right)$. 
Then for any $z_{1},\ldots,z_{n}\in\mathbb{H}$ we have
\begin{align}
    \label{eq:mscasfgJ}
         \mathrm{tr.deg.}_{F}F\left(\mathbf{z},J(\mathbf{z})\right) 
          &\geq  3\dim_{G}\left(\mathbf{z}|\Sigma,\mathbf{t}\right) + \dim^{j}\left(\mathbf{z}|\mathbf{t}\right), \mbox{ and }\\
    \label{eq:mscasfgj}     \mathrm{tr.deg.}_{F}F\left(\mathbf{z},j(\mathbf{z})\right) 
          &\geq  \dim_{G}\left(\mathbf{z}|\Sigma,\mathbf{t}\right) + \dim^{j}\left(\mathbf{z}|\mathbf{t}\right).
\end{align}
\end{lem}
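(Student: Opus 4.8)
The plan is to combine MSCD, applied to the joint tuple $(\mathbf{t},\mathbf{z})$, with the convenience hypothesis on $\mathbf{t}$, and then to transfer the resulting transcendence bound from $\mathbb{Q}$ to the field $F = \mathbb{Q}(\mathbf{t},J(\mathbf{t}))$ by a standard additivity-of-transcendence-degree argument. First I would apply Conjecture \ref{conj:mscd} to the concatenated tuple, obtaining
$$\mathrm{tr.deg.}_{\mathbb{Q}}\mathbb{Q}(\mathbf{t},\mathbf{z},J(\mathbf{t}),J(\mathbf{z}))\geq 3\dim_{G}(\mathbf{t},\mathbf{z}\mid\Sigma).$$
On the left-hand side, the additivity of transcendence degree in a tower gives
$$\mathrm{tr.deg.}_{\mathbb{Q}}\mathbb{Q}(\mathbf{t},\mathbf{z},J(\mathbf{t}),J(\mathbf{z})) = \mathrm{tr.deg.}_{\mathbb{Q}}F + \mathrm{tr.deg.}_{F}F(\mathbf{z},J(\mathbf{z})),$$
so that the quantity we want to bound below, $\mathrm{tr.deg.}_{F}F(\mathbf{z},J(\mathbf{z}))$, equals $\mathrm{tr.deg.}_{\mathbb{Q}}\mathbb{Q}(\mathbf{t},\mathbf{z},J(\mathbf{t}),J(\mathbf{z})) - \mathrm{tr.deg.}_{\mathbb{Q}}F$. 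Since $\mathbf{t}$ is convenient, $\mathrm{tr.deg.}_{\mathbb{Q}}F = 3\dim_{G}(\mathbf{t}\mid\Sigma) + \dim^{j}(\mathbf{t})$.

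Putting these together reduces the claim \eqref{eq:mscasfgJ} to the purely combinatorial inequality
$$3\dim_{G}(\mathbf{t},\mathbf{z}\mid\Sigma) - 3\dim_{G}(\mathbf{t}\mid\Sigma) - \dim^{j}(\mathbf{t}) \geq 3\dim_{G}(\mathbf{z}\mid\Sigma,\mathbf{t}) + \dim^{j}(\mathbf{z}\mid\mathbf{t}).$$
Here I would use two facts about the relevant pregeometries. For the $G$-orbit counting function: $\dim_{G}(\mathbf{t},\mathbf{z}\mid\Sigma) = \dim_{G}(\mathbf{t}\mid\Sigma) + \dim_{G}(\mathbf{z}\mid\Sigma,\mathbf{t})$, which is immediate from the definition, since the orbits generated by $\mathbf{z}$ and disjoint from $G\cdot\Sigma$ split into those that already meet $G\cdot\mathbf{t}$ and those that do not. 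This cancels the first three terms against the right-hand side and leaves us needing $-\dim^{j}(\mathbf{t})\geq \dim^{j}(\mathbf{z}\mid\mathbf{t})$ — which is false as stated, so the bookkeeping must instead be arranged so that a corresponding $\dim^{j}$ contribution appears on the left. The correct way to see this is to apply MSCD not bare but in the sharper Ax--Schanuel-with-derivations form: one does \emph{not} in fact need Proposition \ref{prop:as} here, but rather one must recall that $\dim^{j}$ is subadditive in the sense $\dim^{j}(\mathbf{t},\mathbf{z}) = \dim^{j}(\mathbf{t}) + \dim^{j}(\mathbf{z}\mid\mathbf{t})$, and that convenience of $\mathbf{t}$ forces the $\dim^{j}(\mathbf{t})$ term on the left to be accounted for exactly, with no slack, so that the residual inequality becomes $\dim^{j}(\mathbf{t},\mathbf{z}) - \dim^{j}(\mathbf{t}) \geq \dim^{j}(\mathbf{z}\mid\mathbf{t})$, which holds with equality. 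The remaining issue is that MSCD as stated gives only $3\dim_{G}(\mathbf{t},\mathbf{z}\mid\Sigma)$, without the $\dim^{j}$ term; to recover it I would instead invoke the fact (used implicitly in the convenience setup, cf.\ \cite[\textsection 6]{aek-closureoperator}) that MSCD together with the existence of the pregeometry $j\mathrm{cl}$ yields the strengthened bound $\mathrm{tr.deg.}_{\mathbb{Q}}\mathbb{Q}(\mathbf{w},J(\mathbf{w}))\geq 3\dim_{G}(\mathbf{w}\mid\Sigma) + \dim^{j}(\mathbf{w})$ for all tuples $\mathbf{w}$, and apply this to $\mathbf{w} = (\mathbf{t},\mathbf{z})$.

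With that strengthened bound in hand, the computation closes cleanly: the left side of \eqref{eq:mscasfgJ} is at least $3\dim_{G}(\mathbf{t},\mathbf{z}\mid\Sigma) + \dim^{j}(\mathbf{t},\mathbf{z}) - 3\dim_{G}(\mathbf{t}\mid\Sigma) - \dim^{j}(\mathbf{t})$, and the two additivity identities above rewrite this exactly as $3\dim_{G}(\mathbf{z}\mid\Sigma,\mathbf{t}) + \dim^{j}(\mathbf{z}\mid\mathbf{t})$. Finally, \eqref{eq:mscasfgj} follows from \eqref{eq:mscasfgJ} because $F(\mathbf{z},j(\mathbf{z}))\subseteq F(\mathbf{z},J(\mathbf{z}))$ and the extra coordinates $j'(\mathbf{z}),j''(\mathbf{z})$ contribute at most $2\dim_{G}(\mathbf{z}\mid\Sigma,\mathbf{t})$ to the transcendence degree (each $G$-orbit contributes a one-dimensional family of periods in each of $j',j''$, and orbits meeting $\mathbf{t}$ or $\Sigma$ contribute nothing new), so subtracting $2\dim_{G}(\mathbf{z}\mid\Sigma,\mathbf{t})$ from both sides of \eqref{eq:mscasfgJ} gives \eqref{eq:mscasfgj}. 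I expect the main obstacle to be pinning down exactly which strengthened form of MSCD is legitimately available — i.e.\ making precise the passage from the bare statement of Conjecture \ref{conj:mscd} to the version including the $\dim^{j}$ term — and verifying the two additivity identities for $\dim_{G}(\cdot\mid\cdot)$ and $\dim^{j}(\cdot\mid\cdot)$ carefully enough that the relativization over the nonalgebraic base $\mathbf{t}$ (as opposed to over $\Sigma$ or $\emptyset$) is handled correctly.
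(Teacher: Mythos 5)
Your overall route is the same as the paper's: apply the (strengthened) Schanuel inequality to the concatenated tuple $(\mathbf{t},\mathbf{z})$, use additivity of transcendence degree, of $\dim_{G}$, and of $\dim^{j}$, and cancel the $\mathbf{t}$-contribution using the convenience hypothesis. You are right to be suspicious of a bare appeal to Conjecture~\ref{conj:mscd}: as you note, MSCD applied to $(\mathbf{t},\mathbf{z})$ together with the additivity identities and convenience of $\mathbf{t}$ only yields $\mathrm{tr.deg.}_{F}F(\mathbf{z},J(\mathbf{z}))\geq 3\dim_{G}(\mathbf{z}|\Sigma,\mathbf{t})-\dim^{j}(\mathbf{t})$, which is too weak. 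The paper's own proof is terse on this point, but the strengthened bound you invoke, namely $\mathrm{tr.deg.}_{\mathbb{Q}}\mathbb{Q}(\mathbf{w},J(\mathbf{w}))\geq 3\dim_{G}(\mathbf{w}|\Sigma)+\dim^{j}(\mathbf{w})$, is exactly what is needed and it does hold: split $\mathbf{w}$ into coordinates lying in $C_{j}$ and those not; apply MSCD to the former block; then, since $\mathbb{Q}(\mathbf{w}_{1},J(\mathbf{w}_{1}))\subseteq C_{j}$ by Lemma~\ref{lem:c}, apply Proposition~\ref{prop:as} over $C_{j}$ to the latter block and add. Your identification of the needed strengthening is thus correct, and indeed more careful than what is written in the paper.

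Your justification of the final step, however, contains a genuine error. You assert that the derivative coordinates $j'(\mathbf{z}),j''(\mathbf{z})$ contribute at most $2\dim_{G}(\mathbf{z}\mid\Sigma,\mathbf{t})$ to $\mathrm{tr.deg.}_{F}$, on the grounds that ``orbits meeting $\mathbf{t}$ or $\Sigma$ contribute nothing new.'' The $\mathbf{t}$-half of this is fine: if $z_{i}=gt_{k}$ with $g\in G$ then $J(z_{i})\in\overline{F}$. The $\Sigma$-half is false: for a CM point $\sigma$, although $\sigma$ and $j(\sigma)$ are algebraic, $j'(\sigma)$ (and likewise $j''(\sigma)$) is a period and is transcendental, so an orbit meeting $\Sigma$ can very well increase $\mathrm{tr.deg.}_{F(\mathbf{z},j(\mathbf{z}))}F(\mathbf{z},J(\mathbf{z}))$. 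Hence the claimed bound $\mathrm{tr.deg.}_{F(\mathbf{z},j(\mathbf{z}))}F(\mathbf{z},J(\mathbf{z}))\leq 2\dim_{G}(\mathbf{z}\mid\Sigma,\mathbf{t})$ fails in general. The correct way to close the step is first to reduce to the case where the coordinates of $\mathbf{z}$ lie in pairwise distinct $G$-orbits, none of which meets $\Sigma\cup G\cdot\mathbf{t}$: deleting a coordinate $z_{i}\in\Sigma$ or $z_{i}\in G\cdot\mathbf{t}$, or a duplicate of another coordinate, leaves both sides of \eqref{eq:mscasfgj} unchanged (one uses that $\Sigma\subset\overline{\mathbb{Q}}\subseteq F$, that $j(gt_{k})\in\overline{F}$, and that $z_{i}\in j\mathrm{cl}(\mathbf{t})$ or $j\mathrm{cl}(z_{k})$ respectively). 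After this reduction $\dim_{G}(\mathbf{z}\mid\Sigma,\mathbf{t})$ equals the number of coordinates of $\mathbf{z}$, and the crude bound $\mathrm{tr.deg.}_{F(\mathbf{z},j(\mathbf{z}))}F(\mathbf{z},J(\mathbf{z}))\leq 2|\mathbf{z}|$ suffices to deduce \eqref{eq:mscasfgj} from \eqref{eq:mscasfgJ}.
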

\begin{proof}
 Using the addition formula, we first get that:
\begin{equation}
\label{eq:1}
    \mathrm{tr.deg.}_{\mathbb{Q}}\mathbb{Q}\left(\mathbf{z},\mathbf{t},J(\mathbf{z}),J\left(\mathbf{t}\right)\right) = \mathrm{tr.deg.}_{\mathbb{Q}}F  + \mathrm{tr.deg.}_{F}F(\mathbf{z},J(\mathbf{z})).
\end{equation}
Similarly we get
\begin{equation}
\label{eq:2}
\dim_{G}(\mathbf{z},\mathbf{t}|\Sigma) = \dim_{G}(\mathbf{t}|\Sigma) + \dim_{G}(\mathbf{z}|\mathbf{t},\Sigma)
\end{equation}
and also
\begin{equation}
\label{eq:fgj3}
    \dim^{j}(\mathbf{z},\mathbf{t}) = \dim^{j}(\mathbf{t}) + \dim^{j}(\mathbf{z}|\mathbf{t}).
\end{equation}
Combining MSCD with (\ref{eq:1}), (\ref{eq:2}) and (\ref{eq:fgj3}), we obtain (\ref{eq:mscasfgJ}). 
Inequality (\ref{eq:mscasfgj}) now follows directly from (\ref{eq:mscasfgJ}).
\end{proof}

\begin{remark}
\label{rem:convenient}
Suppose that $t_{1},\ldots,t_{m}\in\mathbb{H}\setminus\Sigma$ are such that $j(t_{1}),\ldots,j(t_{m})\in\overline{\mathbb{Q}}$. 
Then under MSC we get $\mathrm{tr.deg.}_{\mathbb{Q}}\mathbb{Q}(\mathbf{t},j(\mathbf{t})) =\mathrm{tr.deg.}_{\mathbb{Q}}\mathbb{Q}(\mathbf{t}) = \dim_{G}(\mathbf{t}|\Sigma)$. 
So for any $z_{1},\ldots,z_{n}\in\mathbb{H}$ we can repeat the arguments in the proof of Lemma \ref{lem:fgjmscas} to get:
\begin{equation*}
\mathrm{tr.deg.}_{\mathbb{Q}(\mathbf{t})}\mathbb{Q}(\mathbf{z},\mathbf{t},j(\mathbf{z}),j(\mathbf{t})) \geq \dim_{G}(\mathbf{z}|\Sigma,\mathbf{t}).     
\end{equation*}
\end{remark}

\begin{lem}
\label{lem:msc+t}
Assume MSCD holds. 
Let $t_{1},\ldots,t_{m}\in\mathbb{H}\setminus\Sigma$ be a convenient tuple for $j$. 
Then for any $z_{1},\ldots,z_{n}\in\mathbb{H}$ we have
\begin{align*}
    \mathrm{tr.deg.}_{\mathbb{Q}}\mathbb{Q}(\mathbf{z},\mathbf{t},J(\mathbf{z}),J(\mathbf{t}))&\geq3\dim_{G}(\mathbf{z},\mathbf{t}|\Sigma) + \dim^{j}(\mathbf{t}), \mbox{ and }\\
    \mathrm{tr.deg.}_{\mathbb{Q}}\mathbb{Q}(\mathbf{z},\mathbf{t},j(\mathbf{z}),j(\mathbf{t}))&\geq\dim_{G}(\mathbf{z},\mathbf{t}|\Sigma) + \dim^{j}(\mathbf{t}).
\end{align*}
\end{lem}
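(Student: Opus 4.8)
The plan is to reduce Lemma~\ref{lem:msc+t} to Lemma~\ref{lem:fgjmscas} by an application of the additivity of transcendence degree across the tower $\mathbb{Q}\subseteq F\subseteq F(\mathbf{z},J(\mathbf{z}))$, where $F = \mathbb{Q}(\mathbf{t},J(\mathbf{t}))$, exactly as in the proof of Lemma~\ref{lem:fgjmscas} but now keeping track of the contribution of $\mathbf{t}$ itself. First I would write
\begin{equation*}
\mathrm{tr.deg.}_{\mathbb{Q}}\mathbb{Q}(\mathbf{z},\mathbf{t},J(\mathbf{z}),J(\mathbf{t})) = \mathrm{tr.deg.}_{\mathbb{Q}}F + \mathrm{tr.deg.}_{F}F(\mathbf{z},J(\mathbf{z})).
\end{equation*}
Since $\mathbf{t}$ is convenient for $j$, the first summand equals $3\dim_{G}(\mathbf{t}|\Sigma) + \dim^{j}(\mathbf{t})$. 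For the second summand I apply inequality~(\ref{eq:mscasfgJ}) of Lemma~\ref{lem:fgjmscas}, which gives a lower bound of $3\dim_{G}(\mathbf{z}|\Sigma,\mathbf{t}) + \dim^{j}(\mathbf{z}|\mathbf{t})$.

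Next I would combine these using the splitting identities for the $G$-dimension, namely $\dim_{G}(\mathbf{z},\mathbf{t}|\Sigma) = \dim_{G}(\mathbf{t}|\Sigma) + \dim_{G}(\mathbf{z}|\mathbf{t},\Sigma)$ (this is equation~(\ref{eq:2}) with the roles unchanged). Adding the two lower bounds:
\begin{equation*}
\mathrm{tr.deg.}_{\mathbb{Q}}\mathbb{Q}(\mathbf{z},\mathbf{t},J(\mathbf{z}),J(\mathbf{t})) \geq 3\bigl(\dim_{G}(\mathbf{t}|\Sigma) + \dim_{G}(\mathbf{z}|\mathbf{t},\Sigma)\bigr) + \dim^{j}(\mathbf{t}) + \dim^{j}(\mathbf{z}|\mathbf{t}),
\end{equation*}
and the first parenthesised expression is precisely $\dim_{G}(\mathbf{z},\mathbf{t}|\Sigma)$, while the remaining term $\dim^{j}(\mathbf{z}|\mathbf{t}) \geq 0$ can simply be discarded (or kept, but the statement only asserts the weaker bound with $\dim^{j}(\mathbf{t})$). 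This yields the first displayed inequality. The second displayed inequality, involving only $j$ and not its derivatives, then follows from the first exactly as~(\ref{eq:mscasfgj}) follows from~(\ref{eq:mscasfgJ}): the field $\mathbb{Q}(\mathbf{z},\mathbf{t},j(\mathbf{z}),j(\mathbf{t}))$ sits inside $\mathbb{Q}(\mathbf{z},\mathbf{t},J(\mathbf{z}),J(\mathbf{t}))$, and the derivatives contribute at most $2\dim_{G}(\mathbf{z},\mathbf{t}|\Sigma)$ to the transcendence degree over $\mathbb{Q}(\mathbf{z},\mathbf{t},j(\mathbf{z}),j(\mathbf{t}))$ (two derivatives per new orbit, since the differential equation~(\ref{eq:j}) forces $j'''$ into the field generated by lower data), so subtracting $2\dim_{G}(\mathbf{z},\mathbf{t}|\Sigma)$ from both sides of the first inequality gives the result.

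I do not expect a genuine obstacle here — this is essentially a bookkeeping lemma that repackages Lemma~\ref{lem:fgjmscas}. The one point requiring minor care is the passage from the $J$-version to the $j$-version: one must be sure that the derivative coordinates $j'(\mathbf{z}),j''(\mathbf{z}),j'(\mathbf{t}),j''(\mathbf{t})$ add at most $2$ per $G$-orbit, which is standard (for each $z_i$ the pair $(j'(z_i),j''(z_i))$ lies in a field of transcendence degree at most $2$ over $\mathbb{Q}(z_i,j(z_i))$, with equality only contributing when $z_i$ generates a new orbit, and orbit-equivalent points share all modular data up to algebraic dependence), so the total contribution is bounded by $2\dim_{G}(\mathbf{z},\mathbf{t}|\Sigma)$. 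With that observation in hand the algebra is immediate.
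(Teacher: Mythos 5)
Your argument for the first displayed inequality is exactly the computation that appears inside the paper's proof (which the paper phrases as a contradiction, but which amounts to the same chain): write $\mathrm{tr.deg.}_{\mathbb{Q}}\mathbb{Q}(\mathbf{z},\mathbf{t},J(\mathbf{z}),J(\mathbf{t}))$ as $\mathrm{tr.deg.}_{\mathbb{Q}}F + \mathrm{tr.deg.}_{F}F(\mathbf{z},J(\mathbf{z}))$, evaluate the first summand by convenience, bound the second by~(\ref{eq:mscasfgJ}), and recombine the $\dim_G$ terms using~(\ref{eq:2}). That part of your write-up is correct and matches the paper.

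The place where your proposal slips is the deduction of the $j$-inequality from the $J$-inequality. You claim the derivative coordinates contribute at most $2\dim_{G}(\mathbf{z},\mathbf{t}|\Sigma)$ to the transcendence degree. But the ``two per orbit'' mechanism you describe (modular polynomials relate the derivative data within a single $G$-orbit, and the order-$3$ ODE~(\ref{eq:j}) caps each representative's contribution at two) yields a bound of $2\dim_{G}(\mathbf{z},\mathbf{t})$, counting \emph{every} $G$-orbit, not merely the non-special ones. These quantities differ when some $z_i\in\Sigma$: a CM orbit is excluded from $\dim_{G}(\mathbf{z},\mathbf{t}|\Sigma)$, yet its representative can still contribute up to two fresh transcendentals via $j'$ and $j''$ (whose algebraicity at CM points is neither assumed nor known), so the bound you quote does not follow from the argument you give. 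The repair is cheap and worth spelling out: if $z_i$ is special then $z_i$ and $j(z_i)$ both lie in $\overline{\mathbb{Q}}$, so deleting that coordinate from $\mathbf{z}$ changes none of $\mathrm{tr.deg.}_{\mathbb{Q}}\mathbb{Q}(\mathbf{z},\mathbf{t},j(\mathbf{z}),j(\mathbf{t}))$, $\dim_{G}(\mathbf{z},\mathbf{t}|\Sigma)$, or $\dim^{j}(\mathbf{t})$. After discarding all CM coordinates of $\mathbf{z}$, and using that $\mathbf{t}$ is non-special by Lemma~\ref{lem:convenientgenerators}, every remaining $G$-orbit is non-special, $\dim_{G}(\mathbf{z},\mathbf{t})=\dim_{G}(\mathbf{z},\mathbf{t}|\Sigma)$, and your subtraction goes through. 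For context, the paper's own treatment of this direction is equally terse — it simply asserts that failure of the $j$-inequality forces failure of the $J$-inequality — so the same reduction is needed there; you are not in worse shape than the source, but the gap in the justification you offered is real and should be closed as above.
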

\begin{proof}
Set $F = \mathbb{Q}\left(\mathbf{t},J\left(\mathbf{t}\right)\right)$. We proceed by contradiction, if $z_{1},\ldots,z_{n}\in\mathbb{H}$ are such
\[\mathrm{tr.deg.}_{\mathbb{Q}}\mathbb{Q}(\mathbf{z},\mathbf{t},j(\mathbf{z}),j(\mathbf{t}))<\dim_{G}(\mathbf{z},\mathbf{t}|\Sigma) + \dim^{j}(\mathbf{t}),\]
then we also get that
\[\mathrm{tr.deg.}_{\mathbb{Q}}\mathbb{Q}(\mathbf{z},\mathbf{t},J(\mathbf{z}),J(\mathbf{t}))<3\dim_{G}(\mathbf{z},\mathbf{t}|\Sigma) + \dim^{j}(\mathbf{t}).\]
Then using that $\mathbf{t}$ is convenient for $j$, Lemma \ref{lem:fgjmscas}, (\ref{eq:1}) and (\ref{eq:2}), we get:
\begin{align*}
3\dim_{G}(\mathbf{z}|\Sigma,\mathbf{t}) + \mathrm{tr.deg.}_{\mathbb{Q}}F &= 3\dim_{G}(\mathbf{z},\mathbf{t}|\Sigma) + \dim^{j}(\mathbf{t})\\
&> \mathrm{tr.deg.}_{\mathbb{Q}}\mathbb{Q}(\mathbf{z},\mathbf{t},J(\mathbf{z}),J(\mathbf{t}))\\
&= \mathrm{tr.deg.}_{\mathbb{Q}}F + \mathrm{tr.deg.}_{F}F(\mathbf{z},J(\mathbf{z}))\\
&\geq \mathrm{tr.deg.}_{\mathbb{Q}}F + 3\dim(\mathbf{z}|\Sigma,\mathbf{t}) + \dim^{j}(\mathbf{z}|\mathbf{t}).
\end{align*}
As $\dim^{j}(\mathbf{z}|\mathbf{t})\geq 0$, this gives a contradiction. 
\end{proof}

In the rest of the subsection we address the question of existence of convenient tuples, and furthermore finding a convenient tuple which contains the generators of a specific finitely generated field. 
We start by recalling the following result.

\begin{thm}{{\cite[Theorem 5.1]{aek-closureoperator}}}
\label{thm:mainclosureoperator}
Let $F$ be a subfield of $\mathbb{C}$ such that $\mathrm{tr.deg.}_{C_{j}}F$ is finite. 
Then there exist  $t_{1},\ldots,t_{m}\in\mathbb{H}\setminus C_{j}$ such that:
\begin{enumerate}
    \item[(A1):] $F\subseteq\overline{C_{j}\left(\mathbf{t},J\left(\mathbf{t}\right)\right)}$, and
    \item[(A2):] $\mathrm{tr.deg.}_{C_{j}}C_{j}\left(\mathbf{t},J\left(\mathbf{t}\right)\right) = 3\dim_{G}\left(\mathbf{t}|C_{j}\right) + \dim^{j}\left(\mathbf{t}\right)$.
\end{enumerate} 
\end{thm}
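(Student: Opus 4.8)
\emph{Proof plan.} The idea is to realise $F$ by a finite tuple $\mathbf{t}$ built up one entry at a time. Throughout, by Proposition \ref{prop:as} applied over $C_{j}$, condition (A2) says exactly that the Ax--Schanuel inequality is an \emph{equality} for $\mathbf{t}$ over $C_{j}$; call such a tuple \emph{$C_{j}$-generic}. Two harmless reductions come first: replacing $F$ by the compositum $F\cdot C_{j}$ changes neither hypothesis nor conclusion, so we may assume $C_{j}\subseteq F$; and if $a_{1},\dots,a_{d}$ is a transcendence basis of $F$ over $C_{j}$ then $F\subseteq\overline{C_{j}(a_{1},\dots,a_{d})}$, so (A1) becomes the requirement that $a_{1},\dots,a_{d}\in\overline{C_{j}(\mathbf{t},J(\mathbf{t}))}$; the case $d=0$ is settled by taking $m=0$.

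The construction rests on a one-step extension: \emph{if $\mathbf{s}\in(\mathbb{H}\setminus C_{j})^{k}$ is $C_{j}$-generic and $a\in\mathbb{C}$ is transcendental over $L:=C_{j}(\mathbf{s},J(\mathbf{s}))$, then there is $t\in\mathbb{H}$ with $j(t)=a$ such that $(\mathbf{s},t)$ is again $C_{j}$-generic.} Granting this, the theorem follows by iteration: begin with the empty (vacuously $C_{j}$-generic) tuple, and while some $a_{i}$ is not yet algebraic over $C_{j}(\mathbf{s},J(\mathbf{s}))$, apply the one-step extension with $a=a_{i}$ and adjoin the resulting $t$. Since $a_{i}=j(t)$ then lies in $C_{j}(\mathbf{s},t,J(\mathbf{s}),J(t))$, the transcendence degree of $(a_{1},\dots,a_{d})$ over the field generated so far strictly decreases at each step, so after at most $d$ iterations we obtain a $C_{j}$-generic $\mathbf{t}$ with $a_{1},\dots,a_{d}\in\overline{C_{j}(\mathbf{t},J(\mathbf{t}))}$, hence $F\subseteq\overline{C_{j}(\mathbf{t},J(\mathbf{t}))}$; and each entry lies in $\mathbb{H}\setminus C_{j}$ because $j(t)=a_{i}\notin C_{j}$ forces $t\notin C_{j}$ by Lemma \ref{lem:c}(b).

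For the one-step extension, a preimage $t\in\mathbb{H}$ of $a$ exists since $j$ is onto, and by additivity over $\mathbf{s}$ of $\mathrm{tr.deg.}$, of $\dim_{G}(\cdot\mid C_{j})$ and of $\dim^{j}$, combined with $C_{j}$-genericity of $\mathbf{s}$, the tuple $(\mathbf{s},t)$ is $C_{j}$-generic precisely when
$$\mathrm{tr.deg.}_{L}L(t,j(t),j'(t),j''(t)) = 3\dim_{G}(t\mid\mathbf{s},C_{j}) + \dim^{j}(t\mid\mathbf{s}).$$
Since $a=j(t)$ is transcendental over $L$ there is no modular relation between $j(t)$ and the $j(s_{i})$ or with $C_{j}$, so $\dim_{G}(t\mid\mathbf{s},C_{j})=1$. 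If $a\notin j\mathrm{cl}(\mathbf{s})$ then Lemma \ref{lem:c}(b) forces $t\notin j\mathrm{cl}(\mathbf{s})$ for \emph{every} preimage, so $\dim^{j}(t\mid\mathbf{s})=1$ and the right side is $4$; but the left side is the transcendence degree of a four-element set, so it is automatically $\leq 4$ and, by Proposition \ref{prop:as} over $C_{j}$ applied to $(\mathbf{s},t)$ and subtracting the equality for $\mathbf{s}$, also $\geq 4$ — thus any preimage works. The remaining case is $a\in j\mathrm{cl}(\mathbf{s})$: here $\dim^{j}(t\mid\mathbf{s})=0$ for every preimage, so $C_{j}$-genericity of $(\mathbf{s},t)$ demands $\mathrm{tr.deg.}_{L}L(t,J(t))=3$, which one must secure by choosing the preimage $t$ — equivalently its jet $(t,j'(t),j''(t))$ — appropriately; the existence of such a $t\in\mathbb{H}$ is exactly a differential existential closedness statement, supplied by the with-derivatives existential closedness theorem of \cite{aek-differentialEC} applied to the line $\{(x,a):x\in\mathbb{C}\}\subseteq\mathbb{C}\times\mathrm{Y}(1)$, whose projection to the first factor is Zariski dense.

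The main obstacle is precisely this last input. Its no-derivatives shadow is \cite[Theorem 1.1]{paper1}, which already yields a preimage $t\in\mathbb{H}$ of $a$ but controls neither $j'(t)$ nor $j''(t)$; producing a preimage whose full jet sits in the position needed to preserve (A2), and doing so inside $\mathbb{C}$ rather than in an abstract differentially closed extension, is the substantive content of the differential existential closedness results of \cite{aek-differentialEC} — the engine behind the ``convenient generators'' referred to in \textsection\ref{subsec:keys}. Everything else is the bookkeeping needed to verify the additivity identities and the $\dim_{G}$ and $\dim^{j}$ computations used above; in the case $a\in j\mathrm{cl}(\mathbf{s})$ one must also check that the required jet constraint is genuinely realisable by a point of $\mathbb{H}$, which is where the order-three differential equation \eqref{eq:j} satisfied by $j$ makes the differential-EC input applicable.
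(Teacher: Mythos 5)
The paper does not prove this result; it cites it verbatim as \cite[Theorem 5.1]{aek-closureoperator}, so there is no in-paper proof to compare against. Evaluating your argument on its own terms, the high-level plan (reduce to a transcendence basis $a_1,\dots,a_d$ of $F$ over $C_j$, then build $\mathbf t$ one coordinate at a time by adjoining preimages $t$ with $j(t)=a_i$ while maintaining the equality in Proposition~\ref{prop:as}) is a plausible skeleton, and your Case~1 analysis (when $a\notin j\mathrm{cl}(\mathbf s)$) is essentially correct. Two small slips: in both places where you invoke Lemma~\ref{lem:c}(b) to deduce $t\notin C_j$, resp.\ $t\notin j\mathrm{cl}(\mathbf s)$, you actually need the contrapositive of Lemma~\ref{lem:c}(a) (if $t$ were in the closure, then by (a) so would $j(t)=a$ be).

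The genuine gap is in Case~2 ($a\in j\mathrm{cl}(\mathbf s)$, $a$ transcendental over $L$). You assert that the requirement $\mathrm{tr.deg.}_{L}L(t,J(t))=3$ can be secured ``by choosing the preimage $t$ — equivalently its jet — appropriately,'' with the existence of such a $t$ supplied by the differential EC theorem of \cite{aek-differentialEC}. This cannot work as stated. The fibre $j^{-1}(a)$ is a single $\Gamma$-orbit, and under $z\mapsto\gamma z$ with $\gamma\in\mathrm{SL}_2(\mathbb Z)$ the quantities $j'$, $j''$ transform by rational formulas with integer coefficients, so $L\big(\gamma t,J(\gamma t)\big)=L\big(t,J(t)\big)$ for every $\gamma$. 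Hence $\mathrm{tr.deg.}_{L}L(t,J(t))$ is the \emph{same} for every preimage of $a$: there is no freedom in the jet, and ``choosing $t$ appropriately'' is vacuous. Either the equality $\mathrm{tr.deg.}_{L}L(t,J(t))=3$ holds automatically for the unique (up to $\Gamma$) preimage — which would require its own argument that you have not given — or the one-step extension can genuinely fail in this case, in which case the induction breaks. Moreover, the theorem you cite from \cite{aek-differentialEC} is a statement about differentially closed fields; producing a point of $\mathbb H\subset\mathbb C$ with a prescribed transcendence behaviour for the full jet $(t,j(t),j'(t),j''(t))$ is precisely the transfer problem that makes Theorem~\ref{thm:mainclosureoperator} nontrivial, and cannot simply be quoted. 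The actual argument in \cite{aek-closureoperator} runs through the Khovanskii-system machinery (cf.\ the appeal to \cite[Proposition 6.6]{aek-closureoperator} in the proof of Lemma~\ref{lem:jclsol} here) rather than through a one-coordinate-at-a-time induction, and it is this machinery — not a free choice of preimage — that controls the derivatives.
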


Assuming MSCD, we now refine this result to show that convenient tuples exist.

\begin{lem}
\label{lem:convenientgenerators}
Let $K\subset\mathbb{C}$ be a finitely generated subfield. 
Then MSCD implies that there exist $t_{1},\ldots,t_{m}\in\mathbb{H}\setminus\Sigma$ such that:
\begin{enumerate}
    \item[(c1):] $\overline{K}\subseteq\overline{\mathbb{Q}\left(\mathbf{t},J\left(\mathbf{t}\right)\right)}$,
    \item[(c2):] $\mathbf{t}$ is convenient for $j$.
\end{enumerate}
Furthermore, we can assume that $\dim_{G}(\mathbf{t}|\Sigma) = m$. 
\end{lem}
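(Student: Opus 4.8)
The plan is to start from Theorem \ref{thm:mainclosureoperator} applied to a suitable field, and then use MSCD to upgrade condition (A2) (which is stated over $C_j$) to the genuine convenience condition (which is stated over $\mathbb{Q}$). First I would apply Theorem \ref{thm:mainclosureoperator} to the field $F := \overline{K}$ (or a finitely generated field whose algebraic closure contains $K$); this is legitimate because $\mathrm{tr.deg.}_{C_j}\overline{K}$ is finite, as $K$ is finitely generated over $\mathbb{Q}\subseteq C_j$. This produces $t_1,\ldots,t_m\in\mathbb{H}\setminus C_j$ satisfying (A1) and (A2). Condition (A1) says $\overline{K}\subseteq\overline{C_j(\mathbf{t},J(\mathbf{t}))}$, but we want $\overline{K}\subseteq\overline{\mathbb{Q}(\mathbf{t},J(\mathbf{t}))}$, i.e.\ (c1). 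For this, note $C_j = j\mathrm{cl}(\emptyset)$ need not be contained in $\mathbb{Q}(\mathbf{t},J(\mathbf{t}))$ a priori, so I would need to enlarge the tuple: since $\overline{K}$ is finitely generated as a field, only finitely many elements of $C_j$ are actually needed, and each element $c$ of $C_j$ lies in $\overline{\mathbb{Q}(\mathbf{s},J(\mathbf{s}))}$ for some finite tuple $\mathbf{s}$ from $\mathbb{H}$ (this is essentially what it means for $c\in j\mathrm{cl}(\emptyset)$, unwinding the definition of $j\mathrm{cl}$ and Theorem \ref{thm:mainclosureoperator} applied with $F$ trivial, or directly from the structure of $C_j$). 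Adjoining these finitely many extra $t_i$'s to our tuple gives (c1).

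The key step is then verifying that the enlarged tuple is convenient for $j$, i.e.\ that equality holds in
$$\mathrm{tr.deg.}_{\mathbb{Q}}\mathbb{Q}(\mathbf{t},J(\mathbf{t})) = 3\dim_G(\mathbf{t}|\Sigma) + \dim^j(\mathbf{t}).$$
The inequality $\geq$ is exactly MSCD together with the Ax--Schanuel inequality of Proposition \ref{prop:as} applied with $C = C_j$: indeed Proposition \ref{prop:as} gives $\mathrm{tr.deg.}_{C_j}C_j(\mathbf{t},J(\mathbf{t}))\geq 3\dim_G(\mathbf{t}|C_j) + \dim^j(\mathbf{t}|C_j)$, and by Remark \ref{rem:dimj} we have $\dim^j(\mathbf{t}|C_j) = \dim^j(\mathbf{t})$; combined with MSCD which controls the $\mathbb{Q}$-transcendence degree, and the fact that $\dim_G(\mathbf{t}|C_j)$ differs from $\dim_G(\mathbf{t}|\Sigma)$ only by $G$-orbits of elements $t_i$ with $j(t_i)\in C_j$ (which by Lemma \ref{lem:c}(b) forces $t_i\in C_j$, contradicting our choice that $t_i\notin C_j$ for the original tuple, though now we must be careful about the extra $t_i$'s we adjoined). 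For the reverse inequality $\leq$: MSCD is a lower bound, so it alone does not give $\leq$; instead $\leq$ comes from the fact that (A2) provides an \emph{equality} over $C_j$, which we transfer down to $\mathbb{Q}$ using the additivity of transcendence degree, of $\dim_G$, and of $\dim^j$ in a tower, exactly as in the proofs of Lemmas \ref{lem:fgjmscas} and \ref{lem:msc+t}. Concretely, one writes the tower $\mathbb{Q}\subseteq\mathbb{Q}(\mathbf{t},J(\mathbf{t}))\subseteq C_j(\mathbf{t},J(\mathbf{t}))$ and uses that the ``contribution'' of $C_j$ is accounted for consistently on both sides (transcendence degree, $G$-dimension, and $j$-dimension of $C_j$ over $\mathbb{Q}$ all being controlled), so that equality over $C_j$ descends to equality over $\mathbb{Q}$.

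For the final ``furthermore'' clause, I would additionally arrange that $\dim_G(\mathbf{t}|\Sigma) = m$, i.e.\ that the $t_i$ lie in pairwise distinct $G$-orbits, none of which meets $\Sigma$. If some $t_i$ is $G$-equivalent to some $t_k$, or $G$-equivalent to a point of $\Sigma$, then $j(t_i)$ is determined by a modular polynomial relation over the remaining data (in the first case) or is a special point algebraic over $\mathbb{Q}$ (in the second case), so $t_i$ and its derivatives $J(t_i)$ add nothing to the algebraic closure $\overline{\mathbb{Q}(\mathbf{t},J(\mathbf{t}))}$ and can simply be deleted from the tuple without affecting (c1); deleting them also preserves convenience because all three quantities in the convenience equation drop by exactly the same amount ($3$, respectively $0$, for each removed $G$-orbit, matching the transcendence degree drop). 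After this pruning the remaining tuple has $\dim_G(\mathbf{t}|\Sigma) = m$, as required.

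The main obstacle I anticipate is the bookkeeping around $C_j$ in the descent from the $C_j$-equality (A2) to the $\mathbb{Q}$-equality of convenience: one must be sure that $C_j$ does not contribute ``hidden'' transcendence degree or $j$-dimension that would spoil the count, and that the extra $t_i$'s adjoined in order to force $\overline{K}\subseteq\overline{\mathbb{Q}(\mathbf{t},J(\mathbf{t}))}$ (rather than just $\overline{C_j(\mathbf{t},J(\mathbf{t}))}$) do not break the equality in (A2). This requires knowing that $C_j$ is itself ``generated by $j$-data'' in a controlled way — precisely the content that makes $C_j = j\mathrm{cl}(\emptyset)$ well-behaved — together with careful use of the additivity formulas \eqref{eq:1}, \eqref{eq:2}, \eqref{eq:fgj3}. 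Everything else is a routine combination of Theorem \ref{thm:mainclosureoperator}, Proposition \ref{prop:as}, Lemma \ref{lem:c}, and MSCD.
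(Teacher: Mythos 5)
Your top-level strategy — split off the part of the data lying inside $C_j$, handle the part outside $C_j$ via Theorem \ref{thm:mainclosureoperator}, then verify convenience by combining MSCD with the Ax--Schanuel inequality in a tower — is the same as the paper's. But there is a genuine gap at the crucial step, and your own final paragraph correctly flags where it is.

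The gap is this: to upgrade (A1) from $\overline{K}\subseteq\overline{C_j(\mathbf{t},J(\mathbf{t}))}$ to (c1) $\overline{K}\subseteq\overline{\mathbb{Q}(\mathbf{t},J(\mathbf{t}))}$, you must adjoin extra elements $\mathbf{s}\subset\mathbb{H}\cap C_j$ accounting for a finitely generated piece of $C_j$. For the final tuple to be convenient, it is not enough that such $\mathbf{s}$ \emph{exist}; the adjoined tuple must itself satisfy the MSCD equality over $\mathbb{Q}$, i.e. $\mathrm{tr.deg.}_{\mathbb{Q}}\mathbb{Q}(\mathbf{s},J(\mathbf{s})) = 3\dim_G(\mathbf{s}|\Sigma)$ (note $\dim^j(\mathbf{s})=0$ since $\mathbf{s}\subset C_j$). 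Since MSCD and Ax--Schanuel always give the inequality $\geq$ (your tower calculation is correct in spirit), being convenient is exactly the non-automatic reverse inequality $\leq$, and you give no mechanism forcing the new generators $\mathbf{s}$ to satisfy it. Appealing to ``unwinding the definition of $j\mathrm{cl}$'' or ``Theorem \ref{thm:mainclosureoperator} with $F$ trivial'' does not produce such a controlled $\mathbf{s}$: the trivial application of Theorem \ref{thm:mainclosureoperator} gives no information (the empty tuple already satisfies (A1) for $F\subseteq C_j$), and $C_j=j\mathrm{cl}(\emptyset)$ being defined via derivations does not by itself yield that its elements are algebraic over $\mathbb{Q}$ extended by a \emph{convenient} tuple of $J$-values. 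The paper fills this precisely via [Theorem 6.18, aek-closureoperator], which is proved under MSCD and which supplies a tuple $\mathbf{t}_2\subset\mathbb{H}\cap C_j$ satisfying both the generation condition for the relevant finitely generated subfield $L\subset C_j$ and the equality $\mathrm{tr.deg.}_{\mathbb{Q}}\mathbb{Q}(\mathbf{t}_2,J(\mathbf{t}_2))=3\dim_G(\mathbf{t}_2|\Sigma)$. This is the genuine place where MSCD does its work; without invoking it (or an equivalent result), the convenience of the enlarged tuple is unsubstantiated.

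A secondary but related issue: your proposed descent of (A2) ``down to $\mathbb{Q}$ in a tower'' runs through $C_j$, which has infinite transcendence degree over $\mathbb{Q}$, so the additivity formulas cannot be applied directly. The paper avoids this by first replacing $C_j$ with a finitely generated $F\subset C_j$ over which the equality in (A2) still holds, and then ensuring (via [Theorem 6.18]) that the compositum of $F$ with $K\cap C_j$ is captured by a convenient tuple $\mathbf{t}_2$. Your argument would need this intermediate reduction to make the tower bookkeeping valid. The ``furthermore'' clause and your pruning argument are fine.
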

\begin{proof}
By Theorem \ref{thm:mainclosureoperator} there exist $\mathbf{t}_{1}:=(t_{1},\ldots,t_{k})\in(\mathbb{H}\setminus C_{j})^k$ such that 
\begin{enumerate}[(a)]
    \item $K\subseteq\overline{C_{j}\left(\mathbf{t}_{1},J\left(\mathbf{t}_{1}\right)\right)}$,
    \item $\mathrm{tr.deg.}_{C_{j}}C_{j}\left(\mathbf{t}_{1},J\left(\mathbf{t}_{1}\right)\right) = 3\dim_{G}\left(\mathbf{t}_{1}|C_{j}\right) + \dim^{j}\left(\mathbf{t}_{1}\right)$.
\end{enumerate}
As $\mathrm{tr.deg.}_{C_{j}}C_{j}\left(\mathbf{t}_{1},J\left(\mathbf{t}_{1}\right)\right)$ is finite, then there is a finitely generated field $F\subseteq C$ such that $\mathrm{tr.deg.}_{C_{j}}C_{j}\left(\mathbf{t}_{1},J\left(\mathbf{t}_{1}\right)\right) = \mathrm{tr.deg.}_{F}F\left(\mathbf{t}_{1},J\left(\mathbf{t}_{1}\right)\right)$. 
As $K$ is finitely generated, if $L$ denotes the compositum of $F$ and $K\cap C_{j}$, then $L$ has finite transcendence degree over $\mathbb{Q}$, so by \cite[Theorem 6.18]{aek-closureoperator}, MSCD implies that there exist $\mathbf{t}_{2}=(t_{k+1},\ldots,t_{m})\in(\mathbb{H}\cap C_{j})^{m-k}$ such that 
\begin{enumerate}[(i)]
    \item $L\subseteq\overline{\mathbb{Q}\left(\mathbf{t}_{2},J\left(\mathbf{t}_{2}\right)\right)}$,
    \item $\mathrm{tr.deg.}_{\mathbb{Q}}\mathbb{Q}\left(\mathbf{t}_{2},J\left(\mathbf{t}_{2}\right)\right) = 3\dim_{G}\left(\mathbf{t}_{2}\right|\Sigma)$.
\end{enumerate}
The paragraph immediately following \cite[Theorem 6.18]{aek-closureoperator} shows that the coordinates of $\mathbf{t}_2$ may be chosen outside of $\Sigma$. 
Let $\mathbf{t}=(\mathbf{t}_{1},\mathbf{t}_{2})$. 
By construction, the elements of $\mathbf{t}_{1}$ share no $G$-orbits with element of $\mathbf{t}_{2}$. 
Condition (c1) is satisfied by (a) and (i). 
As $F$ is contained in $L$, then condition (c2) is satisfied by (ii) and (b). 

The ``furthermore'' part follows from Schneider's theorem and the equivalence between (M1) and (M2) in \textsection\ref{sec:modular_polynomials}. 
\end{proof}

\subsection{Proof of Theorem \ref{thm:generic}}
\label{sec:generalcase}
We first set up some notation that will be kept through the rest of this subsection. 

Let $V\subset\mathbb{C}^{n}\times\mathrm{Y}(1)^{n}$ be a broad and free variety. 
Let $K\subset\mathbb{C}$ be a finitely generated subfield such that $V$ is defined over $K$ and let $t_{1},\ldots,t_{m}\in\mathbb{H}\setminus\Sigma$ be given by Lemma \ref{lem:convenientgenerators} (which assumes MSCD) applied to $K$ ($m$ may be zero, which happens when $K\subseteq\overline{\mathbb{Q}}$). 
We assume that $\dim_{G}(\mathbf{t}|\Sigma) = m$. 
Set $F:=\mathbb{Q}(\mathbf{t},j(\mathbf{t}))$. 

Choose $(\mathbf{x},\mathbf{y})\in V$ generic over $F$. 
By Lemma \ref{lem:genericpoint} and the freeness of $V$ we know that no coordinate of $(\mathbf{x},\mathbf{y})$ is in $\overline{F}$. 
Let $W\subseteq\mathbb{C}^{n+m}\times\mathrm{Y}(1)^{n+m}$ be the $\overline{\mathbb{Q}}$-Zariski closure of the point $(\mathbf{x},\mathbf{t},\mathbf{y},j(\mathbf{t}))$.

\begin{lem}
\label{lem:dimW}
$\dim W = 
\dim V+m+\dim^{j}(\mathbf{t})$.
\end{lem}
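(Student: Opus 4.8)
The plan is to compute $\dim W$ as a transcendence degree and then peel off the contribution of $V$. Since $W$ is the $\overline{\mathbb{Q}}$-Zariski closure of $(\mathbf{x},\mathbf{t},\mathbf{y},j(\mathbf{t}))$, we have $\dim W = \mathrm{tr.deg.}_{\mathbb{Q}}\mathbb{Q}(\mathbf{x},\mathbf{t},\mathbf{y},j(\mathbf{t}))$. Applying additivity of transcendence degree to the tower $\mathbb{Q}\subseteq F\subseteq F(\mathbf{x},\mathbf{y})=\mathbb{Q}(\mathbf{x},\mathbf{t},\mathbf{y},j(\mathbf{t}))$ gives
\[ \dim W = \mathrm{tr.deg.}_{\mathbb{Q}}F + \mathrm{tr.deg.}_{F}F(\mathbf{x},\mathbf{y}). \]
By condition (c1) of Lemma~\ref{lem:convenientgenerators} we have $K\subseteq\overline{F}$, so $V$ is defined over $\overline F$; as $(\mathbf{x},\mathbf{y})$ is generic in $V$ over $F$, the second term equals $\dim V$. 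Thus everything reduces to proving the identity $\mathrm{tr.deg.}_{\mathbb{Q}}F = m + \dim^{j}(\mathbf{t})$.

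For the lower bound I would apply Lemma~\ref{lem:msc+t} to $\mathbf{t}$ with the empty tuple in place of $\mathbf{z}$; together with the hypothesis $\dim_{G}(\mathbf{t}|\Sigma)=m$ this yields
\[ \mathrm{tr.deg.}_{\mathbb{Q}}F = \mathrm{tr.deg.}_{\mathbb{Q}}\mathbb{Q}(\mathbf{t},j(\mathbf{t})) \geq \dim_{G}(\mathbf{t}|\Sigma) + \dim^{j}(\mathbf{t}) = m + \dim^{j}(\mathbf{t}). \]
Equivalently, one may combine the convenience hypothesis (c2), which with $\dim_{G}(\mathbf{t}|\Sigma)=m$ reads $\mathrm{tr.deg.}_{\mathbb{Q}}\mathbb{Q}(\mathbf{t},J(\mathbf{t})) = 3m + \dim^{j}(\mathbf{t})$, with the trivial fact that $\mathbb{Q}(\mathbf{t},J(\mathbf{t})) = F\bigl(j'(\mathbf{t}),j''(\mathbf{t})\bigr)$ is generated over $F$ by the $2m$ elements $j'(t_i),j''(t_i)$, so that $\mathrm{tr.deg.}_{F}\mathbb{Q}(\mathbf{t},J(\mathbf{t}))\le 2m$.

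For the reverse inequality it suffices, again by (c2), to show that those $2m$ elements are in fact algebraically independent over $\overline{F}$, i.e.\ that $\mathrm{tr.deg.}_{F}\mathbb{Q}(\mathbf{t},J(\mathbf{t})) = 2m$; this is the main obstacle. Here the convenience of $\mathbf{t}$ and the fact that its entries lie in $m$ distinct non-special $G$-orbits are used essentially. I would split $\mathbf{t}$ as in the construction of Lemma~\ref{lem:convenientgenerators} into $\mathbf{t}_{1}\subseteq\mathbb{H}\setminus C_{j}$ and $\mathbf{t}_{2}\subseteq C_{j}$: for $\mathbf{t}_{1}$, condition (A2) of Theorem~\ref{thm:mainclosureoperator} together with the Ax--Schanuel inequality of Proposition~\ref{prop:as} over $C_{j}$ forces $j'(\mathbf{t}_{1}),j''(\mathbf{t}_{1})$ to contribute the full $2\dim_{G}(\mathbf{t}_{1}|C_{j})$; for $\mathbf{t}_{2}$, one uses that the construction produces a tuple with $j(\mathbf{t}_{2})$ algebraic over $\mathbb{Q}(\mathbf{t}_{2})$, so that any algebraic dependence of $j'(\mathbf{t}_{2}),j''(\mathbf{t}_{2})$ over $\overline{F}$ would make $\mathrm{tr.deg.}_{\mathbb{Q}}\mathbb{Q}(\mathbf{t},J(\mathbf{t}))$ strictly smaller than the value $3m+\dim^{j}(\mathbf{t})$ prescribed by (c2). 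Combining the two contributions gives $\mathrm{tr.deg.}_{F}\mathbb{Q}(\mathbf{t},J(\mathbf{t})) = 2m$, hence $\mathrm{tr.deg.}_{\mathbb{Q}}F = m + \dim^{j}(\mathbf{t})$ and the lemma. The delicate point is the bookkeeping for $\mathbf{t}_{2}$: this is precisely where one must invoke properties of the convenient-tuple construction beyond its three stated output conditions (alternatively, a suitable algebraic-independence statement for $j,j',j''$ at points of pairwise distinct non-special orbits).
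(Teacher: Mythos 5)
Your decomposition mirrors the paper's proof exactly: additivity of $\mathrm{tr.deg.}$ over the tower $\mathbb{Q}\subseteq F\subseteq F(\mathbf{x},\mathbf{y})$, with the second summand identified as $\dim V$ and the first as $m+\dim^{j}(\mathbf{t})$. The paper, however, simply \emph{asserts} the identity
$\mathrm{tr.deg.}_{\mathbb{Q}}\mathbb{Q}(\mathbf{t},j(\mathbf{t}))=\dim_{G}(\mathbf{t}\mid\Sigma)+\dim^{j}(\mathbf{t})$ with no explanation; you correctly flag that this is not a formal consequence of (c1)--(c2) and isolate it as the heart of the matter. Your two arguments for the lower bound $\mathrm{tr.deg.}_{\mathbb{Q}}\mathbb{Q}(\mathbf{t},j(\mathbf{t}))\geq m+\dim^{j}(\mathbf{t})$ (Lemma~\ref{lem:msc+t} with $\mathbf{z}=\emptyset$; or (c2) plus the bound $\mathrm{tr.deg.}_{F}\mathbb{Q}(\mathbf{t},J(\mathbf{t}))\leq 2m$) are both correct.

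The genuine gap is the reverse inequality, which you acknowledge you have not proved. The claim that $j'(t_i),j''(t_i)$ ($i=1,\dots,m$) are algebraically independent over $\mathbb{Q}(\mathbf{t},j(\mathbf{t}))$, i.e.\ that $\mathrm{tr.deg.}_{F}\mathbb{Q}(\mathbf{t},J(\mathbf{t}))=2m$, does not follow from (c1) and (c2) together with $\dim_{G}(\mathbf{t}\mid\Sigma)=m$; a convenient tuple could in principle achieve the equality in (c2) while losing a derivative to an algebraic relation with $\mathbb{Q}(\mathbf{t},j(\mathbf{t}))$ and compensating with a larger $\mathrm{tr.deg.}_{\mathbb{Q}}\mathbb{Q}(\mathbf{t},j(\mathbf{t}))$. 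You are right that one has to go inside the construction of Lemma~\ref{lem:convenientgenerators}, splitting $\mathbf{t}=(\mathbf{t}_{1},\mathbf{t}_{2})$, but the bookkeeping for the $\mathbf{t}_{2}$ part (in $C_{j}$) is left as a gesture, not an argument. Since the rest of Theorem~\ref{thm:generic} actually consumes the \emph{upper} bound $\dim W\leq \dim V+m+\dim^{j}(\mathbf{t})$ (it is what makes the subsequent intersections atypical), this is precisely the direction that cannot be skipped. There is also a secondary misstep: you deduce ``$K\subseteq\overline{F}$'' from (c1), but (c1) only gives $\overline{K}\subseteq\overline{\mathbb{Q}(\mathbf{t},J(\mathbf{t}))}$, and $\overline{\mathbb{Q}(\mathbf{t},j(\mathbf{t}))}$ may be strictly smaller once the derivatives are transcendental over it. The conclusion $\mathrm{tr.deg.}_{F}F(\mathbf{x},\mathbf{y})=\dim V$ is what the paper's choice of $(\mathbf{x},\mathbf{y})$ is designed to guarantee, but the justification you give for it is not valid as written.
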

\begin{proof}
\begin{align*}
\dim W = \mathrm{tr.deg.}_{\mathbb{Q}}\mathbb{Q}(\mathbf{x},\mathbf{t},\mathbf{y},j(\mathbf{t}))
&= \mathrm{tr.deg.}_{\mathbb{Q}}\mathbb{Q}(\mathbf{t},j(\mathbf{t})) + \mathrm{tr.deg.}_{\mathbb{Q}(\mathbf{t},j(\mathbf{t}))}\mathbb{Q}(\mathbf{x},\mathbf{t},\mathbf{y},j(\mathbf{t}))\\
&= \dim_{G}(\mathbf{t}|\Sigma) + \dim^{j}(\mathbf{t}) + \dim V.\qedhere
\end{align*}
\end{proof}

We are now ready to prove the first main result.

\begin{proof}[Proof of Theorem \ref{thm:generic}]
By Proposition \ref{prop:secindimn}, we can reduce to the case $\dim V=n$. 
The case $n=1$ was proven in \cite[Theorem 1.2]{paper1}, so now we assume that $n>1$. 

Consider the parametric family  $(W_\mathbf{q})_{\mathbf{q}\in Q}$ of subvarieties of $W$ defined in Example \ref{ex:parametric family}. 
Let $\mathscr{S}$ be the finite collection of proper special subvarieties of $\mathrm{Y}(1)^{n}$ given by Corollary \ref{cor:horizontalUMZP} applied to the parametric family $\left(\pi_{\mathrm{Y}}(W_{\mathbf{q}})\right)_{\mathbf{q}\in Q}$. 
Let $N$ be the integer given by applying Proposition \ref{prop:horizontalboundedcomplexity} to the family $(W_{\mathbf{q}})_{\mathbf{q}\in Q}$. 
Without loss of generality we may assume that $\Delta_b(T)\leq N$ for all $T\in\mathscr{S}$.

Let $W_{1}\subseteq W$ be a Zariski open subset such that for all $(\mathbf{a},\mathbf{b})\in W_{1}$ we have that $\mathbf{b}\notin T_0$ for all $T_0\in\mathscr{S}$. 

Observe that we can choose $(\mathbf{z},j(\mathbf{z}))\in V$ such that $(j(\mathbf{z}),j(\mathbf{t}))\notin T_0$ for all $T_0\in\mathscr{S}$. 
This is because every equation defining $T_0$ either gives a modular dependence between two coordinates of $j(\mathbf{z})$, or it gives a modular dependence between a coordinate of $j(\mathbf{z})$ and a coordinate of $j(\mathbf{t})$. 
As $V$ is free, satisfies (EC), and $\mathscr{S}$ is finite, we can find the desired point. 
This way we get that $(\mathbf{z},\mathbf{t},j(\mathbf{z}),j(\mathbf{t}))\in W_1$.

We will show that $\dim_{G}(\mathbf{z},\mathbf{t}|\Sigma)=n+m$. 
For this we proceed by contradiction, so suppose that $\dim_{G}(\mathbf{z},\mathbf{t}|\Sigma)<n+m$. 
Let $T$ be the special closure of the point $(j(\mathbf{z}),j(\mathbf{t}))$. 
Observe that $\dim T = \dim_{G}(\mathbf{z},\mathbf{t}|\Sigma)$.

Since $\dim_{G}(\mathbf{z},\mathbf{t}|\Sigma)<n+m$ and $\dim_{G}(\mathbf{t}|\Sigma) = m$, then that at least one of the following happen:
\begin{enumerate}[(a)]
    \item There is $i\in\left\{1,\ldots,n\right\}$, $k\in\left\{1,\ldots,m\right\}$, and $g\in G$ such that $gz_i=t_k$.
    \item There are $i,k\in\left\{1,\ldots,n\right\}$ (possibly equal) and $g\in G$ such that $gz_i=z_k$. 
\end{enumerate}
Let $M\subset\mathbb{C}^{n+m}$ be a proper subvariety defined by M\"obius relations defined over $\mathbb{Q}$ and/or setting some coordinates to be a constant in $\Sigma$, satisfying $(\mathbf{z},\mathbf{t})\in M$. 
In other words, $M$ is a M\"obius variety witnessing the relations found in (a) and (b). 
Thus $W\cap (M\times\mathrm{Y}(1)^{n+m})$ is an element of the family $(W_{\mathbf{q}})_{\mathbf{q}\in Q}$, call it $W_{M}$. 
We remark that $W_{M}$ is defined over $\overline{\mathbb{Q}}$. 

We now claim that $\dim W_M < \dim W$. 
Indeed, if the dimensions were equal, then $W_M = W$ and so $V\subset\mathrm{Pr}_{\mathbf{i}}(W_M)$. 
An equation defining $M$ coming from either (a) or (b) would then immediately contradicts freeness of $V$.

Now let $X$ be the irreducible component of $W_{M}\cap(\mathbb{C}^{n+m}\times T)$ containing $(\mathbf{z},\mathbf{t}, j(\mathbf{z}),j(\mathbf{t}))$. 
Observe that $X$ is defined over $\overline{\mathbb{Q}}$. 
Then by Lemma \ref{lem:msc+t} (which assumes MSCD) we get
\begin{equation*}
    \dim X\geq \mathrm{tr.deg.}_{\mathbb{Q}}\mathbb{Q}(\mathbf{z},\mathbf{t}, j(\mathbf{z}),j(\mathbf{t})) \geq \dim_{G}(\mathbf{z},\mathbf{t}|\Sigma)+\dim^{j}(\mathbf{t}).
\end{equation*}
On the other hand, using that $\dim W_M<\dim W$ and Lemma \ref{lem:dimW} we get: 
\begin{align*}
\dim W_{M} + \dim\mathbb{C}^{n+m}\times T - \dim\mathbb{C}^{n+m}\times\mathrm{Y}(1)^{n+m} &< \dim W+\dim T-n\\
&= \dim T + \dim^j(\mathbf{t})\\
&\leq   \dim_{G}(\mathbf{z},\mathbf{t}|\Sigma)+ \dim^j(\mathbf{t})\\
&\leq \dim X.
\end{align*}
This shows that $X$ is  an atypical component of $W_{M}\cap(\mathbb{C}^{n+m}\times T)$. 
But then by Corollary \ref{cor:horizontalUMZP} there must be $T_0\in\mathscr{S}$ such that $X\subseteq\mathbb{C}^{n+m}\times T_0$, which is a contradiction since $(j(\mathbf{z}),j(\mathbf{t}))\notin T_0$.

We have thus shown that $\dim_{G}(\mathbf{z},\mathbf{t}|\Sigma)=n+m$, which proves the theorem by Lemma \ref{lem:fgjmscas}. 
\end{proof}

\begin{remark}
\label{rem:transcoord}
In the proof of Theorem \ref{thm:generic}, if we somehow knew beforehand that the coordinates of $j(\mathbf{z})$ are all transcendental over $\mathbb{Q}$, then we would not need to invoke MZP, because as $X$ is defined over $\overline{\mathbb{Q}}$, then any coordinate that is constant on $X$ must equal an element of $\overline{\mathbb{Q}}$. 
So if we know that the coordinates of $j(\mathbf{z})$ are all transcendental over $\mathbb{Q}$, then we would know that $X$ does not have constant coordinates, hence $X$ is strongly atypical and we can apply Theorem \ref{thm:horizontalweakzp}. 
\end{remark}

\subsection{Blurring}
\label{subsec:blurring}
We will now give an analogue of Theorem \ref{thm:generic} for the the so-called \emph{blurring} of $j$ by $G$ (see \cite{aslanyan-kirby} for more details).

\begin{thm}
\label{thm:genericblurred}
Let $V\subseteq\mathbb{C}^{2n}$ be a broad and free irreducible variety with. Then MSCD and MZP imply that for every finitely generated field $K\subset\mathbb{C}$, there are $g_{1},\ldots,g_{n}\in G$ such that $V$ has a point of the form 
\[(z_{1},\ldots,z_{n},j(g_{1}z_{1}),\ldots,j(g_{n}z_{n})),\] 
with $(z_1,\ldots,z_n)\in \mathbb{H}^n$, which is generic over $K$.\footnote{In fact, using \cite[Theorem 3.1]{aslanyan-kirby}, $g_{1},\ldots,g_{n}$ may be chosen to be upper-triangular. }
\end{thm}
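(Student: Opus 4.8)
The plan is to run the proof of Theorem \ref{thm:generic} almost verbatim, with the single appeal to the hypothesis ``(EC)'' replaced by an appeal to the blurring theorem (Theorem \ref{thm:blurredj}), and with the elementary observation that replacing a coordinate $z_i$ by $g_iz_i$ for $g_i\in G$ changes neither the field generated by the relevant data nor the number of $G$-orbits involved. First I would reduce to the case $\dim V=n$: the argument of Proposition \ref{prop:secindimn} carries over unchanged, since cutting $V$ by a generic affine hyperplane $\sum p_ix_i+\sum q_iy_i=1$ with $\mathbf{p},\mathbf{q}$ transcendental over a generic point of $V$ produces an \emph{irreducible} (being a $\overline{K_1}$-Zariski closure of a point), broad, free variety $V_1$ with $\dim V_1=\dim V-1$ over the finitely generated field $K_1=K(\mathbf{p},\mathbf{q})$, and a blurred point of $V_1$ generic over $K_1$ is automatically generic over $K$ as a point of $V$ because of the hyperplane relation with transcendental coefficients. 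When $n=1$ this leaves an irreducible plane curve with both coordinate projections dominant, for which \cite[Theorem 1.2]{paper1} already produces a point $(z,j(z))\in V$ generic over $K$; taking $g_1$ to be the identity settles this case. So from now on I assume $n>1$ and $\dim V=n$.

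Next, exactly as in \textsection\ref{sec:generalcase}: using MSCD, apply Lemma \ref{lem:convenientgenerators} to $K$ to obtain a convenient tuple $\mathbf{t}=(t_1,\ldots,t_m)$ with $\dim_G(\mathbf{t}|\Sigma)=m$ and $\overline{K}\subseteq\overline{\mathbb{Q}(\mathbf{t},J(\mathbf{t}))}$; set $F:=\mathbb{Q}(\mathbf{t},j(\mathbf{t}))$, choose $(\mathbf{x},\mathbf{y})\in V$ generic over $F$, and let $W\subseteq\mathbb{C}^{n+m}\times\mathrm{Y}(1)^{n+m}$ be the $\overline{\mathbb{Q}}$-Zariski closure of $(\mathbf{x},\mathbf{t},\mathbf{y},j(\mathbf{t}))$, so $\dim W=n+m+\dim^j(\mathbf{t})$ by Lemma \ref{lem:dimW}. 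Form the parametric family $(W_\mathbf{q})_{\mathbf{q}\in Q}$ of intersections of $W$ with M\"obius subvarieties of $\mathbb{C}^{n+m}$ as in Example \ref{ex:parametric family}, and let $\mathscr{S}$ be the finite family of proper special subvarieties of $\mathrm{Y}(1)^{n+m}$ coming from Corollary \ref{cor:horizontalUMZP} (which uses MZP) and Proposition \ref{prop:horizontalboundedcomplexity} applied to $(W_\mathbf{q})_{\mathbf{q}\in Q}$ and $(\pi_\mathrm{Y}(W_\mathbf{q}))_{\mathbf{q}\in Q}$, with $\Delta(T)\le N$ for all $T\in\mathscr{S}$. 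The one genuinely new step is the replacement of ``choose $(\mathbf{z},j(\mathbf{z}))\in V$ avoiding $\mathscr{S}$'' by its blurred analogue: by Theorem \ref{thm:blurredj} the set $\{(\mathbf{z},j(g_1z_1),\ldots,j(g_nz_n))\in V: g_i\in G\}$ is Euclidean dense in $V\cap(\mathbb{H}^n\times\mathrm{Y}(1)^n)$, hence --- since $V$ is irreducible --- Zariski dense in $V$. For each $T_0\in\mathscr{S}$ the condition ``$(j(\mathbf{z}),j(\mathbf{t}))\in T_0$'' cuts out a proper Zariski-closed subset of $V$: every defining equation of $T_0$ imposes a modular dependence between two of the last $n$ coordinates of $V$, or between one of them and a coordinate of $j(\mathbf{t})$, or the constancy of one of them at a fixed special point, and each of these fails on a dense subset of $V$ by freeness. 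Hence we may choose $g_1,\ldots,g_n\in G$ and $(\mathbf{z},j(\mathbf{g}\mathbf{z}))\in V$ such that, writing $\mathbf{w}:=(g_1z_1,\ldots,g_nz_n)\in\mathbb{H}^n$, the point $(\mathbf{z},\mathbf{t},j(\mathbf{w}),j(\mathbf{t}))\in W$ lies off $\mathbb{C}^{n+m}\times T_0$ for every $T_0\in\mathscr{S}$.

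Now I would show $\dim_G(\mathbf{z},\mathbf{t}|\Sigma)=n+m$ by the argument in the proof of Theorem \ref{thm:generic}. The only new observation required is that, since $g_i\in G=\mathrm{GL}_2^+(\mathbb{Q})$, the numbers $z_i$ and $w_i=g_iz_i$ are mutually $\mathbb{Q}$-rational and lie in the same $G$-orbit, so that
\[
\mathrm{tr.deg.}_{\mathbb{Q}}\mathbb{Q}(\mathbf{z},\mathbf{t},j(\mathbf{w}),j(\mathbf{t}))=\mathrm{tr.deg.}_{\mathbb{Q}}\mathbb{Q}(\mathbf{w},\mathbf{t},j(\mathbf{w}),j(\mathbf{t})),\qquad \dim_G(\mathbf{z},\mathbf{t}|\Sigma)=\dim_G(\mathbf{w},\mathbf{t}|\Sigma).
\]
Assuming $\dim_G(\mathbf{z},\mathbf{t}|\Sigma)<n+m$, freeness and $\dim_G(\mathbf{t}|\Sigma)=m$ give a proper M\"obius subvariety $M\subset\mathbb{C}^{n+m}$ over $\mathbb{Q}$ (together possibly with constancy of some coordinates at points of $\Sigma$) witnessing a coincidence $gz_i=t_k$ or $gz_i=z_k$; then $W_M:=W\cap(M\times\mathrm{Y}(1)^{n+m})$ is a member of $(W_\mathbf{q})_{\mathbf{q}\in Q}$ with $\dim W_M<\dim W$ (again by freeness, exactly as in Theorem \ref{thm:generic}) containing $(\mathbf{z},\mathbf{t},j(\mathbf{w}),j(\mathbf{t}))$. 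Letting $T$ be the special closure of $(j(\mathbf{w}),j(\mathbf{t}))$ and $X$ the component of $W_M\cap(\mathbb{C}^{n+m}\times T)$ through that point, Lemma \ref{lem:msc+t} applied to $\mathbf{w}$ gives $\dim X\ge\dim_G(\mathbf{w},\mathbf{t}|\Sigma)+\dim^j(\mathbf{t})=\dim_G(\mathbf{z},\mathbf{t}|\Sigma)+\dim^j(\mathbf{t})$, while $\dim W_M<\dim W$ together with $\dim T=\dim_G(\mathbf{w},\mathbf{t}|\Sigma)$ shows, by the same computation as in \textsection\ref{sec:generalcase}, that $X$ is an atypical component of $W_M\cap(\mathbb{C}^{n+m}\times T)$; Corollary \ref{cor:horizontalUMZP} then puts $X\subseteq\mathbb{C}^{n+m}\times T_0$ for some $T_0\in\mathscr{S}$, contradicting the choice of $(\mathbf{z},\mathbf{g})$. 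Hence $\dim_G(\mathbf{z},\mathbf{t}|\Sigma)=n+m$, so $\dim_G(\mathbf{w}|\Sigma,\mathbf{t})=n$, and by Lemma \ref{lem:fgjmscas} (using the field identity above) $\mathrm{tr.deg.}_FF(\mathbf{z},j(\mathbf{w}))=\mathrm{tr.deg.}_FF(\mathbf{w},j(\mathbf{w}))\ge n=\dim V$, i.e.\ $(\mathbf{z},j(\mathbf{g}\mathbf{z}))$ is generic in $V$ over $F\supseteq K$. For the footnote, one reruns the argument with the version of Theorem \ref{thm:blurredj} given by \cite[Theorem 3.1]{aslanyan-kirby}, in which the $g_i$ are upper-triangular; nothing changes, as the only property of $g_i$ used is $g_i\in G$.

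I do not expect a serious obstacle here: the content is almost entirely a transcription of the proof of Theorem \ref{thm:generic}. The two points needing care are (i) that the blurred points of $V$ form a Zariski-dense subset of $V$, so that they can be chosen off the finitely many special subvarieties in $\mathscr{S}$ --- this is exactly where irreducibility of $V$ enters, and why the hypothesis ``(EC)'' of Theorem \ref{thm:generic} can be dropped; and (ii) the bookkeeping confirming that passing from $z_i$ to $g_iz_i$ leaves both the generated field and the $G$-orbit count unchanged, so that every transcendence estimate of \textsection\ref{sec:generalcase} (Lemmas \ref{lem:fgjmscas}, \ref{lem:msc+t}, \ref{lem:dimW}) applies verbatim with $\mathbf{w}$ in place of the solution tuple.
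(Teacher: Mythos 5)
Your proposal is correct and follows the paper's own approach: run the proof of Theorem~\ref{thm:generic} verbatim, replacing the appeal to (EC) with Theorem~\ref{thm:blurredj} (whose conclusion — Euclidean density in an irreducible $V$ — gives Zariski density, so one can still avoid the finite family $\mathscr{S}$), and observe that passing from $z_i$ to $g_iz_i$ with $g_i\in G$ changes neither $\mathbb{Q}(\cdot)$ nor $\dim_G(\cdot\,|\Sigma)$, so every transcendence estimate of \textsection\ref{subsec:transineq} transfers to $\mathbf{w}=\mathbf{g}\mathbf{z}$. This is exactly the content the paper lists as points (a)--(c) after the theorem statement, and your bookkeeping of the reduction to $\dim V=n$, the base case $n=1$, and the field identity $\mathbb{Q}(\mathbf{z},\mathbf{t},j(\mathbf{w}),j(\mathbf{t}))=\mathbb{Q}(\mathbf{w},\mathbf{t},j(\mathbf{w}),j(\mathbf{t}))$ fills in the details the paper leaves implicit.
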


To prove Theorem \ref{thm:genericblurred}, proceed in exactly the same way as in the proof of Theorem \ref{thm:generic} (recall that Theorem \ref{thm:blurredj} already ensures the existence of a Zariski dense set of solutions). 
The key (trivial) observation is that for all $z,w\in\mathbb{H}$ and all $g,h\in G$ we have that $j(z)$ and $j(w)$ are modularly dependent if and only if $j(gz)$ and $j(hw)$ are modularly dependent. 
This manifests in the different ingredients of the proof as follows.
\begin{enumerate}[(a)]
\item MSCD implies that for every $g_{1},\ldots,g_{n}\in G$ and every $z_{1},\ldots,z_{n}\in\mathbb{H}^{n}$ we have
\[\mathrm{tr.deg.}_{\mathbb{Q}}\mathbb{Q}(z_{1},\ldots,z_{n},j(g_{1}z_{1}),\ldots,j(g_{n}z_{n}))\geq\dim_{G}(z_{1},\ldots,z_{n}|\Sigma).\]
\item One can restate the inequalities of \textsection\ref{subsec:transineq} in a straightforward way.  
This is because for any field $F$, for all $z_{1},\ldots,z_{n}\in\mathbb{H}$ and all $g_{1},\ldots,g_{n}\in G$ we have that
\[\mathrm{tr.deg.}_{F}F(z_{1},\ldots,z_{n},j(z_{1}),\ldots,j(z_{n})) = \mathrm{tr.deg.}_{F}F(z_{1},\ldots,z_{n},j(g_{1}z_{1}),\ldots,j(g_{n}z_{n})).\]
So, for example, we can restate Lemma \ref{lem:msc+t} as: let $t_{1},\ldots,t_{m}\in\mathbb{H}\setminus\Sigma$ be a convenient tuple for $j$, then for all $z_{1},\ldots,z_{n}\in\mathbb{H}$ and all $g_{1},\ldots,g_{n}\in G$ we have that:
\[\mathrm{tr.deg.}_{\mathbb{Q}}\mathbb{Q}(\mathbf{z},\mathbf{t},j(g_{1}z_{1}),\ldots,j(g_{n}z_{n}),j(\mathbf{t}))\geq \dim_{G}(\mathbf{z},\mathbf{t}|\Sigma) + \dim^{j}(\mathbf{t}).\]
\item Corollary \ref{cor:horizontalUMZP} still applies as stated. 
\end{enumerate}

\subsection{Special solutions}
\label{subsec:secialsol}
In this subsection we show that for a variety $V$ as in Theorem \ref{thm:generic}, the set
\[V_{\Sigma}:=\left\{(\boldsymbol{\alpha},j(\boldsymbol{\beta}))\in V : j(\boldsymbol{\alpha}),j(\boldsymbol{\beta})\mbox{ are special points}\right\}\]
cannot be Zariski dense in $V$. 
We show this in Proposition \ref{prop:special}. 
The proof relies on the results of \cite{pila:andre-oort} (Pila's proof of the Andr\'e--Oort conjecture for powers of the modular curve).

Let $X$ denote a subvariety of $\mathrm{Y}(1)^{n}$ defined over a finitely generated field $F$. 
Let $\mathcal{K}_{X}$ denote the set of all subfields of $\overline{F}$ over which $X$ is definable. 
Define
\[\delta_{F}(X):=\min\left\{[K:F]  \mid K\in\mathcal{K}_{X}\right\}.\]

\begin{defn}
Given a special point $\tau\in\Sigma$, there is a unique quadratic polynomial $ax^{2}+bx+c$, with $a,b,c\in\mathbb{Z}$, $\mathrm{gcd}(a,b,c)=1$ and $a>0$, such that $a\tau^{2}+b\tau+c=0$. Let $D_{\tau} = b^{2}-4ac$. 
If $\boldsymbol{\tau}=(\tau_{1},\ldots,\tau_{n})\in\Sigma^{n}$, we define 
\[\mathrm{disc}(\boldsymbol{\tau}):=\max\left\{D_{\tau_{1}},\ldots,D_{\tau_{n}}\right\}.\]
For every $\gamma\in\mathrm{SL}_{2}(\mathbb{Z})$ we have that $\mathrm{disc}(\gamma\tau) = \mathrm{disc}(\tau)$, so it makes sense to define:
\[\mathrm{disc}(j(\boldsymbol{\tau})):=\mathrm{disc}(\boldsymbol{\tau}).\]
\end{defn}

\begin{defn}
A basic special subvariety $T\subset\mathrm{Y}(1)^{n}$ is defined by certain modular polynomials in the ring $\mathbb{C}[Y_{1},\ldots,Y_{n}]$. 
Suppose that $1\leq i_{1}<\cdots<i_{\ell}\leq n$ denote some indices such that, for every $s\in\left\{1,\ldots,s\right\}$, the variable $Y_{i_{s}}$ does not appear in any the polynomials defining $T$. 
In particular this implies that $\mathrm{pr}_{\mathbf{i}}(T) = \mathrm{Y}(1)^{\ell}$. 
Following \cite{pila:andre-oort}, given $\mathbf{y}\in\mathrm{Y}(1)^{\ell}$ we call $T\cap\mathrm{pr}^{-1}(\mathbf{y})$ the \emph{translate} of $T$ by $\mathbf{y}$, and denote it as $\mathrm{tr}(T,\mathbf{y})$. 

We remark that $\mathrm{tr}(T,\mathbf{y})$ is a weakly special subvariety, and if every coordinate of $\mathbf{y}$ is a special point, then $\mathrm{tr}(T,\mathbf{y})$ is a special subvariety.
\end{defn}

\begin{prop}
\label{prop:special}
Let $V\subseteq\mathbb{C}^{n}\times\mathrm{Y}(1)^{n}$ be an irreducible variety which is modularly free and defined over $\overline{\mathbb{Q}}$. Suppose that the dimension of the generic fibre of $\pi_{\mathbb{C}}\upharpoonright_{V}$ is less than $n$. 
Then $V_{\Sigma}$ is not Zariski dense in $V$.
\end{prop}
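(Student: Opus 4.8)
The plan is to argue by contradiction: assume $V_\Sigma$ is Zariski dense in $V$ and derive a contradiction with modular freeness. Write $V^{\mathbb{H}}:=V\cap(\mathbb{H}^n\times\mathrm{Y}(1)^n)$; since $V_\Sigma\subseteq V^{\mathbb{H}}$, the set $V^{\mathbb{H}}$ is a non-empty Euclidean-open, hence Zariski dense, subset of the irreducible variety $V$. We may assume $V$ is defined over a number field $F$. First, the reductions. The set $\pi_{\mathrm{Y}}(V_\Sigma)$ consists of special points of $\mathrm{Y}(1)^n$ and is Zariski dense in $\pi_{\mathrm{Y}}(V)$, so by the Andr\'e--Oort theorem for products of modular curves $\overline{\pi_{\mathrm{Y}}(V)}$ is a finite union of special subvarieties; being irreducible it is a single special subvariety, and modular freeness of $V$ forces it to equal $\mathrm{Y}(1)^n$. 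Thus $\pi_{\mathrm{Y}}\upharpoonright_V$ is dominant and $\dim V\geq n$; combining this with the hypothesis that the generic fibre of $\pi_{\mathbb{C}}\upharpoonright_V$ has dimension $d:=\dim V-\dim\pi_{\mathbb{C}}(V)<n$ and with $\dim\pi_{\mathbb{C}}(V)\leq n$ gives $\dim V<2n$, so the generic fibre $V_{\mathbf{x}}=\{\mathbf{y}:(\mathbf{x},\mathbf{y})\in V\}$ of $\pi_{\mathbb{C}}\upharpoonright_V$ is a \emph{proper} subvariety of $\mathrm{Y}(1)^n$ of dimension $d$. Finally, for $\mathbf{x}=\boldsymbol{\alpha}$ ranging over the Zariski-dense set $\pi_{\mathbb{C}}(V_\Sigma)\subseteq\Sigma^n$, the fibre $V_{\boldsymbol{\alpha}}$ is defined over a number field whose degree over $F$ is bounded independently of $\boldsymbol{\alpha}$ (the coordinates of $\boldsymbol{\alpha}$ are quadratic irrationals), and $V_{\boldsymbol{\alpha}}$ contains a special point $\mathbf{w}(\boldsymbol{\alpha})\in\pi_{\mathrm{Y}}(V_\Sigma)$.

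\medskip

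\noindent The core of the argument is to show that, for a Zariski-dense set of $\boldsymbol{\alpha}\in\pi_{\mathbb{C}}(V_\Sigma)$, the fibre $V_{\boldsymbol{\alpha}}$ \emph{is} a special subvariety of $\mathrm{Y}(1)^n$ of complexity bounded uniformly in $\boldsymbol{\alpha}$. Assume this. Since $V_{\boldsymbol{\alpha}}$ is an algebraic fibre of the fixed variety $V$, a constant coordinate of $V_{\boldsymbol{\alpha}}$ would be an algebraic (and singular-modulus-valued) function of $\boldsymbol{\alpha}$ along $\pi_{\mathbb{C}}(V)$; dealing with that possibility separately (it forces $V$ into a subvariety $\{P(\mathbf{x},Y_k)=0\}$, and either the values of $Y_k$ stay in a finite set of singular moduli, so $V\subseteq\mathbb{C}^n\times\{Y_k=\sigma_0\}$ contradicts modular freeness, or one descends to smaller $n$; Schneider's theorem is used to see the relevant constants are singular moduli), we may assume the $V_{\boldsymbol{\alpha}}$ are \emph{basic} special of bounded complexity. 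There are only finitely many basic special subvarieties of $\mathrm{Y}(1)^n$ of a fixed dimension $d$ and bounded complexity, so $\pi_{\mathbb{C}}(V)$ is covered, up to a subvariety missing a Zariski-dense set, by finitely many constructible sets $\{\boldsymbol{\alpha}:V_{\boldsymbol{\alpha}}=T_i\}$; hence one of them, for a fixed proper special $T_0\subset\mathrm{Y}(1)^n$, is Zariski dense in the irreducible variety $\pi_{\mathbb{C}}(V)$, so $V_{\mathbf{x}}\equiv T_0$ generically and $V\subseteq\mathbb{C}^n\times T_0$, contradicting modular freeness.

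\medskip

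\noindent To prove that the $V_{\boldsymbol{\alpha}}$ are special one ``redoes'' the proof of Andr\'e--Oort for the parametric family of fibres $(V_{\mathbf{x}})_{\mathbf{x}}$ (equivalently, one may push $V$ forward through $j$ on the first factor, forming the $\overline{\mathbb{Q}}$-variety $\widehat{V}\subseteq\mathbb{C}^n\times\mathrm{Y}(1)^n\times\mathrm{Y}(1)^n$ that is the Zariski closure of $\{(\mathbf{z},j(\mathbf{z}),\mathbf{y}):(\mathbf{z},\mathbf{y})\in V^{\mathbb{H}}\}$, whose image $W$ in $\mathrm{Y}(1)^{2n}$ has $\dim W=\dim V$ since $j$ has discrete fibres and is the Zariski closure of the dense set of special points $\{(j(\boldsymbol{\alpha}),j(\boldsymbol{\beta})):(\boldsymbol{\alpha},j(\boldsymbol{\beta}))\in V_\Sigma\}$, so that Andr\'e--Oort applied to $W$ makes the fibres of $W$ over the $j(\boldsymbol{\alpha})$ special, hence so the $V_{\boldsymbol{\alpha}}$ they contain). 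Either way the real content is the quantitative side of Andr\'e--Oort: given the special point $\mathbf{w}(\boldsymbol{\alpha})\in V_{\boldsymbol{\alpha}}$, of discriminant $D$, the lower bound $[\mathbb{Q}(\mathbf{w}(\boldsymbol{\alpha})):F]\gg D^{\,\delta}$ on its Galois orbit (through $\delta_F(V_{\boldsymbol{\alpha}})$, which is bounded uniformly in $\boldsymbol{\alpha}$, together with class-number estimates), all of whose conjugates are special points of $V_{\boldsymbol{\alpha}}$, combined with the Pila--Wilkie counting theorem applied to the preimage of $V_{\boldsymbol{\alpha}}$ in a fundamental domain — a set definable in $\mathbb{R}_{\mathrm{an},\exp}$ uniformly in $\boldsymbol{\alpha}$ — and with Ax--Lindemann, forces $V_{\boldsymbol{\alpha}}$ to contain a positive-dimensional weakly special, hence (containing the special point $\mathbf{w}(\boldsymbol{\alpha})$) special, subvariety through $\mathbf{w}(\boldsymbol{\alpha})$ as soon as $D$ exceeds a bound depending only on the family. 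Iterating this inside $V_{\boldsymbol{\alpha}}$, or invoking a uniform Andr\'e--Oort statement, leaves two cases: either $\mathrm{disc}(\mathbf{w}(\boldsymbol{\alpha}))$ is uniformly bounded, so $\pi_{\mathrm{Y}}(V_\Sigma)$ is finite and $V\subseteq\mathbb{C}^n\times T$ for a proper special $T$, contradicting modular freeness directly; or $V_{\boldsymbol{\alpha}}$ contains a special subvariety of dimension $d=\dim V_{\boldsymbol{\alpha}}$ and bounded complexity, which is the claim.

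\medskip

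\noindent The hard part is making this last step uniform in $\boldsymbol{\alpha}$: one needs versions of the Pila--Wilkie theorem, of the Galois-orbit lower bounds, and of the resulting complexity bounds that are uniform over the definable family $(j^{-1}(V_{\mathbf{x}}))_{\mathbf{x}}$ — precisely the uniform Andr\'e--Oort apparatus ($\delta_F$, $\mathrm{disc}(\,\cdot\,)$, and the notion of a translate $\mathrm{tr}(T,\mathbf{y})$) set up above. A secondary difficulty is the descent bookkeeping: since $V$ is only assumed modularly free, not free, one must separately rule out via the dimension count and the fibre hypothesis that the constraints one extracts merely confine $V$ to a proper M\"obius subvariety of $\mathbb{C}^n$, and one must dispose of fibres with varying constant coordinates before concluding that $V$ is contained in a set of the form $\mathbb{C}^n\times T_0$ with $T_0$ a proper special subvariety.
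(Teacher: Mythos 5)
The main gap is in the ``core of the argument'' (your second and third paragraphs). You claim that, for a Zariski-dense set of $\boldsymbol{\alpha}$, the fibre $V_{\boldsymbol{\alpha}}$ contains a special subvariety of full dimension $d=\dim V_{\boldsymbol{\alpha}}$ (equivalently, that some component of $V_{\boldsymbol{\alpha}}$ \emph{is} special of bounded complexity), and use this to deduce $V\subseteq\mathbb{C}^n\times T_0$ for a single proper special $T_0$. This claim is not established by the argument you sketch, and in general it is false. Knowing $(\boldsymbol{\alpha},\mathbf{b})\in V_\Sigma$ gives you one special point $\mathbf{b}$ in $V_{\boldsymbol{\alpha}}$ (together with its Galois conjugates, which is what the Pila--Wilkie count exploits). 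The counting argument then yields a positive-dimensional special subvariety $S\subseteq V_{\boldsymbol{\alpha}}$ through some conjugate of $\mathbf{b}$ \emph{or} a bound on $\mathrm{disc}(\mathbf{b})$, but there is no mechanism to ``iterate inside $V_{\boldsymbol{\alpha}}$'' and force $\dim S$ up to $d$: once $\mathbf{b}$ sits in a positive-dimensional special, the remaining special points of $V_{\boldsymbol{\alpha}}$ need not increase its dimension. Even the Andr\'e--Oort conclusion for the single variety $V_{\boldsymbol{\alpha}}$ --- that its special points lie in a finite union of special subvarieties of $V_{\boldsymbol{\alpha}}$ --- does not give a full-dimensional special component, because nothing in the hypothesis $V_\Sigma$ dense in $V$ forces the special points of a fixed fibre to be Zariski dense in that fibre.

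What is needed, and what the paper's proof actually does, is the following weaker statement: for $(\mathbf{a},\mathbf{b})\in V_\Sigma$ with $\mathbf{a}$ in a suitable Zariski open set (so that $V_{\mathbf{a}}$ is a \emph{proper} subvariety of $\mathrm{Y}(1)^n$, by the fibre hypothesis), the special point $\mathbf{b}$ lies in a maximal special subvariety of $V_{\mathbf{a}}$; since $\delta_{\mathbb{Q}}(V_{\mathbf{a}})\le 2^n[F:\mathbb{Q}]$ is bounded (the coordinates of $\mathbf{a}$ are quadratic irrationals), uniform Andr\'e--Oort (\cite[Theorem 13.2]{pila:andre-oort} together with \cite[Proposition 13.1]{pila:andre-oort}) confines these maximal special subvarieties to a finite list $\mathscr{S}^\star$, independent of $\mathbf{a}$, whose elements can have \emph{any} dimension between $0$ and $d$ but are all proper. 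Then $V_\Sigma\subseteq (V\setminus V')\cup\bigcup_{T\in\mathscr{S}^\star}\bigl(V\cap(\mathbb{C}^n\times T)\bigr)$, and modular freeness makes each term a proper closed subset of $V$, so $V_\Sigma$ is not dense. The apparatus you set up ($\delta_F$, translates $\mathrm{tr}(T,\mathbf{y})$, discriminant bounds) is exactly the right one, but it should be applied to the maximal special subvarieties of $V_{\mathbf{a}}$ containing $\mathbf{b}$, not used to argue that $V_{\boldsymbol{\alpha}}$ itself is special. The introductory reduction via Andr\'e--Oort to show $\overline{\pi_{\mathrm{Y}}(V)}=\mathrm{Y}(1)^n$ is correct but unnecessary for the argument.
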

\begin{proof}
Given $\mathbf{a}\in\mathbb{C}^{n}$, let
\[V_{\mathbf{a}}:=\left\{\mathbf{b}\in\mathrm{Y}(1)^{n} : (\mathbf{a},\mathbf{b})\in V\right\}.\]
In this way we obtain a parametric family $(V_{\mathbf{a}})_{\mathbf{a}\in\pi_{\mathbb{C}}(V)}$ of subvarieties of $\mathrm{Y}(1)^{n}$. 

Let $F\subset\overline{\mathbb{Q}}$ be a finitely generated field such that $V$ is definable over $F$. 
For every $(\mathbf{a},\mathbf{b})\in V$, $V_{\mathbf{a}}$ is definable over the field $F(\mathbf{a})$. 
If $(\mathbf{a},\mathbf{b})\in V_{\Sigma}$, then since every coordinate of $\mathbf{a}$ defines a degree 2 extension of $\mathbb{Q}$, we get that $\delta_{\mathbb{Q}}(V_{\mathbf{a}})\leq 2^{n}[F:\mathbb{Q}]$.  

By \cite[Theorem 13.2]{pila:andre-oort} (uniform Andr\'e-Oort) there is a finite collection $\mathscr{S}$ of basic special subvarieties of $\mathrm{Y}(1)^{n}$ such that for every $T\in\mathscr{S}$ there is a constant $C>0$ (depending only on $n$, $V$ and $T$) such that, if $\mathbf{i}=(i_{1},\ldots,i_{\ell})$ denotes indices as in the definition above, then for every $\mathbf{a}\in \mathbb{C}^{n}$  and for every $\boldsymbol{\tau}\in\mathbb{H}^{\ell}$, if $\delta_{\mathbb{Q}}(V_{\mathbf{a}}) \leq 2^{n}[F:\mathbb{Q}]$ and $\mathrm{tr}(T,j(\boldsymbol{\tau}))$ is a maximal special subvariety of $V_{\mathbf{a}}$, then
\[\mathrm{disc}(\boldsymbol{\tau}) \leq C\quad\mbox{ and }\quad\delta_{\mathbb{Q}}(\mathrm{tr}(T,j(\boldsymbol{\tau})))\leq C.\]
We remark that by \cite[Proposition 13.1]{pila:andre-oort}, all the maximal special subvarieties of $V_{\mathbf{a}}$ are of the form $\mathrm{tr}(T,j(\boldsymbol{\tau}))$.

As explained in \cite[\textsection 5.6]{pila:andre-oort}, there are finitely many values of $j(\boldsymbol{\tau})$ subject to $\mathrm{disc}(\boldsymbol{\tau}) \leq C$. 
Therefore, there is a finite collection $\mathscr{S}^{\star}$ of special subvarieties of $\mathrm{Y}(1)^{n}$ such that for every $(\mathbf{a},\mathbf{b})\in V_{\Sigma}$, the maximal special subvarieties of $V_{\mathbf{a}}$ are elements of $\mathscr{S}^{\star}$. 

By hypothesis and the fibre-dimension theorem there is a Zariski open subset $V'\subset V$ such that for all $(\mathbf{a},\mathbf{b})\in V'$ we have
\[\dim V\cap\pi_{\mathbb{C}}^{-1}(\mathbf{a}) = \dim V - \dim\pi_{\mathbb{C}}(V) < \dim V = n.\]
This means that $(V_{\mathbf{a}})_{\mathbf{a}\in\pi_{\mathbb{C}}(V')}$ is a family of proper subvarieties of $\mathrm{Y}(1)^{n}$. 
Therefore the elements of $\mathscr{S}^{\star}$ corresponding to the family $(V_{\mathbf{a}})_{\mathbf{a}\in\pi_{\mathbb{C}}(V')}$ are proper special subvarieties of $\mathrm{Y}(1)^{n}$. 
As $V$ is free, the intersection
\[V\cap \bigcup_{T\in\mathscr{S}^{\star}}\mathbb{C}\times T\]
is contained in a proper subvariety of $V$. 
\end{proof}

\begin{remark}
If $V\subseteq\mathbb{C}^{n}\times\mathrm{Y}(1)^{n}$ is not definable over $\overline{\mathbb{Q}}$, then it is immediate by Galois-theoretic reasons that $V\left(\overline{\mathbb{Q}}\right)$ cannot be Zariski dense in $V$. Since the elements of $V_{\Sigma}$ are all elements of $V\left(\overline{\mathbb{Q}}\right)$, then $V_{\Sigma}$ is also not Zariski dense in $V$ in this case.
\end{remark}

\begin{ex}
The condition on the dimension of the fibres in Proposition \ref{prop:special} is necessary. 
For example, consider the variety $V\subseteq\mathbb{C}^{2}\times\mathrm{Y}(1)^{2}$ of dimension 3 defined by the single equation $X_{1} = X_{2}^2$. 
It is clear that $V$ is modularly free, $\dim\pi_{\mathbb{C}}(V)=1>0$, and $V$ is defined over $\overline{\mathbb{Q}}$.

Let $\tau$ be any element of $\Sigma$ with positive real part (of which there are infinitely many), and observe that then $\tau^2\in\mathbb{H}$. 
Since $\tau^2\notin\mathbb{R}$ and $\mathbb{Q}(\tau^2)\subseteq\mathbb{Q}(\tau)$, then $1 < [\mathbb{Q}(\tau^2):\mathbb{Q}] \leq [\mathbb{Q}(\tau):\mathbb{Q}] = 2$, thus showing that $\tau^2\in\Sigma$. 
Therefore, given any two singular moduli $b_{1},b_{2}\in\mathrm{Y}(1)$, we have that $(\tau^2,\tau,b_{1},b_{2})\in V_{\Sigma}$. 
Since special points are Zariski dense in $\mathrm{Y}(1)^2$ and $X_{1} = X_{2}^2$ defines a curve in $\mathbb{C}^2$, the Zariski closure of $V_{\Sigma}$ is equal to $V$. 
\end{ex}

\begin{lem}
\label{lem:t-special}
Let $V\subseteq\mathrm{Y}(1)^{n}\times Q$ be a parametric family of irreducible subvarieties of $\mathrm{Y}(1)^{n}$, and let $d$ be a positive integer. 
Let $\mathbf{t}\in(\mathbb{H}\setminus\Sigma)^{n}$ be a convenient tuple for $j$, and suppose that $V$ is definable over $F=\mathbb{Q}(\mathbf{t},J(\mathbf{t}))$. 
Then there is a finite collection of $j(\mathbf{t})$-special subvarieties $\mathscr{S}$ such that for every $\mathbf{q}\in Q$ with $[F(\mathbf{q}):F]\leq d$, if $V_\mathbf{q}$ is a $j(\mathbf{t})$-special subvariety, then $V_\mathbf{q}\in\mathscr{S}$.
\end{lem}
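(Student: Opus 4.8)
The plan is to prove the finiteness by establishing two things about the subvarieties $V_{\mathbf{q}}$ in question: that the values of their constant coordinates range over one fixed finite subset of $\mathbb{C}$, and that the modular polynomials needed to define them have bounded level. Together these give a finite list of possible weakly special subvarieties, and $\mathscr{S}$ is assembled from it.

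For the first point, note that since $V$ is definable over $F$, each fibre $V_{\mathbf{q}}$ is definable over $F(\mathbf{q})$, and hence so is each coordinate projection $\pi_{i}(V_{\mathbf{q}})$. If the $i$-th coordinate is constant on $V_{\mathbf{q}}$, say equal to $c$, then $\pi_{i}(V_{\mathbf{q}})=\{c\}$ is a single point definable over $F(\mathbf{q})$, so $c\in F(\mathbf{q})$; when $[F(\mathbf{q}):F]\leq d$ this gives $[F(c):F]\leq d$. For the second point, a proper weakly special subvariety $T$ of $\mathrm{Y}(1)^{n}$ is an atypical component of $T\cap T=T$ in $\mathrm{Y}(1)^{n}$, so applying Proposition \ref{prop:boundedcomplexity} to the family $(V_{\mathbf{q}})_{\mathbf{q}\in Q}$ inside $S=\mathrm{Y}(1)^{n}$ yields an integer $N_{0}$ such that whenever $V_{\mathbf{q}}$ is a proper weakly special subvariety, the weakly special subvariety $T_{0}$ it produces must, by the dimension inequalities and irreducibility, coincide with $V_{\mathbf{q}}$ itself; hence $\Delta(V_{\mathbf{q}})\leq N_{0}$ (the case $V_{\mathbf{q}}=\mathrm{Y}(1)^{n}$ being trivial).

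Now fix $\mathbf{q}$ with $[F(\mathbf{q}):F]\leq d$ and $V_{\mathbf{q}}$ a $j(\mathbf{t})$-special subvariety, and let $c$ be one of its constant-coordinate values; then $c$ is either a singular modulus or an element of $\mathrm{He}(j(\mathbf{t}))$, and $[F(c):F]\leq d$. The subtle point is that passing to the large field $F$ can only decrease degrees, so $[F(c):F]\leq d$ does not obviously bound the degree of $c$ over $\mathbb{Q}$ or over $E:=\mathbb{Q}(j(\mathbf{t}))$. I would get around this by noting that the minimal polynomial $p$ of $c$ over $F$ has coefficients in $F\cap\overline{\mathbb{Q}}$ in the first case and in $F\cap\overline{E}$ in the second: indeed $p$ divides the minimal polynomial of $c$ over the small field, so its roots are conjugates of $c$ over that field and its coefficients are symmetric functions of elements algebraic over the small field, while also lying in $F$. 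Since $F$ is finitely generated over $\mathbb{Q}$, the field $F_{0}:=F\cap\overline{\mathbb{Q}}$ is a number field and $E_{1}:=F\cap\overline{E}$ is finite over $E$. Therefore $[F_{0}(c):F_{0}]=[F(c):F]\leq d$, giving $[\mathbb{Q}(c):\mathbb{Q}]\leq d\,[F_{0}:\mathbb{Q}]$ in the singular modulus case, and $[E_{1}(c):E_{1}]=[F(c):F]\leq d$, giving $[E(c):E]\leq d\,[E_{1}:E]$ in the Hecke-orbit case, i.e. $c\in\mathrm{He}_{d[E_{1}:E]}(j(\mathbf{t}))$. Since there are only finitely many singular moduli of bounded degree over $\mathbb{Q}$, and $\mathrm{He}_{d[E_{1}:E]}(j(\mathbf{t}))$ is finite by Remark \ref{rem:heckeorbit}, and all bounds involved depend only on $V$, $d$ and $\mathbf{t}$, the constant-coordinate values of all such $V_{\mathbf{q}}$ lie in one finite set $C_{0}\subseteq\mathbb{C}$.

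Finally, every $j(\mathbf{t})$-special $V_{\mathbf{q}}$ with $[F(\mathbf{q}):F]\leq d$ is a weakly special subvariety of $\mathrm{Y}(1)^{n}$ with $\Delta(V_{\mathbf{q}})\leq N_{0}$ whose constant coordinates take values in $C_{0}$; there are only finitely many such subvarieties — finitely many choices of which coordinates are constant and of their values in $C_{0}$, finitely many systems of modular relations $\Phi_{N}(Y_{i},Y_{k})=0$ with $N\leq N_{0}$ among the remaining coordinates, and each such system has finitely many irreducible components — so taking $\mathscr{S}$ to be this finite collection (whose members are $j(\mathbf{t})$-special by construction) finishes the proof. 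The main obstacle is the ``wrong direction'' of the degree inequality just discussed, which forces one to track where the coefficients of minimal polynomials live; the only other care-requiring point is the extraction of the uniform complexity bound $N_{0}$, for which Proposition \ref{prop:boundedcomplexity} is exactly suited, and the rest is bookkeeping.
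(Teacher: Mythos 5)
Your proof is correct, and it takes a genuinely different route from the paper's. The paper works with the decomposition of $V_{\mathbf{q}}$ as a translate $\mathrm{tr}(T,\mathbf{y})$ of a basic special subvariety, cites a result from the reference labelled \texttt{vahagn3} to get a uniform finite collection of basic special $T$, and cites a technical degree lemma from the reference labelled \texttt{paper1} to control $[\mathbb{Q}(j(\mathbf{t}),\mathbf{y}):\mathbb{Q}(j(\mathbf{t}))]$. You instead argue entirely with internal machinery: you apply Proposition \ref{prop:boundedcomplexity} with $U_{\mathbf{q}}=T=X=V_{\mathbf{q}}$ and use the dimension inequality in part (c) plus irreducibility to force $T_{0}=V_{\mathbf{q}}$, giving a direct uniform bound on $\Delta(V_{\mathbf{q}})$; and you replace the external degree lemma with the clean observation that the minimal polynomial of a constant coordinate $c$ over $F$ has coefficients in $F\cap\overline{\mathbb{Q}}$ (resp.\ $F\cap\overline{\mathbb{Q}(j(\mathbf{t}))}$), reducing to the fact that the relative algebraic closure of a base field in a finitely generated extension is a finite extension, which then transfers the $[F(c):F]\leq d$ bound to a bound over $\mathbb{Q}$ or over $\mathbb{Q}(j(\mathbf{t}))$ as needed. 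Your handling of the ``wrong direction'' of the degree inequality is a genuine contribution of the argument and is carried out carefully (the point $c\in F(\mathbf{q})$ from the projection, the containment of the minimal polynomial's coefficients in the relative algebraic closure, and the equality of degrees $[F_{0}(c):F_{0}]=[F(c):F]$ via divisibility of minimal polynomials are all correct). The final enumeration of weakly special subvarieties with complexity at most $N_{0}$ and constant coordinates in the finite set $C_{0}$ is sound. Overall your proof buys self-containedness and transparency at the cost of slightly more bookkeeping, while the paper's proof is shorter but leans on two external citations.
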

\begin{proof}
Using definability of dimensions as we did in the proof of Corollary \ref{cor:horizontalUMZP}, we may first restrict to the subfamily $Q'\subseteq Q$ such that for all $\mathbf{q}\in Q'$ we have that $V_\mathbf{q}$ is a proper subvariety of $\mathrm{Y}(1)^{n}$. 
So we will assume now that $V\subseteq\mathrm{Y}(1)^{n}\times Q$ is a parametric family of proper irreducible subvarieties of $\mathrm{Y}(1)^{n}$.

Let $T$ be a basic special subvariety of $\mathrm{Y}(1)^{n}$, and suppose that some translate $\mathrm{tr}(T,\mathbf{y})$ is equal to $V_\mathbf{q}$, for some $\mathbf{q}\in Q$. 
Then since $\mathrm{tr}(T,\mathbf{y})$ is definable over $\mathbb{Q}(\mathbf{y})$, then so is $V_\mathbf{q}$. 
On the other hand, $V_\mathbf{q}$ is definable over $F(\mathbf{q})$, and so by \cite[Lemma 7.1]{paper1} there is a positive integer $d'$ depending only on $d$ and $F$ such that if $[F(\mathbf{q}):F]\leq d$ and $\mathbf{q}$ is algebraic over $\mathbb{Q}(j(\mathbf{t}))$, then $[\mathbb{Q}(j(\mathbf{t}),\mathbf{q}):\mathbb{Q}(j(\mathbf{t}))]\leq d'$. 
So if $\mathrm{tr}(T,\mathbf{y}) = V_\mathbf{q}$ and $[F(\mathbf{q}):F]\leq d$, then $[\mathbb{Q}(j(\mathbf{t}),\mathbf{y}):\mathbb{Q}(j(\mathbf{t}))]\leq d'$. 

By Remark \ref{rem:heckeorbit} and \cite[Theorem 13.2]{pila:andre-oort} we have that the set
\[B=\left\{c\in\mathrm{He}(j(\mathbf{t}))\cup j(\Sigma) : [\mathbb{Q}(j(\mathbf{t}),c):\mathbb{Q}(j(\mathbf{t}))]\leq d'\right\}\]
is finite. 
Indeed, by \cite[Lemma 7.1]{paper1} we can find a positive integer $d''$ depending only on $d'$ and $j(\mathbf{t})$ such that if $c\in B\cap j(\Sigma)$, then $[\mathbb{Q}(c):\mathbb{Q}]\leq d''$, and so \cite[Theorem 13.2]{pila:andre-oort} applies. 
Otherwise $c\in \mathrm{He}_{d''}(j(\mathbf{t}))$, and so Remark \ref{rem:heckeorbit} applies directly.

This shows that, for a given basic special variety $T$, there are only finitely many tuples $\mathbf{y}$ which satisfy the following three conditions
\begin{enumerate}[(i)]
    \item $\mathrm{tr}(T,\mathbf{y})$ is a $j(\mathbf{t})$-special subvariety,
    \item $\mathrm{tr}(T,\mathbf{y}) = V_\mathbf{q}$ for some $\mathbf{q}\in Q$, and
    \item $[F(\mathbf{q}):F]\leq d$.
\end{enumerate} 

We also have that when $\mathrm{tr}(T,\mathbf{y})=V_\mathbf{q}$,
\[\dim V_\mathbf{q}\cap\mathrm{tr}(T,\mathbf{y}) > \dim V_\mathbf{q} + \mathrm{tr}(T,\mathbf{y})-n,\]
so the intersection is atypical. 
By \cite[Thoerem 4.2]{vahagn3} there is a finite collection $\mathscr{S}$ of basic special subvarieties such that for every $\mathbf{q}\in Q$, every maximal $j(\mathbf{t})$-atypical component of $V_\mathbf{q}$ is a translate of some $T\in\mathscr{S}$. 
This finishes the proof.
\end{proof}

A straightforward adaptation of the proof of Proposition \ref{prop:special} gives now the following. 

\begin{prop}
\label{prop:Fspecial}
Suppose that $t_{1},\ldots,t_{m}\in\mathbb{H}\setminus\Sigma$ define a convenient tuple for $j$, and set $F:=\mathbb{Q}(\mathbf{t},J(\mathbf{t}))$. 
Let $V\subseteq\mathbb{C}^{n}\times\mathrm{Y}(1)^{n}$ be an irreducible variety which is modularly free and defined over $\overline{F}$. 
Suppose that the dimension of the generic fibre of $\pi_{\mathbb{C}}\upharpoonright_{V}$ is less than $n$. 
Then the set
\[V_{\Sigma,F}:=\left\{(\boldsymbol{\alpha},j(\boldsymbol{\beta})) : \dim_{G}(\boldsymbol{\alpha}|\Sigma\cup\mathbf{t}) = \dim_{G}(\boldsymbol{\beta}|\Sigma\cup\mathbf{t})=0\right\}\]
is not Zariski dense in $V$.
\end{prop}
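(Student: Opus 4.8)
The plan is to transcribe the proof of Proposition \ref{prop:special} almost word for word, working relatively over $F=\mathbb{Q}(\mathbf{t},J(\mathbf{t}))$ in place of $\mathbb{Q}$: ``special point'' becomes ``point whose $G$-orbit meets $\Sigma\cup\mathbf{t}$'', ``special subvariety'' becomes ``$j(\mathbf{t})$-special subvariety'', and the appeal to uniform Andr\'e--Oort is replaced by the relative finiteness statement that already underlies Lemma \ref{lem:t-special} (i.e. \cite[Theorem 13.2]{pila:andre-oort}, \cite[Theorem 4.2]{vahagn3} and Remark \ref{rem:heckeorbit}). Concretely, first form the parametric family $(V_{\boldsymbol{\alpha}})_{\boldsymbol{\alpha}\in\pi_{\mathbb{C}}(V)}$ of fibres $V_{\boldsymbol{\alpha}}:=\{\mathbf{b}\in\mathrm{Y}(1)^n:(\boldsymbol{\alpha},\mathbf{b})\in V\}$. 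Choosing a finitely generated $F'\subseteq\overline{F}$ over which $V$ is defined (so $F\subseteq F'$ and $[F':F]<\infty$), each $V_{\boldsymbol{\alpha}}$ is defined over $F'(\boldsymbol{\alpha})$, and all degree estimates below hold with $F'$ in place of $F$ up to the constant factor $[F':F]$, exactly as in the proof of Proposition \ref{prop:special}; I suppress this and write $F$ for $F'$.

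The one genuinely new ingredient is a degree bound on the parameter. If $(\boldsymbol{\alpha},j(\boldsymbol{\beta}))\in V_{\Sigma,F}$, then for each $i$ either $\alpha_i\in\Sigma$, so $[\mathbb{Q}(\alpha_i):\mathbb{Q}]=2$ and $[F(\alpha_i):F]\leq 2$, or $\alpha_i=g\,t_k$ for some $g\in G=\mathrm{GL}_2^+(\mathbb{Q})$ and some $k\leq m$, whence $\alpha_i\in\mathbb{Q}(t_k)\subseteq F$ and $[F(\alpha_i):F]=1$; in either case $[F(\alpha_i):F]\leq 2$, so $[F(\boldsymbol{\alpha}):F]\leq 2^n$ and hence $\delta_F(V_{\boldsymbol{\alpha}})\leq 2^n$ for every point of $V_{\Sigma,F}$. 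The key observation is that $G$ acts by M\"obius maps with rational coefficients, so translating a $t_k$ by $G$ keeps it inside $F$; this replaces the estimate $\delta_{\mathbb{Q}}(V_{\mathbf{a}})\leq 2^n[F:\mathbb{Q}]$ used in Proposition \ref{prop:special}.

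Next, given $(\boldsymbol{\alpha},j(\boldsymbol{\beta}))\in V_{\Sigma,F}$, every coordinate of $j(\boldsymbol{\beta})$ lies in $j(\Sigma)\cup\mathrm{He}(j(\mathbf{t}))$, so $\{j(\boldsymbol{\beta})\}$ is a zero-dimensional $j(\mathbf{t})$-special subvariety of $V_{\boldsymbol{\alpha}}$ and is contained in a maximal $j(\mathbf{t})$-special subvariety $S$ of $V_{\boldsymbol{\alpha}}$. Since $V_{\boldsymbol{\alpha}}$ is a fibre of $V$ (hence of bounded degree) defined over $F(\boldsymbol{\alpha})$ with $[F(\boldsymbol{\alpha}):F]\leq 2^n$, the argument of Lemma \ref{lem:t-special} applies uniformly in $\boldsymbol{\alpha}$: $S$ is a $j(\mathbf{t})$-atypical component of $V_{\boldsymbol{\alpha}}$, so by \cite[Theorem 4.2]{vahagn3} it is a translate $\mathrm{tr}(T,\mathbf{y})$ of one of finitely many basic special subvarieties $T$; the $j(\mathbf{t})$-speciality of $S$ puts the coordinates of $\mathbf{y}$ in $j(\Sigma)\cup\mathrm{He}(j(\mathbf{t}))$, while \cite[Lemma 7.1]{paper1} bounds $[\mathbb{Q}(j(\mathbf{t}),\mathbf{y}):\mathbb{Q}(j(\mathbf{t}))]$ by a constant independent of $\boldsymbol{\alpha}$, so Remark \ref{rem:heckeorbit} and \cite[Theorem 13.2]{pila:andre-oort} leave only finitely many possible $\mathbf{y}$. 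Hence there is a finite collection $\mathscr{S}^{\star}$ of $j(\mathbf{t})$-special subvarieties of $\mathrm{Y}(1)^n$ such that every maximal $j(\mathbf{t})$-special subvariety of every fibre $V_{\boldsymbol{\alpha}}$ occurring in $V_{\Sigma,F}$ belongs to $\mathscr{S}^{\star}$. Then, exactly as in Proposition \ref{prop:special}: by the fibre-dimension theorem there is a dense open $V'\subseteq V$ on which $\dim V\cap\pi_{\mathbb{C}}^{-1}(\boldsymbol{\alpha})=\dim V-\dim\pi_{\mathbb{C}}(V)<n$, so the fibres over $\pi_{\mathbb{C}}(V')$ are proper, the corresponding members of $\mathscr{S}^{\star}$ are proper, and modular freeness of $V$ gives that $V\cap\bigcup_{T\in\mathscr{S}^{\star}}(\mathbb{C}^n\times T)$ lies in a proper subvariety of $V$; since $V_{\Sigma,F}$ is covered by $(V\setminus V')\cup\bigcup_{T\in\mathscr{S}^{\star}}(\mathbb{C}^n\times T)$, it is not Zariski dense in $V$.

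The main obstacle is the third paragraph: making the relative Andr\'e--Oort / Zilber--Pink finiteness uniform in the parameter $\boldsymbol{\alpha}$ using only the single degree bound $[F(\boldsymbol{\alpha}):F]\leq 2^n$ together with the bounded degree of the fibres of $V$, i.e. checking that the constants produced by \cite[Theorem 13.2]{pila:andre-oort}, \cite[Theorem 4.2]{vahagn3} and \cite[Lemma 7.1]{paper1} depend only on $V$, $F$ and $\mathbf{t}$ and not on the individual fibre. Everything else is a line-by-line copy of the proof of Proposition \ref{prop:special}.
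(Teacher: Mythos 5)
Your overall strategy matches what the paper implies (it gives no proof, only the remark that the argument is a straightforward adaptation of Proposition~\ref{prop:special}), and the degree estimate $[F(\boldsymbol{\alpha}):F]\leq 2^n$, resting on the observation that $G$ acts by $\mathbb{Q}$-rational M\"obius transformations so that $g t_k\in\mathbb{Q}(t_k)\subseteq F$, is a correct and genuinely new input. However, the third paragraph has a more substantial gap than the one you flag at the end, and it is not resolved by ``the argument of Lemma~\ref{lem:t-special}''. In Proposition~\ref{prop:special} the field-of-definition bound $\delta_{\mathbb{Q}}(\mathrm{tr}(T,j(\boldsymbol{\tau})))\leq C$ is an \emph{output} of \cite[Theorem~13.2]{pila:andre-oort}; there is no cited analogue of this for $j(\mathbf{t})$-special subvarieties, and Lemma~\ref{lem:t-special} covers only the degenerate case where the entire member $V_\mathbf{q}$ of the family is $j(\mathbf{t})$-special, not the present situation where $S$ (or the maximal $j(\mathbf{t})$-atypical component $X\supseteq S$) is a proper subvariety of the fibre $V_{\boldsymbol{\alpha}}$. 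In that situation \cite[Theorem~4.2]{vahagn3} only gives finitely many translation \emph{classes}: there may be infinitely many maximal $j(\mathbf{t})$-atypical components of $V_{\boldsymbol{\alpha}}$, so the Galois orbit of $X$ over $F(\boldsymbol{\alpha})$ is not a priori finite and one cannot conclude a degree bound on $\mathbf{y}$ by naively conjugating; you need to supply the missing step that the component $X$ you pass to is defined over a controlled extension of $F(\boldsymbol{\alpha})$ (e.g.\ by first replacing $X$ with a weakly special $T_0$ of \emph{bounded complexity} via Proposition~\ref{prop:boundedcomplexity}, cutting $V_{\boldsymbol{\alpha}}$ by the finitely many modular relations defining $T_0$, and only then arguing about constant coordinates). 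Also note, as you half-do, that $S$ itself need not be a \emph{maximal} $j(\mathbf{t})$-atypical component, so the direct application of \cite[Theorem~4.2]{vahagn3} to $S$ in your third paragraph is not literally correct; it must be applied to a maximal component containing it.

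A smaller point: in the last step the members of $\mathscr{S}^{\star}$ are $j(\mathbf{t})$-special, not special, so when $T\in\mathscr{S}^{\star}$ has a constant coordinate lying in $\mathrm{He}(j(\mathbf{t}))\setminus j(\Sigma)$, modular freeness alone does not exclude $V\subseteq\mathbb{C}^n\times T$; this case requires $V$ to be free (no constant coordinates), which is satisfied in every application of the proposition in the paper but is not part of the stated hypotheses. It would be worth either strengthening the hypothesis to ``free'' or adding a sentence explaining why the case $V\subseteq\mathbb{C}^n\times T$ cannot occur under the stated assumptions.
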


\subsection{Results which do not require MZP}

In this subsection we review some cases of Theorem \ref{thm:generic} which do not depend on MZP. 
We recall that the case $n=1$ (i.e.~plane curves) was proven in \cite[Theorem 1.2]{paper1}, and does not rely on MZP. 
We will show that the same is true for $n=2$. 
First we need the following lemma.

\begin{lem}
\label{lem:curvefalg}
Let $V\subseteq\mathbb{C}\times\mathrm{Y}(1)$ be a free irreducible curve. 
Let $t_{1},\ldots,t_{m}\in\mathbb{H}$ be a convenient tuple for $j$ and set $F=\mathbb{Q}(\mathbf{t},J(\mathbf{t}))$. 
If $V$ is definable over $\overline{F}$, then MSCD implies that the set
\[A(V):=\left\{(z,j(z))\in V : j(z)\in\overline{F}\right\}\]
is not Zariski dense in $V$.
\end{lem}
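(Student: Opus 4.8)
The plan is to argue by contradiction: suppose $A(V)$ is Zariski dense in $V$. Since $V$ is a curve, this forces $A(V)$ to be infinite, and in fact it forces the Zariski closure of $A(V)$ to equal $V$ itself. For each point $(z,j(z))\in A(V)$ we have $j(z)\in\overline{F}$, and because $F$ is finitely generated, there is a uniform bound $d$ on the degree $[\,\mathbb{Q}(j(\mathbf t))(j(z)):\mathbb{Q}(j(\mathbf t))\,]$ for those $j(z)$ which happen to be algebraic over $\mathbb{Q}(j(\mathbf t))$ — this is exactly the kind of degree-transfer statement furnished by \cite[Lemma 7.1]{paper1}, applied just as in the proof of Lemma \ref{lem:t-special}. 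The first step, then, is to split $A(V)$ into two parts according to whether $j(z)\in\overline{\mathbb{Q}(j(\mathbf t))}$ or not, and to handle each separately.

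First I would dispose of the ``transcendental'' part. If $(z,j(z))\in A(V)$ with $j(z)\notin\overline{\mathbb{Q}(j(\mathbf t))}$ but still $j(z)\in\overline F$, then since $F=\mathbb{Q}(\mathbf t,J(\mathbf t))$ and $\mathbf t$ is convenient, $j(z)$ must be modularly dependent over $\mathbf t$ or $z$ must be special — otherwise MSCD (via Lemma \ref{lem:msc+t} or Lemma \ref{lem:fgjmscas}) would give $\mathrm{tr.deg.}_{F}F(z,j(z))\geq 1$, forcing $(z,j(z))$ to be generic in $V$ over $F$, contradicting $j(z)\in\overline F$ (here I use that $V$ is free, so by Lemma \ref{lem:genericpoint} no coordinate of a generic point lies in $\overline F$). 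Thus every such point has $j(z)\in\mathrm{He}(j(\mathbf t))$ or $z\in\Sigma$. Because $V$ is free (its $\pi_{\mathbb{C}}$-projection is not a single M\"obius condition over $G$, and it has no constant coordinates), and because the $\mathrm{Y}$-coordinates of such points lie in $\mathrm{He}(j(\mathbf t))\cup j(\Sigma)$ — together with Remark \ref{rem:heckeorbit} bounding $\mathrm{He}_{d}(j(\mathbf t))$ and the finiteness of singular moduli of bounded discriminant from \cite[\textsection 5.6]{pila:andre-oort} — only finitely many values of $j(z)$ can occur. Since $V$ is a curve with non-constant $\mathrm{Y}$-coordinate, finitely many fibres over $\pi_{\mathrm Y}$ contribute only finitely many points, so this part of $A(V)$ is not Zariski dense.

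The remaining ``algebraic'' part consists of points $(z,j(z))\in V$ with $j(z)\in\overline{\mathbb{Q}(j(\mathbf t))}$. For these, the second step is to invoke MSC in the form of Remark \ref{rem:convenient}: since $j(z)\in\overline{\mathbb{Q}}$-over-$\mathbb{Q}(j(\mathbf t))$, an analogue of the argument giving $\mathrm{tr.deg.}_{\mathbb{Q}(\mathbf t)}\mathbb{Q}(\mathbf t,z,j(\mathbf t),j(z))\geq \dim_G(z\mid\Sigma,\mathbf t)$ shows that either $z\in G\cdot\mathbf t$ (so $j(z)\in\mathrm{He}(j(\mathbf t))$, already covered) or $z\in\Sigma$ (so $j(z)$ is a singular modulus). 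In the latter case, $z\in\Sigma$ forces $j(z)\in j(\Sigma)$, and the degree bound $[\mathbb{Q}(j(\mathbf t))(j(z)):\mathbb{Q}(j(\mathbf t))]\le d$ together with \cite[Lemma 7.1]{paper1} bounds $[\mathbb{Q}(j(z)):\mathbb{Q}]$, hence bounds $\mathrm{disc}(z)$, so by \cite[\textsection 5.6]{pila:andre-oort} again only finitely many values of $j(z)$ arise. Once more, since $V$ is a free curve with non-constant $\mathrm Y$-coordinate, these finitely many fibres are not Zariski dense in $V$. Combining the two parts, $A(V)$ is contained in finitely many fibres of $\pi_{\mathrm Y}\upharpoonright_V$ together with (possibly) finitely many fibres of $\pi_{\mathbb C}\upharpoonright_V$ coming from special $z$, and since $V$ is a free curve none of these finite unions is all of $V$; this is the contradiction.

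The main obstacle I anticipate is the careful bookkeeping in the ``transcendental'' part: pinning down that $j(z)\in\overline F\setminus\overline{\mathbb{Q}(j(\mathbf t))}$ really does force a modular relation with $\mathbf t$ (rather than, say, $z$ being algebraic over $\mathbb{Q}(\mathbf t)$ in some subtler way), and making the degree-transfer from $[F(j(z)):F]$ down to $[\mathbb{Q}(j(\mathbf t))(j(z)):\mathbb{Q}(j(\mathbf t))]$ fully rigorous — this is where \cite[Lemma 7.1]{paper1} and the convenience of $\mathbf t$ must be used exactly as in Lemma \ref{lem:t-special}. Everything else (the finiteness statements, the reduction to finitely many fibres) is routine given Remark \ref{rem:heckeorbit}, Remark \ref{rem:convenient}, and the André–Oort machinery already cited.
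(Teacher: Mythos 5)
There is a genuine gap: the uniform degree bound you assert at the start — that ``because $F$ is finitely generated, there is a uniform bound $d$ on $[\mathbb{Q}(j(\mathbf t))(j(z)):\mathbb{Q}(j(\mathbf t))]$'' — is not true for that reason, and you never actually establish it. A point of $A(V)$ only gives $j(z)\in\overline F$, which says nothing about the degree; without a bound on $[F(j(z)):F]$, the hypothesis of \cite[Lemma 7.1]{paper1} is not met, and $\mathrm{He}(j(\mathbf t))$ and $j(\Sigma)$ are infinite sets. The missing input is geometric: once $\dim_G(z\mid\Sigma,\mathbf t)=0$ is shown and $z\in\Sigma$ has been excluded, one concludes $z\in G\cdot\mathbf t\subseteq F$, and only then can one say $j(z)$ lies in the fibre of the degree-$d$ map $\pi_{\mathbb{C}}\!\upharpoonright_V$ over $z\in F$, giving $[F(j(z)):F]\le d$. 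You never invoke $\pi_{\mathbb{C}}$ (nor $\pi_{\mathrm{Y}}$), so your finiteness claims for both the Hecke orbit and the singular-moduli discriminant rest on an unjustified bound.

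Two secondary problems compound this. First, your split according to whether $j(z)\in\overline{\mathbb{Q}(j(\mathbf t))}$ is unnecessary and in the second branch you appeal to Remark \ref{rem:convenient}, which applies only when $j(\mathbf t)\in\overline{\mathbb{Q}}$ — false here in general; Lemma \ref{lem:fgjmscas} already covers both branches at once. Second, the case $z\in\Sigma$ is better handled by Proposition \ref{prop:special} (together with its accompanying remark on non-$\overline{\mathbb{Q}}$ fields of definition), which directly shows special solutions are not Zariski dense in a free curve, rather than by the discriminant-counting route you sketch, which again needs the degree bound you haven't supplied. The part of your argument that does work — using $j(z)\in\overline F$ plus freeness and Lemma \ref{lem:genericpoint} to force $\mathrm{tr.deg.}_F F(z,j(z))=0$, and hence $\dim_G(z\mid\Sigma,\mathbf t)=0$ via Lemma \ref{lem:fgjmscas} — is a perfectly good alternative to the paper's reduction via the finite fibres of $\pi_{\mathrm Y}\!\upharpoonright_V$; it is everything after that step that needs repair.
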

\begin{proof}
By freeness and irreducibility of $V$ we have that $\dim\pi_{\mathrm{Y}}(V) = 1$, and so the standard fibres of the restriction $\pi_{\mathrm{Y}}\upharpoonright_{V}$ have dimension 0, so there is a non-empty Zariski open subset $U\subset V$ such that if $(z,j(z))\in U\cap A(V)$, then $z\in\overline{F}$. 

Similarly we have that $\dim\pi_{\mathbb{C}}(V) = 1$. Let $d\in\mathbb{N}$ be the degree of the restriction $\pi_{\mathbb{C}}\upharpoonright_{V}$. 
Suppose $(z,j(z))\in U\cap A(V)$, then as $z\in\overline{F}$, Lemma \ref{lem:fgjmscas} implies that $z$ is in the same $G$-orbit of some element of $\Sigma\cup\mathbf{t}$. 
Using Proposition \ref{prop:special} we can shrink $U$ if necessary, and so without loss of generality we may assume that $z\notin\Sigma$, therefore it must be that $z$ is in the $G$-orbit of some element of $\mathbf{t}$. 
This implies that $z\in F$, and so we get that $[F(j(z)) : F]\leq d$. 
Since $j(z)$ is modularly dependent over $j(\mathbf{t})$, this means that there are only finitely many possible values for $j(z)$ by Remark \ref{rem:heckeorbit}. 
This finishes the proof. 
\end{proof}

\begin{thm}
Let $V\subseteq\mathbb{C}^{2}\times\mathrm{Y}(1)^2$ be a broad and free variety. 
If $V$ satisfies (EC), then MSCD implies that $V$ satisfies (SEC).
\end{thm}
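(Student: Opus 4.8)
The plan is to run the proof of Theorem~\ref{thm:generic} essentially verbatim, replacing the single appeal to MZP --- Corollary~\ref{cor:horizontalUMZP} --- by its unconditional, Ax--Schanuel-based weak analogue, Theorem~\ref{thm:horizontalweakzp}. As Remark~\ref{rem:transcoord} makes explicit, this substitution is legitimate as soon as the atypical component $X$ produced in that proof is strongly atypical; since $X$ is defined over $\overline{\mathbb{Q}}$, this is the same as asking that no $Y$-coordinate of $X$ be constant, which will follow once we know that all of $j(z_1),j(z_2)$ and $j(t_1),\dots,j(t_m)$ are transcendental over $\mathbb{Q}$. The hypothesis $n=2$ is used precisely to guarantee the transcendence of $j(z_1),j(z_2)$: when one of these is forced to be algebraic, the ``free'' part of the point is a single $j$-value, so one can fall back on the plane-curve case \cite[Theorem~1.2]{paper1}, packaged here as Lemma~\ref{lem:curvefalg}.

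Concretely: by Proposition~\ref{prop:secindimn} (and inspection of its proof, in which the reduced variety is a Zariski closure of a point) we reduce to $V$ irreducible of dimension $2$. If $K\subseteq\overline{\mathbb{Q}}$ take $\mathbf{t}$ empty; otherwise apply Lemma~\ref{lem:convenientgenerators} to $K$, producing a convenient tuple $\mathbf{t}=(t_1,\dots,t_m)$ with $\dim_G(\mathbf{t}|\Sigma)=m$, and note that it may be taken with $j(t_1),\dots,j(t_m)\notin\overline{\mathbb{Q}}$: the part $\mathbf{t}_1$ produced by Theorem~\ref{thm:mainclosureoperator} lies in $\mathbb{H}\setminus C_j$, so Proposition~\ref{prop:as} forces $\mathrm{tr.deg.}_{C_j}C_j(t_i,J(t_i))\geq 4$ and hence $j(t_i)\notin\overline{\mathbb{Q}}$, while the part $\mathbf{t}_2\subseteq\mathbb{H}\cap C_j$ can be selected with the same property when invoking \cite[Theorem~6.18]{aek-closureoperator}. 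Set $F=\mathbb{Q}(\mathbf{t},J(\mathbf{t}))$, so $V$ is defined over $\overline{F}$; choose $(\mathbf{x},\mathbf{y})\in V$ generic over $F$, let $W$ be the $\overline{\mathbb{Q}}$-Zariski closure of $(\mathbf{x},\mathbf{t},\mathbf{y},j(\mathbf{t}))$, form the parametric family $(W_{\mathbf{q}})_{\mathbf{q}\in Q}$ of Example~\ref{ex:parametric family}, and let $\mathscr{S}$ be the finite collection of proper special subvarieties of $\mathrm{Y}(1)^{2+m}$ supplied by Theorem~\ref{thm:horizontalweakzp} applied to $(W_{\mathbf{q}})_{\mathbf{q}\in Q}$.

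Using (EC) for $V$, choose $(\mathbf{z},j(\mathbf{z}))\in V\cap\mathrm{E}_j^2$ with (i) $(j(\mathbf{z}),j(\mathbf{t}))\notin T_0$ for all $T_0\in\mathscr{S}$, and (ii) $j(z_1),j(z_2)\notin\overline{F}$. This is possible because the bad set is not Zariski dense: for each $T_0\in\mathscr{S}$, substituting the fixed values $j(\mathbf{t})$ into the defining equations of $T_0$ shows that $\{(\mathbf{z},j(\mathbf{z}))\in V:(j(\mathbf{z}),j(\mathbf{t}))\in T_0\}$ is contained in $V\cap(\mathbb{C}^2\times S)$ for a proper special $S\subseteq\mathrm{Y}(1)^2$, or in $V\cap\{Y_i=c\}$ for $c$ in a finite set, and freeness of $V$ makes each of these a proper subvariety; the locus $j(z_i)\in\overline{F}$ is handled by projecting to the $i$-th coordinate pair, obtaining an irreducible $V_i\subseteq\mathbb{C}\times\mathrm{Y}(1)$ with $\dim V_i\geq 1$: if $\dim V_i=1$ then $V_i$ is a free curve over $\overline{F}$ and Lemma~\ref{lem:curvefalg} shows $\{(z,j(z))\in V_i:j(z)\in\overline{F}\}$ is finite, while if $\dim V_i=2$ this locus is a countable set, and in either case its preimage in $V$ has dimension $<2$. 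Now suppose for contradiction that $\dim_G(\mathbf{z},\mathbf{t}|\Sigma)<2+m$; since $j(z_i)\notin\overline{F}$ we have $z_i\notin G\cdot\Sigma$, so exactly as in the proof of Theorem~\ref{thm:generic} one builds a proper M\"obius subvariety $M$ of $\mathbb{C}^{2+m}$ witnessing a $G$-relation among the $z_i$ and $t_k$, the member $W_M$ of the family with $\dim W_M<\dim W$ by freeness, and an atypical component $X$, defined over $\overline{\mathbb{Q}}$, of $W_M\cap(\mathbb{C}^{2+m}\times T)$ containing $(\mathbf{z},\mathbf{t},j(\mathbf{z}),j(\mathbf{t}))$, where $T=\mathrm{spcl}(j(\mathbf{z}),j(\mathbf{t}))$ is proper; the atypicality is checked with Lemma~\ref{lem:msc+t}. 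By our choices no $Y$-coordinate of $(\mathbf{z},\mathbf{t},j(\mathbf{z}),j(\mathbf{t}))$ lies in $\overline{\mathbb{Q}}$, hence no $Y$-coordinate is constant on $X$, so Theorem~\ref{thm:horizontalweakzp} yields $T_0\in\mathscr{S}$ with $X\subseteq\mathbb{C}^{2+m}\times T_0$, contradicting (i). Thus $\dim_G(\mathbf{z},\mathbf{t}|\Sigma)=2+m$, and Lemma~\ref{lem:fgjmscas} gives $\mathrm{tr.deg.}_FF(\mathbf{z},j(\mathbf{z}))\geq 2=\dim V$, so $(\mathbf{z},j(\mathbf{z}))$ is generic in $V$ over $F$ and a fortiori over $K$.

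I expect the main obstacle to be step~(ii): showing that the locus where $j(z_1)$ or $j(z_2)$ is algebraic is not Zariski dense. In higher dimension this is exactly the configuration that MZP is invoked to control in Theorem~\ref{thm:generic}; for $n=2$ it collapses to a single free plane curve, so Lemma~\ref{lem:curvefalg} suffices, but one must keep careful track of the fibre dimensions of the coordinate projections $V\to\mathbb{C}\times\mathrm{Y}(1)$ when pulling the statement back to $V$. A secondary point needing care is the assertion that the convenient tuple can be chosen with $j(\mathbf{t})$ transcendental over $\mathbb{Q}$, which is what ultimately makes $X$ strongly atypical.
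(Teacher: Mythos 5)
Your overall architecture is right --- reduce to dimension $2$, replace the appeal to MZP (Corollary~\ref{cor:horizontalUMZP}) by the unconditional two-sorted weak MZP (Theorem~\ref{thm:horizontalweakzp}), and make the atypical component $X$ strongly atypical by arranging that $j(z_1),j(z_2),j(t_1),\dots,j(t_m)$ all be transcendental over $\mathbb{Q}$. But there is a genuine gap exactly where you flagged uncertainty, namely step~(ii), and your treatment of the case $\dim V_i=2$ does not close it. When $\dim\mathrm{Pr}_{(i)}(V)=2$, the set $\bigl\{(z,j(z))\in V_i:j(z)\in\overline{F}\bigr\}$ is indeed only countable, but countability of the image does not imply that the corresponding subset $A_i(V)\subset V\cap\mathrm{E}_j^2$ is contained in a proper Zariski-closed subset: $A_i(V)$ sits inside a \emph{countable} union of fibres of $\mathrm{Pr}_{(i)}$, and a countable union of proper subvarieties can perfectly well be Zariski dense in $V$. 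Asserting that the preimage ``has dimension $<2$'' conflates set-theoretic smallness with containment in a proper subvariety, which is what you actually need in order to avoid the bad locus by (EC). So your choice of the point $(\mathbf{z},j(\mathbf{z}))$ satisfying both (i) and (ii) is not justified when one of the coordinate projections is dominant.

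The paper's proof fills precisely this gap with a different argument. It first sets up the open set $U$ so that condition (c) rules out the $\dim V_i=1$ case directly (via Lemma~\ref{lem:curvefalg}), and then, arguing by contradiction, it assumes the bad locus $A(V)$ is Zariski dense and shows that $\dim\mathrm{Pr}_{(2)}(V)=2$ must hold. With both $z_1,z_2\notin\Sigma\cup G\mathbf{t}$, Remark~\ref{rem:convenient} and Lemma~\ref{lem:fgjmscas} force $z_1\in G\cdot z_2$; then a degree bound coming from the projection $\mathrm{Pr}_{(2)}\restriction_V$ together with \cite[Lemma~7.1]{paper1} and the Hecke-orbit finiteness of Remark~\ref{rem:heckeorbit} shows that all such bad points satisfy one of a \emph{finite} list of modular relations $\Phi_1,\dots,\Phi_N$ in $(j(z_1),j(z_2))$. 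Freeness of $V$ then makes the union of these modular divisors a proper subvariety, contradicting the Zariski density of $A(V)$. This bounded-degree/Hecke-orbit step is the genuinely two-dimensional content of the theorem, and it is the piece your proposal is missing. (Your secondary worry, that one can select the convenient tuple with $j(\mathbf{t})$ transcendental over $\mathbb{Q}$, is handled in the paper more simply: one does not need to perturb the construction of $\mathbf{t}$; instead, the special-point values among $j(\mathbf{t})$ are dealt with through condition~(b) of $U$ and Proposition~\ref{prop:Fspecial}.)
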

\begin{proof}
By Proposition \ref{prop:secindimn} we may assume that $\dim V=2$. 
Let $t_{1},\ldots,t_{m}\in\mathbb{H}\setminus\Sigma$ be a convenient tuple for $j$, set $F=\mathbb{Q}(\mathbf{t},J(\mathbf{t}))$, and assume that $V$ is definable over $\overline{F}$. 

By the fibre-dimension theorem, Proposition \ref{prop:Fspecial} and Lemma \ref{lem:curvefalg} we can first find a non-empty Zariski open subset $U\subset V$ satisfying the following three conditions:
\begin{enumerate}[(a)]
    \item If $\mathbf{i}=(1)$ or $(2)$, then for all $(a,b)\in\mathrm{Pr}_{\mathbf{i}}(U)$ we have that $\dim\mathrm{Pr}_{\mathbf{i}}^{-1}((a,b))\cap V = \dim V - \dim\mathrm{Pr}_{\mathbf{i}}(V)$.
    \item If $(z_{1},z_{2},j(z_{1}),j(z_2))\in U\cap\mathrm{E}_{j}^{2}$, then there is $i\in\{1,2\}$ such that $z_{i}\notin\Sigma\cup G\mathbf{t}$.
    \item If $\mathbf{i}=(1)$ or $(2)$ and $\dim\mathrm{Pr}_{\mathbf{i}}(V)=1$, then for all $(z_{1},z_{2},j(z_{1}),j(z_2))\in U\cap\mathrm{E}_j^2$ we have that $\mathrm{pr}_{\mathbf{i}}(j(z_1),j(z_2))\notin\overline{F}$.
\end{enumerate}
\begin{claim}
The set
\[A(V):=\left\{(z_{1},z_{2},j(z_{1}),j(z_2))\in V\cap\mathrm{E}_j^2 : j(z_1)\in\overline{F}\mbox{ or } j(z_2)\in\overline{F}\right\}\]
is not Zariski dense in $V$.
\end{claim}
\begin{proof}
Proceed by contradiction, so suppose that $A(V)$ is Zariski dense in $V$. 
Then for some $i\in\{1,2\}$ we have that the subset  
\[A_i(V):=\{(z_{1},z_{2},j(z_{1}),j(z_2))\in A(V) : j(z_i)\in\overline{F}\}\]
is also Zariski dense in $V$.
Without loss of generality, we assume that $A_2(V)$ is Zariski dense in $V$.
Then we can choose $(z_{1},z_{2},j(z_{1}),j(z_2))\in U\cap A_2(V)$. 
Also, since $A_2(V)$ is Zariski dense in $V$, then $\mathrm{Pi}_{(2)}(A_2(V))$ is Zariski dense in $\mathrm{Pr}_{(2)}(V)$. 
By broadness of $V$ we know that $\dim\mathrm{Pr}_{(2)}(V)\geq 1$, and using condition (c) in the definition of $U$ we conclude that $\dim\mathrm{Pr}_{(2)}(V)=2$.
Thus generic fibres of the restriction $\mathrm{Pr}_{(2)}\upharpoonright_{V}$ have dimension 0. 

By condition (b) in the definition of $U$, we have that either $z_{1}\notin\Sigma\cup G\mathbf{t}$ or $z_{2}\notin\Sigma\cup G\mathbf{t}$ (or both). 
We now show that both cases imply that $z_2$ is transcendental over $F$. 
If $z_2\notin\Sigma\cup G\mathbf{t}$, then by Lemma \ref{lem:fgjmscas} we get that $\mathrm{tr.deg.}_{F}F(z_{2},j(z_{2})) \geq 1$, and since we are assuming that $j(z_2)\in \overline{F}$, we conclude that $z_2\notin\overline{F}$.
On the other hand, if $z_{1}\notin\Sigma\cup G\mathbf{t}$, then by Lemma \ref{lem:fgjmscas} (which assumes MSCD) we get that $\mathrm{tr.deg.}_{F}F(z_{1},j(z_{1})) \geq 1$. 
By condition (a) in the definition of $U$, we know that the fibre in $V$ above $(z_2,j(z_2))$ has dimension 0, and so $\mathrm{tr.deg.}_{F}F(z_{2},j(z_{2})) \neq 0$, which as before gives that $z_2\notin\overline{F}$. 

We will now show that we can reduce to the case where both $z_1$ and $z_2$ are transcendental over $F$. 
For this we will separate into cases depending on the value of $\dim\pi_{\mathbb{C}}(V)$ (which is at least 1, by freeness).
We first consider the case $\dim\pi_{\mathbb{C}}(V)=1$. 
By the freeness of $V$ there is a Zariski open subset $\mathcal{O}\subseteq\pi_{\mathbb{C}}(V)$ such that for every $(x_1,x_2)\in\mathcal{O}$ we have that if $x_2\notin\overline{F}$, then $x_1\notin\mathcal{F}$. 
Therefore, we may shrink $U$ if necessary to ensure that both $z_1,z_2\notin \overline{F}$. 

Now suppose that $\dim\pi_{\mathbb{C}}(V)=2 =\dim V$ (the following paragraph is just an adapation of an argument present in the proof of \cite[Proposition 7.4]{paper1}). 
We may then shrink $U$ so that for all $(x_1,x_2,y_1,y_2)\in U$ we have that (see e.g.~the definition of triangular varieties in \cite[\S 5]{paper1})
\[\mathrm{tr.deg.}_FF(x_1,x_2,y_1,y_2) = \mathrm{tr.deg.}_FF(x_1,x_2).\]
By Lemma \ref{lem:fgjmscas}  we get that
\[\mathrm{tr.deg.}_FF(z_1,z_2) = \mathrm{tr.deg.}_FF(z_1,z_2,j(z_1),j(z_2)) \geq \dim_G(z_1,z_2|\Sigma,\mathbf{t}).\]
From this we deduce that
\[\mathrm{tr.deg.}_FF(z_1,z_2) = \dim_G(z_1,z_2|\Sigma,\mathbf{t}).\]
If $z_1\in\overline{F}$, then this equality implies that $z_1\in \Sigma\cup G\mathbf{t}$. 
There is a positive integer $d_1$, which depends only on the equations defining $V$, such that for all $(x_1,x_2,y_1,y_2)\in U$ we have $[F(x_1,x_2,y_1):F(x_1,x_2)]\leq d_1$. 
Then in particular $[F(z_1,z_2,j(z_1)):F(z_1,z_2)]\leq d_1$, but since $z_2$ is transcendental over $F$, we get $[F(z_1,j(z_1)):F(z_1)]\leq d_1$. 
We can now find another positive integer $D_1$ depending only on $V$ and $F$ such that, if $z_1\in \Sigma$, then $[\mathbb{Q}(z_1,j(z_1)):\mathbb{Q}(z_1)]\leq D_1$ (observe that $F\cap\overline{\mathbb{Q}}$ is a number field).
On the other hand, if $z_1\in G\mathbf{t}$, let us say that $z \in Gt_i$, then $F\cap \overline{\mathbb{Q}(t_i,j(t_i))}$ is a finite extension of $\mathbb{Q}(t_i,j(t_i))$. 
So, similarly as to the case $z_1\in \Sigma$, we get a positive integer $D_2$ depending only on $V$ and $F$ such that,  $z_1\in G t_i$ for some $i\in\{1,,\ldots,m\}$, then $[\mathbb{Q}(z_1,t_i,j(z_1),j(t_i)):\mathbb{Q}(z_1,t_i,j(t_i))]\leq D_2$.
In either case, we may use Remark \ref{rem:heckeorbit} and \cite[Lemma 2.1]{paper1} to conclude that there are only finitely many possible values for $j(z_1)$. 
By shrinking $U$, we may avoid these values. 

So from now on, we assume that $z_1,z_2\notin\overline{F}$. 
Since $j(z_2)\in\overline{F}$ and $V$ is free, then $(z_1,z_2,j(z_1),j(z_2))$ is not generic in $V$ over $F$.
By Lemma \ref{lem:fgjmscas}  we have that
\[2=\dim V> \mathrm{tr.deg.}_{F}F(z_1,z_2,j(z_1),j(z_2)) \geq \dim_{G}(z_{1},z_2|\Sigma,\mathbf{t}),\]
so we must have that $z_1\in G\cdot z_2$.  
Let $d\in\mathbb{N}$ be the degree of the restriction $\mathrm{Pr}_{(2)}\upharpoonright_{V}$, then 
\[[F(z_{1},z_{2},j(z_{1}),j(z_{2})) : F(z_{2},j(z_{2}))]\leq d.\]
Since $z_1\in G\cdot z_2$ and $z_{2}$ is transcendental over $F$, then by \cite[Lemma 7.1]{paper1}
\begin{align*}
    [F(z_{1},z_{2},j(z_{1}),j(z_{2})) : F(z_{2},j(z_{2}))] &= [F(z_2,j(z_{1}),j(z_{2})) : F(j(z_{2}))]\\
    &= [F(j(z_{1}),j(z_{2})) : F(j(z_{2}))]\\
    &\leq d.
\end{align*}
By Remark \ref{rem:heckeorbit} we conclude that there are only finitely many modular polynomials $\Phi_{1},\ldots,\Phi_{N}$ such that if $(z_1,z_2,j(z_1),j(z_2))\in U\cap A_2(V)$, then $\prod_{k=1}^{N}\Phi_{k}(j(z_{1}),j(z_2))=0$. 
Since $V$ is free, we may shrink $U$ to avoid these finitely many modular polynomials, while still preserving Zariski openness. 
This contradicts the density of $A(V)$. 
\end{proof}

By the Claim and (EC) we can find $(z_1,z_2,j(z_1),j(z_2))\in U$ satisfying $j(z_1),j(z_2)\notin\overline{F}$  are Zariski dense in $V$. 
By Remark \ref{rem:transcoord}, this finishes the proof. 
\end{proof}

In higher dimensions, inspired by the results in \cite{weakSEC}, we get the following theorem.

\begin{thm}
\label{thm:domproj2}
Let $V\subseteq\mathbb{C}^{n}\times\mathrm{Y}(1)^n$ be variety such that $V$ projects dominantly both to $\mathbb{C}^{n}$ and $\mathrm{Y}(1)^n$. 
Then MSCD implies that $V$ has (SEC).
\end{thm}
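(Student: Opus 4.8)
The strategy is to run the proof of Theorem~\ref{thm:generic} but, exploiting the extra hypotheses, to replace the appeal to MZP (through Corollary~\ref{cor:horizontalUMZP}) by the unconditional two-sorted weak Zilber--Pink of Theorem~\ref{thm:horizontalweakzp}, in the spirit of Remark~\ref{rem:transcoord}. We may assume $V$ is irreducible. Since $\pi_{\mathbb{C}}(V)$ is Zariski dense in $\mathbb{C}^{n}$, the variety $V$ is broad and, by Theorem~\ref{th:main1}, satisfies (EC); since also $\pi_{\mathrm{Y}}(V)$ is Zariski dense in $\mathrm{Y}(1)^{n}$, $V$ is free. Following the argument of Proposition~\ref{prop:secindimn} --- and noting that a generic hyperplane section $\sum p_{i}X_{i}+\sum q_{i}Y_{i}=1$ of $V$ still projects dominantly onto both factors, because once $\dim V>n$ the fibres of $\pi_{\mathbb{C}}\upharpoonright_{V}$ and $\pi_{\mathrm{Y}}\upharpoonright_{V}$ are positive-dimensional and hence survive the cut --- we reduce to the case $\dim V=n$, in which both projections are dominant and generically finite. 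Fix a finitely generated $K$ over which $V$ is defined, let $t_{1},\dots,t_{m}\in\mathbb{H}$ be a convenient tuple for $j$ with $\dim_{G}(\mathbf{t}|\Sigma)=m$ and $\overline{K}\subseteq\overline{F}$, where $F:=\mathbb{Q}(\mathbf{t},J(\mathbf{t}))$, provided by Lemma~\ref{lem:convenientgenerators} (this is the one place MSCD is used), and replace $K$ by $\overline{F}$.

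The next step is to choose the solution point carefully. For each $i\in\{1,\dots,n\}$ the image $\mathrm{Pr}_{(i)}(V)\subseteq\mathbb{C}\times\mathrm{Y}(1)$ projects dominantly onto both $\mathbb{C}$ and $\mathrm{Y}(1)$ (because $V$ does), so it is either $\mathbb{C}\times\mathrm{Y}(1)$, in which case $\{(x,y)\in\mathrm{Pr}_{(i)}(V):y\in\overline{F}\}$ is clearly not Zariski dense, or a free irreducible plane curve over $\overline{F}$, and then Lemma~\ref{lem:curvefalg} gives the same conclusion. Pulling back, the locus $\{(\mathbf{z},j(\mathbf{z}))\in V:\ j(z_{i})\in\overline{F}\text{ for some }i\}$ lies in a proper subvariety of $V$. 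Let $\mathscr{S}$ be the finite collection of proper special subvarieties of $\mathrm{Y}(1)^{n}$ obtained by applying Theorem~\ref{thm:horizontalweakzp} to the family of intersections with $G$-M\"obius subvarieties of the variety $W$ constructed as in the proof of Theorem~\ref{thm:generic} (Example~\ref{ex:parametric family}); as there, freeness ensures no member of $\mathscr{S}$ contains $\pi_{\mathrm{Y}}$ of a Zariski-dense subset of $V\cap\mathrm{E}_{j}^{n}$. Using (EC), choose $(\mathbf{z},j(\mathbf{z}))\in V\cap\mathrm{E}_{j}^{n}$ with $j(z_{i})\notin\overline{F}$ for all $i$ --- which in particular makes each $j(z_{i})$ transcendental over $\mathbb{Q}$ and prevents $z_{i}$ from lying in $G\cdot(\Sigma\cup\mathbf{t})$ (otherwise $j(z_{i})$ would be a singular modulus, or would lie in $\mathrm{He}(j(\mathbf{t}))\subseteq\overline{F}$) --- and with $(j(z_{1}),\dots,j(z_{n}))\notin T_{0}$ for every $T_{0}\in\mathscr{S}$.

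Finally one runs the dichotomy of the proof of Theorem~\ref{thm:generic}. Suppose for contradiction that $\dim_{G}(\mathbf{z},\mathbf{t}|\Sigma)<n+m$; then there is a nontrivial $G$-relation among $\mathbf{z},\mathbf{t}$. A relation $gz_{i}=t_{k}$ is excluded by freeness (it would force $X_{i}$ constant on $V$), so $gz_{i}=z_{k}$ with $i,k\le n$, whence $j(z_{i})$ and $j(z_{k})$ are modularly dependent. By our choices the special closure $T$ of $(j(\mathbf{z}),j(\mathbf{t}))$ has no constant coordinates (no $j(z_{i})$ or $j(t_{k})$ is a singular modulus) and no modular relation between a $z$- and a $t$-coordinate or between two $t$-coordinates (the $t_{k}$ lie in distinct $G$-orbits and $z_{i}\notin G\mathbf{t}$), so $T=T_{z}\times\mathrm{Y}(1)^{m}$ with $T_{z}=\mathrm{spcl}(j(\mathbf{z}))$ a \emph{proper} special subvariety of $\mathrm{Y}(1)^{n}$. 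Intersecting the relevant M\"obius section $W_{M}$ with $\mathbb{C}^{n+m}\times T$ and discarding the $\mathrm{Y}(1)^{m}$ factor produces a component $X$ whose $\pi_{\mathrm{Y}}$-image has no constant coordinate, since all $j(z_{i})$ are transcendental; a transcendence count via Lemma~\ref{lem:msc+t}, together with $\dim W_{M}<\dim W$ and $\dim_{G}(\mathbf{t}|\Sigma)=m$, shows that $X$ is atypical, hence strongly atypical. Then Theorem~\ref{thm:horizontalweakzp} (applied, harmlessly, with the $\mathbb{C}$-factor of dimension $n+m$) forces $(j(z_{1}),\dots,j(z_{n}))\in T_{0}$ for some $T_{0}\in\mathscr{S}$, contradicting the choice of $(\mathbf{z},j(\mathbf{z}))$. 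Therefore $\dim_{G}(\mathbf{z},\mathbf{t}|\Sigma)=n+m$, so $\dim_{G}(\mathbf{z}|\Sigma,\mathbf{t})=n$, and \eqref{eq:mscasfgj} of Lemma~\ref{lem:fgjmscas} gives $\mathrm{tr.deg.}_{F}F(\mathbf{z},j(\mathbf{z}))\ge n=\dim V$; thus $(\mathbf{z},j(\mathbf{z}))$ is generic in $V$ over $F$, and hence over $K$.

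The main difficulty is the bookkeeping around the auxiliary $\mathbf{t}$-coordinates. The variety $W$, built over $\overline{\mathbb{Q}}$, may genuinely have constant coordinates among the $j(t_{k})$ --- precisely when $j(t_{k})\in\overline{\mathbb{Q}}$, which happens for instance whenever $t_{k}$ is transcendental with $j(t_{k})$ a non-singular algebraic number (forcing $t_{k}\in C_{j}$) --- and one cannot in general avoid this by a better choice of convenient tuple. This obstructs strong atypicality directly, so the crux of the argument is the reduction just described: arranging that the special closure splits as $T_{z}\times\mathrm{Y}(1)^{m}$ with no singular and no mixed coordinates, and then tracking the atypicality inequality through the discarding of the $\mathrm{Y}(1)^{m}$ factor --- essentially a fibering argument of the type used in the proof of Proposition~\ref{prop:boundedcomplexity}. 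A secondary point requiring the full hypothesis is the uniformity over $i$ in the second paragraph: it is here that dominance onto $\mathrm{Y}(1)^{n}$, rather than mere freeness, is used, as it makes each $\mathrm{Pr}_{(i)}(V)$ a plane curve dominating both factors and thereby reduces that sub-step to the already settled case $n=1$.
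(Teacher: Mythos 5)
There is a genuine gap, and in any case your route differs substantially from the paper's.

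The decisive problem is in your second paragraph, in the case $\mathrm{Pr}_{(i)}(V)=\mathbb{C}\times\mathrm{Y}(1)$. You assert that $\{(x,y)\in\mathbb{C}\times\mathrm{Y}(1):y\in\overline{F}\}$ is ``clearly not Zariski dense''; this is false, since $\overline{F}$ is an infinite subset of $\mathbb{C}$ and so $\mathbb{C}\times\overline{F}$ is Zariski dense in $\mathbb{C}\times\mathrm{Y}(1)$. Consequently its preimage in $V$ need not lie in a proper subvariety, and there is no reason that (EC) yields a point with $j(z_{i})\notin\overline{F}$ for every $i$. This case can genuinely occur under the double-domination hypothesis: e.g.\ $n=2$, $V$ defined by $Y_{1}=X_{1}+X_{2}$, $Y_{2}=X_{1}X_{2}$, projects dominantly to both $\mathbb{C}^{2}$ and $\mathrm{Y}(1)^{2}$, yet $\mathrm{Pr}_{(1)}(V)=\mathbb{C}\times\mathrm{Y}(1)$. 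Your argument is therefore not correct as written. (Lemma~\ref{lem:curvefalg} does handle the case where $\mathrm{Pr}_{(i)}(V)$ is a curve, and then the pullback of the finite bad set really is a proper closed subset of $V$; it is only the $2$-dimensional case that breaks.) A secondary soft spot, which you flag yourself, is that even granting the good choice of $(\mathbf{z},j(\mathbf{z}))$, the variety $W$ defined over $\overline{\mathbb{Q}}$ may carry constant $j(t_{k})$-coordinates (whenever $j(t_{k})\in\overline{\mathbb{Q}}$), so $\pi_{\mathrm{Y}}(X)$ may have constant coordinates and $X$ need not be strongly atypical. ``Discarding the $\mathrm{Y}(1)^{m}$ factor'' is a gesture towards a fibering argument, but carrying it out is exactly the delicate bookkeeping where the paper uses a genuinely different tool; it cannot be waved through.

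The paper avoids both difficulties by taking a different route: it never forms the auxiliary variety $W$ in $\mathbb{C}^{n+m}\times\mathrm{Y}(1)^{n+m}$ and never invokes the strongly-atypical weak MZP of Theorem~\ref{thm:horizontalweakzp}. Instead it works directly in $\mathbb{C}^{n}\times\mathrm{Y}(1)^{n}$, replaces special subvarieties by $j(\mathbf{t})$-special ones (so that constant coordinates coming from $\mathrm{He}(j(\mathbf{t}))$ or singular moduli are admissible), and uses both dominance hypotheses in a fibre-dimension argument to force $T=\mathrm{spcl}(j(\mathbf{z}))$ to actually equal an irreducible component of $\pi_{\mathrm{Y}}(V_{M})$. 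The finiteness then comes from Lemma~\ref{lem:t-special}, which rests on uniform Andr\'e--Oort and Hecke-orbit finiteness rather than on weak Zilber--Pink. That is the step your argument is missing: you would need either that mechanism, or a correct version of the bounded-complexity Proposition~\ref{prop:horizontalboundedcomplexity} machinery as deployed in the proof of Theorem~\ref{thm:unconditional}, where strong broadness of $W$ is what makes the dimension counts close --- a hypothesis that is not available here.
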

\begin{proof}
The proof below is a small adaptation of the one given in \cite{weakSEC}. 
The domination conditions on $V$ imply that $V$ is free, broad and satisfies (EC) (by Theorem \ref{th:main1}). 
By (the proof of) Proposition \ref{prop:secindimn}, it suffices to consider the case where $\dim V=n$. 

We keep the notation used earlier, so $K$, $\mathbf{t}$ and $F$ are as in \textsection\ref{sec:generalcase}. 
Using the fibre dimension theorem, let $V^\star$ be the Zariski open subset of $V$ such that for all $(\mathbf{x},\mathbf{y})\in V$ we have that 
\[\mathrm{tr.deg.}_{F}F(\mathbf{x},\mathbf{y}) = \mathrm{tr.deg.}_{F}F(\mathbf{y})=\mathrm{tr.deg.}_{F}F(\mathbf{x}).\]
Take $(\mathbf{z},j(\mathbf{z}))\in V_0$ and assume that
\[\mathrm{tr.deg.}_{F}F(\mathbf{z},j(\mathbf{z})) < n.\]
Then by MSCD we know that $\dim_G(\mathbf{z}|\Sigma,\mathbf{t}) < n$, and so we can choose a M\"obius variety $M\subset\mathbb{C}^{n}$ defined over $\overline{\mathbb{Q}(\mathbf{t})}$, and a proper $j(\mathbf{t})$-special subvariety $T$ of $\mathrm{Y}(1)^{n}$ such that $(\mathbf{z},j(\mathbf{z}))\in M\times T$ and $\dim M = \dim T = \dim_G(\mathbf{z}|\Sigma,\mathbf{t})$. 

As usual, consider the definable family $(V_{q})_{q\in Q}$ of Example \ref{ex:parametric family} and let $V_M := V\cap(M\times\mathrm{Y}(1)^{n})$. 
Observe that by MSCD and the double domination assumption we get
\begin{equation}
    \label{eq:doubledom}
    \mathrm{tr.deg.}_{F}F(\mathbf{z},j(\mathbf{z})) = \mathrm{tr.deg.}_{F}F(j(\mathbf{z})) = \mathrm{tr.deg.}_{F}F(j(\mathbf{z})) = \dim_G(\mathbf{z}|\Sigma,\mathbf{t}).
\end{equation}
As $(\mathbf{z},j(\mathbf{z}))\in V_M$, then $\dim V_M\geq \dim M$.

By (\ref{eq:doubledom}), $T$ is in fact the $F$-Zariski closure of $j(\mathbf{z})$. 
So $T$ is contained in the irreducible component of Zariski closure of $\pi_{\mathrm{Y}}(V_M)$ containing $j(\mathbf{z})$.

Similarly, (\ref{eq:doubledom}) shows that $M$ is the $F$-Zariski closure of $\mathbf{z}$. 
So $M$ is contained in the irreducible component of the Zariski closure of $\pi_{\mathbb{C}}(V_M)$ containing $\mathbf{z}$. 
But by definition of $V_M$ we also have that $\pi_{\mathbb{C}}(V_M)$ must be contained in $M$, so $M$ is in fact the Zariski closure of $\pi_{\mathbb{C}}(V_M)$. 
Since $\mathbf{z}$ is generic in $M$ over $F$, it is also generic in $\pi_{\mathbb{C}}(V_M)$, so by the fibre dimension theorem the dimension of $V_{M}\cap\pi_{\mathbb{C}}^{-1}(\mathbf{z})$ must be equal to $\dim V_M - \dim \pi_{\mathbb{C}}^{-1}(V_M)$. 
By hypothesis $\dim \pi_{\mathbb{C}}^{-1}(\mathbf{z})=0$, so $\dim V_M = \pi_{\mathbb{C}}^{-1}(V_M)= \dim M$.

This shows that $\dim\pi_{\mathrm{Y}}(V_M)\leq\dim M= \dim T$, and as we already showed that $T$ is contained in $\pi_{\mathrm{Y}}(V_M)$, then $\dim \pi_{\mathbb{C}}(V_M) = \dim T$, and $T$ must be equal to the irreducible component of Zariski closure of $\pi_{\mathrm{Y}}(V_M)$ containing $j(\mathbf{z})$. 
As $V$ is free, then $\dim V_M <\dim V=n$, so
\[\dim \pi_{\mathrm{Y}}(V_M)\cap T = \dim T > \dim \pi_{\mathrm{Y}}(V_M) + \dim T - n.\]
This shows that $T$ is a $j(\mathbf{t})$-atypical component of $\pi_{\mathrm{Y}}(V_M)\cap T$. 
As $V$ is free and satisfies (EC), to complete the proof it suffices to show that there are only finitely many possible options for $T$.

Consider the parametric family $\left\{U_i\right\}_{i\in I}$ of the Zariski closures of irreducible components of the members of $\left(\pi_{\mathrm{Y}}(V_\mathbf{q})\right)_{\mathbf{q}\in Q}$. 
Let $d$ be a positive integer so that for every $\mathbf{q}\in Q$ which is defined over $\mathbb{Q}(j(\mathbf{t}))$, we have that the irreducible components of $\pi_{\mathrm{Y}}(V_\mathbf{q})$ are defined over a field $L$ satisfying $[L:F]\leq d$. 
Then there is an integer $d'$, which only depends on $d$ and the family $\left\{U_i\right\}_{i\in I}$  with the following property: for every $j(\mathbf{t})$-special subvariety $S$ of $\mathrm{Y}(1)^{n}$, if $S$ equals $U_i$, where $U_i$ is an irreducible component of some $\pi_{\mathrm{Y}}(V_\mathbf{q})$ with $\mathbf{q}\in Q$ definable over $\mathbb{Q}(j(\mathbf{t}))$, then for any value $c$ of a constant coordinate of $S$ we have that
\[[\mathbb{Q}(j(\mathbf{t}),j(\Sigma),c):\mathbb{Q}(j(\mathbf{t}),j(\Sigma))]\leq d'.\]
So by Lemma \ref{lem:t-special} this shows that there are only finitely many possible values that can appear as constant coordinates in $S$. 
Therefore there are only finitely many options for $T$.
\end{proof}

\begin{remark}
We observe that one can also get a notion of ``convenient tuples for $\exp$'' in analogy to our notion of convenient tuples for $j$ (see \cite[\textsection 5.2]{aek-closureoperator}). 
Using this, one can then strengthen the main result of \cite{weakSEC} by removing the requirement of $V$ being definable over $\overline{\mathbb{Q}}$, and instead allowing any finitely generated field of definition like we have done in Theorem \ref{thm:domproj2}.
\end{remark}

In \cite{paper1} it was shown that despite the fact that the composition of $j$ with itself is not defined on all of $\mathbb{H}$, we are still able to find solutions to certain equations involving the iterates of $j$. 
More precisely, given a positive integer $n$ we define inductively 
\[j_{1}:= j\quad \mbox{ and }\quad j_{n+1}:=j\circ j_{n}.\]
The domains of these iterates are defined as
\[\mathbb{H}_{1}:=\mathbb{H}\quad\mbox{ and }\quad \mathbb{H}_{n+1}:=\left\{z\in\mathbb{H}_{n} : j(z)\in\mathbb{H}\right\},\]
so that the natural domain of $j_{n}$ is $\mathbb{H}_{n}$. 

If perhaps unnatural at first, the motivation behind the results of \cite{paper1} concerning iterates of $j$ was to show that there is some ground to stand on if one wants to consider a dynamical system using $j$. 
While there is a lot of work on the dynamical aspects of meromorphic functions on $\mathbb{C}$, the same is not true about holomorphic functions whose natural domain is (under the Riemann mapping theorem) the unit disc. 

The following result is a generalisation of \cite[Theorem 1.4]{paper1}, and in particular, it shows that (under MSCD) for every positive integer $n$ there is $z\in\mathbb{H}_{n}$ satisfying $z = j_n(z)$ and
\[\mathrm{tr.deg.}_{\mathbb{Q}}\mathbb{Q}(z,j(z),j_2(z),\ldots,j_{n-1}(z)) = n.\]

\begin{cor}
\label{cor:iterates1}
Let $Z\subset\mathbb{C}^{n+1}$ be an irreducible hypersurface defined by an irreducible polynomial $p\in\mathbb{C}[X,Y_{1},\ldots,Y_{n}]$ satisfying $\frac{\partial p}{\partial X},\frac{\partial p}{\partial Y_{n}}\neq 0$. 
Then MSCD implies that for every every finitely generated field $K$ over which $Z$ can be defined, there is $z\in\mathbb{H}_{n}$ such that $(z,j(z),j_{2}(z),\ldots,j_{n}(z))$ is generic in $Z$ over $K$. 
\end{cor}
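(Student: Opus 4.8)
The plan is to recast the equation defining $Z$ as a strong existential closedness problem over $\mathbb{C}^{n}\times\mathrm{Y}(1)^{n}$ to which Theorem \ref{thm:domproj2} applies directly, so that only MSCD is needed. Writing coordinates on $\mathbb{C}^{n}\times\mathrm{Y}(1)^{n}$ as $(X_{1},\ldots,X_{n},Y_{1},\ldots,Y_{n})$, I would first introduce the variety
$$V:=\left\{(X_{1},\ldots,X_{n},Y_{1},\ldots,Y_{n}) : Y_{k}=X_{k+1}\text{ for }1\leq k\leq n-1,\text{ and }p(X_{1},Y_{1},\ldots,Y_{n})=0\right\}.$$
The point of this choice is that a point of $V$ lying on the graph $\mathrm{E}_{j}^{n}$ has the form $(z_{1},\ldots,z_{n},j(z_{1}),\ldots,j(z_{n}))$ with $j(z_{k})=z_{k+1}$ for $k<n$; iterating, $z_{k}=j_{k-1}(z_{1})$ and $j(z_{n})=j_{n}(z_{1})$, and the remaining equation $p(z_{1},j(z_{1}),\ldots,j(z_{n}))=0$ reads exactly $(z_{1},j_{1}(z_{1}),\ldots,j_{n}(z_{1}))\in Z$. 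Since the relations $Y_{k}=X_{k+1}$ present $V$ as the image of a closed immersion of $Z$ (under the relabelling $X\mapsto X_{1}$, $Y_{k}\mapsto X_{k+1}$ for $k<n$, $Y_{n}\mapsto Y_{n}$), $V$ is irreducible of dimension $n$ because $p$ is irreducible, and $V$ is defined over exactly the finitely generated fields over which $Z$ is defined.

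Next I would check that $V$ projects dominantly onto both $\mathbb{C}^{n}$ and $\mathrm{Y}(1)^{n}$. For $\pi_{\mathbb{C}}$: given generic $(X_{1},\ldots,X_{n})$, the coordinates $Y_{1},\ldots,Y_{n-1}$ are forced to equal $X_{2},\ldots,X_{n}$, and because $\frac{\partial p}{\partial Y_{n}}\neq 0$ the polynomial $Y_{n}\mapsto p(X_{1},X_{2},\ldots,X_{n},Y_{n})$ is non-constant on a Zariski-dense open subset of $\mathbb{C}^{n}$, hence has a root there. For $\pi_{\mathrm{Y}}$: given generic $(Y_{1},\ldots,Y_{n})$, put $X_{k+1}=Y_{k}$ for $k<n$ and use $\frac{\partial p}{\partial X}\neq 0$ to solve for $X_{1}$ in $p(X_{1},Y_{1},\ldots,Y_{n})=0$. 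As recorded in the proof of Theorem \ref{thm:domproj2}, double dominance already forces $V$ to be broad, free, and to satisfy (EC); in particular no coordinate of $V$ can be constant. I would then apply Theorem \ref{thm:domproj2} to $V$ and to the given finitely generated $K$ (over which $Z$, hence $V$, is defined), obtaining a point $(\mathbf{z},j(\mathbf{z}))\in V$ with $\mathbf{z}\in\mathbb{H}^{n}$ and $\mathrm{tr.deg.}_{K}K(\mathbf{z},j(\mathbf{z}))=\dim V=n$.

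It remains to translate back. The defining equations of $V$ give $z_{k+1}=j(z_{k})$ for $k<n$, so $z_{k}=j_{k-1}(z_{1})\in\mathbb{H}$ for every $k$, whence $z_{1}\in\mathbb{H}_{n}$; moreover $p(z_{1},j_{1}(z_{1}),\ldots,j_{n}(z_{1}))=0$, i.e. $(z_{1},j_{1}(z_{1}),\ldots,j_{n}(z_{1}))\in Z$. Since $K(\mathbf{z},j(\mathbf{z}))=K(z_{1},j_{1}(z_{1}),\ldots,j_{n}(z_{1}))$ and $\dim Z=n$, the point $(z_{1},j_{1}(z_{1}),\ldots,j_{n}(z_{1}))$ is generic in $Z$ over $K$, as required. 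Given Theorem \ref{thm:domproj2} and the convenient-tuple machinery underlying it, this argument is essentially bookkeeping; the only steps demanding genuine care are verifying that the two non-vanishing-partial hypotheses correspond precisely to the two dominant projections, and keeping accurate track of the iterate domains $\mathbb{H}_{k}$ — these are the places where carelessness would break the argument.
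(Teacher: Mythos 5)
Your proof is correct and follows essentially the same route as the paper: construct the auxiliary variety $V$ cut out by the chain relations $Y_k=X_{k+1}$ together with the hypersurface equation (your $p(X_1,Y_1,\ldots,Y_n)$ is equivalent on $V$ to the paper's $p(X_1,\ldots,X_n,Y_n)$), verify that the two non-vanishing partials give double dominance of $\pi_{\mathbb C}$ and $\pi_{\mathrm Y}$, apply Theorem \ref{thm:domproj2}, and translate the generic point back to $Z$. The extra care you take with the iterate domains $\mathbb H_k$ and with the equality of the two function fields is implicit in the paper's shorter argument but does not change the method.
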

\begin{proof}
As explained in \cite[\textsection 8]{paper1}, finding a point of the form $(z,j(z),j_{2}(z),\ldots,j_{n}(z))$ in $Z$ can be done by finding points in $V\cap \mathrm{E}_{j}^{n}$, where $V\subset\mathbb{C}^{n}\times\mathrm{Y}(1)^{n}$ is defined as
\[V:=\left\{\begin{array}{rcc}
    X_{2} & = & Y_{1} \\
    X_{3} & = & Y_{2}\\
    &\vdots&\\
    X_{n} &=& Y_{n-1}\\
    p(X_{1},\ldots,X_{n},Y_{n}) &=& 0
\end{array}\right\}.\]
Under the conditions $\frac{\partial p}{\partial Y_{n}}\neq 0$ and $\frac{\partial p}{\partial X}\neq 0$ we have that $\dim\pi_{\mathbb{C}}(V) = \dim\pi_{\mathrm{Y}}(V)=n$. 
So by Theorem \ref{thm:domproj2} we get that $V$ has a point $(\mathbf{z},j(\mathbf{z}))$ which is generic over $K$. 
Since $z_{2} = j(z_{1}),\ldots,z_{n}=j(z_{n-1})$, then we also obtain a point in $Z$ of the desired form which is generic over $K$. 
\end{proof}

\section{Unconditional Results}
\label{subsec:unconditional}
One would like to find varieties $V\subseteq\mathbb{C}^{n}\times\mathrm{Y}(1)^{n}$ having generic points in the graph of the $j$-function without having to rely on MSCD or MZP. 
As we mentioned in the introduction, a few unconditional results have already been obtained: see \cite[Theorem 1.1 and \textsection 6.2]{aek-closureoperator}. 

\begin{defn}
A broad algebraic variety $V\subseteq\mathbb{C}^{n}\times\mathrm{Y}(1)^{n}$ is said to have \emph{no $C_j$-factors} if for every choice of indices $1\leq i_1 < \ldots < i_k \leq n$ we have that either $\dim\mathrm{Pr}_{\mathbf{i}} (V) > k$, or $\dim\mathrm{Pr}_{\mathbf{i}} (V)=k$ and $\mathrm{Pr}_{\mathbf{i}} (V)$ is not definable over $C_j$. 

In particular, if $V\subseteq\mathbb{C}^{n}\times\mathrm{Y}(1)^{n}$ is an irreducible variety of dimension $n$ with no $C_j$-factors, then $V$ is not definable over $C_j$.
\end{defn}

The condition of having no $C_{j}$-factors aims at giving a precise notion of what it would mean for a variety to be sufficiently generic with respect to the $j$-function. 
In particular, if $(V_q)_{q\in Q}$ is an algebraic family of free and broad subvarieties of $\mathbb{C}^{n}\times\mathrm{Y}(1)^{n}$ such that for every choice of indices $1\leq i_1 < \ldots < i_k \leq n$ we have that the family of projections $(\mathrm{Pr}_{\mathbf{i}} (V_q))_{q\in Q}$ is either generically of dimension larger than $k$ or non-constant in $q$, then this family contains varieties with no $C_j$-factors (because $C_j$ is countable). 

It should be observed that the condition of having no $C_{j}$-factors is stronger than simply saying that $V$ is not defined over $C_{j}$. 
Indeed, we cannot expect to prove Theorem \ref{thm:unconditional} unconditionally under this weaker assumption. 
For example, suppose that $V_{1},V_{2}\subset\mathbb{C}\times\mathrm{Y}(1)$ are two different free plane curves, with $V_{1}$ defined over $C_{j}$ and $V_{2}$ not definable over $C_{j}$. 
Then the variety $V = V_{1}\times V_{2} \subset\mathbb{C}^{2}\times\mathrm{Y}(1)^{2}$ can be easily checked to be free, broad, and not definable over $C_{j}$. 
If somehow we could prove Theorem \ref{thm:unconditional} for this $V$, then in particular we would have proven Theorem \ref{thm:generic} for $V_{1}$ unconditionally, thus eliminating the need for MSCD.

\subsection{Proof of Theorem \ref{thm:unconditional}}
The proof is inspired by \cite[Proposition 11.5]{bays-kirby}. 
We will set up very similar notation to the one used in \textsection\ref{sec:generalcase}. 
Let $V\subset\mathbb{C}^{n}\times\mathrm{Y}(1)^{n}$ be a broad and free variety. 
Let $K\subset\mathbb{C}$ be a finitely generated subfield such that $V$ is defined over $K$. 
Let $t_{1},\ldots,t_{m}\in\mathbb{H}\setminus C_{j}$ be given by Theorem \ref{thm:mainclosureoperator} applied to $K$. 
We assume that $\dim_{G}(\mathbf{t}|C_{j}) = m$.

Choose $(\mathbf{x},\mathbf{y})\in V$ generic over $C_{j}(\mathbf{t},j(\mathbf{t}))$ (which is possible since $C_{j}(\mathbf{t},j(\mathbf{t}))$ is a countable field). 
By the freeness of $V$ and Lemma \ref{lem:genericpoint} we know that no coordinate of $(\mathbf{x},\mathbf{y})$ is in $C_{j}(\mathbf{t},j(\mathbf{t}))$. 
Let $W\subseteq\mathbb{C}^{n+m}\times\mathrm{Y}(1)^{n+m}$ be the $C_{j}$-Zariski closure of the point $(\mathbf{x},\mathbf{t},\mathbf{y},j(\mathbf{t}))$. 

\begin{lem}
\label{lem:strongWbroadnfree}
$W$ is broad, free and $\dim W = \dim V+m+\dim^{j}(\mathbf{t})$. 
Furthermore, if $\dim^{j}(\mathbf{t})>0$, then $W$ is strongly broad.
\end{lem}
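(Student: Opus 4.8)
The plan is to verify the three assertions about $W$ one at a time, in the order: dimension formula, freeness, broadness (with strong broadness at the end). The dimension formula is essentially a transcendence-degree computation identical in shape to Lemma \ref{lem:dimW}, but now working over $C_j$ rather than $\mathbb{Q}$: since $(\mathbf{x},\mathbf{t},\mathbf{y},j(\mathbf{t}))$ generates the $C_j$-Zariski closure $W$, we have $\dim W = \mathrm{tr.deg.}_{C_j}C_j(\mathbf{x},\mathbf{t},\mathbf{y},j(\mathbf{t}))$, and splitting this via the tower $C_j \subseteq C_j(\mathbf{t},j(\mathbf{t})) \subseteq C_j(\mathbf{x},\mathbf{t},\mathbf{y},j(\mathbf{t}))$ gives $\mathrm{tr.deg.}_{C_j}C_j(\mathbf{t},j(\mathbf{t})) + \mathrm{tr.deg.}_{C_j(\mathbf{t},j(\mathbf{t}))}C_j(\mathbf{x},\mathbf{t},\mathbf{y},j(\mathbf{t}))$. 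The first summand equals $\dim_G(\mathbf{t}\mid C_j) + \dim^j(\mathbf{t}) = m + \dim^j(\mathbf{t})$ by property (A2) of Theorem \ref{thm:mainclosureoperator} together with our assumption $\dim_G(\mathbf{t}\mid C_j)=m$; the second summand equals $\dim V$ because $(\mathbf{x},\mathbf{y})$ is generic in $V$ over $C_j(\mathbf{t},j(\mathbf{t}))$ and $V$ is defined over $K \subseteq \overline{C_j(\mathbf{t},j(\mathbf{t}))}$ by (A1).

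For freeness of $W$: we must show no coordinate of $W$ is constant, $W$ is not contained in $\mathbb{C}^{n+m}\times T$ for a proper special $T$, and $W$ is not contained in $M \times \mathrm{Y}(1)^{n+m}$ for a proper M\"obius $M$ defined over $G$. Since $(\mathbf{x},\mathbf{y})$ is generic in the free variety $V$ over $C_j(\mathbf{t},j(\mathbf{t}))$, Lemma \ref{lem:genericpoint} forces every coordinate of $(\mathbf{x},\mathbf{y})$ to be transcendental over that field, hence in particular non-constant; and the coordinates of $\mathbf{t}$ lie outside $C_j$ so they are non-constant too, while $j(\mathbf{t})$ being constant would force $t_i \in C_j$ by Lemma \ref{lem:c}(b). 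Modular freeness of $W$ and the absence of a $G$-M\"obius factor both reduce to the corresponding property of $V$: any special relation among the $Y$-coordinates of $W$, or M\"obius relation among the $X$-coordinates, either is a relation purely among the $V$-coordinates (contradicting freeness of $V$), or relates a $V$-coordinate to a coordinate of $(\mathbf{t},j(\mathbf{t}))$, which would make some coordinate of $(\mathbf{x},\mathbf{y})$ modularly (or M\"obius-) dependent over $C_j(\mathbf{t},j(\mathbf{t}))$, hence algebraic over it — again contradicting genericity and Lemma \ref{lem:genericpoint}. (One must also rule out a special relation purely among the $j(\mathbf{t})$-coordinates or a $G$-M\"obius relation purely among the $\mathbf{t}$-coordinates; both contradict $\dim_G(\mathbf{t}\mid C_j)=m$, i.e. that the $t_i$ lie in $m$ distinct $G$-orbits none meeting $C_j \supseteq \Sigma$.)

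For broadness: fix indices $\mathbf{i}$ selecting $\ell$ coordinates among the $n+m$ available, say $\ell_1$ of them among the "$\mathbf{x}$/$\mathbf{y}$" block (indices $\le n$) and $\ell_2$ among the "$\mathbf{t}$/$j(\mathbf{t})$" block, $\ell = \ell_1+\ell_2$. Then $\dim \mathrm{Pr}_{\mathbf{i}}(W) = \mathrm{tr.deg.}_{C_j}C_j(\text{selected coordinates})$; splitting over $C_j(\mathrm{pr}_{\mathbf{i}_2}(\mathbf{t}), \mathrm{pr}_{\mathbf{i}_2}(j(\mathbf{t})))$ and using that $\mathrm{pr}_{\mathbf{i}_2}(\mathbf{t})$ is itself a convenient-type tuple (the relevant sub-tuple of $\mathbf{t}$ still satisfies an (A2)-type equality, being a subtuple of one that does and having $\ell_2$ distinct $G$-orbits over $C_j$) gives a lower bound $\ell_2 + \dim^j(\mathrm{pr}_{\mathbf{i}_2}(\mathbf{t})) + \dim \mathrm{Pr}_{\mathbf{i}_1}(V_0)$ where $V_0$ is the $C_j(\mathbf{t},j(\mathbf{t}))$-Zariski closure of $(\mathbf{x},\mathbf{y})$, which is $V$ by genericity; by broadness of $V$, $\dim\mathrm{Pr}_{\mathbf{i}_1}(V) \ge \ell_1$, so $\dim\mathrm{Pr}_{\mathbf{i}}(W) \ge \ell_1 + \ell_2 = \ell$. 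If moreover $\dim^j(\mathbf{t})>0$, then for $\mathbf{i}$ involving at least one coordinate from the $\mathbf{t}$-block carrying a positive $\dim^j$ contribution we get strict inequality directly; for $\mathbf{i}$ avoiding such coordinates entirely one appeals to the fact that, when $\dim^j(\mathbf{t})>0$, the $j$-derivation witnessing this does not annihilate $\mathbf{t}$, forcing an extra unit of transcendence degree in the appropriate projection — this is the one place needing a little care, and it is the step I expect to be the main obstacle: making precise how a positive $\dim^j(\mathbf{t})$ propagates to strict broadness of \emph{every} projection of $W$, rather than just those that visibly "see" the $j$-closure-independent part of $\mathbf{t}$. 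The resolution should be that genericity of $(\mathbf{x},\mathbf{y})$ over $C_j(\mathbf{t},j(\mathbf{t}))$ together with $\mathbf{t}\notin C_j$ means each $\mathrm{Pr}_{\mathbf{i}_1}(\mathbf{x},\mathbf{y})$ contributes its full $\dim\mathrm{Pr}_{\mathbf{i}_1}(V)\ge \ell_1$ \emph{on top of} the $\ge \ell_2 + (\dim^j\text{-contribution})$ coming from $\mathbf{t}$, and one checks that when $\ell_2 = 0$ but $\dim^j(\mathbf{t})>0$ the strictness instead follows from strong broadness being needed only to feed the later argument, i.e. one should verify it is enough to have strictness with the global $\dim^j(\mathbf{t})$ added once.
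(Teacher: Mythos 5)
Your first three parts (the dimension formula via the tower $C_j \subseteq C_j(\mathbf{t},j(\mathbf{t})) \subseteq C_j(\mathbf{x},\mathbf{t},\mathbf{y},j(\mathbf{t}))$, the freeness check, and plain broadness) are sound and follow the same route as the paper: the paper also decomposes $\dim\mathrm{Pr}_{(\mathbf{i},\mathbf{k})}(W) = \mathrm{tr.deg.}_{C_j}C_j(\mathbf{x}_\mathbf{i},\mathbf{t}_\mathbf{k},\mathbf{y}_\mathbf{i},j(\mathbf{t}_\mathbf{k}))$ through a tower and feeds in Proposition \ref{prop:as} and property (A2).

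The one gap is exactly the one you flag, but part of your worry is misplaced. When $\mathbf{k}$ is nonempty there is no subtlety at all: every $t_i$ lies outside $C_j = j\mathrm{cl}(\emptyset)$, so $\dim^j(t_i)\geq 1$ and hence $\dim^j(\mathbf{t}_\mathbf{k})\geq 1$; the sub-tuple always contributes a strictly positive $\dim^j$-term, so your own bound $\dim\mathrm{Pr}_{(\mathbf{i},\mathbf{k})}(W)\geq \dim_G(\mathbf{t}_\mathbf{k}|C_j) + \dim^j(\mathbf{t}_\mathbf{k}) + \dim\mathrm{Pr}_\mathbf{i}(V)$ already gives $>\ell+s$. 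The genuinely delicate case is $\mathbf{k}=\emptyset$, and here your proposed resolution (``the $j$-derivation witnessing $\dim^j(\mathbf{t})>0$ forces an extra unit of transcendence in the projection'') is not an argument: the $j$-derivation sees $\mathbf{t}$, but the projection onto the $V$-block has forgotten $\mathbf{t}$ entirely, and nothing a priori prevents $\mathrm{tr.deg.}_{C_j}C_j(\mathbf{x}_\mathbf{i},\mathbf{y}_\mathbf{i})$ from equalling $\dim\mathrm{Pr}_\mathbf{i}(V)$. The paper's written proof asserts a lower bound $\dim_G(\mathbf{t}_\mathbf{k}|C_j)+\dim^j(\mathbf{t})+\dim\mathrm{Pr}_\mathbf{i}(V)$ carrying the \emph{full} $\dim^j(\mathbf{t})$ for every projection, including $\mathbf{k}=\emptyset$, which would dispose of the problem uniformly — but it does not explain why the full $\dim^j(\mathbf{t})$ (rather than $\dim^j(\mathbf{t}_\mathbf{k})$) appears, and I do not see a direct justification either.

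What actually rescues the $\mathbf{k}=\emptyset$ case is a hypothesis that is present in Theorem \ref{thm:unconditional} (where the lemma is used) but not in the lemma's own statement: that $V$ has no $C_j$-factors. Concretely: if $\dim\mathrm{Pr}_\mathbf{i}(W) = \ell$ for some $\mathbf{i}$ of $V$-block indices, then since $\mathrm{Pr}_\mathbf{i}(W)$ is the $C_j$-Zariski closure of $(\mathbf{x}_\mathbf{i},\mathbf{y}_\mathbf{i})$ and the $\overline{C_j(\mathbf{t},j(\mathbf{t}))}$-Zariski closure of $(\mathbf{x}_\mathbf{i},\mathbf{y}_\mathbf{i})$ is the (irreducible) Zariski closure of $\mathrm{Pr}_\mathbf{i}(V)$, having equal dimensions forces these two irreducible varieties to coincide, so $\mathrm{Pr}_\mathbf{i}(V)$ would be defined over $C_j$ and have dimension $\ell$ — precisely a $C_j$-factor of $V$. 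So you should invoke the no-$C_j$-factors hypothesis rather than try to squeeze the strictness out of $\dim^j(\mathbf{t})>0$ alone in the $\mathbf{k}=\emptyset$ case.
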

\begin{proof}
The calculation of the dimension of $W$ is done in the same way as in Lemma \ref{lem:dimW}.

As $V$ is free, then the coordinates of $\mathbf{x}$ are all in distinct $G$-orbits and the coordinates of $\mathbf{y}$ are modularly independent. 
On the other hand, as $(\mathbf{x},\mathbf{y})$ is generic over $C_j(\mathbf{t},j(\mathbf{t}))$, then the coordinates of $\mathbf{x}$ define different $G$-orbits than the coordinates of $\mathbf{t}$. 
Similarly, every coordinates of $\mathbf{y}$ is modularly independent from every coordinate of $j(\mathbf{t})$. 
Since $W$ is defined over $C_j$, then $W$ cannot have constant coordinates as every coordinate of $(\mathbf{x},\mathbf{t},\mathbf{y},j(\mathbf{t}))$ is transcendental over $C_j$. 
By construction of $\mathbf{t}$, the coordinates of $\mathbf{t}$ are all in distinct $G$-orbits which implies that the coordinates of $j(\mathbf{t})$ are modularly independent. 
Therefore $W$ is free.

Choose $1\leq i_{1}<\cdots < i_{\ell} \leq n < k_{1}<\cdots <k_{s}\leq n+m$ and set 
\[(\mathbf{i},\mathbf{k})=(i_{1},\ldots,i_{\ell},k_{1},\ldots,k_{s}).\] 
Write
\[\mathrm{Pr}_{(\mathbf{i},\mathbf{k})}(\mathbf{x},\mathbf{t},\mathbf{y},j(\mathbf{t})) = (\mathbf{x}_{\mathbf{i}},\mathbf{t}_{\mathbf{k}},\mathbf{y}_{\mathbf{i}},j(\mathbf{t}_{\mathbf{k}})).\]
Observe that $\mathrm{Pr}_{(\mathbf{i},\mathbf{k})}(W)$ is defined over $C_j$, and since $(\mathbf{x},\mathbf{t},\mathbf{y},j(\mathbf{t}))$ is (by construction) generic in $W$ over $C_j$, then by Proposition \ref{prop:as} we get
\begin{align*}
\dim \mathrm{Pr}_{\mathbf{i}}(W) &= \mathrm{tr.deg.}_{C_j}C_j(\mathbf{x}_{\mathbf{i}},\mathbf{t}_{\mathbf{k}},\mathbf{y}_{\mathbf{i}},j(\mathbf{t}_{\mathbf{k}}))\\
&= \mathrm{tr.deg.}_{C_j}C_j(\mathbf{t}_{\mathbf{k}},j(\mathbf{t}_{\mathbf{k}})) + \mathrm{tr.deg.}_{C_j(\mathbf{t}_{\mathbf{k}},j(\mathbf{t}_{\mathbf{k}}))}C_j(\mathbf{x}_{\mathbf{i}},\mathbf{t}_{\mathbf{k}},\mathbf{y}_{\mathbf{i}},j(\mathbf{t}_{\mathbf{k}}))\\
&\geq \dim_{G}(\mathbf{t}_{\mathbf{k}}|C_j) + \dim^{j}(\mathbf{t}) + \dim \mathrm{Pr}_{\mathbf{i}}(V)\\
&\geq s+\ell.
\end{align*}
Therefore $W$ is broad. 
From the last inequality we also get that if $\dim^{j}(\mathbf{t})>0$, then $W$ is strongly broad.
\end{proof}

\begin{proof}[Proof of Theorem \ref{thm:unconditional}]
By Proposition \ref{prop:secindimn} we will assume that $\dim V=n$. 
We then know that $V$ is not definable over $C_{j}$, so $V(C_{j})$ is contained in a proper subvariety of $V$. 
In particular, there is a Zariski open subset $V_{0}\subseteq V$ such that if $(\mathbf{z},j(\mathbf{z}))\in V_{0}\cap \mathrm{E}_{j}^{n}$, then some of the coordinates of $j(\mathbf{z})$ are not in $C_{j}$. 
Also, $K\not\subset C_{j}$, so $\dim^{j}(\mathbf{t})\geq 1$.

The case $n=1$ was proven in \cite[Theorem 1.1]{aek-closureoperator}, so now we assume that $n>1$. 
Consider the parametric family of subvarieties $(W_\mathbf{q})_{\mathbf{q}\in Q}$ of $W$ defined in Example \ref{ex:parametric family} and let $\mathscr{S}$ be the finite collection  of special subvarieties of $\mathrm{Y}(1)^{n+m}$ given by Theorem \ref{thm:horizontalweakzp} applied to this family. 
Let $N$ be the integer given by Proposition \ref{prop:horizontalboundedcomplexity} applied to  $(W_{\mathbf{q}})_{\mathbf{q}\in Q}$. 

Let $W_{0}\subseteq W$ be a Zariski open subset defined over $C_j$ such that the points $(\mathbf{a},\mathbf{b})$ of $W_{0}$ satisfy all of the following conditions:
\begin{enumerate}[(a)]
    \item The point $\mathbf{b}$ does not lie in any $T\in\mathscr{S}$. 
    As $W$ is free, this condition defines a Zariski open subset of $W$.
    \item The coordinate of $\mathbf{b}$ do not satisfy any of the modular relations $\Phi_{1},\ldots,\Phi_{N}$. 
    As $W$ is free, this condition defines a Zariski open subset of $W$.
    \item For every $1\leq i_{1}<\cdots<i_{\ell}\leq n$, and letting $\mathbf{i}=(i_{1},\ldots,i_{\ell})$, we have that
    \[\dim \left(W\cap \mathrm{Pr}_{\mathbf{i}}^{-1}((\mathbf{a},\mathbf{b}))\right) = \dim W - \dim \mathrm{Pr}_{\mathbf{i}}(W).\]
    By the fibre-dimension theorem and the fact there are only finitely many tuples $\mathbf{i}$ to consider, this defines a Zariski open subset of $W$. 
\end{enumerate}
By the construction of $W$, there is a Zariski opens subset $V_{1}\subseteq V$ such that if $(\mathbf{x},\mathbf{y})$ is any point of $V_{1}$, then $(\mathbf{x},\mathbf{t},\mathbf{y},j(\mathbf{t}))$ is a point in $W_{0}$. 

Choose $(\mathbf{z},j(\mathbf{z}))\in V_{0}\cap V_{1}$, so that $(\mathbf{z},\mathbf{t},j(\mathbf{z}),j(\mathbf{t}))\in W_{0}$. 
We will show that $\dim_{G}(\mathbf{z},\mathbf{t}|C_{j})=n+m$. 
For this we proceed by contradiction, so suppose that $\dim_{G}(\mathbf{z},\mathbf{t}|C_{j})<n+m$. 
Let $T$ be the weakly special subvariety of $\mathrm{Y}(1)^{n+m}$ of minimal dimension defined over $C_{j}$ for which $(j(\mathbf{z}),j(\mathbf{t}))\in T$. 
Observe that $\dim(T) = \dim_{G}(\mathbf{z},\mathbf{t}|C_{j})$. 

Let $M\subset\mathbb{C}^{n+m}$ be a subvariety of minimal dimension defined by M\"obius relations defined over $\mathbb{Q}$ and/or setting some coordinates to be a constant in $C_{j}$ satisfying $(\mathbf{z},\mathbf{t})\in M$. 
Then $W\cap (M\times\mathrm{Y}(1)^{n+m})$ is an element of the family $(W_{\mathbf{q}})_{\mathbf{q}\in Q}$, call it $W_{M}$. 
We remark that $W_{M}$ is defined over $C_{j}$. 

Now let $X$ be the irreducible component of $W_{M}\cap(\mathbb{C}^{n+m}\times T)$ containing $(\mathbf{z},\mathbf{t}, j(\mathbf{z}),j(\mathbf{t}))$. 
Observe that $X$ is defined over $C_{j}$. 
Then by Proposition \ref{prop:as} we get
\begin{equation}
\label{eq:dimXunconditional}
\dim X \geq \mathrm{tr.deg.}_{C_{j}}C_{j}(\mathbf{z},\mathbf{t}, j(\mathbf{z}),j(\mathbf{t})) \geq \dim_{G}(\mathbf{z}
,\mathbf{t}|C_{j}) + \dim ^{j}(\mathbf{z}
\cup\mathbf{t}).
\end{equation}
On the other hand, as $W$ is free, then $\dim W_{M} <\dim W$, so using Lemma \ref{lem:strongWbroadnfree} we have
\begin{equation}
\label{eq:atypicalXunconditional}
\begin{array}{ccl}
\dim W_{M} + \dim\mathbb{C}^{n+m}\times T - \dim\mathbb{C}^{n+m}\times\mathrm{Y}(1)^{n+m} &<& \dim W+\dim T-n-m\\
&=& \dim^{j}(\mathbf{t})+\dim T\\
&=& \dim^{j}(\mathbf{t}) + \dim_{G}(\mathbf{z},\mathbf{t}|C_{j})\\
&\leq& \dim X.
\end{array}    
\end{equation}
This shows that $X$ is  an atypical component of $W_{M}\cap(\mathbb{C}^{n+m}\times T)$ in $\mathbb{C}^{n+m}\times\mathrm{Y}(1)^{n+m}$. 
If $\pi_{\mathrm{Y}}(X)$ has no constant coordinates, then there exists $T_{0}\in\mathscr{S}$ such that $X\subset W_{M}\cap(\mathbb{C}^{n+m}\times T_{0})$. 
However, this would contradict condition (a) in the definition of $W_{0}$.

So $\pi_{\mathrm{Y}}(X)$ has some constant coordinates. 
Then, as $X$ is defined over $C_{j}$, those constant coordinates must be given by elements of $C_{j}$. 
Since no element of $j(\mathbf{t})$ is in $C_{j}$, the constant coordinates of $\pi_{\mathrm{Y}}(X)$ must be found among the coordinates of $j(\mathbf{z})$. 
Let $1\leq i_{1}<\cdots<i_{\ell}\leq n$ denote all the coordinates of $j(\mathbf{z})$ which are in $C_{j}$. 
Since $(\mathbf{z},j(\mathbf{z}))\in V_{0}$, then $\ell<n$. 

By Proposition \ref{prop:horizontalboundedcomplexity} we know that there is a weakly special subvariety $T_{0}
\subset\mathrm{Y}(1)^{n+m}$ such that $\Delta_b(T_{0})\leq N$, $X\subseteq\mathbb{C}^{n+m}\times T_{0}$, and
\begin{equation}
\label{eq:typicalXunconditional}
    \dim X\leq \dim W_{M}\cap(\mathbb{C}^{n+m}\times T_0) + \dim T\cap T_0 - \dim T_0.
\end{equation}
By condition (b) in the definition of $W_0$ we know that $\Delta(T_0)=0$, which means that $T_0$ is completely defined by setting certain coordinates to be constant. 
As the constant coordinates of $\pi_Y(X)$ are in $C_j$, then $T_0$ is defined over $C_j$, and $\dim T_0\geq n+m-\ell$. 
But $T$ is, by definition, the smallest weakly special subvariety of $\mathrm{Y}(1)^{n+m}$ which is defined over $C_j$ and contains the point $(j(\mathbf{z}),j(\mathbf{t}))$. 
So $T\cap T_0 = T$. 
Combining (\ref{eq:dimXunconditional}) and (\ref{eq:typicalXunconditional}) we get
\begin{equation}
\label{eq:fibredimlowunconditional}
    \dim^{j}(\mathbf{t}) + \dim T_0 \leq \dim W_{M}\cap(\mathbb{C}^{n+m}\times T_0).
\end{equation}

Set $\Theta:=W\cap \mathrm{Pr}_{\mathbf{i}}^{-1}\left(\mathrm{Pr}_{\mathbf{i}}(\mathbf{z},\mathbf{t},j(\mathbf{z},\mathbf{t}))\right)$. 
By the fibre dimension theorem, condition (c) of the definition of $W_{0}$ and the fact that $W$ is strongly broad (Lemma \ref{lem:strongWbroadnfree}) we know that 
\begin{equation}
\label{eq:fibredimupunconditional}
    \dim \Theta\leq  \dim W - \dim \mathrm{Pr}_{\mathbf{i}}(W) < n+m+\dim^{j}(\mathbf{t})-\ell.
\end{equation}
Observe that $W_{M}\cap(\mathbb{C}^{n+m}\times T_0) = W
\cap(M\times T_0)\subseteq\Theta$, so combining (\ref{eq:fibredimlowunconditional}) and (\ref{eq:fibredimupunconditional}) gives
\[\dim T_0 < n+m-\ell\]
which is a contradiction.

We deduce from this that $\dim_{G}(\mathbf{z},\mathbf{t}|C_{j})=n+m$. 
By \cite[Lemma 5.2]{aek-closureoperator} this implies that $(\mathbf{z},j(\mathbf{z}))$ is generic in $V$ over $C_{j}(\mathbf{t},j(\mathbf{t}))$. 
\end{proof}

With this we can get the following ``generic'' version of Corollary \ref{cor:iterates1}.

\begin{cor}
\label{cor:iterates2}
Let $Z\subset\mathbb{C}^{n+1}$ be an irreducible hypersurface defined by an irreducible polynomial $p\in\mathbb{C}[X,Y_{1},\ldots,Y_{n}]$ satisfying $\frac{\partial p}{\partial X},\frac{\partial p}{\partial Y_{n}}\neq 0$. Suppose that $Z$ is not definable over $C_{j}$. 
Then for every every finitely generated field $K$ over which $Z$ can be defined, there is $z\in\mathbb{H}_{n}$ such that $(z,j(z),j_{2}(z),\ldots,j_{n}(z))$ is generic in $Z$ over $K$. 
\end{cor}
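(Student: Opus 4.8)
The plan is to run the argument of Corollary~\ref{cor:iterates1}, but with Theorem~\ref{thm:unconditional} in place of Theorem~\ref{thm:domproj2}. Let $V\subset\mathbb{C}^{n}\times\mathrm{Y}(1)^{n}$ be the variety attached to $Z$ in the proof of Corollary~\ref{cor:iterates1}, i.e.\ the one cut out by $X_{2}=Y_{1},\ldots,X_{n}=Y_{n-1}$ and $p(X_{1},\ldots,X_{n},Y_{n})=0$. Up to the $\mathbb{Q}$-linear change of coordinates $(X,Y_{1},\ldots,Y_{n})\leftrightarrow(X_{1},X_{2},\ldots,X_{n},Y_{n})$, the variety $V$ is just the hypersurface $Z=\{p=0\}$; hence $V$ is irreducible of dimension $n$, it has the same field of definition as $Z$, and it is definable over $C_{j}$ exactly when $Z$ is. As recalled in that proof, the hypotheses $\partial p/\partial Y_{n}\neq0$ and $\partial p/\partial X\neq0$ give $\dim\pi_{\mathbb{C}}(V)=\dim\pi_{\mathrm{Y}}(V)=n$, so $V$ is broad and free, and it satisfies (EC) by Theorem~\ref{th:main1}. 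Finally, any point $(\mathbf{z},j(\mathbf{z}))\in V$ has $z:=z_{1}\in\mathbb{H}_{n}$ (because $j(z_{1})=z_{2}\in\mathbb{H}$, $j_{2}(z_{1})=j(z_{2})=z_{3}\in\mathbb{H}$, and so on), satisfies $(z,j_{1}(z),\ldots,j_{n}(z))=(z_{1},\ldots,z_{n},j(z_{n}))\in Z$, and $K(z,j_{1}(z),\ldots,j_{n}(z))=K(\mathbf{z},j(\mathbf{z}))$; hence genericity of $(\mathbf{z},j(\mathbf{z}))$ in $V$ over $K$ transfers to genericity of $(z,j_{1}(z),\ldots,j_{n}(z))$ in $Z$ over $K$. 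So the corollary will follow from Theorem~\ref{thm:unconditional} once we check that $V$ has no $C_{j}$-factors.

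Checking this is where the iterative structure of $V$ is used; it will be the main point. Fix $\mathbf{i}=(i_{1},\ldots,i_{k})$ with $1\le i_{1}<\cdots<i_{k}\le n$; I claim $\dim\mathrm{Pr}_{\mathbf{i}}(V)>k$ unless $\mathbf{i}=\mathbf{n}$. On $V$ one has $Y_{s}=X_{s+1}$ for $s<n$, while $Y_{n}$ is algebraic over $X_{1},\ldots,X_{n}$ and the coordinates $X_{1},\ldots,X_{n}$ are otherwise unconstrained; so after making these substitutions, $\mathrm{Pr}_{\mathbf{i}}(V)$ is the image of $\mathbb{C}^{n}$ recording the coordinates $X_{i_{1}},\ldots,X_{i_{k}}$, together with $X_{i_{s}+1}$ for each $s$ with $i_{s}<n$, and together with $Y_{n}$ if $n\in\mathbf{i}$. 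If $n\notin\mathbf{i}$, then $i_{k}+1\le n$ is an index not among $i_{1},\ldots,i_{k}$, giving at least $k+1$ independent coordinates $X_{\bullet}$, so $\dim\mathrm{Pr}_{\mathbf{i}}(V)\ge k+1$. If $n\in\mathbf{i}$, let $I'$ be the set of recorded $X$-indices, so $|I'|\ge k$; then $\dim\mathrm{Pr}_{\mathbf{i}}(V)$ equals $|I'|$ or $|I'|+1$ according as $Y_{n}$ is or is not algebraic over $\{X_{j}:j\in I'\}$. The only way to avoid $\dim\mathrm{Pr}_{\mathbf{i}}(V)>k$ is therefore to have $|I'|=k$, which forces $\mathbf{i}$ to be the consecutive block $(n-k+1,\ldots,n)$, together with $p$ involving no variable $X_{j}$ with $j\notin I'$; but $\partial p/\partial X\neq0$ forces $1\in I'=\{n-k+1,\ldots,n\}$, whence $k=n$ and $\mathbf{i}=\mathbf{n}$.

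Consequently the only coordinate projection of $V$ of dimension $k$ is $\mathrm{Pr}_{\mathbf{n}}(V)=V$, and by hypothesis $Z$, hence $V$, is not definable over $C_{j}$; so $V$ has no $C_{j}$-factors. Applying Theorem~\ref{thm:unconditional} to $V$ and to any finitely generated field $K$ over which $Z$ (equivalently $V$) is defined produces a point $(\mathbf{z},j(\mathbf{z}))\in V$ generic over $K$, and by the first paragraph this is the required generic point $(z,j(z),j_{2}(z),\ldots,j_{n}(z))\in Z$. The routine parts of the argument are the dimension bookkeeping and the translation between $V$ and $Z$; the only genuinely delicate point is the combinatorial claim that the shifts $Y_{s}=X_{s+1}$ make every proper coordinate projection of $V$ strictly larger-dimensional than its index length, so that the no-$C_{j}$-factors condition for $V$ collapses to the single hypothesis that $Z$ is not defined over $C_{j}$.
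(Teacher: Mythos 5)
Your proof is correct and follows the paper's route: swap Theorem~\ref{thm:domproj2} for Theorem~\ref{thm:unconditional} in the proof of Corollary~\ref{cor:iterates1}, after checking that $V$ has no $C_{j}$-factors. The paper asserts this last implication without detail; your combinatorial analysis of the projections $\mathrm{Pr}_{\mathbf{i}}(V)$ (using that the shift equations $Y_{s}=X_{s+1}$ together with $\partial p/\partial X\neq 0$ force $\dim\mathrm{Pr}_{\mathbf{i}}(V)>|\mathbf{i}|$ for every proper $\mathbf{i}$, so the no-$C_{j}$-factors condition collapses to the single hypothesis on $Z$) supplies exactly the verification the paper leaves implicit, and it is correct.
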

\begin{proof}
Proceed just like in the proof of Corollary \ref{cor:iterates1}. 
As we are assuming that $Z$ is not definable over $C_{j}$, this will imply that the corresponding variety $V$ has no $C_{j}$-factors, and so Theorem \ref{thm:unconditional} applies. 
\end{proof}

\subsection{Proof of Theorem \ref{thm:blurred}}
\label{subsec:thmblurred}
The proof of Theorem \ref{thm:blurred} is done by a straightforward repetition of the proof of Theorem \ref{thm:unconditional}, and taking into consideration the comments in \textsection\ref{subsec:blurring}, which manifest in the proof of Theorem \ref{thm:blurred} as follows.
\begin{enumerate}[(a)]
\item Proposition \ref{prop:as} takes the following form: for every $g_{1},\ldots,g_{n}\in G$ and every $z_{1},\ldots,z_{n}\in\mathbb{H}^{n}$ we have
\[\mathrm{tr.deg.}_{C_j}C_j(z_{1},\ldots,z_{n},j(g_{1}z_{1}),\ldots,j(g_{n}z_{n}))\geq\dim_{G}(z_{1},\ldots,z_{n}|C_j) + \dim^j(z_1,\ldots,z_n).\]
\item We can use \cite[Corollary 5.4]{aek-closureoperator} to obtain the following: if $t_{1},\ldots,t_{m}\in\mathbb{H}\setminus C_j$ satisfies 
\[\mathrm{tr.deg.}_{C_j}C_j(\mathbf{t},J(\mathbf{t})) = 3\dim_G(\mathbf{t}|C_j) + \dim^j(\mathbf{t})\]
(the existence of such tuples is guaranteed by Theorem \ref{thm:mainclosureoperator}), then setting $F:=C_j(\mathbf{t},j(\mathbf{t}))$ we have that for all $z_{1},\ldots,z_{n}\in\mathbb{H}$ and all $g_{1},\ldots,g_{n}\in G$:
\[\mathrm{tr.deg.}_{F}F(z_1,\ldots,z_n,j(g_{1}z_{1}),\ldots,j(g_{n}z_{n}))\geq \dim_{G}(\mathbf{z}|C_j\cup\mathbf{t}) + \dim^{j}(\mathbf{z}|\mathbf{t}).\]
\item Theorem \ref{thm:horizontalweakzp} still applies as stated. 
\end{enumerate}

\section{Results With Derivatives}
\label{sec:derivatives}
As explained in \cite{aslanyan-kirby} and \cite{vahagn} (among other sources), in conjunction with the EC problem for $j$, one should also consider the EC problem for $j$ and its derivatives. 
In this section we will explain how the methods we have used can be adapted to study the strong EC problem for $j$ and its derivatives.

\subsection{Definitions}

We start by setting up some notation. 
Define $\mathrm{Y}_{2}(1) := \mathrm{Y}(1)\times\mathbb{C}^{2}$. 
Let $\mathrm{E}_{J}^{n}:=\left\{(\mathbf{z},J(\mathbf{z})) : \mathbf{z}\in\mathbb{H}^{n}\right\}\subseteq\mathbb{C}^{n}\times\mathrm{Y}_{2}(1)^{n}$. 

Let $n,\ell$ be positive integers with $\ell \leq n$ and $\mathbf{i}=(i_1,\ldots,i_\ell)$ in $\mathbb{N}^{\ell}$ with $1\leq i_1 < \ldots < i_\ell \leq n$. 
Define $\mathrm{PR}_{\mathbf{i}}:\mathbb{C}^{n}\times\mathrm{Y}_{2}(1)^{n}\rightarrow \mathbb{C}^{\ell}\times\mathrm{Y}_{2}(1)^{\ell}$ by
\[\mathrm{PR}_{\mathbf{i}}:(\mathbf{x},\mathbf{y}_{0},\mathbf{y}_{1},\mathbf{y}_{2})\mapsto (\mathrm{pr}_{\mathbf{i}}(\mathbf{x}),\mathrm{pr}_{\mathbf{i}}(\mathbf{y}_{0}), \mathrm{pr}_{\mathbf{i}}(\mathbf{y}_{1}),\mathrm{pr}_{\mathbf{i}}(\mathbf{y}_{2})).\]
We will abuse slightly some notation we have already introduced define the maps $\pi_{\mathbb{C}}:\mathbb{C}^{n}\times\mathrm{Y}_{2}(1)^{n}\to\mathbb{C}^{n}$ and $\pi_{\mathrm{Y}}:\mathbb{C}^{n}\times\mathrm{Y}_{2}(1)^{n}\to\mathrm{Y}(1)^{n}$ as the coordinate projections. 
Notice that $\pi_{\mathrm{Y}}$ still maps onto $\mathrm{Y}(1)^{n}$, not to $\mathrm{Y}_2(1)^{n}$.
These projections will be used in a very similar way as to how $\pi_\mathbb{C}$ and $\pi_\mathrm{Y}$ have been used in the previous sections, which is why we have decided to keep the names. 

\begin{defn}
An algebraic set $V \subseteq \mathbb{C}^{n}\times\mathrm{Y}_{2}(1)^{n}$ is said to be $J$-\emph{broad} if for any $\mathbf{i}=(i_1,\ldots,i_{\ell})$ in $\mathbb{N}^{\ell}$ with $1\leq i_1 < \ldots < i_{\ell} \leq n$ we have $\dim \mathrm{PR}_{\mathbf{i}} (V) \geq 3\ell$. 
In particular, if $V$ is $J$-broad then $\dim V\geq 3n$. 

We say $V$ is \emph{strongly $J$-broad} if the strict inequality $\dim \mathrm{PR}_{\mathbf{i}} (V) > 3\ell$ holds for every $\mathbf{i}$.
\end{defn}

\begin{defn}
A subvariety $T\subseteq\mathrm{Y}_{2}(1)^{n}$ is called a \emph{special subvariety} of $\mathrm{Y}_{2}(1)^{n}$ if there is a M\"obius subvariety $M\subseteq\mathbb{C}^{n}$ defined over $\mathbb{Q}$ such that $T$ is the Zariski closure over $\overline{\mathbb{Q}}$ of the set $J(M\cap\mathbb{H}^{n})$. 
We will say that $T$ is \emph{weakly special} if there is a M\"obius subvariety $M\subseteq\mathbb{C}^{n}$ such that $T$ is the Zariski closure over $\mathbb{C}$ of the set $J(M\cap\mathbb{H}^{n})$.
\end{defn} 

\begin{defn}
We will say that an irreducible constructible  set $V\subseteq \mathbb{C}^{n}\times\mathrm{Y}_{2}(1)^{n}$ is $J$-\emph{free} if no coordinate of $V$ is constant, and $V$ is not contained in any subvariety of the form $M\times\mathrm{Y}_{2}(1)^{n}$ or $\mathbb{C}^{n}\times T$, where $M\subset\mathbb{C}^{n}$ is a proper M\"obius subvariety defined over $\mathbb{Q}$, and $T\subset\mathrm{Y}(1)^{n}$ is a proper special subvariety. 

A constructible subset $V\subseteq \mathbb{C}^{n}\times\mathrm{Y}(1)^{n}$ is $J$-\emph{free} if every irreducible component of $V$ is free.
\end{defn}

\begin{defn}
We say that an algebraic variety $V\subseteq \mathbb{C}^{n}\times\mathrm{Y}_{2}(1)^{n}$ satisfies the \emph{Existential Closedness condition for $J$}, or $\mathrm{(EC)}_{J}$, if the set $V\cap \mathrm{E}_{J}^{n}$ is Zariski dense in $V$. 

We say that $V$ satisfies the \emph{Strong Existential Closedness condition for $J$}, or $\mathrm{(SEC)}_{J}$, if for every finitely generated field $K\subset\mathbb{C}$ over which $V$ can be defined, there exists $(\mathbf{z},J(\mathbf{z}))\in V$ such that $(\mathbf{z},J(\mathbf{z}))$ is generic in $V$ over $K$. 
\end{defn}

\begin{conj}
\label{conj:ecwithderivatives}
For every positive integer $n$, every algebraic variety $V\subseteq \mathbb{C}^{n}\times\mathrm{Y}_{2}(1)^{n}$ which is $J$-broad and $J$-free, if $V\cap\left(\mathbb{H}^n\times\mathrm{Y}_2(1)^n\right)$ is Zariski dense in $V$, then $V$ satisfies $\mathrm{(EC)}_{J}$. 
Even more, such $V$ satisfy $\mathrm{(SEC)}_{J}$.
\end{conj}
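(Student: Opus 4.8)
Conjecture~\ref{conj:ecwithderivatives} is the existential-closedness problem for $j$ together with $j'$ and $j''$, which is open; what follows is therefore a strategy built on the machinery developed above, not a finished proof. It has two halves: the geometric statement $\mathrm{(EC)}_{J}$, that $V\cap\mathrm{E}_{J}^{n}$ is Zariski dense in $V$, and the strengthening $\mathrm{(SEC)}_{J}$, that one such point is generic over an arbitrary finitely generated field of definition. The plan is to treat $\mathrm{(EC)}_{J}$ by analytic and approximation methods, and then to deduce $\mathrm{(SEC)}_{J}$ from it by transplanting to the $J$-setting the argument used for Theorems~\ref{thm:generic} and~\ref{thm:unconditional}.

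Towards $\mathrm{(EC)}_{J}$: by the Rabinowitsch argument used in the lemma after Conjecture~\ref{conj:ec} one reduces to $V\cap\mathrm{E}_{J}^{n}\neq\emptyset$, and by the analogue of Proposition~\ref{prop:secindimn} (cutting with generic affine hyperplanes in the $4n$ coordinates, and replacing the normalisation $\dim V=n$ by $\dim V=3n$) one reduces to $\dim V=3n$; there $V$ defines, locally, a Khovanskii system in $(j,j',j'')$ in the sense of \cite[\textsection 6.1]{aek-closureoperator}. The approach would be first to obtain the \emph{blurred} analogue of Theorem~\ref{thm:blurredj}, namely that for $J$-broad and $J$-free $V$ the set of points $(\mathbf z,J(g_{1}z_{1}),\ldots,J(g_{n}z_{n}))\in V$ with $g_{i}\in\mathrm{GL}_2^+(\mathbb{Q})$ is Euclidean dense in $V\cap(\mathbb{H}^{n}\times\mathrm{Y}_2(1)^{n})$ (this should follow from the blurring techniques of \cite{aslanyan-kirby}, since replacing $z_{i}$ by $g_{i}z_{i}$ changes $J(z_{i})$ only by an algebraic transformation), and then to pass from blurred solutions to genuine ones. \textbf{This un-blurring step is the main obstacle.} There is at present no unconditional device for removing the matrices $g_{i}$ even for $j$ alone, and indeed Conjecture~\ref{conj:ecwithderivatives} contains the open Conjecture~\ref{conj:ec}: taking $V=W\times\mathbb{C}^{2n}$ with $W\subseteq\mathbb{C}^{n}\times\mathrm{Y}(1)^{n}$ broad and free makes $V$ $J$-broad and $J$-free, and "$V\cap\mathrm{E}_{J}^{n}$ Zariski dense in $V$" is exactly "$W$ satisfies (EC)". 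So $\mathrm{(EC)}_{J}$ is no easier than the already open EC problem.

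Granting $\mathrm{(EC)}_{J}$, the passage to $\mathrm{(SEC)}_{J}$ follows the two steps of \textsection\ref{subsec:keys}. One introduces convenient tuples for $J$ (the correct defining identity is again $\mathrm{tr.deg.}_{\mathbb{Q}}\mathbb{Q}(\mathbf t,J(\mathbf t))=3\dim_{G}(\mathbf t|\Sigma)+\dim^{j}(\mathbf t)$), re-proves Lemmas~\ref{lem:fgjmscas}--\ref{lem:msc+t} over $F=\mathbb{Q}(\mathbf t,J(\mathbf t))$ from MSCD, and uses Lemma~\ref{lem:convenientgenerators} to produce a convenient $\mathbf t$ with $\overline{K}\subseteq\overline{F}$ and $\dim_{G}(\mathbf t|\Sigma)=m$. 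For $(\mathbf x,J(\mathbf x))\in V$ generic over $F$, let $W\subseteq\mathbb{C}^{n+m}\times\mathrm{Y}_2(1)^{n+m}$ be the $\overline{\mathbb{Q}}$-Zariski closure of $(\mathbf x,\mathbf t,J(\mathbf x),J(\mathbf t))$; one checks, as in Lemma~\ref{lem:strongWbroadnfree}, that $W$ is $J$-broad and $J$-free with $\dim W=\dim V+m+\dim^{j}(\mathbf t)$, strongly $J$-broad when $\dim^{j}(\mathbf t)>0$. Then one chooses $(\mathbf z,J(\mathbf z))\in V$ (possible by $\mathrm{(EC)}_{J}$ and $J$-freeness) lying outside the finitely many bad special subvarieties of $\mathrm{Y}_2(1)^{n+m}$ supplied by the $J$-analogues of Corollary~\ref{cor:horizontalUMZP} and Proposition~\ref{prop:horizontalboundedcomplexity}, and outside the special-point locus of the $J$-analogue of Proposition~\ref{prop:Fspecial}: a drop $\dim_{G}(\mathbf z,\mathbf t|\Sigma)<n+m$ would, exactly as in the proof of Theorem~\ref{thm:generic}, force an atypical (indeed strongly atypical, after discarding the constant coordinates, which here are forced to be algebraic) component of $W\cap(\mathbb{C}^{n+m}\times T)$ to be contained in one of those subvarieties, a contradiction. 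Hence $\dim_{G}(\mathbf z,\mathbf t|\Sigma)=n+m$, and the $J$-version of Lemma~\ref{lem:fgjmscas} forces $(\mathbf z,J(\mathbf z))$ to be generic in $V$ over $F$, hence over $K$; by Lemma~\ref{lem:genericpoint} and $J$-freeness no coordinate lies in $\overline{F}$. Replacing MSCD by Proposition~\ref{prop:as} and the uniform MZP corollaries by Theorem~\ref{thm:horizontalweakzp} yields the same conclusion unconditionally when $V$ has no $C_{j}$-factors (this is the content of Theorem~\ref{thm:unconditionalwithderivatives} below); but in full unconditional generality the conjecture still rests on $\mathrm{(EC)}_{J}$, i.e.\ on the open EC problem, which remains the essential obstruction.
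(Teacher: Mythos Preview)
Your assessment is essentially correct and matches the paper's treatment: Conjecture~\ref{conj:ecwithderivatives} is stated as an open problem, the paper offers no proof, and the only thing said immediately afterwards is that (as in \cite[\textsection 5.2]{vahagn2}) one may reduce to $\dim V=3n$ by the obvious adaptation of Proposition~\ref{prop:secindimn}. Your observation that $\mathrm{(EC)}_{J}$ already contains the open Conjecture~\ref{conj:ec} (via $V=W\times\mathbb{C}^{2n}$) is exactly right, and your outline of how $\mathrm{(SEC)}_{J}$ would follow from $\mathrm{(EC)}_{J}$ under MSCD together with a Zilber--Pink statement mirrors the paper's strategy for $j$ alone.

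Two small corrections in your outline. First, the dimension formula for $W$ in the $J$-setting should read $\dim W=\dim V+3m+\dim^{j}(\mathbf t)$, not $\dim V+m+\dim^{j}(\mathbf t)$: the convenient-tuple identity gives $\mathrm{tr.deg.}_{\mathbb{Q}}\mathbb{Q}(\mathbf t,J(\mathbf t))=3m+\dim^{j}(\mathbf t)$, and this factor of~$3$ is what makes the atypicality count balance against the $3n$ in $\dim\mathrm{Y}_2(1)^{n}$. Second, the generic point of $V$ used to define $W$ should be written $(\mathbf x,\mathbf y_{0},\mathbf y_{1},\mathbf y_{2})$, not $(\mathbf x,J(\mathbf x))$; a generic point of $V$ need not lie on the graph of~$J$. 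Also note that the paper does not state a $J$-analogue of Theorem~\ref{thm:generic}: it carries out the reduction from $\mathrm{(EC)}_{J}$ to $\mathrm{(SEC)}_{J}$ only in the unconditional ``no $C_{j}$-factors'' form (Theorem~\ref{thm:unconditionalwithderivatives}), using the weak MZP with derivatives (Theorems~\ref{thm:weakMZPD}--\ref{thm:horizontalweakMZPD} and Proposition~\ref{prop:Jhorizontalboundedcomplexity}) rather than any conjectural full MZP for~$\mathrm{Y}_2(1)^{n}$. Your conditional route would need such a statement, which is plausible but not formulated in the paper.
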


As explained in \cite[\textsection 5.2]{vahagn2}, Conjecture \ref{conj:ecwithderivatives} can be reduced to the case where $\dim V=3n$ by doing an obvious adaptation of Proposition \ref{prop:secindimn}. 

Before continuing to the results with derivatives, in the next few sections we will go over the key ingredients that we need.

\subsection{Convenient tuples for \texorpdfstring{$J$}{J}} 

We could start this section by giving a natural definition of \emph{convenient generators for $J$}, following what we did in \textsection\ref{subsec:transineq}. 
However this definition would be exactly the same as the definition of convenient generators for $j$. 
To see this we recall that \cite[Theorem 1.2]{aek-differentialEC} and the results in \cite[\textsection 5]{aek-closureoperator} already include the derivatives of $j$. 
This is manifested in the fact that the various inequalities for $j$ we showed in \textsection\ref{subsec:transineq} were all proven by first proving the statement with derivatives. 
So we already have all the transcendence inequalities we need.

\subsection{Weak Zilber--Pink with derivatives} 
Here we recall one of the main results of \cite{vahagn}. 

\begin{defn}
Given a special subvariety $S$ of $\mathrm{Y}_{2}(1)^{n}$ and a special subvariety $T$ of $\mathrm{Y}(1)^{n}$, we say that $S$ \emph{is associated with} $T$ if $\pi_{\mathrm{Y}}(S) = T$.
\end{defn}

\begin{defn}
Let $V$ be an algebraic subvariety of $\mathrm{Y}_{2}(1)^{n}$. 
An \emph{atypical component} of $V$ is an irreducible component $X$ of the intersection between $V$ and a special subvariety $T$ of $\mathrm{Y}_{2}(1)^{n}$ such that
\[\dim X > \dim V + \dim T - 3n.\]
Furthermore, we say that $X$ is a \emph{strongly atypical component} of $V$ if $X$ is an atypical component of $V$ and no coordinate is constant on $\pi_{\mathrm{Y}}(X)$. 
\end{defn}

For the definition of \emph{upper triangular $D$-special subvariety} used in the following theorem, see \cite[\textsection 6.1]{vahagn}. 
In particular, we can choose $S = \mathrm{Y}_{2}(1)^{n}$.

\begin{thm}[Uniform weak MZP with derivatives, see {{\cite[Theorem 7.9]{vahagn}}}]
\label{thm:weakMZPD}
Let $S$ be an upper-triangular $D$-special subvariety of $\mathrm{Y}_{2}(1)^{n}$. Given a parametric family $(V_{\mathbf{q}})_{\mathbf{q}\in Q}$ of algebraic subvarieties of $\mathrm{Y}_{2}(1)^{n}$, there is a finite collection $\mathscr{S}$ of proper special subvarieties of $\mathrm{Y}(1)^{n}$ such that for every $\mathbf{q}\in Q$ we have that for every strongly atypical component $X$ of $V$ there is a special subvariety $S$ of $\mathrm{Y}_{2}(1)^{n}$ such that $X\subseteq S$ and $\pi_{\mathrm{Y}}(S)= T$ for some $T\in\mathscr{S}$. 
\end{thm}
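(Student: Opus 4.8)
\textbf{Proof plan for Theorem \ref{thm:weakMZPD}.}

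The plan is to reduce this statement about $\mathrm{Y}_2(1)^n$ to the already-established weak MZP statements on $\mathrm{Y}(1)^n$, namely Theorem \ref{thm:weakzp} and its refinement Corollary \ref{cor:weakmzp}, by projecting along $\pi_{\mathrm{Y}}:\mathrm{Y}_2(1)^n\to\mathrm{Y}(1)^n$. The key observation, which makes this work, is that the fibres of $\pi_{\mathrm{Y}}$ restricted to any special subvariety $S$ of $\mathrm{Y}_2(1)^n$ have a controlled dimension: by the defining property of special subvarieties of $\mathrm{Y}_2(1)^n$ (Zariski closures of $J(M\cap\mathbb{H}^n)$ for M\"obius $M$), once the M\"obius data is fixed the derivative coordinates $j',j''$ are determined on each component up to finitely many choices, so $\dim S = \dim\pi_{\mathrm{Y}}(S) + c(S)$ for a constant $c(S)$ depending only on the number of $\mathrm{GL}_2$-relations, and crucially $c(S)$ is bounded uniformly in terms of $n$. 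This is where the hypothesis that $S$ be an \emph{upper-triangular $D$-special} subvariety is used: it is precisely the condition under which the derivative coordinates behave well under projection, and it is what lets one lift a special subvariety of $\mathrm{Y}(1)^n$ found downstairs back to a special subvariety of $\mathrm{Y}_2(1)^n$ upstairs.

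First I would set up the projection bookkeeping: given the parametric family $(V_{\mathbf{q}})_{\mathbf{q}\in Q}$ of subvarieties of $\mathrm{Y}_2(1)^n$, form the family $(\pi_{\mathrm{Y}}(V_{\mathbf{q}}))_{\mathbf{q}\in Q}$ of subvarieties of $\mathrm{Y}(1)^n$ (this is again a parametric family by definability of dimension and constructibility of images, exactly as in the proof of Corollary \ref{cor:horizontalUMZP}). Apply Theorem \ref{thm:weakzp} (or Corollary \ref{cor:weakmzp}) to this family to obtain a finite collection $\mathscr{S}$ of proper special subvarieties of $\mathrm{Y}(1)^n$. I claim this $\mathscr{S}$ works. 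Suppose $X$ is a strongly atypical component of $V_{\mathbf{q}}\cap S'$ for some special $S'\subseteq\mathrm{Y}_2(1)^n$, so that $\dim X > \dim V_{\mathbf{q}} + \dim S' - 3n$ and no coordinate is constant on $\pi_{\mathrm{Y}}(X)$. Set $X' := \pi_{\mathrm{Y}}(X)$; this is an irreducible component of (a component of) $\pi_{\mathrm{Y}}(V_{\mathbf{q}})\cap\pi_{\mathrm{Y}}(S')$, and $\pi_{\mathrm{Y}}(S')$ is a special subvariety of $\mathrm{Y}(1)^n$.

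The heart of the argument is the dimension count showing $X'$ is strongly atypical in $\pi_{\mathrm{Y}}(V_{\mathbf{q}})$. Using the fibre-dimension theorem for $\pi_{\mathrm{Y}}$ restricted to $X$, to $V_{\mathbf{q}}$, and to $S'$, together with the fact that the fibre dimension of $\pi_{\mathrm{Y}}$ on a special subvariety is exactly the "derivative defect" $c(\cdot)$ and that the derivative coordinates on $X$ are governed by those on $S'$ (this is exactly the upper-triangularity hypothesis), one obtains
\begin{equation*}
\dim X' = \dim X - c(S') \geq \dim X - \bigl(\dim V_{\mathbf{q}} - \dim\pi_{\mathrm{Y}}(V_{\mathbf{q}})\bigr) - \bigl(\dim S' - \dim\pi_{\mathrm{Y}}(S')\bigr)
\end{equation*}
which combined with strong atypicality of $X$ gives $\dim X' > \dim\pi_{\mathrm{Y}}(V_{\mathbf{q}}) + \dim\pi_{\mathrm{Y}}(S') - 3n$; then using that $\dim\pi_{\mathrm{Y}}(S')\geq \dim S' - (n$-dependent bound$)$ one upgrades the ambient dimension from $3n$ to $n$, yielding $\dim X' > \dim\pi_{\mathrm{Y}}(V_{\mathbf{q}}) + \dim\mathrm{spcl}(X') - n$, i.e. $X'$ is a strongly atypical component of $\pi_{\mathrm{Y}}(V_{\mathbf{q}})$ in $\mathrm{Y}(1)^n$. (As in Proposition \ref{prop:boundedcomplexity}, one may need to pass to an open subset of $X$ of the right relative dimension and argue on fibres; the stratification trick of Corollary \ref{cor:horizontalUMZP} handles the non-constancy of fibre dimension.) Hence there is $T\in\mathscr{S}$ with $X'\subseteq T$. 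Finally, let $S$ be an irreducible component of $\pi_{\mathrm{Y}}^{-1}(T)\cap(\text{the ambient }D\text{-special }S)$ containing $X$: since $T$ is special in $\mathrm{Y}(1)^n$, this $S$ is special in $\mathrm{Y}_2(1)^n$ with $\pi_{\mathrm{Y}}(S) = T\in\mathscr{S}$, and $X\subseteq S$, completing the proof.

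\textbf{Main obstacle.} The delicate point I expect to be hardest is verifying that $c(S')$ — the relative dimension of $\pi_{\mathrm{Y}}$ over a special subvariety of $\mathrm{Y}_2(1)^n$ — really is controlled by the downstairs data in a way that survives intersection with $V_{\mathbf{q}}$, i.e. that $\dim X - \dim X'$ is bounded above by the sum of the relative defects of $V_{\mathbf{q}}$ and $S'$ rather than being larger. This is exactly where one must invoke the structure of \emph{upper-triangular $D$-special} subvarieties from \cite[\textsection 6.1]{vahagn}: the derivatives $j',j''$ transform under $\mathrm{GL}_2$ in an upper-triangular fashion, so specifying the $\mathrm{Y}(1)$-coordinates and the M\"obius relations determines the higher coordinates on each irreducible component, and this is what forces the fibre-dimension inequality to go the right way. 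Without this hypothesis the theorem genuinely fails, so the proof must use it precisely here; the remaining steps are routine adaptations of the $\mathrm{Y}(1)^n$ arguments already carried out in the excerpt.
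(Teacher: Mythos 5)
This statement is not proved in the paper at all — it is cited directly from Aslanyan's work (\cite[Theorem 7.9]{vahagn}), so there is no "paper's own proof" to compare against. Your proposal is a genuinely new argument, and it has a gap.

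The central issue is the dimension count. You want to show that if $X$ is a strongly atypical component of $V_{\mathbf{q}}\cap S'$ in $\mathrm{Y}_2(1)^n$ (ambient dimension $3n$), then $X':=\pi_{\mathrm{Y}}(X)$ is a strongly atypical component of $\pi_{\mathrm{Y}}(V_{\mathbf{q}})\cap\pi_{\mathrm{Y}}(S')$ in $\mathrm{Y}(1)^n$ (ambient dimension $n$). Chase your chain of inequalities: from $\dim X > \dim V_{\mathbf{q}} + \dim S' - 3n$ and $\dim X' \geq \dim X - c(S')$ with $c(S') = \dim S' - \dim\pi_{\mathrm{Y}}(S')$, you arrive at $\dim X' > \dim V_{\mathbf{q}} + \dim\pi_{\mathrm{Y}}(S') - 3n$. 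To reach $\dim X' > \dim\pi_{\mathrm{Y}}(V_{\mathbf{q}}) + \dim\pi_{\mathrm{Y}}(S') - n$, you would need $\dim V_{\mathbf{q}} - 3n \geq \dim\pi_{\mathrm{Y}}(V_{\mathbf{q}}) - n$, i.e. $\dim V_{\mathbf{q}} - \dim\pi_{\mathrm{Y}}(V_{\mathbf{q}}) \geq 2n$, which is false in general. The remark that one can "upgrade the ambient dimension from $3n$ to $n$" using a bound on $\dim\pi_{\mathrm{Y}}(S')$ does not supply the missing $2n$: the defect $c(S')$ is at most $2n$, so it exactly cancels the drop in ambient dimension only in the extreme case $c(S') = 2n$, and otherwise leaves a genuine deficit.

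A concrete counterexample to the intermediate claim: take $n=2$, $V\subset\mathrm{Y}_2(1)^2\cong\mathbb{C}^6$ with $\dim V = 3$ and $\dim\pi_{\mathrm{Y}}(V)=2$, and $S'$ the special subvariety associated to a single relation $z_1 = gz_2$, so $\dim S' = 3$ and $\dim\pi_{\mathrm{Y}}(S') = 1$. If $X$ is a $1$-dimensional component of $V\cap S'$ then $\dim X = 1 > 3 + 3 - 6 = 0$, so $X$ is atypical upstairs. But $X' = \pi_{\mathrm{Y}}(X) \subseteq \pi_{\mathrm{Y}}(S')$ has $\dim X' \leq 1$, while the typical downstairs dimension is $\dim\pi_{\mathrm{Y}}(V) + \dim\pi_{\mathrm{Y}}(S') - 2 = 1$. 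So $X'$ is typical, and $\mathrm{Y}(1)^n$-weak-MZP gives no finite collection $\mathscr{S}$ containing it. The actual proof in \cite{vahagn} has to work directly with the differential Ax--Schanuel theorem (with derivatives) and the structure of $D$-special subvarieties, not by reduction to $\mathrm{Y}(1)^n$; the upper-triangularity hypothesis is needed to control what the special subvarieties look like, but it cannot rescue the naive projection argument.
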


\begin{thm}
\label{thm:horizontalweakMZPD}
Given a parametric family $(U_{\mathbf{q}})_{\mathbf{q}\in Q}$ of constructible subsets of $\mathbb{C}^{n}\times\mathrm{Y}_{2}(1)^{n}$, there is a finite collection $\mathscr{S}$ of of proper special subvarieties of $\mathrm{Y}(1)^{n}$ such that for every $\mathbf{q}\in Q$ we have that for every strongly atypical component $X$ of $V_\mathbf{q}$ there is a special subvariety $S$ of $\mathrm{Y}_{2}(1)^{n}$ such that $X\subseteq\mathbb{C}^{n}\times S$ and $\pi_{\mathrm{Y}}(S)= T$ for some $T\in\mathscr{S}$. 
\end{thm}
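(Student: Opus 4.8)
The plan is to run the proof of Corollary~\ref{cor:horizontalUMZP} again, with three changes: everything happens inside $\mathrm{Y}_2(1)^n$ instead of $\mathrm{Y}(1)^n$; one fibres over $\mathrm{Y}_2(1)^n$ rather than over $\mathrm{Y}(1)^n$; and the one-sorted input, previously Corollary~\ref{cor:UMZP}, is now Theorem~\ref{thm:weakMZPD} (used with ambient variety $S=\mathrm{Y}_2(1)^n$, which is permitted by the remark preceding that theorem). First I would fix notation: let $\rho\colon\mathbb{C}^n\times\mathrm{Y}_2(1)^n\to\mathrm{Y}_2(1)^n$ be the projection forgetting the $\mathbb{C}^n$-coordinates, and $\pi'\colon\mathrm{Y}_2(1)^n\to\mathrm{Y}(1)^n$ the projection onto the $\mathrm{Y}(1)$-coordinates, so that $\pi_{\mathrm{Y}}=\pi'\circ\rho$ on $\mathbb{C}^n\times\mathrm{Y}_2(1)^n$. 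As always in the two-sorted setting, ``$X$ is a strongly atypical component of $U_\mathbf{q}$'' is to be read as: $X$ is an irreducible component of $U_\mathbf{q}\cap(\mathbb{C}^n\times T)$ for some special subvariety $T$ of $\mathrm{Y}_2(1)^n$ with $\dim X>\dim U_\mathbf{q}+\dim T-3n$, and $\pi_{\mathrm{Y}}(X)$ has no constant coordinate.

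Let $U\subseteq Q\times\mathbb{C}^n\times\mathrm{Y}_2(1)^n$ be the definable set with fibres $U_\mathbf{q}$, and for $k\in\{0,\ldots,\dim U\}$ put
\[ U^{(k)}:=\bigl\{(\mathbf{q},\mathbf{x},\mathbf{w})\in U:\dim\bigl(U_\mathbf{q}\cap\rho^{-1}(\mathbf{w})\bigr)=k\bigr\},\qquad V^{(k)}:=U^{(k)}\cup\cdots\cup U^{(\dim U)}, \]
which are constructible by definability of dimension; let $V^{(k)}_\mathbf{q}$ be the fibres. For each $k$ I would apply Theorem~\ref{thm:weakMZPD} to the parametric family $\bigl(\overline{\rho(V^{(k)}_\mathbf{q})}\bigr)_{\mathbf{q}\in Q}$ of subvarieties of $\mathrm{Y}_2(1)^n$, getting a finite collection $\mathscr{S}_k$ of proper special subvarieties of $\mathrm{Y}(1)^n$, and then set $\mathscr{S}:=\bigcup_{k=0}^{\dim U}\mathscr{S}_k$. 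Now take a strongly atypical component $X$ of $U_\mathbf{q}\cap(\mathbb{C}^n\times T)$ as above and let $k_0:=\dim X-\dim\rho(X)$. By the fibre-dimension theorem there is a dense open $X'\subseteq X$ on which $\rho$ has all fibres of dimension $k_0$; then for $\mathbf{w}\in\rho(X')$ we have $\dim(U_\mathbf{q}\cap\rho^{-1}(\mathbf{w}))\geq k_0$, whence $X'\subseteq V^{(k_0)}_\mathbf{q}$, and since $X\subseteq\mathbb{C}^n\times T$ also $\overline{\rho(X')}\subseteq\overline{\rho(V^{(k_0)}_\mathbf{q})}\cap T$. Let $X_1$ be the irreducible component of $\overline{\rho(V^{(k_0)}_\mathbf{q})}\cap T$ containing $\overline{\rho(X')}$. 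As each nonempty fibre of $\rho$ over $V^{(k_0)}_\mathbf{q}$ equals the corresponding fibre of $U_\mathbf{q}$ and hence has dimension $\geq k_0$, one gets $\dim\rho(V^{(k_0)}_\mathbf{q})\leq\dim V^{(k_0)}_\mathbf{q}-k_0\leq\dim U_\mathbf{q}-k_0$, and combining this with $\dim X>\dim U_\mathbf{q}+\dim T-3n$ and $\dim X_1\geq\dim\rho(X')=\dim X-k_0$ gives
\[ \dim X_1\ >\ \dim\rho(V^{(k_0)}_\mathbf{q})+\dim T-3n, \]
so $X_1$ is an atypical component of $\overline{\rho(V^{(k_0)}_\mathbf{q})}\cap T$ in $\mathrm{Y}_2(1)^n$.

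To conclude I would check that $X_1$ is \emph{strongly} atypical in the sense of Theorem~\ref{thm:weakMZPD}, i.e.\ that $\pi'(X_1)$ has no constant coordinate: $\pi'(X_1)\supseteq\pi'(\rho(X'))=\pi_{\mathrm{Y}}(X')$, which is Zariski dense in $\pi_{\mathrm{Y}}(X)$, and $\pi_{\mathrm{Y}}(X)$ has no constant coordinate by hypothesis, so neither does $\pi'(X_1)$. Theorem~\ref{thm:weakMZPD} then gives a special subvariety $S$ of $\mathrm{Y}_2(1)^n$ with $X_1\subseteq S$ and $\pi'(S)=T_0$ for some $T_0\in\mathscr{S}_{k_0}\subseteq\mathscr{S}$; and since $\rho(X')\subseteq X_1\subseteq S$, the set $\mathbb{C}^n\times S=\rho^{-1}(S)$ is Zariski closed, and $X'$ is dense in $X$, we get $X\subseteq\mathbb{C}^n\times S$, which is exactly the assertion. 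I do not expect a genuine obstacle — the paper already observes that the one-sorted argument transfers — but the step requiring the most care is juggling the two projections correctly, so that atypicality is transported down along $\rho$ while the ``no constant coordinate'' condition is transported down along $\pi'$. A secondary technical point is that Theorem~\ref{thm:weakMZPD} is phrased for families of algebraic subvarieties whereas the $\rho(V^{(k)}_\mathbf{q})$ are only constructible; this is absorbed by passing to Zariski closures, just as in the passage from Theorem~\ref{thm:weakzp} to Corollary~\ref{cor:weakmzp}.
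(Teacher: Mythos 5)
Your proof is correct and follows exactly the route the paper intends: the paper does not spell out a proof of Theorem~\ref{thm:horizontalweakMZPD} (it is implicitly covered by the remark that one can ``go through the same sequence of steps'' as in \textsection\ref{subsec:weakmzp}, which in turn points back to the fibred argument of Corollary~\ref{cor:horizontalUMZP}), and your proposal is precisely that argument, transported with the natural replacements $\pi_{\mathrm Y}\rightsquigarrow\rho$, $n\rightsquigarrow 3n$, and Corollary~\ref{cor:UMZP} (resp.\ Theorem~\ref{thm:weakzp}) $\rightsquigarrow$ Theorem~\ref{thm:weakMZPD}. The two small points you flag yourself — fibering over $\mathrm{Y}_2(1)^n$ via $\rho$ while transporting the ``no constant coordinate'' condition along $\pi'$, and passing to Zariski closures to feed constructible images into Theorem~\ref{thm:weakMZPD} — are exactly the right points of care, and your handling of both is sound.
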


We can now go through the same sequence of steps as in \textsection\ref{subsec:weakmzp} to obtain the following analogue of Proposition \ref{prop:horizontalboundedcomplexity}. 

\begin{prop}
\label{prop:Jhorizontalboundedcomplexity}
Given a parametric family $(U_{\mathbf{q}})_{\mathbf{q}\in Q}$ of constructible subsets of $\mathbb{C}^{n}\times\mathrm{Y}_{2}(1)^{n}$, there is a positive integer $N$ such that for every $\mathbf{q}\in Q$, for every weakly special subvariety $S\subset\mathrm{Y}_{2}(1)^{n}$ and for every atypical component $X$ of $U_{\mathbf{q}}\cap (\mathbb{C}^{n}\times S)$, there is a proper weakly special subvariety $S_{0}\subset\mathrm{Y}_{2}(1)^{n}$ with $\Delta_b(\pi_{\mathrm{Y}}(S_{0}))\leq N$ such that $X\subseteq \mathbb{C}^{n}\times S_{0}$. 
\end{prop}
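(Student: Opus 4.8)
The plan is to mimic exactly the proof architecture of Proposition \ref{prop:horizontalboundedcomplexity}, which combined Corollary \ref{cor:weakmzp} (the strongly-atypical version of weak MZP together with its typicality refinement), Corollary \ref{cor:basicspecials} (only finitely many basic specials appear in a parametric family of proper weakly specials), and an induction on $n$ that handles the case of an atypical component with constant coordinates by slicing along the constant coordinates and projecting away from them. The only change is that every occurrence of ``$\mathrm{Y}(1)$'' is replaced by ``$\mathrm{Y}_2(1)$'', and $\Delta$ is measured through $\pi_{\mathrm{Y}}$, i.e.\ the complexity of a (weakly) special subvariety $S$ of $\mathrm{Y}_2(1)^n$ is $\Delta(\pi_{\mathrm{Y}}(S))$. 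So the very first step is to record the ``intrinsic'' (one-sorted) analogue of Proposition \ref{prop:horizontalboundedcomplexity} for $\mathrm{Y}_2(1)^n$: given a parametric family $(U_{\mathbf{q}})_{\mathbf{q}\in Q}$ of constructible subsets of a weakly special $S\subseteq\mathrm{Y}_2(1)^n$, there is $N$ such that for every weakly special $T\subseteq S$ and every atypical component $X$ of $U_{\mathbf{q}}\cap T$ in $S$, there is a proper weakly special $T_0\subseteq S$ with $\Delta(\pi_{\mathrm{Y}}(T_0))\le N$, $X\subseteq T_0$, $X$ typical in $T_0$, and $U_{\mathbf{q}}\cap T_0$ atypical in $S$. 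This is proved by induction on $n$ exactly as in Proposition \ref{prop:horizontalboundedcomplexity}: for strongly atypical $X$ one invokes Corollary \ref{cor:weakmzp} (whose $\mathrm{Y}_2(1)$-version follows from Theorem \ref{thm:weakMZPD} by the same induction-on-dimension argument that produced Corollary \ref{cor:weakmzp} from Theorem \ref{thm:weakzp}); for $X$ with a tuple $\mathbf{i}$ of constant coordinates taking value $\mathbf{c}$, one passes to the fibre $S_{\mathbf{c}}$ and its image $\mathrm{pr}_{\mathbf{k}}(S_{\mathbf{c}})$ under the complementary projection, uses that only finitely many $\mathbf{c}$ give a basic special fibre (Corollary \ref{cor:basicspecials}, again in its $\mathrm{Y}_2(1)$ incarnation), applies the induction hypothesis on the lower-dimensional $\mathrm{pr}_{\mathbf{k}}(S_{\mathbf{c}})$, and pulls back the resulting $T_1$ to $T_0:=\mathrm{pr}_{\mathbf{k}}^{-1}(T_1)\cap S_{\mathbf{c}}$. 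The two dimension computations at the end of that argument go through verbatim with $3n$, $3(n-m)$, etc.\ in place of $n$, $n-m$, since $\dim\mathrm{Y}_2(1)=3$ and special subvarieties of $\mathrm{Y}_2(1)^n$ have the correct dimensions.

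With the one-sorted statement in hand, I would then run the two-sorted reduction of Corollary \ref{cor:horizontalUMZP}: stratify $U\subseteq Q\times\mathbb{C}^n\times\mathrm{Y}_2(1)^n$ by the dimension $k$ of the fibres of $\pi_{\mathrm{Y}}$, form $V^{(k)}_{\mathbf{q}}$ as the union of the strata of fibre-dimension $\ge k$, apply the one-sorted proposition to each family $\bigl(\pi_{\mathrm{Y}}(V^{(k)}_{\mathbf{q}})\bigr)_{\mathbf{q}\in Q}$ to get collections $\mathscr{S}_k$, and take $N$ to be the maximum of the bounds so obtained. Given an atypical component $X$ of $U_{\mathbf{q}}\cap(\mathbb{C}^n\times S)$, set $k_0=\dim X-\dim\pi_{\mathrm{Y}}(X)$; by the fibre-dimension theorem there is a dense open $X'\subseteq X$ with all $\pi_{\mathrm{Y}}$-fibres of dimension $k_0$, hence $X'\subseteq V^{(k_0)}_{\mathbf{q}}$, the atypicality of $X$ in $\mathbb{C}^n\times\mathrm{Y}_2(1)^n$ pushes down to atypicality of $\pi_{\mathrm{Y}}(X')$ inside $\pi_{\mathrm{Y}}(V^{(k_0)}_{\mathbf{q}})\cap S$, and the one-sorted proposition produces a proper weakly special $S_0\subseteq\mathrm{Y}_2(1)^n$ with $\Delta(\pi_{\mathrm{Y}}(S_0))\le N$, $\pi_{\mathrm{Y}}(X')\subseteq S_0$ (so $X'\subseteq\mathbb{C}^n\times S_0$, and hence $X\subseteq\mathbb{C}^n\times S_0$ since $\mathbb{C}^n\times S_0$ is closed), together with the typicality estimate. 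Transporting that estimate back through $\pi_{\mathrm{Y}}$ on $V^{(k_0)}_{\mathbf{q}}\cap(\mathbb{C}^n\times S_0)$, whose fibres have dimension $\ge k_0$, and using $V^{(k_0)}_{\mathbf{q}}\subseteq U_{\mathbf{q}}$, yields $\dim X\le\dim U_{\mathbf{q}}\cap(\mathbb{C}^n\times S_0)+\dim S\cap S_0-\dim S_0$, which is the desired conclusion (and in particular the weaker inequality stated in the proposition, with $T=\pi_{\mathrm{Y}}(S)$, $T_0=\pi_{\mathrm{Y}}(S_0)$).

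The main obstacle is not the two-sorted bookkeeping, which is a faithful copy of Corollary \ref{cor:horizontalUMZP}, but making sure the \emph{derivative} versions of the building blocks are genuinely available and compatible: one must check that Theorem \ref{thm:weakMZPD} (stated for \emph{strongly} atypical components, where strong atypicality is defined via $\pi_{\mathrm{Y}}$ having no constant coordinates) can be bootstrapped by induction on $\dim S$ to a Corollary \ref{cor:weakmzp}-style statement with the extra typicality and atypicality clauses, and that the fibre $S_{\mathbf{c}}$ of a special $S\subseteq\mathrm{Y}_2(1)^n$ over constant coordinates is again (weakly) special of the expected dimension — this is where the structure theory of special subvarieties of $\mathrm{Y}_2(1)^n$ (they are Zariski closures of $J$ applied to Möbius subvarieties of $\mathbb{C}^n$) does the work, and it is cited from \cite{vahagn}. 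One subtlety worth flagging explicitly in the write-up: a constant coordinate of $X\subseteq\mathrm{Y}_2(1)^n$ could in principle be a constant only in a $j'$- or $j''$-slot while the corresponding $\pi_{\mathrm{Y}}$-coordinate is non-constant; the definition of (strong) atypicality here is phrased through $\pi_{\mathrm{Y}}(X)$, so one should carry out the slicing in the induction step along the coordinates on which $\pi_{\mathrm{Y}}(X)$ is constant, and observe that on such a slice the remaining $\mathrm{Y}_2(1)$-blocks are genuinely three-dimensional, so the dimension arithmetic with the factor $3$ is unaffected. Once that is pinned down, the proof is, as the paper says, "the same sequence of steps as in \textsection\ref{subsec:weakmzp}."
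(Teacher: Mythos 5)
Your proposal matches the paper's intent exactly: the paper gives no explicit proof for this proposition and simply asserts that one should "go through the same sequence of steps as in \textsection\ref{subsec:weakmzp}," which is precisely the one-sorted-analogue-then-two-sorted-reduction scheme you lay out. Your explicit flag of the subtlety that (strong) atypicality in $\mathrm{Y}_2(1)^n$ is defined through $\pi_{\mathrm{Y}}$, so that the induction must slice along coordinates where $\pi_{\mathrm{Y}}(X)$ is constant, is exactly the point that makes the transcription nontrivial, and you handle it correctly.
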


\subsection{Main Result}

Here we present and prove an analogue of Theorem \ref{thm:unconditional} which includes derivatives. 

\begin{defn}
Let $L$ be an algebraically closed subfield of $\mathbb{C}$. A $J$-broad algebraic variety $V\subseteq\mathbb{C}^{n}\times\mathrm{Y}_{2}(1)^{n}$ is said to have \emph{no $L$-factors} if for every choice of indices $1\leq i_1 < \ldots < i_k \leq n$ we have that either $\dim\mathrm{PR}_{\mathbf{i}} (V) > 3k$, or $\dim\mathrm{PR}_{\mathbf{i}} (V)=3k$ and $\mathrm{PR}_{\mathbf{i}} (V)$ is not definable over $L$. 
\end{defn}

\begin{thm}
\label{thm:unconditionalwithderivatives}
Let $V\subseteq\mathbb{C}^{n}\times\mathrm{Y}_{2}(1)^{n}$ be a $J$-broad and $J$-free variety with no $C_{j}$-factors satisfying $\mathrm{(EC)}_{J}$. 
Then for every finitely generated field $K$ over which $V$ can be defined, there exists $\mathbf{z}\in\mathbb{H}^{n}$ such that $(\mathbf{z},J(\mathbf{z}))\in V$ is generic over $K$. 
\end{thm}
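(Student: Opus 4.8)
The plan is to mirror the proof of Theorem \ref{thm:unconditional} almost verbatim, replacing the ingredients for $j$ by their counterparts for $J=(j,j',j'')$, all of which have already been set up in this section. The one substantive structural difference is that the "reduction to $\dim V = 3n$" step is the analogue of Proposition \ref{prop:secindimn} for $\mathrm{Y}_2(1)$ noted in \cite[\textsection 5.2]{vahagn2}, and that throughout we must carry along the extra term $\dim^{j}(\mathbf{t})$ (which is forced to be positive again because $K\not\subset C_j$, owing to the ``no $C_j$-factors'' hypothesis).

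Concretely, first I would reduce to the case $\dim V = 3n$. Since $V$ has no $C_j$-factors, $V$ is not definable over $C_j$, so $V(C_j)$ lies in a proper subvariety and there is a Zariski open $V_0\subseteq V$ on which, for every $(\mathbf{z},J(\mathbf{z}))\in V_0\cap\mathrm{E}_J^n$, at least one coordinate of $j(\mathbf{z})$ (equivalently, of $J(\mathbf{z})$) is not in $C_j$. Apply Theorem \ref{thm:mainclosureoperator} to a finitely generated field $K$ of definition of $V$ to get $t_1,\ldots,t_m\in\mathbb{H}\setminus C_j$ with $K\subseteq\overline{C_j(\mathbf{t},J(\mathbf{t}))}$ and $\mathrm{tr.deg.}_{C_j}C_j(\mathbf{t},J(\mathbf{t})) = 3\dim_G(\mathbf{t}|C_j) + \dim^j(\mathbf{t})$, assuming $\dim_G(\mathbf{t}|C_j) = m$; because $K\not\subset C_j$ we get $\dim^j(\mathbf{t})\geq 1$. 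Pick $(\mathbf{x},\mathbf{y})\in V$ generic over the countable field $C_j(\mathbf{t},J(\mathbf{t}))$ and let $W\subseteq\mathbb{C}^{n+m}\times\mathrm{Y}_2(1)^{n+m}$ be the $C_j$-Zariski closure of $(\mathbf{x},\mathbf{t},\mathbf{y},J(\mathbf{t}))$. Exactly as in Lemma \ref{lem:strongWbroadnfree}, using Proposition \ref{prop:as} (which already includes the derivatives), one shows $W$ is $J$-broad and $J$-free with $\dim W = \dim V + 3m + \dim^j(\mathbf{t})$, and since $\dim^j(\mathbf{t})>0$, $W$ is strongly $J$-broad.

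Next, feed the parametric family $(W_\mathbf{q})_{\mathbf{q}\in Q}$ of intersections of $W$ with M\"obius subvarieties (Example \ref{ex:parametric family}) into Theorem \ref{thm:horizontalweakMZPD} to get a finite collection $\mathscr{S}$ of proper special subvarieties of $\mathrm{Y}(1)^{n+m}$, and into Proposition \ref{prop:Jhorizontalboundedcomplexity} to get the bound $N$. Shrink to a $C_j$-open $W_0\subseteq W$ on which: (a) $\pi_{\mathrm{Y}}(\mathbf{b})$ avoids every $T\in\mathscr{S}$; (b) $\pi_{\mathrm{Y}}(\mathbf{b})$ satisfies none of the modular relations $\Phi_1,\ldots,\Phi_N$; and (c) the fibres of every $\mathrm{PR}_{\mathbf{i}}$ on $W$ have the generic dimension $\dim W - \dim\mathrm{PR}_{\mathbf{i}}(W)$. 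Then there is $V_1\subseteq V$ open so that $(\mathbf{x},\mathbf{y})\in V_1\Rightarrow (\mathbf{x},\mathbf{t},\mathbf{y},J(\mathbf{t}))\in W_0$. By $\mathrm{(EC)}_J$ and freeness choose $(\mathbf{z},J(\mathbf{z}))\in V_0\cap V_1$. Now argue by contradiction that $\dim_G(\mathbf{z},\mathbf{t}|C_j)<n+m$: let $T$ be the minimal weakly special subvariety of $\mathrm{Y}(1)^{n+m}$ over $C_j$ with $(j(\mathbf{z}),j(\mathbf{t}))\in T$, let $M$ be the minimal M\"obius-plus-constants subvariety (constants in $C_j$) with $(\mathbf{z},\mathbf{t})\in M$, and let $S:=\mathrm{wspcl}$ of the $J$-image of $M$, so $\pi_{\mathrm{Y}}(S)=T$; let $X$ be the component of $W_M\cap(\mathbb{C}^{n+m}\times S)$ through $(\mathbf{z},\mathbf{t},J(\mathbf{z}),J(\mathbf{t}))$. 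Proposition \ref{prop:as} gives $\dim X \geq 3\dim_G(\mathbf{z},\mathbf{t}|C_j) + \dim^j(\mathbf{z}\cup\mathbf{t})$, while $\dim W_M < \dim W$ (freeness of $V$) and the dimension formula for $W$ give that $X$ is an atypical component of $W_M\cap(\mathbb{C}^{n+m}\times S)$. If $\pi_{\mathrm{Y}}(X)$ has no constant coordinate, Theorem \ref{thm:horizontalweakMZPD} places $\pi_{\mathrm{Y}}(X)$ inside some $T_0\in\mathscr{S}$, contradicting (a). Otherwise the constant coordinates of $\pi_{\mathrm{Y}}(X)$ are in $C_j$ hence, since no coordinate of $j(\mathbf{t})$ lies in $C_j$, they are among $j(\mathbf{z})$; let $\mathbf{i}$ index those $\ell<n$ constant coordinates. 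Apply Proposition \ref{prop:Jhorizontalboundedcomplexity} to get a weakly special $S_0$ with $\Delta(\pi_{\mathrm{Y}}(S_0))\leq N$, $X\subseteq\mathbb{C}^{n+m}\times S_0$, and $\dim X\leq\dim W_M\cap(\mathbb{C}^{n+m}\times S_0)+\dim S\cap S_0 - \dim S_0$; condition (b) forces $\pi_{\mathrm{Y}}(S_0)$ to be defined purely by constant coordinates, those constants lie in $C_j$, so $S_0$ is over $C_j$ and $\dim\pi_{\mathrm{Y}}(S_0)\geq n+m-\ell$, while minimality of $T$ gives $S\cap S_0 = S$. Combining this with the $\dim X$ lower bound yields $\dim^j(\mathbf{t}) + \dim S_0 \leq \dim W_M\cap(\mathbb{C}^{n+m}\times S_0)$. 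On the other hand, $W_M\cap(\mathbb{C}^{n+m}\times S_0) = W\cap(M\times S_0)$ is contained in the $\mathrm{PR}_{\mathbf{i}}$-fibre $\Theta$ through our point, and by (c) plus strong $J$-broadness $\dim\Theta < 3(n+m)+\dim^j(\mathbf{t}) - 3\ell$; chasing the inequalities (keeping the factor $3$ in front of the $\mathrm{Y}(1)$-directions, which is the only place the derivative bookkeeping differs) gives $\dim S_0 < 3(n+m) - 3\ell$ or the $\mathrm{Y}(1)$-shadow version $\dim\pi_{\mathrm{Y}}(S_0) < n+m-\ell$, a contradiction.

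Having ruled this out, $\dim_G(\mathbf{z},\mathbf{t}|C_j)=n+m$, and then \cite[Lemma 5.2]{aek-closureoperator} (the version with derivatives) gives that $(\mathbf{z},J(\mathbf{z}))$ is generic in $V$ over $C_j(\mathbf{t},J(\mathbf{t}))$, hence generic over $K$ since $K\subseteq\overline{C_j(\mathbf{t},J(\mathbf{t}))}$; freeness plus Lemma \ref{lem:genericpoint} then shows no coordinate is in $\overline{C_j(\mathbf{t},J(\mathbf{t}))}$. The main obstacle I anticipate is purely bookkeeping: making sure the dimension counts carry the correct factor of $3$ on the $J$-side throughout (in the definition of $J$-broad, the dimension of $W$, and the final fibre-dimension contradiction), and checking that the weakly special subvarieties $S,S_0\subset\mathrm{Y}_2(1)^{n+m}$ genuinely satisfy $\pi_{\mathrm{Y}}(S)=T$, $\pi_{\mathrm{Y}}(S_0)=\,$(the constant-coordinate special), i.e. that passing between $\mathrm{Y}_2(1)^{n+m}$ and its $\mathrm{Y}(1)^{n+m}$-shadow does not lose control — this is exactly what the ``associated with'' formalism and Theorems \ref{thm:weakMZPD}, \ref{thm:horizontalweakMZPD}, and Proposition \ref{prop:Jhorizontalboundedcomplexity} are designed to handle, so no genuinely new difficulty should arise beyond careful transcription.
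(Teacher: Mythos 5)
Your proposal is correct and follows essentially the same approach as the paper's proof: the same reduction to $\dim V=3n$, the same construction of $W$ over $C_j$ and its parametric family, the same invocation of Theorem \ref{thm:horizontalweakMZPD} and Proposition \ref{prop:Jhorizontalboundedcomplexity} to define $W_0$, the same atypicality/fibre-dimension contradiction split by whether $\pi_{\mathrm{Y}}(X)$ has constant coordinates, and the same conclusion via \cite[Lemma 5.2]{aek-closureoperator}. (You even have the dimension count right as $\dim W = \dim V + 3m + \dim^j(\mathbf{t})$, tracking the factor of $3$ on the derivative side correctly.)
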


We will set up very similar notation to the one used in \textsection\ref{sec:generalcase}. 
Let $V\subset\mathbb{C}^{n}\times\mathrm{Y}_{2}(1)^{n}$ be a $J$-broad and $J$-free variety with no $C_{j}$-factors. 
Let $K\subset\mathbb{C}$ be a finitely generated subfield such that $V$ is defined over $K$. 
Let $t_{1},\ldots,t_{m}\in\mathbb{H}\setminus C_{j}$ be given by Theorem \ref{thm:mainclosureoperator} applied to $K$. 
By Lemma \ref{lem:c}, we also assume that $\dim_{G}(\mathbf{t}|C_j) = m$. 

Choose $(\mathbf{x},\mathbf{y}_{0},\mathbf{y}_{1},\mathbf{y}_{2})\in V$ generic over $C_{j}(\mathbf{t},J(\mathbf{t}))$. 
Let $W\subseteq\mathbb{C}^{n+m}\times\mathrm{Y}_{2}(1)^{n+m}$ be the $C_{j}$-Zariski closure of the point $(\mathbf{x},\mathbf{t},\mathbf{y}_{0},\mathbf{y}_{1},\mathbf{y}_{2},J(\mathbf{t}))$. 

\begin{lem}
\label{lem:JstrongWbroadnfree}
$W$ is $J$-broad, $J$-free and $\dim W = 3\dim V+3m+\dim^{j}(\mathbf{t})$. 
Furthermore, if $\dim^{j}(\mathbf{t})>0$, then $W$ is strongly $J$-broad.
\end{lem}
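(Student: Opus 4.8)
The plan is to reproduce, almost line by line, the proof of the derivative-free analogue Lemma~\ref{lem:strongWbroadnfree}, replacing $j$ everywhere by $J=(j,j',j'')$, ``broad''/``free'' by ``$J$-broad''/``$J$-free'', the projections $\mathrm{Pr}_{\mathbf{i}}$ by $\mathrm{PR}_{\mathbf{i}}$, and the elementary transcendence estimate used there by the Ax--Schanuel inequality of Proposition~\ref{prop:as}, which already builds in the first two derivatives of $j$. Throughout we use that $t_1,\dots,t_m\in\mathbb{H}\setminus C_j$ satisfy (A1)--(A2) of Theorem~\ref{thm:mainclosureoperator}, that $\dim_G(\mathbf{t}\mid C_j)=m$, and that $(\mathbf{x},\mathbf{y}_0,\mathbf{y}_1,\mathbf{y}_2)$ is generic in $V$ over $C_j(\mathbf{t},J(\mathbf{t}))$. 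The dimension of $W$ is computed exactly as in Lemma~\ref{lem:dimW}: the tower law gives $\dim W=\mathrm{tr.deg.}_{C_j}C_j(\mathbf{t},J(\mathbf{t}))+\mathrm{tr.deg.}_{C_j(\mathbf{t},J(\mathbf{t}))}C_j(\mathbf{t},J(\mathbf{t}))(\mathbf{x},\mathbf{y}_0,\mathbf{y}_1,\mathbf{y}_2)$; the first summand equals $3\dim_G(\mathbf{t}\mid C_j)+\dim^j(\mathbf{t})$ by (A2) and the assumption $\dim_G(\mathbf{t}\mid C_j)=m$, and the second equals $\dim V$ by genericity, which is the same computation as in Lemma~\ref{lem:dimW}.

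For $J$-freeness one checks the three defining conditions. No coordinate of $W$ is constant: since $W$ is defined over $C_j$ it suffices that every coordinate of the point $(\mathbf{x},\mathbf{t},\mathbf{y}_0,\mathbf{y}_1,\mathbf{y}_2,J(\mathbf{t}))$ be transcendental over $C_j$; the coordinates of $(\mathbf{x},\mathbf{y}_0,\mathbf{y}_1,\mathbf{y}_2)$ are transcendental over $C_j(\mathbf{t},J(\mathbf{t}))$ by the $J$-freeness of $V$ and Lemma~\ref{lem:genericpoint}, the $t_i$ are chosen outside $C_j$, and $j(t_i),j'(t_i),j''(t_i)\notin C_j$ by Lemma~\ref{lem:c} together with the differential equation~\eqref{eq:j} (which lets one pass from $j'(t_i)$ or $j''(t_i)$ back to $j(t_i)$, hence to $t_i$). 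Next, $W\not\subseteq M\times\mathrm{Y}_2(1)^{n+m}$ for a proper $\mathbb{Q}$-M\"obius $M$: the $x_i$ lie in pairwise distinct $G$-orbits and in $G$-orbits distinct from those of the $t_k$ by genericity over $C_j(\mathbf{t},J(\mathbf{t}))$, the $t_k$ lie in pairwise distinct $G$-orbits since $\dim_G(\mathbf{t}\mid C_j)=m$, and no $\mathbb{Q}$-M\"obius relation holds among the $x_i$ alone by the $J$-freeness of $V$. Finally $W\not\subseteq\mathbb{C}^{n+m}\times T$ for a proper special $T\subseteq\mathrm{Y}(1)^{n+m}$: the coordinates of $\mathbf{y}_0$ are modularly independent (from the $J$-freeness of $V$) and modularly independent from those of $j(\mathbf{t})$ (genericity), and the coordinates of $j(\mathbf{t})$ are modularly independent because the $t_i$ lie in distinct $G$-orbits.

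For $J$-broadness, fix $1\le i_1<\dots<i_\ell\le n<k_1<\dots<k_s\le n+m$, set $\mathbf{i}=(i_1,\dots,i_\ell)$ and $\mathbf{k}=(k_1,\dots,k_s)$ (every index subtuple of $\{1,\dots,n+m\}$ has this shape). Writing $\dim\mathrm{PR}_{(\mathbf{i},\mathbf{k})}(W)$ as a transcendence degree over $C_j$ and splitting off the block $(\mathbf{t}_{\mathbf{k}},J(\mathbf{t}_{\mathbf{k}}))$, one bounds that block below by $3\dim_G(\mathbf{t}_{\mathbf{k}}\mid C_j)+\dim^j(\mathbf{t}_{\mathbf{k}})=3s+\dim^j(\mathbf{t}_{\mathbf{k}})$ using Proposition~\ref{prop:as} and $\dim_G(\mathbf{t}\mid C_j)=m$, and bounds the remaining part below by $\dim\mathrm{PR}_{\mathbf{i}}(V)\ge 3\ell$ using the $J$-broadness of $V$ and the fact that the transcendence degree of the projected generic point over the smaller field $C_j(\mathbf{t}_{\mathbf{k}},J(\mathbf{t}_{\mathbf{k}}))$ is at least its value over $C_j(\mathbf{t},J(\mathbf{t}))$. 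This yields $\dim\mathrm{PR}_{(\mathbf{i},\mathbf{k})}(W)\ge 3(\ell+s)$, so $W$ is $J$-broad. The delicate point, which I expect to be the main obstacle, is upgrading this to a strict inequality when $\dim^j(\mathbf{t})>0$: the term $\dim^j(\mathbf{t}_{\mathbf{k}})$ may vanish on a sub-block, so one must instead exploit the hypothesis that $V$ has no $C_j$-factors --- when $\dim\mathrm{PR}_{\mathbf{i}}(V)=3\ell$ exactly, $\mathrm{PR}_{\mathbf{i}}(V)$ is by assumption not definable over $C_j$, so the $C_j$-Zariski closure of the (suitably generic) projected point strictly contains $\mathrm{PR}_{\mathbf{i}}(V)$, producing the extra dimension. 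Carrying this bookkeeping through all index subtuples, keeping careful track of which field each generic point is generic over, is the one genuinely technical ingredient; everything else is formal.
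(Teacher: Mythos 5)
Your plan --- transpose the proof of Lemma~\ref{lem:strongWbroadnfree} verbatim, with $j$ replaced by $J$, $\mathrm{Pr}$ by $\mathrm{PR}$, and Proposition~\ref{prop:as} doing the transcendence estimates --- is exactly what the paper does; its proof of this lemma is the single sentence ``Repeat the proof of Lemma~\ref{lem:strongWbroadnfree}.'' Your dimension computation via the tower law is correct (note that it actually gives $\dim W=\dim V+3m+\dim^{j}(\mathbf{t})$, which is what the argument should produce; the $3\dim V$ in the printed statement is most naturally read as a typo). Two smaller points deserve tightening. First, to see that the coordinates $j'(t_i),j''(t_i)$ are not in $C_j$, the appeal to Lemma~\ref{lem:c} plus the ODE is not quite the right tool --- Lemma~\ref{lem:c}(a) goes the wrong way. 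The clean route is Proposition~\ref{prop:as} applied to the single element $t_i$: since $t_i\notin C_j$ one gets $\mathrm{tr.deg.}_{C_j}C_j(t_i,j(t_i),j'(t_i),j''(t_i))\geq 3+1=4$, so all four coordinates are transcendental over $C_j$.

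Second, and more substantively, you are right to flag the strong-$J$-broadness step, but the scope of the worry is narrower than you state. Since every $t_i\notin C_j$, any nonempty subtuple $\mathbf{t}_{\mathbf{k}}$ has $\dim^{j}(\mathbf{t}_{\mathbf{k}})\geq 1$, so the bound $\dim\mathrm{PR}_{(\mathbf{i},\mathbf{k})}(W)\geq 3\dim_{G}(\mathbf{t}_{\mathbf{k}}\mid C_j)+\dim^{j}(\mathbf{t}_{\mathbf{k}})+\dim\mathrm{PR}_{\mathbf{i}}(V)$ is already strict when $s\geq 1$. The only residual case is $\mathbf{k}=\emptyset$, i.e.\ a subtuple lying entirely in the $V$-block, where the bound degenerates to $\dim\mathrm{PR}_{\mathbf{i}}(W)\geq \dim\mathrm{PR}_{\mathbf{i}}(V)\geq 3\ell$ with no slack. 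Here your instinct to invoke the ``no $C_j$-factors'' hypothesis is exactly right: either $\dim\mathrm{PR}_{\mathbf{i}}(V)>3\ell$ and one is done, or $\dim\mathrm{PR}_{\mathbf{i}}(V)=3\ell$ and $\mathrm{PR}_{\mathbf{i}}(V)$ is by assumption not definable over $\overline{C_j}$, so the $C_j$-Zariski closure of $\mathrm{PR}_{\mathbf{i}}(\mathbf{x},\mathbf{y}_0,\mathbf{y}_1,\mathbf{y}_2)$ strictly contains $\mathrm{PR}_{\mathbf{i}}(V)$, giving $\dim\mathrm{PR}_{\mathbf{i}}(W)>3\ell$. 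The paper's model (the chain of inequalities in Lemma~\ref{lem:strongWbroadnfree}) writes $\dim^{j}(\mathbf{t})$ rather than $\dim^{j}(\mathbf{t}_{\mathbf{k}})$ in the middle line, which sweeps this $\mathbf{k}=\emptyset$ case under the rug; your version, which explicitly routes around it via the $C_j$-factors hypothesis (available in the standing assumptions preceding the lemma), is a genuine improvement in rigour, not a detour.
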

\begin{proof}
Repeat the proof of Lemma \ref{lem:strongWbroadnfree}. 
\end{proof}

\begin{proof}[Proof of Theorem \ref{thm:unconditionalwithderivatives}]
We assume that $\dim V=3n$. 
We then know that $V$ is not definable over $C_{j}$, so $V(C_{j})$ is contained in a proper subvariety of $V$. 
In particular, there is a Zariski open subset $V_{0}\subseteq V$ such that if $(\mathbf{z},j(\mathbf{z}))\in V_{0}$, then some of the coordinates of $j(\mathbf{z})$ are not in $C_{j}$. 
Also, in this case $K\not\subset C_{j}$ so $\dim^{j}(\mathbf{t})\geq 1$.

Consider the parametric family of subvarieties $(W_\mathbf{q})_{\mathbf{q}\in Q}$ of $W$ such that for every $\mathbf{q}\in Q$ there is a M\"obius subvariety $M_\mathbf{q}\subseteq\mathbb{C}^{n}$ such that $W_\mathbf{q}:=W\cap (M_\mathbf{q}\times\mathrm{Y}_2(1)^{2})$. 
Let $\mathscr{S}$ be the finite collection  of special subvarieties of $\mathrm{Y}_2(1)^{n+m}$ given by Theorem \ref{thm:horizontalweakMZPD}. 
Let $N$ be given by Proposition \ref{prop:Jhorizontalboundedcomplexity} applied to  $(W_{\mathbf{q}})_{\mathbf{q}\in Q}$. 

Let $W_{0}\subseteq W$ be a Zariski open subset defined over $C_j$ such that the points $(\mathbf{a},\mathbf{b}_0,\mathbf{b}_1,\mathbf{b}_2)$ of $W_{0}$ satisfy all of the following conditions:
\begin{enumerate}[(a)]
    \item The point $\mathbf{b}_0$ does not lie in any $T\in\mathscr{S}$. As $W$ is free, this condition defines a Zariski open subset of $W$.
    \item The coordinate of $\mathbf{b}_0$ do not satisfy any of the modular relations $\Phi_{1},\ldots,\Phi_{N}$. 
    As $W$ is free, this condition defines a Zariski open subset of $W$.
    \item For every $1\leq i_{1}<\cdots<i_{\ell}\leq n$, and letting $\mathbf{i}=(i_{1},\ldots,i_{\ell})$, we have that
    \[\dim \left(W\cap \mathrm{PR}_{\mathbf{i}}^{-1}((\mathbf{a},\mathbf{b}_0,\mathbf{b}_1,\mathbf{b}_2))\right) = \dim W - \dim \mathrm{PR}_{\mathbf{i}}(W).\]
    By the fibre-dimension theorem and the fact there are only finitely many tuples $\mathbf{i}$ to consider, this defines a Zariski open subset of $W$. 
\end{enumerate}
By the construction of $W$, there is a Zariski open subset $V_{1}\subseteq V$ such that if $(\mathbf{x},\mathbf{y}_{0},\mathbf{y}_{1},\mathbf{y}_{2})$ is any point of $V_{1}$, then $(\mathbf{x},\mathbf{t},\mathbf{y}_{0},\mathbf{y}_{1},\mathbf{y}_{2},J(\mathbf{t}))$ is a point in $W_{0}$. 

Choose $(\mathbf{z},J(\mathbf{z}))\in V_{0}\cap V_{1}$, so that $(\mathbf{z},\mathbf{t},J(\mathbf{z}),J(\mathbf{t}))\in W_{0}$. 
We will show that $\dim_{G}(\mathbf{z},\mathbf{t}|C_{j})=n+m$. 
For this we proceed by contradiction, so suppose that $\dim_{G}(\mathbf{z},\mathbf{t}|C_{j})<n+m$. 
Let $S$ be the weakly special subvariety of $\mathrm{Y}_{2}(1)^{n+m}$ of minimal dimension defined over $C_{j}$ for which $(J(\mathbf{z}),J(\mathbf{t}))\in T$. 
Observe that $\dim(T) = 3\dim_{G}(\mathbf{z},\mathbf{t}|C_{j})$. 

Let $M\subset\mathbb{C}^{n+m}$ be the M\"obius subvariety of smallest dimension defined by M\"obius relation over $\mathbb{Q}$ and/or setting some coordinates to be a constant in $C_{j}$, which satisfies $(\mathbf{z},\mathbf{t})\in M$. 
Then $W\cap (M\times\mathrm{Y}_{2}(1)^{n+m})$ is an element of the family $(W_{\mathbf{q}})_{\mathbf{q}\in Q}$, call it $W_{M}$. 
We remark that $W_{M}$ is defined over $C_{j}$. 

Now let $X$ be the irreducible component of $W_{M}\cap(\mathbb{C}^{n+m}\times S)$ containing $(\mathbf{z},\mathbf{t}, J(\mathbf{z}),J(\mathbf{t}))$. 
Observe that $X$ is defined over $C_{j}$. 
Then by Proposition \ref{prop:as} we get
\begin{equation}
\label{eq:JdimXunconditional}
\dim X \geq \mathrm{tr.deg.}_{C_{j}}C_{j}(\mathbf{z},\mathbf{t}, J(\mathbf{z}),J(\mathbf{t})) \geq 3\dim_{G}(\mathbf{z},\mathbf{t}|C_{j}) + \dim ^{j}(\mathbf{z},\mathbf{t}).
\end{equation}
On the other hand, as $W$ is free, then $\dim W_{M} <\dim W$, so using Lemma \ref{lem:strongWbroadnfree} we have
\begin{equation}
\label{eq:JatypicalXunconditional}
\begin{array}{ccl}
\dim W_{M} + \dim\mathbb{C}^{n+m}\times S - \dim\mathbb{C}^{n+m}\times\mathrm{Y}_{2}(1)^{n+m} &<& \dim W+\dim T-n-m\\
&=& \dim^{j}(\mathbf{t})+\dim S\\
&=& \dim^{j}(\mathbf{t}) + 3\dim_{G}(\mathbf{z},\mathbf{t}|C_{j})\\
&\leq& \dim X.
\end{array}    
\end{equation}
This shows that $X$ is  an atypical component of $W_{M}\cap(\mathbb{C}^{n+m}\times S)$ in $\mathbb{C}^{n+m}\times\mathrm{Y}_{2}(1)^{n+m}$. 
If $\pi_{\mathrm{Y}}(X)$ has no constant coordinates, then by \ref{thm:horizontalweakMZPD} there exists a special subvariety $S\subseteq\mathrm{Y}_2(1)^{n+m}$ and $T_{0}\in\mathscr{S}$ such that $X\subset W_{M}\cap(\mathbb{C}^{n+m}\times S)$ and $\pi_{\mathrm{Y}}(S)= T_0$. 
However, this would contradict condition (a) in the definition of $W_{0}$.

So $\pi_{\mathrm{Y}}(X)$ has some constant coordinates. Then, as $X$ is defined over $C_{j}$, those constant coordinates must be given by elements of $C_{j}$. 
Since no element of $J(\mathbf{t})$ is in $C_{j}$ (by Lemma \ref{lem:c}), the constant coordinates of $\pi_{\mathrm{Y}}(X)$ must be found among the coordinates of $J(\mathbf{z})$. 
Let $1\leq i_{1}<\cdots<i_{\ell}\leq n$ denote all the coordinates of $j(\mathbf{z})$ which are in $C_{j}$. 
Recall that by Lemma \ref{lem:c}, if some coordinate of $(z,j(z),j'(z),j''(z))$ is in $C_j$, then they all are. 
Since $(\mathbf{z},J(\mathbf{z}))\in V_{0}$, then $\ell<n$. 
We also remark that at this point we have already proven the theorem for the case $n=1$.

By Proposition \ref{prop:horizontalboundedcomplexity} we know that there is a weakly special subvariety $S_{0}
\subset\mathrm{Y}_2(1)^{n+m}$ such that $\Delta_b(\pi_{\mathrm{Y}}(S_{0}))\leq N$, $X\subseteq\mathbb{C}^{n+m}\times S_{0}$, and
\begin{equation}
\label{eq:JtypicalXunconditional}
    \dim X\leq \dim W_{M}\cap(\mathbb{C}^{n+m}\times S_0) + \dim T\cap S_0 - \dim S_0.
\end{equation}
By condition (b) in the definition of $W_0$ we know that $\Delta_b(\pi_{\mathrm{Y}}(S_0))=0$, which means that $\pi_{\mathrm{Y}}(S_0)$ is completely defined by setting certain coordinates to be constant. 
As the constant coordinates of $\pi_\mathrm{Y}(X)$ are in $C_j$, then $S_0$ is defined over $C_j$, and $\dim S_0\geq 3(n+m-\ell)$ (again by Lemma \ref{lem:c}). 
But $S$ is, by definition, the smallest weakly special subvariety of $\mathrm{Y}_{2}(1)^{n+m}$ which is defined over $C_j$ and contains the point $(J(\mathbf{z}),J(\mathbf{t}))$. 
So $S\cap S_0 = S$. 
Combining (\ref{eq:JdimXunconditional}) and (\ref{eq:JtypicalXunconditional}) we get
\begin{equation}
\label{eq:Jfibredimlowunconditional}
    \dim^{j}(\mathbf{t}) + \dim S_0 \leq \dim W_{M}\cap(\mathbb{C}^{n+m}\times S_0).
\end{equation}

Set $\Theta:=W\cap \mathrm{PR}_{\mathbf{i}}^{-1}\left(\mathrm{PR}_{\mathbf{i}}(\mathbf{z},\mathbf{t},j(\mathbf{z},\mathbf{t}))\right)$. 
By the fibre dimension theorem, condition (c) of the definition of $W_{0}$ and the fact that $W$ is strongly broad (Lemma \ref{lem:JstrongWbroadnfree}) we know that
\begin{equation}
\label{eq:Jfibredimupunconditional}
    \dim \Theta\leq  \dim W - \dim \mathrm{PR}_{\mathbf{i}}(W) < 3(n+m-\ell).
\end{equation}
Observe that $W_{M}\cap(\mathbb{C}^{n+m}\times S_0) = W
\cap(M\times S_0)\subseteq\Theta$, so combining (\ref{eq:Jfibredimlowunconditional}) and (\ref{eq:Jfibredimupunconditional}) gives
\[\dim S_0 < 3(n+m-\ell)\]
which is a contradiction.

We deduce from this that $\dim_{G}(\mathbf{z},\mathbf{t}|C_{j})=3(n+m)$. 
By \cite[Lemma 5.2]{aek-closureoperator} this implies that $(\mathbf{z},J(\mathbf{z}))$ is generic in $V$ over $C_{j}(\mathbf{t},J(\mathbf{t}))$. 
\end{proof}

We have not added here an analogue of Theorem \ref{thm:blurred} including derivatives because \cite[Theorem 1.8]{aslanyan-kirby}, which is the available result on EC for the blurring of $J$, requires one to use a group larger than $G$ to define the blurring. 
So the result we can obtain would not be as close an approximation to Conjecture \ref{conj:ecwithderivatives} as Theorem \ref{thm:blurred} is to Conjecture \ref{conj:sec}.  

\subsubsection*{Availability of data and material} Not applicable.

\subsubsection*{Declaration} The author states that there is no conflict of interest.

\bibliographystyle{alpha}
\bibliography{gensol}{}

\end{document}